\documentclass[reqno]{amsart}

\usepackage{amsmath, amssymb, amsthm}
\usepackage{aliascnt}
\usepackage[colorlinks, linkcolor=purple, citecolor=teal]{hyperref}
\usepackage{cleveref}
\usepackage{colonequals}
\usepackage{enumitem}
\usepackage{fullpage}
\usepackage[mathscr]{eucal}
\usepackage{mathtools}
\usepackage{stmaryrd}
\usepackage{tikz}
\usepackage{tikz-cd}
\usepackage[foot]{amsaddr}

\let\oldtocsection=\tocsection
\let\oldtocsubsection=\tocsubsection
\let\oldtocsubsubsection=\tocsubsubsection

\renewcommand{\tocsection}[2]{\hspace{0em}\oldtocsection{#1}{#2}}
\renewcommand{\tocsubsection}[2]{\hspace{1em}\oldtocsubsection{#1}{#2}}
\renewcommand{\tocsubsubsection}[2]{\hspace{2em}\oldtocsubsubsection{#1}{#2}}

\theoremstyle{definition}	

\newtheorem{thm}{Theorem}[subsection]

\newaliascnt{prp}{thm}
\newtheorem{prp}[prp]{Proposition}
\aliascntresetthe{prp}

\newaliascnt{lem}{thm}
\newtheorem{lem}[lem]{Lemma}
\aliascntresetthe{lem}

\newaliascnt{cor}{thm}
\newtheorem{cor}[cor]{Corollary}
\aliascntresetthe{cor}

\newaliascnt{dfn}{thm}
\newtheorem{dfn}[dfn]{Definition}
\aliascntresetthe{dfn}

\newaliascnt{exm}{thm}
\newtheorem{exm}[exm]{Example}
\aliascntresetthe{exm}

\newaliascnt{rmk}{thm}
\newtheorem{rmk}[rmk]{Remark}
\aliascntresetthe{rmk}

\newaliascnt{fct}{thm}
\newtheorem{fct}[fct]{Fact}
\aliascntresetthe{fct}

\numberwithin{equation}{section}
\numberwithin{figure}{section}
\numberwithin{table}{section}

\crefname{thm}{Theorem}{Theorems}
\crefname{prp}{Proposition}{Propositions}
\crefname{lem}{Lemma}{Lemmas}
\crefname{cor}{Corollary}{Corollaries}
\crefname{dfn}{Definition}{Definitions}
\crefname{exm}{Example}{Examples}
\crefname{rmk}{Remark}{Remarks}
\crefname{fct}{Fact}{Facts}
\crefname{equation}{}{}
\crefname{section}{\S\!}{\S\!}
\crefname{subsection}{\S\!}{\S\!}

\setlist[itemize]{nosep, leftmargin=2.3em}
\setlist[enumerate]{label=(\arabic*), nosep, leftmargin=2.3em}
\newlist{clist}{enumerate}{1}
\setlist[clist]{label=(\roman*), nosep, leftmargin=2.3em}

\newcommand{\bbA}{\mathbb{A}}
\newcommand{\bbC}{\mathbb{C}}

\newcommand{\bbN}{\mathbb{N}}

\newcommand{\bbR}{\mathbb{R}}
\newcommand{\bbZ}{\mathbb{Z}}

\newcommand{\clA}{\mathcal{A}}

\newcommand{\clF}{\mathcal{F}}
\newcommand{\clG}{\mathcal{G}}

\newcommand{\frB}{\mathfrak{B}}

\newcommand{\frU}{\mathfrak{U}}

\newcommand{\frg}{\mathfrak{g}}
\newcommand{\frh}{\mathfrak{h}}

\newcommand{\sfA}{\mathsf{A}}

\newcommand{\sfC}{\mathsf{C}}
\newcommand{\sfM}{\mathsf{M}}

\newcommand{\bs}{\backslash}
\newcommand{\ceq}{\coloneqq}

\newcommand{\ol}{\overline}
\newcommand{\wh}{\widehat}
\newcommand{\wt}{\widetilde}

\newcommand{\inj}{\hookrightarrow}
\newcommand{\srj}{\twoheadrightarrow}
\newcommand{\sto}{\xrightarrow{\,\sim\,}}
\newcommand{\rarr}{\ratio\Longleftrightarrow}

\newcommand{\dbr}[1]{\llbracket #1 \rrbracket} 
\newcommand{\dpr}[1]{(\!( #1 )\!)}

\newcommand{\pdd}{\partial}

\newcommand{\pd}[2]{\frac{\partial #1}{\partial #2}}

\DeclareMathOperator{\Aut}{Aut}
\DeclareMathOperator{\End}{End}
\DeclareMathOperator{\Fld}{Field}

\DeclareMathOperator{\Hom}{Hom}
\DeclareMathOperator{\Img}{Im}
\DeclareMathOperator{\Ker}{Ker}

\DeclareMathOperator{\Res}{Res}
\DeclareMathOperator{\Span}{Span}

\DeclareMathOperator{\id}{id}
\DeclareMathOperator{\op}{op}

\DeclareMathOperator{\supp}{supp}

\usepackage{braket}

\newcounter{mainthm}[section]

\newtheorem{theorem}{Theorem}[mainthm]

\newcommand{\newmaintheorem}{%
  \refstepcounter{mainthm}%
  \setcounter{theorem}{0}%
}
\crefname{theorem}{Theorem}{Theorems}

\newcommand{\ve}{\varepsilon}
\newcommand{\BVS}{\mathsf{BVS}}
\newcommand{\rms}{\mathrm{s}}
\newcommand{\sep}{\mathrm{sep}}
\newcommand{\ModC}{\mathsf{Mod}\,\bbC}
\newcommand{\CVS}{\mathsf{CVS}}
\newcommand{\bfV}{\mathbf{V}}
\newcommand{\bfVR}{\bfV_{\!R}}
\newcommand{\bfVi}{\mathbf{V}_{\!\infty}}
\newcommand{\vac}{\ket{0}}
\newcommand{\clD}{\mathcal{D}}
\newcommand{\rmc}{\mathrm{c}}
\newcommand{\sCVS}{\mathsf{CVS}^{\mathsf{super}}}
\newcommand{\super}{\mathrm{super}}
\newcommand{\clS}{\mathcal{S}}
\newcommand{\dgLie}[1]{\mathsf{dgLie}(#1)}
\newcommand{\LXD}{L^{\bullet}_{X, D}}
\newcommand{\CE}{\mathrm{CE}}
\newcommand{\LCD}{L^\bullet_{\bbC, \pdd_z}}
\newcommand{\LCDs}{L^{\bullet+1}_{\bbC, \pdd_z}}
\newcommand{\clU}{\mathcal{U}}

\DeclareMathOperator{\conv}{conv}
\DeclareMathOperator{\disk}{disk}
\DeclareMathOperator*{\blim}{blim}
\DeclareMathOperator{\totimes}{\wt{\otimes}}
\DeclareMathOperator{\Disks}{Disks}
\DeclareMathOperator{\Conf}{Conf}
\DeclareMathOperator*{\tbotimes}{\wt{\bigotimes}}
\DeclareMathOperator{\Sym}{Sym}
\DeclareMathOperator{\Lie}{Lie}
\DeclareMathOperator{\Cur}{Cur}
\DeclareMathOperator{\Vir}{Vir}

\begin{document}

\title{Factorization envelopes and enveloping vertex algebras}
\author{Yusuke Nishinaka}
\address{Graduate School of Mathematics, Nagoya University.  Furocho, Chikusaku, Nagoya, Japan, 464-8602.}
\email{m21035a@math.nagoya-u.ac.jp}
\date{December 8, 2025; Revised on February 25, 2026.} 

\begin{abstract}
We construct a factorization algebra, via the factorization envelope, starting from a Lie conformal algebra, and prove that the associated vertex algebra is isomorphic to its enveloping vertex algebra. Our construction generalizes the Kac--Moody factorization algebra of Costello--Gwilliam and the Virasoro factorization algebra of Williams. Moreover, by considering the super analogue of this construction, we obtain new factorization algebras corresponding to vertex superalgebras, such as the Neveu--Schwarz vertex superalgebra, the $N=2$ vertex superalgebra, and the $N=4$ vertex superalgebra. 
\end{abstract}

\maketitle

\setcounter{tocdepth}{2}
\tableofcontents

\section*{Introduction}

Factorization algebras, introduced by Costello and Gwilliam \cite{CG, CG2}, provide a mathematical framework for describing the observables of a quantum field theory. Using the renormalization techniques developed in \cite{C11}, they constructed the observables of a perturbative quantum field theory and proved that the resulting structure indeed forms a factorization algebra. 

Vertex algebras, on the other hand, offer an algebraic formulation of two-dimensional chiral conformal field theory. It is therefore natural to ask how factorization algebras on the complex plane $\mathbb{C}$ relate to vertex algebras. 
In \cite[Chapter 5]{CG}, Costello and Gwilliam developed a general procedure for extracting a vertex algebra from a factorization algebra on $\bbC$. Conversely, they constructed two factorization algebras---one associated with the $\beta$-$\gamma$ vertex algebra and the other with the affine vertex algebra---for which this procedure applies, and proved that the resulting vertex algebra is isomorphic to the original one in both cases. Their construction uses an important method for building factorization algebras, called \emph{factorization envelopes}. Moreover, Williams \cite{W} constructed a factorization algebra from the Virasoro vertex algebra using the same method. For general vertex algebras, Bruegmann \cite{B20a, B20b} constructed a factorization algebra associated to an arbitrary vertex algebra. However, there are several unsatisfactory aspects in the present situation: 

\begin{itemize}
\item
To extract a vertex algebra from a factorization algebra, one needs to impose an analytic structure on factorization algebras. Costello and Gwilliam start from a factorization algebra taking values in the category of chain complexes of differentiable vector spaces. The notion of differentiable vector spaces is convenient for BV field theory, since many topological vector spaces can be realized as differentiable vector spaces, and they form a Grothendieck abelian category, so that the standard tools of homological algebra apply immediately (see \cite[Appendix B]{CG} for more details). However, this notion is somewhat unintuitive, as a differentiable vector space is a sheaf on the site of smooth manifolds. Moreover, the vertex algebra structure emerges at the level of cohomology. Therefore, it seems possible to construct a vertex algebra from a factorization algebra taking values in a category rather than in the category of chain complexes, and to avoid using the notion of differentiable vector spaces. 

\item 
In \cite{B20b}, Bruegmann constructed a geometric vertex algebra---whose categorical equivalence with vertex algebras was proved in \cite{B20a}---from a factorization algebra taking values in bornological vector spaces. The notion of bornological vector spaces is an axiomatization of bounded sets in a topological vector space, so the concept is more familiar than that of differentiable vector spaces. However, the relationship with the setting of Costello and Gwilliam---namely, the construction from a factorization algebra taking values in chain complexes---has not been discussed. 

\item 
Bruegmann also constructed, in \cite{B20b}, a factorization algebra from an arbitrary geometric vertex algebra. However, the relationship with those constructed via factorization envelopes by Costello--Gwilliam and Williams has not been established. Moreover, although he proved that this construction is a right inverse to the previous construction of geometric vertex algebras, it has not been shown whether it gives rise to a categorical equivalence between factorization algebras and vertex algebras. 
\end{itemize}

\smallskip

We now remark that there is another concept of factorization algebras introduced by Beilinson and Drinfeld \cite{BD04}. Beilinson--Drinfeld factorization algebras are essentially the same as chiral algebras, which provide an algebro-geometric formulation of two-dimensional conformal field theory. It is known that there is a categorical equivalence between chiral algebras on the affine line $\bbA^1$ and vertex algebras \cite[\S0.15]{BD04} (see also \cite[Corollary A.2]{BDHK}). Thus, for Beilinson--Drinfeld factorization algebras, a categorical equivalence with vertex algebras has already been established. 

Costello--Gwilliam factorization algebras can be viewed as an analytic version of the concept of factorization algebras. In \cite[\S1.4.1]{CG}, Costello and Gwilliam heuristically explain the connection between these two notions of factorization algebras. In order to turn this explanation into a rigorous mathematical theorem, a deeper understanding of the relationship between Costello--Gwilliam factorization algebras and vertex algebras is of significant importance. 

The original motivation for the author's preprint \cite{N1} was to construct factorization algebras from vertex algebras in a manner compatible with the construction via factorization envelopes, and to identify the class of factorization algebras that is categorically equivalent to vertex algebras. However, this goal has not yet been achieved. 

Since the $\beta$-$\gamma$ vertex algebra, the affine vertex algebra, and the Virasoro vertex algebra can be realized in a unified way as the \emph{enveloping vertex algebras} of Lie conformal algebras, it is natural to seek a generalization of the construction via factorization envelopes due to Costello--Gwilliam and Williams. This note provides such a generalization, namely the construction of a factorization algebra starting from a Lie conformal algebra. 

\addtocontents{toc}{\protect\setcounter{tocdepth}{0}}
\subsection*{Main results}

We first establish a general procedure for extracting a vertex algebra from a factorization algebra on the complex plane $\bbC$. We work with factorization algebras taking values in bornological vector spaces, rather than in chain complexes of differentiable vector spaces as in \cite{CG}, because the notion of differentiable vector spaces is less familiar and not needed for our purposes. Moreover, we construct a vertex algebra directly, without passing through a geometric vertex algebra as in \cite{B20a, B20b}. 

\smallskip

A prefactorization algebra is a precosheaf equipped with a family of morphisms called the factorization products. Thus, on $\bbC$, it assigns an object $\clF(U)$ to each open subset $U\subset \bbC$, and a morphism $\clF^U_V\colon \clF(U)\to \clF(V)$ for each inclusion $U\subset V$ of open subsets of $\bbC$.
To extract a vertex algebra from a prefactorization algebra, we need to impose an analytic structure on $\clF(U)$. In this note, we adopt bornology, so each $\clF(U)$ is a bornological vector space. We further assume that $\clF(U)$ is a complete and separated convex bornological vector space, so that we can apply complex analysis to functions with values in $\clF(U)$. A complete and separated convex bornological vector space (whose bornology is the von Neumann bornology of a Hausdorff locally convex topology) is called a convenient vector space in \cite{KM}. We use this terminology, so each $\clF(U)$ is a convenient vector space. 
Moreover, our prefactorization algebra $\clF$ needs to be equivariant under the natural action of the group $S^1\ltimes \bbC$ on $\bbC$ as isometric affine transformations. This means that $\clF$ is equipped with a morphism of convenient vector spaces
\[
\sigma_{(q, z), U}\colon \clF(U)\to \clF((q, z)U)
\]
for $(q, z)\in S^1\ltimes \bbC$ and open subset $U\subset \bbC$,  satisfying certain compatibility conditions. 
Thus, we consider an $S^1\ltimes \bbC$-equivariant prefactorization algebra $\clF$ on $\bbC$ taking values in convenient vector spaces. 

Since the open disk $D_R$ centered at $0$ with radius $R$ is invariant under the action of $S^1$, for each $\Delta\in \bbZ$, one can define a weight space by
\[
\clF(D_R)_\Delta \ceq \{a\in \clF(D_R)\mid \forall q\in S^1,\, \sigma_{(q, 0), D_R}(a)=q^\Delta a\}. 
\]
The compatibility condition of $\sigma_{(q, z), U}$ implies that, for $0<r<R\le \infty$, the image of $\clF(D_r)_\Delta$ under the linear map $\clF^{D_r}_{D_R}$ lies in $\clF(D_R)_\Delta$. 
We call $\clF$ an \emph{amenably holomorphic prefactorization algebra} if it satisfies the following conditions: 

\begin{clist}
\item 
The $\bbC$-action makes $\clF$ a holomorphically translation-equivariant prefactorization algebra (\cref{dfn:holoPFA}). 

\item 
For $0<R\le \infty$, we have $\clF(D_R)_\Delta=0$ ($\Delta\ll0$). 

\item
For $0<r<R\le\infty$ and $\Delta\in \bbZ$, the map $\clF^{D_r}_{D_R}\colon \clF(D_r)_\Delta\to \clF(D_R)_\Delta$ is a linear isomorphism. 
\end{clist}
Note that the analytic structure on $\clF$ is needed to state the condition (i). 

\newmaintheorem
\begin{theorem}[{\cref{thm:FAtoVA}}]\label{theorem:1A}
Let $\clF$ be an $S^1\ltimes \bbC$-equivariant prefactorization algebra on $\bbC$ taking values in convenient vector spaces. 
If $\clF$ is amenably holomorphic, then the linear space
\[
\bfVR(\clF)=\bigoplus_{\Delta\in \bbZ}\clF(D_R)_\Delta
\]
admits the structure of a $\bbZ$-graded vertex algebra. 
\end{theorem}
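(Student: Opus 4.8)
The plan is to reconstruct the vertex algebra by transporting each piece of the vertex algebra structure from the prefactorization data: the vacuum from the unit, the translation operator from the infinitesimal $\bbC$-action, and the state--field correspondence from the factorization products, with the $S^1$-action supplying the grading and forcing the requisite holomorphic functions to be monomials (two-point case) or rational functions (three-point case).

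\emph{Underlying space, vacuum, translation.} By conditions (ii)--(iii) the maps $\clF^{D_r}_{D_R}$ identify all weight spaces $\clF(D_r)_\Delta$ for $0<r\le R\le\infty$, so $\bfVR(\clF)$ is independent of $R$ and I may represent a homogeneous vector inside $\clF(D_r)$ for $r$ as small as needed; in particular $\bfVR(\clF)_\Delta\cong\clF(\bbC)_\Delta$. The vacuum $\vac$ is the image of $1$ under the unit $\bbC\to\clF(D_r)$; it is $S^1$-invariant, hence of weight $0$. The translation operator is $T\ceq\pdd_z|_{z=0}\,\sigma_{(1,z),D_r}$, which is meaningful because $\clF$ is holomorphically translation-equivariant (\cref{dfn:holoPFA}); uniqueness for the associated ODE gives $\sigma_{(1,z)}=e^{zT}$, and from $(q,0)(1,z)(q,0)^{-1}=(1,qz)$ one gets $\sigma_{(q,0)}T\sigma_{(q,0)}^{-1}=qT$, so $T$ raises weight by $1$.

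\emph{State--field correspondence and the easy axioms.} For homogeneous $a,b$ of weights $\Delta_a,\Delta_b$, place $\sigma_{(1,z)}a\in\clF(D_\epsilon(z))$ and $b\in\clF(D_\epsilon)$ in disjoint disks ($2\epsilon<|z|$) and apply the factorization product into $\clF(\bbC)$; using holomorphic translation-equivariance and the continuity of the product, the result $G_{a,b}(z)\in\clF(\bbC)$ is a well-defined holomorphic function on $\bbC^\times$ (independence of $\epsilon$ follows from the precosheaf and associativity axioms). The $S^1$-equivariance of the factorization product gives $\sigma_{(q,0)}G_{a,b}(z)=q^{\Delta_a+\Delta_b}G_{a,b}(qz)$, so each weight component satisfies $G_{a,b}(z)_{\Delta'}=q^{\Delta'-\Delta_a-\Delta_b}$-homogeneity and must therefore be a single monomial $c_{\Delta'}z^{\Delta'-\Delta_a-\Delta_b}$ with $c_{\Delta'}\in\clF(\bbC)_{\Delta'}\cong\bfVR(\clF)_{\Delta'}$. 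I define $Y(a,z)b\ceq G_{a,b}(z)=\sum_n(a_{(n)}b)z^{-n-1}$, with $a_{(n)}b=c_{\Delta_a+\Delta_b-n-1}$, extended bilinearly. Condition (ii) gives $c_{\Delta'}=0$ for $\Delta'\ll0$, i.e.\ the field property $a_{(n)}b=0$ for $n\gg0$; the homogeneity is the grading axiom $a_{(n)}\colon\bfVR(\clF)_{\Delta_b}\to\bfVR(\clF)_{\Delta_a+\Delta_b-n-1}$. Inserting $\vac$ at $z$ changes nothing, so $Y(\vac,z)=\id$; inserting $b$ together with $\vac$ and translating gives $G_{a,\vac}(z)=\sigma_{(1,z)}a=e^{zT}a$, hence $Y(a,z)\vac=e^{zT}a$ and the creation axiom. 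Differentiating $G_{a,b}$ in $z$ with $\pdd_z\sigma_{(1,z)}=\sigma_{(1,z)}T$ yields $Y(Ta,z)=\pdd_zY(a,z)$, and translating the whole configuration yields $[T,Y(a,z)]=\pdd_zY(a,z)$.

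\emph{Locality.} For homogeneous $a,b,c$ I form the three-point function $F_{a,b,c}(z,w)\in\clF(\bbC)$ via the $3$-to-$1$ factorization product on tiny disjoint disks at $z,w,0$ carrying $\sigma_{(1,z)}a,\sigma_{(1,w)}b,c$; shrinking radii, this is holomorphic on $\{z,w,z-w\neq0\}$. By associativity of the factorization products it expands, on $|z|>|w|>0$, to $Y(a,z)Y(b,w)c$; on $|w|>|z|>0$, to $Y(b,w)Y(a,z)c$; and on $0<|z-w|<|w|$, to $Y(Y(a,z-w)b,w)c$ --- the bookkeeping being that a two-disk-in-a-medium-disk product computes $Y$ valued in $\clF$ of the medium disk, compatibly with the above. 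The $S^1$-homogeneity forces each weight component $F_{a,b,c}(z,w)_{\Delta'}$ to be jointly homogeneous of a fixed degree, hence of the form $w^{d}\phi(z/w)$ with $\phi$ holomorphic on $\bbC\setminus\{0,1\}$; the three expansions each have only finitely many negative powers of their expansion variable (apply the field property to the innermost product), so $\phi$ has at worst poles at $0,1,\infty$ and is therefore rational, making $F_{a,b,c}(z,w)_{\Delta'}$ a rational vector with poles only along $z=0$, $w=0$, $z=w$. Thus $Y(a,z)Y(b,w)c$ and $Y(b,w)Y(a,z)c$ are the two expansions of one such rational function, giving locality $(z-w)^N[Y(a,z),Y(b,w)]=0$ for $N\gg0$ (indeed the Borcherds/associativity identity). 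Assembling all of this produces the $\bbZ$-graded vertex algebra structure on $\bfVR(\clF)$.

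\emph{Main obstacle.} The genuine difficulty is the locality/rationality step, and it has two layers. First, one must make precise --- via the functional analysis of holomorphic maps into convenient vector spaces (Cauchy estimates, termwise differentiation, and the interaction of holomorphy with the $S^1$-spectral decomposition, as set up earlier) --- that $F_{a,b,c}$ is holomorphic and that the iterated operators really are its Laurent expansions in the three regions; this is where convenience of the value spaces does essential work, controlling the infinite sums over weight components. Second, one must run the homogeneity-plus-three-expansions argument to pin down the rational form and deduce locality. By contrast, the vacuum, translation, and grading axioms are formal consequences of the unit, the $S^1\ltimes\bbC$-equivariance, and the associativity of the factorization products.
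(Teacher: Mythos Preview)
Your overall architecture---vacuum from the unit, $T$ from infinitesimal translation, $Y$ from the two-disk factorization product, grading from the $S^1$-action---matches the paper's, and your verification of the vacuum and translation axioms is essentially the same as theirs. Two remarks on that part: the identity $\sigma_{(1,z)}=e^{zT}$ is not literally well-posed, since $\sigma_{(1,z)}$ lands in $\clF(D_r(z))$ rather than $\clF(D_r)$; the paper works instead with $\mu^R_z(a)$ and never invokes an exponential. This is cosmetic.

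The genuine gap is in your locality argument. You propose to project the three-point function $F_{a,b,c}(z,w)\in\clF(\bbC)$ onto its weight-$\Delta'$ component, use $S^1$-homogeneity to write that component as $w^{d}\phi(z/w)$, and then argue that $\phi$ is rational. But the hypotheses of \cref{thm:FAtoVA} do \emph{not} give you a bounded projection $\clF(\bbC)\to\clF(\bbC)_{\Delta'}$, nor do they guarantee that $q\mapsto\sigma_{(q,0)}(v)$ is continuous, so you cannot form the weight component of the $\clF(\bbC)$-valued holomorphic function $F_{a,b,c}$. This is precisely the content of the tameness condition (\cref{dfn:tame}), which the paper deliberately imposes only later, in \cref{prp:CGconst}, and not in \cref{thm:FAtoVA}. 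Without it, the ``weight component $F_{a,b,c}(z,w)_{\Delta'}$'' you manipulate is at best a formal Laurent series extracted from the iterated expansions, and you have no a priori convergence or analyticity for it; the rationality step then has no analytic input to bite on.

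The paper sidesteps this entirely. It first shows (\cref{lem:locality}) that in the region $|z-w|<|w|$ one has the bornologically convergent expansion $\mu^\infty_{z,w,0}(a\otimes b\otimes c)=\sum_n(z-w)^{-n-1}\mu^\infty_{w,0}(a_{(n)}b\otimes c)$, so that $(z-w)^N\mu^\infty_{z,w,0}(a\otimes b\otimes c)$ extends bornologically holomorphically to all of $(\bbC^\times)^2$. Then---and this is where the convenient-vector-space machinery actually enters---it chooses a bounded completant disk $B\subset\clF(\bbC)$ so that the function is holomorphic into the Banach space $\clF(\bbC)_B$, and uses that $\Res_{z=0}$ and $\Res_{w=0}$ commute in a Banach-valued setting. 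Computing both iterated residues against $z^mw^n(z-w)^N$ yields the locality identity directly, with no weight projection needed. Your sketch would be repaired by replacing the rationality-of-components argument with this extension-plus-commuting-residues argument.
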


A factorization algebra is a prefactorization algebra that satisfies a local-to-global condition analogous to that of sheaves. As this condition does not play a role in the proof of \cref{theorem:1A}, we mainly work with prefactorization algebras and do not discuss this condition in detail. 

As mentioned above, Costello and Gwilliam start from a prefactorization algebra taking values in the category of chain complexes and construct a vertex algebra structure on cohomology. From the author's perspective, the reason they start from prefactorization algebras valued in chain complexes is that they are mainly interested in general perturbative gauge theories using the BV formalism, and that the relationship with vertex algebras is merely one example included in their framework. \cref{theorem:1A} provides a simplified version of their construction. 

In \cref{ss:FAtoVA}, we also compare \cref{theorem:1A} with the construction of Costello and Gwilliam (\cref{prp:CGconst}). We note that the term amenably holomorphic was first used in \cite{GGW} to refer to prefactorization algebras satisfying the conditions in \cite[Theorem 5.3.3]{CG}. \cref{prp:CGconst} justifies our usage of this term, although the conditions in \cref{prp:CGconst} are slightly modified from those in \cite[Theorem 5.3.3]{CG} for our purposes (see \cref{rmk:CGconst} for instance). 

\smallskip
Moreover, in \cref{ss:FAtoVSA}, we introduce the notion of convenient vector superspaces and extend \cref{theorem:1A} to the super setting, that is, we provide a construction of a vertex superalgebra from a prefactorization algebra with values in convenient vector superspaces. 

Let $\clF$ be an $S^1\ltimes \bbC$-equivariant prefactorization algebra on $\bbC$ taking values in convenient vector superspaces. In the super case, for each $\Delta\in \bbZ$, we define a weight space by
\[
\clF(D_R)_{\Delta+i/2}\ceq \{a\in \clF(D_R)_{\ol{i}}\mid \forall q\in S^1,\, \sigma_{(q, 0), D_R}(a)=q^{\Delta}a\} \quad (i\in\{0, 1\}), 
\]
where $\clF(D_R)_{\ol{i}}$ denotes the parity-$\ol{i}$ component of $\clF(D_R)$. The notion of amenably holomorphic prefactorization algebras remains valid in the super case, with only minor modifications. 

\begin{theorem}[{\cref{thm:superFAtoVA}}]\label{theorem:1B}
Let $\clF$ be an $S^1\ltimes \bbC$-equivariant prefactorization algebra on $\bbC$ taking values in convenient vector superspaces. If $\clF$ is amenably holomorphic, then the linear superspace
\[
\bfVR^{\super}(\clF)\ceq \bigoplus_{\Delta\in \bbZ/2}\clF(D_R)_\Delta, \hspace{0.2cm} \text{where} \hspace{0.22cm} \bfVR^\super(\clF)_{\ol{i}}\ceq \bigoplus_{\Delta\in \bbZ}\clF(D_R)_{\Delta+i/2} \quad (i\in\{0, 1\})
\]
admits the structure of a $\bbZ/2$-graded vertex superalgebra. 
\end{theorem}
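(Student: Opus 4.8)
The proof runs parallel to that of \cref{theorem:1A}, and I will indicate only the modifications forced by the $\bbZ/2$-grading. Recall that in the non-super case the vertex algebra structure on $\bfVR(\clF)$ is assembled from three pieces of data extracted from $\clF$: the \emph{vacuum}, the image of the canonical generator of $\clF(\emptyset)\cong\bbC$ under $\clF^{\emptyset}_{D_R}$; the \emph{translation operator} $T$, obtained by differentiating the $\bbC$-action $z\mapsto\sigma_{(1,z),U}$ at $z=0$; and the \emph{state--field correspondence} $Y(a,z)b$, obtained by inserting $b$ near the origin and a holomorphic translate of $a$ at a nearby point, applying the factorization product into $D_R$, and expanding the resulting $\clF(D_R)$-valued holomorphic function of the insertion point as a Laurent series whose coefficients are the modes $a_{(n)}b$. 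The weight grading comes from the $S^1$-action, and condition (iii) of the definition of amenably holomorphic ensures independence of the auxiliary radii. Every analytic ingredient of that proof---convergence of these expansions, the contour-integral identities, and the identity theorem for holomorphic functions valued in a convenient vector space---is a statement internal to $\CVS$; since the parity components $\clF(D_R)_{\ol 0}$ and $\clF(D_R)_{\ol 1}$ are again convenient vector spaces, these arguments go through on each component unchanged.

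First I would fix the super linear algebra. A convenient vector superspace is a $\bbZ/2$-graded object of $\CVS$, and $\sCVS$ carries the completed tensor product $\totimes$ whose symmetry isomorphism is the \emph{signed} flip, inserting the Koszul sign $(-1)^{|a||b|}$; the structure morphisms of $\clF$---the factorization products and the equivariance maps $\sigma_{(q,z),U}$---are required to be even, i.e.\ parity-preserving. Granting this, $\bfVR^{\super}(\clF)$ is $\bbZ/2$-graded as in the statement, and because $\sigma_{(q,0),D_R}$ is even the operator whose eigenspaces are the weight spaces respects the parity; hence the rule $\Delta+i/2\mapsto\ol i$ is a consistent assignment of parity to weights. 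This consistency is in fact forced by the construction rather than imposed on it: the vacuum has weight $0$ and even parity, $T$ raises the weight by $1$ and preserves parity, and each mode $a_{(n)}$ with $n\in\bbZ$ has the parity of $a$ and shifts the weight by an integer, so all of these operations preserve the congruence class modulo $1$ that records the parity.

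The one place where the argument genuinely departs from \cref{theorem:1A} is the appearance of Koszul signs in the vertex superalgebra axioms, and this is where I expect the only real work to lie. In the non-super case, locality of $Y(a,z)$ and $Y(b,w)$ is deduced from the commutativity of the factorization product associated to a disjoint union of two small disks: swapping the two insertions is realized by the symmetry of the coproduct, which in $\CVS$ is the ordinary flip. In $\sCVS$ the same symmetry is the signed flip, so swapping insertions of $a$ and $b$ now produces exactly the factor $(-1)^{|a||b|}$---precisely the sign that distinguishes the vertex superalgebra locality axiom from its ungraded form. The identical mechanism supplies the correct signs in skew-symmetry, $Y(a,z)b=(-1)^{|a||b|}e^{zT}Y(b,-z)a$, and in the Borcherds/commutator identity, and no further signs can enter because the $S^1\ltimes\bbC$-equivariance maps are even and the analytic manipulations commute with the parity decomposition. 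The main obstacle is therefore not conceptual but bookkeeping: one must audit the proof of \cref{theorem:1A} term by term, checking that each use of the tensor-symmetry contributes exactly the Koszul sign predicted by the definition of a vertex superalgebra and that the even structure maps contribute none. Once this audit is done, the verifications of the vacuum, translation covariance, and grading axioms are word for word those of the non-super case, completing the proof.
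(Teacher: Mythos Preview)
Your proposal is correct and takes essentially the same approach as the paper: the paper's proof of \cref{thm:superFAtoVA} is literally the single sentence ``The proof of \cref{thm:FAtoVA} works with minor modifications,'' and you have accurately identified what those modifications are---namely, that the signed braiding in $\sCVS$ supplies exactly the Koszul signs needed for the super locality axiom, while all analytic arguments pass unchanged to the even and odd components. Your elaboration is more detailed than the paper's own treatment, but the underlying strategy is identical.
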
 

The second result concerns the construction of a prefactorization algebra from a Lie conformal algebra via the factorization envelope. 

A Lie conformal algebra consists of a linear space $L$, a linear map $T\colon L\to L$ called the translation operator, and a linear map $[\cdot_\lambda\cdot]\colon L\otimes L\to L[\lambda]$ called the $\lambda$-bracket. Here $L[\lambda]$ denotes the linear space of polynomials in the variable $\lambda$ with coefficients in $L$. 
The notion of Lie conformal algebras can be regarded as the ``polar part'' of a vertex algebra, in the sense that it governs the operator product expansion of vertex operators. Thus, every vertex algebra naturally gives rise to a Lie conformal algebra. This assignment defines a functor from the category of vertex algebras to that of Lie conformal algebras, which admits a left adjoint functor
\[
\{\text{Lie conformal algebras}\}\to\{\text{vertex algebras}\}, \quad L\mapsto V(L). 
\]
The vertex algebra $V(L)$ obtained in this way is called the enveloping vertex algebra. 
The construction of $V(L)$ can be described roughly as follows: 
For a Lie conformal algebra $L$, one can define a Lie algebra $\Lie(L)$, called the current Lie algebra or the Borcherds Lie algebra. This Lie algebra contains a Lie subalgebra denoted by $\Lie_+(L)$. Then the induced representation
\[
V(L)\ceq U(\Lie L)\otimes_{U(\Lie_+(L))}\bbC, 
\]
where $\bbC$ is regarded as the one-dimensional trivial representation of $\Lie_+(L)$, carries the structure of a vertex algebra. 

Let $X$ be a complex manifold, and consider the Dolbeault complex $\clA^{0, \bullet}_X(U)$ on an open subset $U\subset X$. Varying open subsets $U\subset X$, it forms a sheaf on $X$. For a Lie conformal algebra $L$ and a morphism $D\colon \clA^{0, \bullet}_X\to \clA^{0, \bullet}_X$ of sheaves satisfying 
\[
D_U(\xi\wedge\eta)=D_U\xi\wedge\eta+\xi\wedge D_U\eta \quad (\xi\in \clA^{0, k}_X(U), \eta\in \clA^{0, l}_X(U)), 
\]
we construct, in \cref{ss:HDCLA}, a prefactorization algebra $\LXD$ on $X$ taking values in dg Lie algebras internal to the category of convenient vector spaces. 

A factorization envelope is a procedure for constructing a prefactorization algebra from a precosheaf valued in dg Lie algebras, obtained by applying the symmetric monoidal functor $C^\CE_\bullet$ of Chevalley--Eilenberg chains. 
Thus, a factorization envelope yields a prefactorization algebra $C^\CE_\bullet\LXD$ on $X$ with values in chain complexes of convenient vector spaces. Moreover, by composing this with the symmetric monoidal functor $H_\bullet$ of taking homology, we obtain a prefactorization algebra $H^\CE_\bullet\LXD\ceq H_\bullet C^\CE_\bullet\LXD$ on $X$ with values in convenient vector spaces. Although many prefactorization algebras arising from factorization envelopes are in fact factorization algebras \cite[Theorem 6.6.1]{CG}, including in particular $C^\CE_\bullet\LXD$ and $H^\CE_\bullet\LXD$, we do not study their local-to-global condition. 

Now we focus on the case where $X=\bbC$ and $D=\pdd_z$. In this case, for an $\bbN$-graded Lie conformal algebra $L=\bigoplus_{\Delta\in \bbN}L_\Delta$, we can equip $\LCD$ with an $S^1\ltimes \bbC$-equivariant structure, which in turn induces an $S^1\ltimes \bbC$-equivariant structure on $H^\CE_\bullet\LCD$. To verify that $H^\CE_\bullet\LCD$ is amenably holomorphic, we assume the following technical condition: 
\begin{itemize}
\item [$(*)$]
There exists a graded linear subspace $E\subset L$ such that 
\[
\bbC[T]\otimes E\to L,\quad p(T)\otimes x\mapsto p(T)x
\]
is a linear isomorphism. Here $T\colon L\to L$ denotes the translation operator of $L$. 
\end{itemize}

\smallskip
Many interesting Lie conformal algebras, such as the Virasoro conformal algebra (\cref{exm:Virconf}) and the current Lie conformal algebra (\cref{exm:curconf}), are one-dimensional central extensions $L_{\omega_\lambda}=L\oplus \bbC C$ of a Lie conformal algebra $L$ satisfying the condition $(*)$, associated with a $2$-cocycle $\omega_\lambda\colon L\otimes L\to \bbC[\lambda]$. 

For a 2-cocycle $\omega_\lambda$ of a Lie conformal algebra $L$, one defines a $(-1)$-shifted $2$-cocycle $\omega$ of the prefactorization algebra $\LCD$. Thus, the twisted version of the factorization envelope yields an $S^1\ltimes \bbC$-equivariant prefactorization algebra $H^\CE_{\bullet\, \omega}\LCD$. 

\newmaintheorem
\begin{theorem}[{\cref{thm:main}, \cref{thm:main2}}]\label{theorem:main2A}
Let $L=\bigoplus_{\Delta\in \bbN}L_\Delta$ be an $\bbN$-graded Lie conformal algebra satisfying the condition $(*)$ above. 
\begin{enumerate}
\item 
Then the $S^1\ltimes \bbC$-equivariant prefactorization algebra $H^\CE_\bullet\LCD$ is amenably holomorphic, and its associated vertex algebra $\bfVR(H^\CE_\bullet\LCD)$ is isomorphic to the enveloping vertex algebra $V(L)$ as an $\bbN$-graded vertex algebra. 

\item 
For a $2$-cocycle $\omega_\lambda\colon L\otimes L\to \bbC[\lambda]$ of $L$, the $S^1\ltimes \bbC$-equivariant prefactorization algebra $H^\CE_{\bullet\, \omega}\LCD$ is amenably holomorphic, and its associated vertex algebra $\bfVR(H^\CE_{\bullet\, \omega}\LCD)$ is isomorphic to the enveloping vertex algebra $V(L_{\omega_\lambda})$ as an $\bbN$-graded vertex algebra. 
\end{enumerate}
\end{theorem}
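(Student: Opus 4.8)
The plan is to verify, for $H^\CE_\bullet\LCD$ and its twisted variant, the three conditions defining an amenably holomorphic prefactorization algebra, and then to identify the vertex algebra produced by \cref{theorem:1A} with the enveloping vertex algebra through the universal property of $V(L)$. Condition (i), holomorphic translation-equivariance, is not where the difficulty lies: the $S^1\ltimes\bbC$-equivariant structure and its holomorphy are transported from $\LCD$ (constructed in \cref{ss:HDCLA}) along the symmetric monoidal functors $C^\CE_\bullet$ and $H_\bullet$, so one just records this. Everything else rests on one homology computation, which follows the pattern of the special cases treated by Costello--Gwilliam and Williams.

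The computation is of $H^\CE_\bullet\LCD(D_R)$. The dg Lie algebra $\LCD(D_R)$ is a Dolbeault complex with coefficients in $L$ living in two adjacent degrees; by the compactly supported $\bar\partial$-Poincar\'e lemma on the disk, its $\bar\partial$-cohomology vanishes in one of the two degrees and, in the other, is the compactly supported Dolbeault cohomology $H^1_c(D_R,\mathcal{O})$ with coefficients in $L$. Using condition $(*)$ to replace $L$ by $\bbC[T]\otimes E$ --- this is exactly what makes the $D=\pdd_z$-twisted structure well behaved --- this becomes $H^1_c(D_R,\mathcal{O})\otimes E$, whose $S^1$-weight spaces are finite dimensional and, being detected by residues at the origin, independent of $R$. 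Since this cohomology sits in a single degree, the induced Lie bracket on it vanishes (for degree reasons), there are no higher differentials ($\LCD(D_R)$ is formal), and so, working weight by weight, the Chevalley--Eilenberg spectral sequence filtered by symmetric powers degenerates and $H^\CE_\bullet\LCD(D_R)\cong\Sym\!\big(H^1_c(D_R,\mathcal{O})\otimes E\big)$. Reading off the $S^1$-weights and comparing with the Poincar\'e--Birkhoff--Witt description of $V(L)=U(\Lie L)\otimes_{U(\Lie_+(L))}\bbC$ (which, as a weight-graded space, is $U$, hence $\Sym$, of the span $\Lie_-(L)$ of the negative modes $x_{(n)}$, $n<0$), one obtains a weight-preserving linear isomorphism $V(L)\sto\bfVR(H^\CE_\bullet\LCD)$. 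In particular the weight spaces lie in $\bbN$ and are bounded below (condition (ii)), and under the residue presentation the structure maps $\clF^{D_r}_{D_R}$ become identities, hence isomorphisms on weight spaces (condition (iii)); so $H^\CE_\bullet\LCD$ is amenably holomorphic.

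To promote the linear isomorphism to an isomorphism of vertex algebras I would use that $V(L)$ is the enveloping vertex algebra. First build a morphism of Lie conformal algebras from $L$ to the one underlying the vertex algebra $\bfVR(H^\CE_\bullet\LCD)$, sending $x\in E$ to the class dual, under the residue pairing, to the constant function $1$, tensored with $x$, and extending $\bbC[T]$-linearly along the translation operator supplied by \cref{theorem:1A}; by the universal property this induces a vertex algebra morphism $\Phi\colon V(L)\to\bfVR(H^\CE_\bullet\LCD)$, which preserves weights and, on Poincar\'e--Birkhoff--Witt monomials, coincides with the linear isomorphism above, hence is an isomorphism. \emph{The main obstacle} is checking that the chosen assignment respects the $\lambda$-bracket: one must unwind the factorization product of the envelope $C^\CE_\bullet\LCD$ for two small disks inside a larger one and verify that, after passing to homology, the bracket of $\LCD$ built from $[\cdot_\lambda\cdot]$ and $\pdd_z$ becomes exactly the operator product expansion that \cref{theorem:1A} reads off. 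This is the one genuinely computational point, combining the residue description of $H^1_c(D_R,\mathcal{O})$ with the explicit bracket of $\LCD$.

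For part (2) the same strategy applies with the central extension $L_{\omega_\lambda}=L\oplus\bbC C$ in place of $L$; one cannot simply invoke part (1), because $L_{\omega_\lambda}$ need not satisfy $(*)$ (the line $\bbC C$ is $T$-torsion), so one redoes the computation for the twisted envelope. The $(-1)$-shifted cocycle $\omega$ pairs the degree-$0$ and degree-$1$ parts of $\LCD$ via $\omega_\lambda$ and integration, so forming the central extension adjoins a single central class $\hbar$ in the same homological degree as the degree-$1$ part; the cohomology of $\LCD$ stays in one degree, formality still holds, and $H^\CE_{\bullet\,\omega}\LCD(D_R)\cong\Sym\!\big(H^1_c(D_R,\mathcal{O})\otimes E\big)\otimes\bbC[\hbar]$, matching $V(L_{\omega_\lambda})\cong V(L)\otimes\bbC[C_{(-1)}\vac]$ as weight-graded spaces --- the level being invisible to the Lie algebra $H^\CE_\bullet$ but recorded by the factorization products. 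The Lie conformal algebra morphism $L_{\omega_\lambda}\to\bfVR(H^\CE_{\bullet\,\omega}\LCD)$ must in addition send the central generator $C$ to $\hbar$ and produce the term $\omega_\lambda(x,y)C$ in the $\lambda$-bracket of the images of $x,y\in E$; this is where the correspondence between $\omega$ and $\omega_\lambda$, hence the normalization of the central charge, enters. The universal property and the comparison on Poincar\'e--Birkhoff--Witt monomials then give $\bfVR(H^\CE_{\bullet\,\omega}\LCD)\cong V(L_{\omega_\lambda})$ as $\bbN$-graded vertex algebras.
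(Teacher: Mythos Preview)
Your proposal is essentially correct and, for the amenably holomorphic verification, follows the same route as the paper: the spectral sequence for $C^\CE_\bullet\LCD(D_R)$ filtered by symmetric powers degenerates at $E^1$ because, via the isomorphism $\LCDs(D_R)^{\wt\otimes p}\cong \clA^{0,\bullet+p}_\rmc(D_R^p)\otimes E^{\otimes p}$ coming from condition $(*)$, compactly supported Dolbeault cohomology concentrates $E^1_{p,q}$ on the line $p+q=0$; the weight spaces are then read off by Serre duality with $\Omega^1(D_R)$ and are independent of $R$. One small point: the paper does not invoke formality or the vanishing of the induced bracket explicitly---degeneration is immediate from the vanishing pattern of $E^1$---but your phrasing amounts to the same thing.

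For the vertex algebra isomorphism the paper takes a slightly different route than you propose. Rather than constructing a Lie conformal algebra morphism $L\to\bfVR(H^\CE_\bullet\LCD)$ and invoking the left adjoint, the paper first examines the values of $H^\CE_\bullet\LCD$ on \emph{annuli}: for an open interval $I\subset\bbR$ it defines $\Phi_I\colon\Lie(L)\to\clU_L(I)$ by $x_{[n]}\mapsto[\ol{\xi_n\otimes x}]$ with $\phi(\xi_n)(z^{-n-1}dz)=2\pi\sqrt{-1}$, and a direct computation with the factorization product of nested annuli (\cref{lem:PhiILiehom}) shows this respects the Lie bracket of $\Lie(L)$. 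This yields a $U(\Lie(L))$-module structure on $\bfVR$, the vacuum is killed by $\Lie_+(L)$ (\cref{lem:killnge0}), and the induced map $\theta_R\colon V(L)\to\bfVR$ is a linear isomorphism by the PBW filtration comparison you describe. Finally, a Cauchy integral / Laurent expansion argument on $\Omega^1(A_J)'$ (\cref{lem:alphaRn}) shows $\alpha_R(x_{[n]},-)=\theta_R(x_{[-1]}\vac)_{(n)}(-)$, so $\theta_R$ intertwines the generating fields and is a vertex algebra morphism. Your approach via a Lie conformal algebra morphism is conceptually cleaner, but the verification you flag as ``the main obstacle''---that the factorization product reproduces the $\lambda$-bracket---would unwind to exactly the annulus bracket computation plus the Laurent expansion, so the two routes have the same computational core. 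The paper's detour through $\Lie(L)$ has the advantage that the bracket check is a single residue identity between compactly supported forms on an annulus, rather than an OPE computation inside $\bfVR$. For part (2) the paper simply says the same proof works with minor modifications; your remark that $L_{\omega_\lambda}$ need not satisfy $(*)$ is correct and is why the twisted envelope is treated separately.
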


\cref{theorem:main2A} (2) provides a generalization of the constructions of Costello--Gwilliam \cite[\S5.5]{CG} and Williams \cite{W}. Although the construction of a factorization algebra from an arbitrary vertex algebra was established by the work of Bruegmann \cite{B20a, B20b}, \cref{theorem:main2A} is important, since it is geometric in nature, using the factorization envelope and the compactly supported Dolbeault complex. 

To prove \cref{theorem:main2A}, we examine the value of the prefactorization algebra $H^\CE_\bullet \LCD$ on annuli. We find that it contains the universal enveloping algebra $U(\Lie(L))$ of the current Lie algebra $\Lie(L)$ (\cref{rmk:ULieL}). Thus, in the case of general complex manifold, we can view $\LXD$ as a higher-dimensional analogue of the current Lie algebra. 

\smallskip
The construction of $\LXD$ can be extended to the super setting with minor modifications. That is, for a Lie conformal superalgebra $L$, one can construct a prefactorization algebra $\LXD$ on $X$ taking values in dg Lie algebras internal to the category of convenient vector superspaces. In the case where $X=\bbC$, $D=\pdd_z$, and $L$ is an $\bbN/2$-graded Lie conformal superalgebra, one can equip $\LCD$ with an $S^1\ltimes \bbC$-equivariant structure. Thus, the factorization envelope yields an $S^1\ltimes \bbC$-equivariant prefactorization algebra $H^\CE_\bullet\LCD$ on $\bbC$ with values in convenient vector superspaces. 

\begin{theorem}[{\cref{thm:main3}}]\label{theorem:main2B}
Let $L=\bigoplus_{\Delta\in \bbN/2}L_\Delta$ be an $\bbN$/2-graded Lie conformal superalgebra satisfying the following condition: There exists a graded linear subsuperspace $E\subset L$ such that $\bbC[T]\otimes E\to L$, $p(T)\otimes x\mapsto p(T)x$ is a linear isomorphism. 
\begin{enumerate}
\item 
Then the $S^1\ltimes \bbC$-equivariant prefactorization algebra $H^\CE_\bullet\LCD$ is amenably holomorphic, and its associated vertex superalgebra $\bfVR^\super(H^\CE_\bullet\LCD)$ is isomorphic to the enveloping vertex superalgebra $V(L)$ as an $\bbN/2$-graded vertex superalgebra. 

\item 
For a $2$-cocycle $\omega_\lambda\colon L\otimes L\to \bbC[\lambda]$ of $L$, the $S^1\ltimes \bbC$-equivariant prefactorization algebra $H^\CE_{\bullet\,\omega}\LCD$ is amenably holomorphic, and its associated vertex superalgebra $\bfVR^\super(H^\CE_{\bullet\,\omega}\LCD)$ is isomorphic to the enveloping vertex superalgebra $V(L_{\omega_\lambda})$ as an $\bbN/2$-graded vertex superalgebra. 
\end{enumerate}
\end{theorem}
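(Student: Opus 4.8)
The plan is to transcribe the proof of \cref{theorem:main2A} (i.e.\ of \cref{thm:main} and its twisted counterpart) into the $\bbZ/2$-graded setting, checking that every ingredient has a super analogue and inserting Koszul signs where required. One might hope to deduce the super statement formally from the even one, but the conformal-weight grading of $L$ interacts with its parity, so instead I would verify the super analogue of each lemma used in the even proof; none demands a new idea, only sign bookkeeping. From the discussion preceding the statement we are given the prefactorization algebra $\LCD$ valued in dg Lie algebras internal to convenient vector superspaces, together with its $S^1\ltimes\bbC$-equivariant structure---the $\bbC$-part being holomorphic translation equivariance, which is automatic because $D=\pdd_z$ is translation invariant, and the $S^1$-part built from the rotation action on Dolbeault forms together with the weight grading of $L$. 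Applying the symmetric monoidal functors $C^\CE_\bullet$ and $H_\bullet$, this equivariance passes to $H^\CE_\bullet\LCD$; for part (2) one checks in addition that the $(-1)$-shifted $2$-cocycle $\omega$ attached to $\omega_\lambda$ is $S^1\ltimes\bbC$-invariant, so that $H^\CE_{\bullet\,\omega}\LCD$ is equivariant as well. In particular condition (i) in the definition of an amenably holomorphic prefactorization algebra holds, and it is insensitive to the super refinement.

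To obtain conditions (ii) and (iii) I would compute $H^\CE_\bullet\LCD(D_R)$ weight space by weight space, exactly as in the even case. The freeness hypothesis $\bbC[T]\otimes E\sto L$ lets one replace the dg Lie superalgebra $\LCD(D_R)$ by a quasi-isomorphic model whose underlying complex, computed by a Dolbeault-cohomology argument on the disk, is concentrated in homological degree $0$; a Poincar\'e--Birkhoff--Witt/filtration argument then yields an isomorphism of $\bbN/2$-graded convenient vector superspaces $H^\CE_\bullet\LCD(D_R)\cong U(\Lie(L))\otimes_{U(\Lie_+(L))}\bbC=V(L)$, under which the rotation-weight grading matches the conformal-weight grading and the parity of $\LCD$ matches that of $V(L)$. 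Since $V(L)$ is $\bbN/2$-graded and bounded below, (ii) follows; and since the model, and hence the identification, is canonically independent of $R$, with the restriction maps inducing the identity, the maps $\clF^{D_r}_{D_R}$ are isomorphisms on weight spaces, giving (iii). For part (2) the same argument applies with $L$ replaced by $L_{\omega_\lambda}$: the cocycle $\omega$ only deforms the Chevalley--Eilenberg differential, which does not disturb the filtration argument.

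It remains to identify the vertex superalgebra structure that \cref{theorem:1B} places on $\bfVR^\super(H^\CE_\bullet\LCD)$ with that of $V(L)$. Having already matched the underlying $\bbN/2$-graded superspaces, by the reconstruction theorem for vertex superalgebras it suffices to exhibit, for each $x\in L$, a state whose field under the factorization-algebra state-field correspondence of \cref{theorem:1B} is the generating field $x(z)=\sum_{n}x_{(n)}z^{-n-1}$ of $V(L)$, and to check that these fields are mutually local with the correct singular operator product expansions. The required state is the class of the evident $x$-cocycle supported on a small disk; the operator product coefficients are read off, as in \cref{thm:main}, from the factorization product on two small disks inside an annulus, which by \cref{rmk:ULieL} is governed by the multiplication in $U(\Lie(L))$, hence by the $\lambda$-bracket of $L$, now up to the Koszul signs dictated by parity. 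Super-locality together with the vacuum and translation axioms then transport directly from the even case, and part (2) is the same with $L_{\omega_\lambda}$ in place of $L$.

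The step I expect to be the main obstacle is the homological computation of the second paragraph: showing that the model of $\LCD(D_R)$ supplied by the freeness hypothesis is genuinely concentrated in degree $0$, and that the ensuing identification $H^\CE_\bullet\LCD(D_R)\cong V(L)$ is an isomorphism of convenient vector superspaces---not merely of abstract superspaces---that is moreover $S^1$-equivariant and compatible with the restriction maps. Controlling the bornological structure through the spectral sequence, and tracking the Koszul signs so that the resulting vertex operators satisfy super-locality with the correct signs, is where the genuine work lies; everything else is a sign-decorated transcription of the proof of \cref{theorem:main2A}.
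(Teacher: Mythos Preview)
Your proposal is correct and follows essentially the same approach as the paper: the paper's proof of \cref{thm:main3} consists of the single sentence ``The proof of \cref{thm:main} works with minor modifications,'' and what you have written is precisely an explicit account of what those minor modifications are---super signs, the $\bbN/2$-grading, and the observation that every lemma in the even proof (the spectral-sequence/filtration argument, \cref{lem:HmclArmc}, \cref{lem:PhiILiehom}, \cref{lem:alphaRn}) goes through with parity decorations. Your caution about controlling the bornology through the filtration and about $S^1$-equivariance of the identification is well placed and matches exactly the care taken in the even case; no new idea is needed beyond that.
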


\cref{theorem:main2B} (2) can be applied to, for example, the Neveu--Schwarz Lie conformal superalgebra (\cref{exm:N=1LCA}), the $N=2$ Lie conformal superalgebra (\cref{exm:N=2LCA}), and the $N=4$ Lie conformal superalgebra (\cref{exm:N=4LCA}). Therefore, we obtain new factorization algebras corresponding to vertex superalgebras. 
To the best of our knowledge, \cref{theorem:1B} and \cref{theorem:main2B} provide the first systematic constructions relating factorization algebras and vertex superalgebras. 

\medskip
In summary, the main results consist of constructions that make the following diagram commute, as well as its super analogue: 
\medskip
\[
\begin{tikzcd}[row sep=2cm, column sep=2.2cm]
\text{(Lie conformal algebras)} \arrow[r, "\text{\cref{theorem:main2A}}"] \arrow[d, "\text{enveloping vertex algebras\ }"'] & 
\text{(amenably holomorphic prefactorization algebras)} \arrow[ld, "\text{\cref{theorem:1A}}"] \\
\text{(vertex algebras)} &
\end{tikzcd}
\]

\medskip

Choosing an appropriate analytic structure on factorization algebras is difficult and subtle. In fact, Costello and Gwilliam use differentiable vector spaces in \cite{CG, CG2}, Williams uses nuclear spaces in \cite{W}, and Bruegmann uses bornological vector spaces in \cite{B20b}. 
Differentiable vector spaces provide a comprehensive framework and offer one possible resolution of this issue. However, this framework is less intuitive, and many natural examples arising in quantum field theory remain within the setting of convenient vector spaces. Throughout this thesis, we consistently use bornologies and demonstrate their advantages when addressing problems relating factorization algebras and vertex algebras. 

\smallskip

As mentioned in the background, it remains open to establish a categorical equivalence between prefactorization algebras on $\bbC$ and vertex algebras. It might be possible, within the setting of bornologies rather than the setting of topologies as in \cite{N1}, to provide a construction 
\[
\begin{tikzcd}
\text{(vertex algebras)} \arrow[r] &
\text{(amenably holomorphic prefactorization algebras)}
\end{tikzcd}
\]
that makes the above diagram commute, and to identify a class of prefactorization algebras that is categorically equivalent to vertex algebras. 

\subsection*{Organization}

\cref{s:preFAVA} provides recollections of factorization algebras and vertex algebras. In \cref{ss:baFA}, we give the precise definition of factorization algebras and a detailed description of the factorization envelope. In \cref{ss:VA}, we briefly review the notion of vertex algebras and the construction of the enveloping vertex algebra of a Lie conformal algebra. We also present basic examples of vertex algebras. 

\cref{s:BVS} devotes to a preliminary introduction to bornological vector spaces. In \cref{ss:defBVS}, we give their definition and fundamental properties. In \cref{ss:constBVS}, we provide basic constructions of bornological vector spaces. Moreover, in \cref{ss:holoBVS}, we briefly discuss how the usual complex analysis applies to functions with values in a bornological vector space. 

\cref{s:FAtoVA} presents a detailed description of the general method for constructing a vertex algebra from an amenably holomorphic prefactorization algebra. In \cref{ss:holotrans}, we briefly review, the notion of holomorphically translation-equivariant prefactorization algebras, in a form suitable for our purposes. \cref{ss:FAtoVA} is devoted to the proof of \cref{theorem:1A} and to a comparison with the construction of Costello and Gwilliam. In \cref{ss:FAtoVSA}, we explain how to extend \cref{theorem:1A} to the super setting (\cref{theorem:1B}). 

\cref{s:FAEVA} contains the main result, namely, a construction of a prefactorization factorization algebra starting from a Lie conformal algebra via the factorization envelope. In \cref{ss:compDol}, we provide a preliminary on the compactly supported Dolbeault complex. We then construct, in \cref{ss:HDCLA}, the prefactorization algebra $\LXD$ which can be viewed as a higher-dimensional analogue of the current Lie algebra. Finally, in \cref{ss:FEEV}, we present a proof of \cref{theorem:main2A} and its extension to the super setting (\cref{theorem:main2B}). 

\subsection*{Acknowledgements}

I would like to thank Shintaro Yanagida for valuable comments on the manuscript and helpful discussions. 
I am grateful to Hiroaki Kanno, Hidetoshi Awata, and Masashi Hamanaka for their valuable feedback. 
I am also grateful to Benoit Vicedo for helpful discussions on factorization envelopes and for sharing his work \cite{V}. 
This work was supported by JSPS Research Fellowship for Young Scientists (No.\,23KJ1120).

\subsection*{Notations}

Here we list the notations used throughout this note. Some of them are also introduced in each section. 

\smallskip
\begin{itemize}
\item 
We denote by $\varnothing$ the empty set. 

\item 
We denote by $\bbN=\{0, 1, 2, \ldots\}$ the set of non-negative integers, by $\bbZ$ the ring of integers, by $\bbR$ the field of real numbers, and by $\bbC$ the field of complex numbers. 

\item 
All linear spaces and linear maps are over $\bbC$ unless otherwise stated. 

\item
For a linear space $V$, we denote by $V[z]$ the linear space of polynomials in the variable $z$ with coefficients in $V$. Moreover, we denote by $V\dbr{z}\ceq\{\sum_{n\in \bbN}z^nv_n\mid v_n\in V\}$ the linear space of formal power series and by $V\dpr{z}\ceq \{\sum_{n\in \bbZ}z^nv_n\mid v_n\in V,\, v_n=0\text{ for }n\ll0\}$ the linear space of formal Laurent series. Furthermore, we denote by $V\dbr{z^{\pm1}}\ceq \{\sum_{n\in \bbZ}z^nv_n\mid v_n\in V\}$ the linear space of doubly-infinite formal series. 

\item 
For a Lie algebra $\frg$, we denote by $U(\frg)$ the universal enveloping algebra of $\frg$. 

\item 
For a topological space $X$, we denote by $\frU_X$ the set of all open subsets of $X$. 

\item 
For a topological vector space $X$, we denote by $X'$ the continuous dual space of $X$. 

\item 
For open intervals $I=(a, b), J=(c, d)\subset \bbR$, we denote $I<J$ whenenver $a<b<c<d$. 

\item 
For $z\in \bbC$ and $R>0$, we denote by $D_R(z)\subset \bbC$ the open disk centered at $z$ with radius $R$. If $z=0$, we simply write $D_R\ceq D_R(0)$. We also set $D_\infty\ceq \bbC$. 

\item 
For $z\in \bbC$ and $0\le r<R$, we denote by $A_{r, R}(z)\subset \bbC$ the open annulus centered at $z$ with inner radius $r$ and outer radius $R$. If $z=0$, we simply write $A_{r, R}\ceq A_{r, R}(0)$. We also set $A_{r, \infty}\ceq \{z\in \bbC\mid |z|>r\}$. 

\item 
For a pre-abelian category $\sfA$, we denote by $\sfC(\sfA)$ the pre-abelian category of complexes in $\sfA$. 

\item 
For a symmetric monoidal linear category $\sfM$, we denote by $\dgLie{\sfM}$ the category of dg Lie algebras internal to $\sfM$. 

\item 
We denote by $\BVS$ the category of convex bornological vector spaces with bounded linear maps (\cref{ss:constBVS}). 

\item
A complete and separated convex bornological vector space is called a convenient vector space (\cref{ss:holotrans}). 
We denote by $\CVS$ the full subcategory of $\BVS$ consisting of convenient vector spaces. 

\item 
For a complex $X$ of convenient vector spaces, we denote by $H^n(X)$ (resp. $H_n(X)$) the cohomology (resp. homology) taken in $\sfC(\BVS)$, and by $H^n_\sep(X)$ (resp. $H_n^\sep(X)$) the cohomology (resp. homology) taken in $\sfC(\CVS)$. 
\end{itemize}

\addtocontents{toc}{\protect\setcounter{tocdepth}{2}}
\section{Preliminaries: factorization algebras and vertex algebras}
\label{s:preFAVA}

This section provides a minimal introduction to factorization algebras and vertex algebras necessary for this note. 

\subsection{Factorization algebras}
\label{ss:baFA}

In this subsection, we review the basic definitions of prefactorization algebras and describe the construction, known as the factorization envelope, which produces a prefactorization algebra from a precosheaf with values in dg Lie algebras. Our main reference is \cite[Chapter 3]{CG}. 

\subsubsection{Definition of factorization algebras}

Recall that a precosheaf $\clF$ on a topological space $X$ with values in a category $\sfA$ is simply a functor $\clF\colon \frU_X\to \sfA$, where $\frU_X$ denotes the poset of all open subsets of $X$ ordered by inclusion. Moreover, a morphism $\varphi\colon \clF\to \clG$ of precosheaves is just a natural transformation from the functor $\clF$ to $\clG$. 

\smallskip

Fix a topological space $X$ and a symmetric monoidal category $\sfM$. We denote by $\otimes$ the tensor product of $\sfM$, and by $1_{\sfM}$ its unit object.  

\begin{dfn}[{\cite[\S3.1.1]{CG}}]\label{dfn:PFA}
A \emph{prefactorization algebra} on $X$ with values in $\sfM$ is a tuple $(\clF, \{\clF^{U, V}_W\}, 1_\clF)$ consisting of 
\begin{itemize}
\item 
a precosheaf $\clF\colon \frU_X\to \sfM$, 

\item 
a family of morphisms of $\sfM$
\[
\clF^{U, V}_W\colon \clF(U)\otimes \clF(V)\to \clF(W) \quad (U, V, W\in \frU_X, U\sqcup V\subset W), 
\]
called the factorization product, 

\item 
a morphism $1_\clF\colon 1_\sfM\to \clF(\varnothing)$ of $\sfM$, called the unit,
\end{itemize}
which satisfies the following conditions: 
\begin{clist}
\item 
For open subsets $U_1, U_2, V, W\subset X$ such that $U_1\sqcup U_2\subset V\subset W$, the following diagram commutes: 
\[
\begin{tikzcd}[row sep=huge, column sep=huge]
\clF(U_1)\otimes \clF(U_2) \arrow[r, "\clF^{U_1, U_2}_V"] \arrow[rd, "\clF^{U_1, U_2}_W"'] &
\clF(V) \arrow[d, "\clF^V_W"] \\
&
\clF(W)
\end{tikzcd}
\]
Moreover, for open subsets $U_1, U_2, V_1, V_2, W\subset X$ such that $U_1\subset V_1$, $U_2\subset V_2$, $V_1\sqcup V_2\subset W$, the following diagram commutes: 
\[
\begin{tikzcd}[row sep=huge, column sep=huge]
\clF(U_1)\otimes \clF(U_2) \arrow[r, "\clF^{U_1}_{V_1}\otimes \clF^{U_2}_{V_2}"] \arrow[rd, "\clF^{U_1, U_2}_W"'] &
\clF(V_1)\otimes \clF(V_2) \arrow[d, "\clF^{V_1, V_2}_W"] \\
&
\clF(W)
\end{tikzcd}
\]

\item 
For open subsets $U_1, U_2, V\subset X$ such that $U_1\sqcup U_2\subset V$, the following diagram commutes: 
\[
\begin{tikzcd}[row sep=huge, column sep=huge]
\clF(U_1)\otimes \clF(U_2) \arrow[r, "\sim"] \arrow[rd, "\clF^{U_1, U_2}_V"'] &
\clF(U_2)\otimes \clF(U_1) \arrow[d, "\clF^{U_2, U_1}_V"] \\
&
\clF(V)
\end{tikzcd}
\]
Here $\clF(U_1)\otimes \clF(U_2)\sto \clF(U_2)\otimes \clF(U_1)$ denotes the braiding of $\sfM$. 

\item 
For open subsets $U_1, U_2, U_3, V_1, V_2, W\subset X$ such that 
\[
U_1\sqcup U_2\subset V_1, \quad U_2\sqcup U_3\subset V_2, \quad 
U_1\sqcup V_2\subset W, \quad V_1\sqcup U_3\subset W, 
\]
the following diagram commutes: 
\[
\begin{tikzcd}[row sep=huge]
(\clF(U_1)\otimes\clF(U_2))\otimes \clF(U_3) \arrow[rr, "\sim"] \arrow[d, "\clF^{U_1, U_2}_{V_1}\otimes \id_{\clF(U_3)}"'] &
&
\clF(U_1)\otimes (\clF(U_2)\otimes \clF(U_3)) \arrow[d, "\id_{\clF(U_1)}\otimes \clF^{U_2, U_3}_{V_2}"] \\
\clF(V_1)\otimes \clF(U_3) \arrow[rd, "\clF^{V_1, U_3}_W"'] &
&
\clF(U_1)\otimes \clF(V_2) \arrow[ld, "\clF^{U_1, V_2}_W"] \\
&
\clF(W)&
\end{tikzcd}
\]
Here $(\clF(U_1)\otimes \clF(U_2))\otimes \clF(U_3)\sto \clF(U_1)\otimes (\clF(U_2)\otimes \clF(U_3))$ denotes the associator of $\sfM$. 

\item 
For an open subset $U\subset X$, the following diagram commutes: 
\[
\begin{tikzcd}[row sep=huge, column sep=huge]
1_\sfM\otimes \clF(U) \arrow[r, "1_\clF\otimes \id_{\clF(U)}"] \arrow[rd, "\sim"', sloped] &
\clF(\varnothing)\otimes \clF(U) \arrow[d, "\clF^{\varnothing, U}_U"] \\
&
\clF(U)
\end{tikzcd}
\]
Here $1_\sfM\otimes \clF(U)\sto \clF(U)$ denotes the left unitor of $\sfM$. 
\end{clist}
A prefactorization algebra $\clF\colon \frU_X\to \sfM$ is said to be \emph{multiplicative} if $\clF^{U, V}_{U\sqcup V}\colon \clF(U)\otimes \clF(V)\to \clF(U\sqcup V)$ is an isomorphism for any disjoint open subsets $U, V\subset X$. 
\end{dfn}

Let $\clF\colon \frU_X\to \sfM$ be a prefactorization algebra. For open subsets $U_1, \ldots, U_n, V\subset X$ such that $U_1\sqcup \cdots \sqcup U_n\subset V$, define a morphism
\[
\clF^{U_1, \ldots, U_n}_V\colon \bigotimes_{i=1}^n\clF(U_i)\to \clF(V)
\]
inductively as follows: 
\[
\clF^{U_1, \ldots, U_n}_V\ceq \clF^{U_1\sqcup \cdots \sqcup U_{n-1}, U_n}_{V}\circ (\clF^{U_1, \ldots, U_{n-1}}_{U_1\sqcup \cdots \sqcup U_{n-1}}\otimes \id_{\clF(U_n)}). 
\]

\begin{dfn}[{\cite[Definition 3.1.6]{CG}}]
Let $\clF, \clG\colon \frU_X\to \sfM$ be prefactorization algebras. A \emph{morphism of prefactorization algebras} from $\clF$ to $\clG$ is a morphism of precosheaves $\varphi\colon \clF\to \clG$ which satisfies the following conditions: 
\begin{clist}
\item 
For open subsets $U, V, W\subset X$ such that $U\sqcup V\subset W$, the following diagram commutes: 
\[
\begin{tikzcd}[row sep=huge, column sep=huge]
\clF(U)\otimes \clF(V) \arrow[r, "\varphi_U\otimes \varphi_V"] \arrow[d, "\clF^{U, V}_W"'] &
\clG(U)\otimes \clG(V) \arrow[d, "\clG^{U, V}_W"] \\
\clF(W) \arrow[r, "\varphi_W"'] &
\clG(W)
\end{tikzcd}
\]

\item 
The following diagram commutes: 
\[
\begin{tikzcd}[row sep=huge, column sep=huge]
\clF(\varnothing) \arrow[r, "\varphi_\varnothing"] &
\clG(\varnothing) \\
1_\sfM \arrow[u, "1_\clF"] \arrow[ur, "1_\clG"'] &
\end{tikzcd}
\]
\end{clist}
\end{dfn}

A factorization algebra is a prefactorization algebra that satisfies a local-to-global condition analogous to that of sheaves. As this condition will not play a role in what follows, we omit the precise definition (see \cite[Chapter 6]{CG} for details).

\subsubsection{Equivariant factorization algebras}

Let $X$ be a topological space, $\sfM$ a symmetric monoidal category, and $G$ a group acting continuously on $X$. Given a precosheaf $\clF\colon \frU_X\to \sfM$ and $a\in G$, the map
\[
a\clF\colon \frU_X\to \sfM, \quad U\mapsto \clF(aU)
\]
is a precosheaf by letting 
\[
(a\clF)^U_V\ceq \clF^{aU}_{aV}\colon (a\clF)(U)\to (a\clF)(V)
\]
for each inclusion of open subsets $U\subset V\subset X$. 

\begin{dfn}[{\cite[Definition 3.7.1]{CG}}]\label{dfn:equivFA}
A \emph{$G$-equivariant prefactorization algebra} is a prefactorization algebra $\clF\colon \frU_X\to \sfM$ equipped with morphisms of precosheaves
\[
\sigma^\clF_a=\{\sigma^\clF_{a, U}\}_{U\in\frU_X}\colon \clF\to a\clF \quad (a\in G)
\]
such that the following conditions hold: 
\begin{clist}
\item 
For $a, b\in U$ and an open subset $U\subset X$, we have $\sigma^\clF_{a, bU}\circ \sigma^\clF_{b, U}=\sigma^\clF_{ab, U}$. 

\item 
For an open subset $U\subset X$, we have $\sigma^\clF_{1_G, U}=\id_{\clF(U)}$, where $1_G$ denotes the unit of $G$. 

\item 
For $a\in G$ and open subsets $U, V, W\subset X$ such that $U\sqcup V\subset W$, we have
\[
\sigma^\clF_{a, W}\circ \clF^{U, V}_W=\clF^{aU, aV}_{aW}\circ (\sigma^\clF_{a, U}\otimes \sigma^\clF_{a, V}). 
\]

\item 
For $a\in G$, we have $\sigma^\clF_{a, \varnothing}\circ 1_\clF=1_\clF$. 
\end{clist}
\end{dfn}

\begin{dfn}
Let $\clF, \clG\colon \frU_X\to \sfM$ be $G$-equivariant prefactorization algebras. A \emph{morphism of $G$-equivariant prefactorization algebras} from $\clF$ to $\clG$ is a morphism of prefactorization algebras $\varphi\colon \clF\to \clG$ such that the following diagram commutes for $a\in G$ and an open subset $U\subset X$: 
\[
\begin{tikzcd}[row sep=huge, column sep=huge]
\clF(U) \arrow[r, "\varphi_U"] \arrow[d, "\sigma^\clF_{a, U}"'] &
\clG(U) \arrow[d, "\sigma^\clG_{a, U}"] \\
\clF(aU) \arrow[r, "\varphi_{aU}"'] &
\clG(aU)
\end{tikzcd}
\]
\end{dfn}

In \cref{ss:FAtoVA}, we will consider prefactorization algebras on the complex plane $\bbC$ that are equivariant under the natural action of the group $S^1\ltimes \bbC$ of isometric affine transformations. 

\subsubsection{Factorization envelopes}

Let $\sfM$ be a symmetric monoidal linear category with all colimits, such that the tensor product preserves colimits in each variable. The category $\sfC(\sfM)$ of complexes in $\sfM$ naturally inherits a symmetric monoidal linear structure: 
\begin{itemize}
\item 
For $X, Y\in \sfC(\sfM)$, the tensor product $X\otimes Y$ is defined as follows: For each $n\in \bbZ$, the object in degree $n$ is 
\[
(X\otimes Y)^n\ceq \bigoplus_{l+m=n} X^l\otimes Y^m, 
\]
and the differential in degree $n$ is 
\[
d_{X\otimes Y}^n\colon (X\otimes Y)^n\to (X\otimes Y)^{n+1}, \quad 
x\otimes y\mapsto d_X^n(x)\otimes y+(-1)^{|x|}x\otimes d_Y(y).
\]
Here $|\cdot|$ denotes the degree of $X$. 

\item
The unit object is $1_\sfM$, regarded as a complex concentrated in degree $0$. 

\item 
The braiding is given by
\[
(X\otimes Y)^n\to (Y\otimes X)^n, \quad x\otimes y\mapsto (-1)^{|x||y|}y\otimes x
\]
for each $X, Y\in \sfC(\sfM)$. 
\end{itemize} 
Moreover, the category $\sfC(\sfM)$ has all colimits, and its tensor product preserves colimits separately in each variable. 

For $X\in  \sfC(\sfM)$, we denote by
\[
T(X)\ceq\bigoplus_{p\in \bbN}X^{\otimes p}, \quad 
\Sym(X)\ceq \bigoplus_{p\in \bbN}\Sym^p(X), \quad 
{\bigwedge}(X)\ceq\bigoplus_{p\in\bbN}\bigwedge^p(X)
\]
the tensor algebra, the symmetric algebra, and the exterior algebra, respectively. 
There is an isomorphism $X[1]^{\otimes p}\sto X^{\otimes p}[p]$ defined by
\[
(X[1]^{\otimes p})^n\to X^{\otimes p}[p]^n, \quad 
\otimes_{i=1}^px_i\mapsto \bigl(\prod_{i=1}^p(-1)^{(p-i)(|x_i|-1)}\bigr)\otimes_{i=1}^px_i,  
\]
which induces an isomorphism
\[
\Sym^p(X[1])\sto (\bigwedge^pX)[p]
\]
 in $\sfC(\sfM)$. Here $|\cdot|$ denotes the degree of $X$. 
From now on, we identify $\Sym^p(X[1])$ with $\bigwedge^p(X)[p]$ via this isomorphism. 

\smallskip

A \emph{dg Lie algebra} in $\sfM$ is just a Lie algebra object in $\sfC(\sfM)$. More explicitly, a dg Lie algebra is an object $\frg\in \sfC(\sfM)$ endowed with a morphism $[\cdot, \cdot]\colon \frg\otimes \frg\to \frg$ in $\sfC(\sfM)$, called the Lie bracket, satisfying the usual skew-symmetry and Jacobi identity. A morphism of dg Lie algebras is a morphism in $\sfC(\sfM)$ that respects the Lie brackets. We denote by $\dgLie{\sfM}$ the category of dg Lie algebras in $\sfM$. 

Let $\frg$ be a dg Lie algebra in $\sfM$, and denote by $|\cdot|$ the degree of $\frg$. For $p\in \bbN$, we have a morphism $\frg^{\otimes p}\to \frg^{\otimes (p-1)}$ in $\sfC(\sfM)$ defined by
\begin{align*}
(\frg^{\otimes p})^n
&\to (\frg^{\otimes (p-1)})^n, \\
\otimes_{i=1}^px_i
&\mapsto \sum_{1\le i<j\le p}(-1)^{|x_i|(|x_{i+1}|+\cdots +|x_{j-1}|)+i}x_1\otimes \cdots \otimes \check{x_i}\otimes \cdots \otimes [x_i, x_j]\otimes \cdots \otimes x_p, 
\end{align*}
which induces a morphism $\bigwedge^p(\frg)\to \bigwedge^{p-1}(\frg)$ in $\sfC(\sfM)$. Here $\check{x_i}$ indicates to remove $x_i$. Thus, we obtain a morphism
\[
\Sym^p(\frg[1])=(\bigwedge^p\frg)[p]\to (\bigwedge^{p-1}\frg)[p]=\Sym^{p-1}(\frg[1])[1] , 
\]
and hence a morphism 
\[
d_{[\cdot, \cdot]}\colon \Sym(\frg[1])\to \Sym(\frg[1])[1]
\]
in $\sfC(\sfM)$. Since $d_{[\cdot, \cdot]}$ satisfies
\[
d_{[\cdot, \cdot]}^{n+1}\circ d^n_{[\cdot, \cdot]}=0, \quad 
d^{n+1}_{[\cdot, \cdot]}\circ d^n_{\Sym(\frg[1])}=-d^{n+1}_{\Sym(\frg[1])}\circ d_{[\cdot, \cdot]}^n,
\]
the $\bbZ$-graded object $\{\Sym(\frg[1])^n\}_{n\in \bbZ}$ becomes a complex with differential $\{d^n_{\Sym(\frg[1])}+d^n_{[\cdot, \cdot]}\}_{n\in \bbZ}$. This complex is called the \emph{Chevalley--Eilenberg chain complex} of the dg Lie algebra $\frg$, and we denote it by $C^\CE_\bullet(\frg)$: 
\[
C^\CE_n(\frg)\ceq \Sym(\frg[1])^{-n}, \quad d_n^\CE\ceq d_{\Sym(\frg[1])}^{-n}+d_{[\cdot, \cdot]}^{-n}. 
\]

For a morphism $f\colon \frg\to \frh$ in $\dgLie{\sfM}$, 
\[
C^\CE_\bullet(f)\ceq \Sym(f[1])\colon C_\bullet^\CE(\frg)\to C_\bullet^\CE(\frh)
\]
is a morphism of complexes. The assignment 
\[
C^\CE_\bullet\colon \dgLie{\sfM}\to \sfC(\sfM), \quad \frg\mapsto C_\bullet^\CE(\frg)
\]
gives rise to a functor by $f\mapsto C^\CE_\bullet(f)$. Note that the natural morphism $\mu_{\frg, \frh}\colon \Sym(\frg[1])\otimes \Sym(\frh[1])\to \Sym((\frg\otimes \frh)[1])$ is compatible with the Chevalley--Eilenberg differential $d^\CE$. Hence 
\[
\mu_{\frg, \frh}\colon C^\CE_\bullet(\frg)\otimes C^\CE_\bullet(\frh)\to C^\CE_\bullet(\frg\otimes \frh)
\]
is a morphism of complexes. We find that $C^\CE_\bullet\colon \dgLie{\sfM}\to \sfC(\sfM)$ becomes a symmetric monoidal functor via $\mu_{\frg, \frh}$ and the canonical morphism of the coproduct $1_\sfM=\Sym^0(0_\sfM[1])\to C^\CE_\bullet(0_\sfM)$. 

\smallskip

Let $X$ be a topological space and $\clF\colon \frU_X\to \dgLie{\sfM}$ a precosheaf. For open subsets $U, V, W\subset X$ with $U\sqcup V\subset W$, the universality of coproducts in $\sfC(\sfM)$ induces a unique morphism $\clF^{U, V}_W\colon \clF(U)\oplus \clF(V)\to \clF(W)$ such that the following diagram commutes: 
\[
\begin{tikzcd}[row sep=huge, column sep=huge]
\clF(U) \arrow[r, hook] \arrow[rd, "\clF^U_W"']& 
\clF(U)\oplus \clF(V) \arrow[d, "\clF^{U, V}_W"] &
\clF(V) \arrow[l, hook'] \arrow[ld, "\clF^V_W"] \\
&
\clF(W) &
\end{tikzcd}
\]
In general, the morphism $\clF^{U, V}_W$ is not a morphism in $\dgLie{\sfM}$. However, if the condition
\begin{equation}\label{eq:disj=0}
[\cdot, \cdot]\circ (\clF^U_W\otimes \clF^V_W)=0 
\end{equation}
holds, then $\clF^{U, V}_W$ is a morphism in $\dgLie{\sfM}$. 

A precosheaf $\clF\colon \frU_X\to \dgLie{\sfM}$ that satisfies \eqref{eq:disj=0} for all open subsets $U, V, W\subset X$ with $U\sqcup V\subset W$ naturally becomes a prefactorization algebra on $X$ taking values in $\dgLie{\sfM}$, where the factorization product is given by $\clF^{U, V}_W$ and the unit by $\{0\}\to \clF(\varnothing)$. 
Hence, composing it with the symmetric monoidal functor $C^\CE_\bullet$ yields a new prefactorization algebra
\[
C^\CE_\bullet \clF\colon \frU_X\to \sfC(\sfM). 
\]
We call $C^\CE_\bullet\clF$ the \emph{factorization envelope} of $\clF$. Moreover, since the functor $H_\bullet\colon \sfC(\sfM)\to \sfM$ of taking homology is naturally symmetric monoidal, we have a prefactorization algebra
\[
H^\CE_\bullet\clF\ceq H_\bullet C^\CE_\bullet\clF\colon \frU_X\to \sfM. 
\]
We also call $H^\CE_\bullet\clF$ the factorization envelope of $\clF$. 

\smallskip

There is a twisted version of the factorization envelopes. To describe it, recall the notion of 2-cocycles for a dg Lie algebra. Let $\frg$ be a dg Lie algebra in $\sfM$. A $k$-shifted $2$-cocycle of $\frg$ is a morphism $\omega\colon \frg\otimes \frg\to 1_\sfM[k]$ in $\sfC(\sfM)$ satisfying: 
\begin{clist}
\item 
$\omega(y\otimes x)=-(-1)^{|x||y|}\omega(x\otimes y)$, 

\item 
$\omega(x\otimes [y, z])=\omega([x, y]\otimes z)+(-1)^{|x||y|}\omega(y, [x, z])$. 
\end{clist}
Given a $k$-shifted $2$-cocycle $\omega$ of $\frg$, the object $\frg_\omega\ceq \frg\oplus 1_\sfM[k]$ in $\sfC(\sfM)$ carries the structure of a dg Lie algebra whose bracket is defined by
\[
[x, y]^\sim\ceq [x, y]+\omega(x, y) \quad (x, y\in \frg), \quad 
[\frg_\omega, 1_\sfM]\ceq 0. 
\]

Let $\clF\colon \frU_X\to \dgLie{\sfM}$ be a precosheaf that satisfies \eqref{eq:disj=0} for all open subsets $U, V, W\subset X$ with $U\sqcup V\subset W$. A $k$-shifted $2$-cocycle of $\clF$ is a family of $k$-shifted 2-cocycles 
\[
\omega_U\colon \clF(U)\otimes \clF(U)\to 1_\sfM[k]
\] 
of dg Lie algebras satisfying the following conditions: 

\begin{clist}
\item
For open subsets $U, V\subset X$ with $U\subset V$, we have $\omega_U=\omega_V\circ (\clF^U_V\otimes \clF^U_V)$. 

\item 
For open subsets $U, V, W\subset X$ with $U\sqcup V\subset W$, we have $\omega_W\circ (\clF^U_W\otimes \clF^V_W)=0$. 
\end{clist}
Given a $k$-shifted cocycle $\omega=\{\omega_U\}_{U\in \frU_X}$ of $\clF$, the assignment
\[
\clF_\omega\colon \frU_X\to \dgLie{\sfM}, \quad U\mapsto \clF(U)_{\omega_U}=\clF(U)\oplus 1_\sfM[k]
\]
is a precosheaf by letting 
\[
(\clF_\omega)^U_V\ceq \clF^U_V\oplus\id_{1_\sfM[k]}\colon \clF_\omega(U)\to \clF_\omega(V)
\]
for each inclusion $U\subset V$ of open subsets of $X$. Moreover, this precosheaf satisfies \eqref{eq:disj=0} for all open subsets $U, V, W\subset X$ with $U\sqcup V\subset W$. Hence, one can apply the factorization envelope to $\clF_\omega$ and obtain prefactorization algebras
\[
C^\CE_{\bullet\, \omega}\clF\ceq C^\CE_\bullet\clF_\omega\colon \frU_X\to \sfC(\sfM), \qquad
H^\CE_{\bullet\, \omega}\clF\ceq H^\CE_\bullet\clF_\omega\colon \frU_X\to \sfM. 
\]
We call these the \emph{twisted factorization envelopes} of $\clF$ by $\omega$. 

\subsection{Vertex algebras}
\label{ss:VA}

This subsection is an exposition of vertex algebras, containing only the material necessary for later use. We refer the reader to \cite{FBZ} and \cite{K98} for more details on the theory of vertex algebras. 

\subsubsection{Definition of vertex algebras}

We briefly recall the definitions and basic examples of vertex algebras. 

\smallskip
An (algebraic) field on a linear space $V$ is a formal series $a(z)\in (\End V)\dbr{z^{\pm1}}$ such that $a(z)v\in V\dpr{z}$ for all $v\in V$. 
Here, writing $a(z)=\sum_{n\in \bbZ}z^na_n$ with $a_n\in \End V$, we set $a(z)v\ceq \sum_{n\in \bbZ}z^na_n(v)$. We denote by $\Fld V$ the linear subspace of $(\End V)\dbr{z^{\pm1}}$ consisting of all fields on $V$. 

\begin{dfn}\label{dfn:VA}
A \emph{vertex algebra} is a tuple $(V, \vac, T, Y)$ consisting of 
\begin{itemize}
\item 
a linear space $V$, called the state space, 

\item 
an element $\vac\in V$, called the vacuum, 

\item 
a linear map $T\colon V\to V$, called the translation operator, 

\item 
a linear map 
\[
Y\colon V\to \Fld V, \quad a\mapsto Y(a, z)=\sum_{n\in \bbZ}z^{-n-1}a_{(n)}, 
\]
called the state-field correspondence, or the vertex operators, 
\end{itemize}
which satisfies the following conditions: 
\begin{clist}
\item (vacuum axiom)
We have $Y(\vac, z)=\id_V$. Furthermore, for $a\in V$, we have $Y(a, z)\vac\in V\dbr{z}$, and $Y(a, z)\vac|_{z=0}=a$. 

\item (translation invariance) 
We have $T\vac=0$ and $[T, Y(a, z)]=\pdd_zY(a, z)$ for $a\in V$. 

\item (locality) 
For $a, b\in V$, there exists $N\in\bbN$ such that 
\[
(z-w)^N[Y(a, z), Y(b, w)]=0
\]
in $(\End V)\dbr{z^{\pm1}, w^{\pm1}}$. 
\end{clist}
\end{dfn}

Let $V$ and $W$ be vertex algebras. A \emph{morphism of vertex algebras} is a linear map $f\colon V\to W$ which satisfies
\[
f(\vac)=\vac, \quad f\circ T=T\circ f, \quad f(Y(a, z)b)=Y(f(a), z)f(b) \quad (a, b\in V). 
\]

We present two basic examples of vertex algebras: the Virasoro vertex algebra and the affine vertex algebra. The constructions of these vertex algebras go back to the work of Frenkel and Zhu \cite{FZ}. 

\begin{exm}[{\cite[\S2.5]{FBZ}}]
Consider the Virasoro algebra
\[
\Vir=\bigoplus_{n\in \bbZ}\bbC L_n\oplus \bbC C,
\]
whose Lie bracket is given by 
\[
[L_m, L_n]=(m-n)L_{m+n}+\frac{1}{12}\delta_{m+n, 0}(m^3-m)C, \quad 
[L_m, C]=0. 
\]
For $c\in \bbC$, define a $1$-dimensional representation of the Lie subalgebra $\Vir_+\ceq \bigoplus_{n\ge -1}\bbC L_n\oplus \bbC C$ on $\bbC$ by 
\[
L_n\cdot 1\ceq 0 \quad (n\ge -1), \quad C\cdot 1\ceq c. 
\]
Denote this representation by $\bbC_c$. Then, the linear space 
\[
V^c\ceq U(\Vir)\otimes_{U(\Vir_+)}\bbC_c
\]
has the structure of a vertex algebra such that: 
\begin{itemize}
\item 
The vacuum is $v_c\ceq 1_{U(\Vir)}\otimes 1$. 

\item
The translation operator is $L_{-1}\colon V^c\to V^c$. 

\item 
The state-field correspondence $Y\colon V^c\to \Fld(V^c)$ is determined by 
\[
Y(L_{-2}v_c, z)\ceq \sum_{n\in \bbZ}z^{-n-2}L_n. 
\]
\end{itemize}
The vertex algebra $V^c$ is called the \emph{Virasoro vertex algebra of central charge $c$}. 
\end{exm}

\begin{exm}[{\cite[\S2.4]{FBZ}}]
Let $\frg$ be a Lie algebra and $\kappa\colon \frg\otimes \frg\to \bbC$ be a symmetric invariant form. Consider the affine Lie algebra 
\[
\wh{\frg}_\kappa\ceq \bbC[t^{\pm1}]\otimes \frg\oplus \bbC K,
\]
whose Lie bracket is given by
\[
[t^m\otimes a, t^n\otimes b]=t^{m+n}\otimes [a, b]+m\delta_{m+n, 0}\kappa(a, b)K, \quad 
[t^m\otimes a, K]=0. 
\]
Define a $1$-dimensional representation of the Lie subalgebra $\wh{\frg}_{\kappa, +}\ceq \bbC[t]\otimes \frg\oplus \bbC K$ on $\bbC$ by
\[
(t^n\otimes a)\cdot 1\ceq 0, \quad (n\in \bbN, a\in \frg) \quad K\cdot 1\ceq 1. 
\]
Then, the linear space
\[
V^\kappa(\frg)\ceq U(\wh{\frg}_\kappa)\otimes_{U(\wh{\frg}_{\kappa, +})}\bbC
\]
has the structure of a vertex algebra such that:
\begin{itemize}
\item 
The vacuum is $\vac\ceq 1_{U(\wh{\frg}_\kappa)}\otimes 1$. 

\item 
The translation operator $T\colon V^\kappa(\frg)\to V^\kappa(\frg)$ is determined by
\[
T\vac\ceq 0, \quad [T, t^n\otimes a]=-nt^{n-1}\otimes a\quad (a\in \frg, n\in \bbZ). 
\]

\item 
The state-field correspondence $Y\colon V^\kappa(\frg)\to \Fld(V^\kappa(\frg))$ is determined by
\[
Y((t^{-1}\otimes a)\cdot \vac, z)\ceq \sum_{n\in \bbZ}z^{-n-1}(t^n\otimes a).
\]
\end{itemize}
The vertex algebra $V^\kappa(\frg)$ is called the \emph{affine vertex algebra} associated to $\frg$ and $\kappa$. 
\end{exm}

Many vertex algebras are naturally equipped with a $\bbZ$-grading, which we now define precisely:  

\begin{dfn}\label{dfn:grVA}
A \emph{$\bbZ$-graded vertex algebra} is a vertex algebra $V=\bigoplus_{\Delta\in \bbZ}V_\Delta$ with a direct sum decomposition as a linear space, satisfying the following conditions: 
\begin{clist}
\item 
$\vac\in V_0$. 

\item 
For $a\in V_{\Delta}$, $b\in V_{\Delta'}$ and $n\in \bbZ$, we have $a_{(n)}b\in V_{\Delta+\Delta'-n-1}$. 
\end{clist}
Moreover, a $\bbZ$-graded vertex algebra $V=\bigoplus_{\Delta\in \bbZ}$ is said to be $\bbN$-graded if $V_\Delta=\{0\}$ for all $\Delta<0$. 
\end{dfn}

For an element $a$ of a $\bbZ$-graded vertex algebra $V$, if $a\in V_\Delta\bs\{0\}$, we say that $a$ is a homogeneous element of $V$, and denote $\Delta(a)\ceq \Delta$. 

Let $V$ and $W$ be $\bbZ$-graded vertex algebras. A \emph{morphism of $\bbZ$-graded vertex algebras} is a morphism of vertex algebras $f\colon V\to W$ satisfying $f(V_\Delta)\subset W_\Delta$ for all $\Delta\in \bbZ$. 

\begin{exm}
The Virasoro vertex algebra $V^c$ of central charge $c\in \bbC$ carries an $\bbN$-grading defined as follows: 
For each $\Delta\in \bbN$, let $V^c_\Delta$ be the subspace of $V^c$ spanned by elements of the form
\[
L_{-n_1-2}\cdots L_{-n_r-2}\vac, \quad (r, n_i\in \bbN,\, \sum_{i=1}^r(n_i+2)=\Delta).
\]
Then $V^c=\bigoplus_{\Delta\in \bbN}V_\Delta^c$ is an $\bbN$-graded vertex algebra. 
\end{exm}

\begin{exm}
The affine vertex algebra $V^\kappa(\frg)$ associated to a Lie algebra $\frg$ and a symmetric invariant form $\kappa\colon \frg\otimes \frg\to \bbC$ carries an $\bbN$-grading defined as follows: For each $\Delta\in \bbN$, let $V^\kappa(\frg)_\Delta$ be the subspace of $V^\kappa(\frg)$ spanned by elements of the form
\[
(t^{-n_1-1}\otimes a^1)\cdots (t^{-n_r-1}\otimes a^r)\vac \quad (a^i\in \frg,\, r, n_i\in \bbN,\, \sum_{i=1}^r(n_i+1)=\Delta).
\]
Then $V^\kappa(\frg)=\bigoplus_{\Delta\in \bbN}V^\kappa(\frg)_\Delta$ is an $\bbN$-graded vertex algebra. 
\end{exm}

\subsubsection{Enveloping vertex algebras of Lie conformal algebras}
\label{ss:EVALCA}

We recall the notion of Lie conformal algebras and the construction of enveloping vertex algebras. 

\begin{dfn}
A \emph{Lie conformal algebra} (or \emph{vertex Lie algebra}) is a tuple $(L, T, [\cdot_\lambda\cdot])$ consisting of 
\begin{itemize}
\item 
a linear space $L$, 

\item 
a linear map $T\colon L\to L$, 

\item 
a linear map 
\[
[\cdot_\lambda\cdot]\colon L\otimes L\to L[\lambda], \quad a\otimes b\mapsto [a_\lambda b]=\sum_{n\in \bbN}\frac{1}{n!}\lambda^na_{(n)}b,
\]
called the $\lambda$-bracket, 
\end{itemize}
which satisfies the following conditions for $a, b, c\in L$: 
\begin{clist}
\item (sesquilinearity) 
\hspace{0.185cm}$[Ta_\lambda b]=-\lambda[a_\lambda b]$. 

\item (skew-symmetry) 
$[b_\lambda a]=-[a_{-\lambda-T}b]$. 

\item (Jacobi identity) 
\hspace{0.085cm}$[a_\lambda [b_\mu c]]=
[[a_\lambda b]_{\lambda+\mu}c]+[b_\mu[a_\lambda c]]$. 
\end{clist}
\end{dfn}

Let $K$ and $L$ be Lie conformal algebras. A \emph{morphism of Lie conformal algebras} is a linear map $f\colon K\to L$ which satisfies
\[
f\circ T=T\circ f, \quad f([a_\lambda b])=[f(a)_\lambda f(b)] \quad (a, b\in K). 
\]

\begin{dfn}
Let $L$ be a Lie conformal algebra. A \emph{$2$-cocycle of $L$} is a linear map 
\[
\omega_\lambda\colon L\otimes L\to \bbC[\lambda], \quad 
a\otimes b\mapsto \omega_\lambda(a, b)=\sum_{n\in \bbN}\frac{1}{n!}\lambda^n\omega_{(n)}(a, b),
\]
which satisfies the following conditions for $a, b, c\in L$: 
\begin{clist}
\item (sesquilinearity) 
\hspace{0.185cm}$\omega_\lambda(Ta, b)=-\lambda\omega_\lambda(a, b)$. 

\item (skew-symmetry) 
$\omega_\lambda(b, a)=-\omega_{-\lambda}(a, b)$. 

\item (Jacobi identity) 
\hspace{0.085cm}$\omega_\lambda(a, [b_\mu c])=
\omega_{\lambda+\mu}([a_\lambda b], c)+\omega_\mu(b, [a_\lambda c])$. 
\end{clist}
\end{dfn}

For a Lie conformal algebra $L$ and a $2$-cocycle $\omega_\lambda$ of $L$, one can endow $L_{\omega_\lambda}\ceq L\oplus\bbC C$ with the structure of a Lie conformal algebra as follows: 
\begin{itemize}
\item 
The translation operator $\wt{T}\colon L_{\omega_\lambda}\to L_{\omega_\lambda}$ is defined by
\[
\wt{T}(a)\ceq T(a) \quad (a\in L), \quad \wt{T}(C)\ceq 0. 
\]

\item 
The $\lambda$-bracket $[\cdot_\lambda\cdot]^\sim$ is defined by
\[
[a_\lambda b]^\sim\ceq [a_\lambda b]+\omega_\lambda(a, b)C, \quad 
[a_\lambda C]^\sim=[C_\lambda a]^\sim=[C_\lambda C]^\sim\ceq 0 \quad (a, b\in L). 
\]
\end{itemize}
We call $L_{\omega_\lambda}$ the \emph{one-dimensional central extension of $L$ associated with $\omega_\lambda$}. 

\begin{exm}\label{exm:Virconf}
The linear space $\bbC[T]L$ has the structure of a Lie conformal algebra defined by
\[
[L_\lambda L]\ceq (T+2\lambda)L. 
\]
This Lie conformal algebra admits a one-dimensional central extension $\Conf\ceq\bbC[T]L\oplus \bbC C$ associated with the $2$-cocycle 
\begin{equation}\label{eq:Virconf2cocy}
\omega_\lambda(L, L)\ceq \frac{1}{12}\lambda^3. 
\end{equation}
We call $\Conf$ the \emph{Virasoro conformal algebra}. The $\lambda$-bracket of $\Conf$ is given by 
\[
[L_\lambda L]=(T+2\lambda)L+\frac{1}{12}\lambda^3C, \quad [L_\lambda C]=[C_\lambda L]=0. 
\]
\end{exm}

\begin{exm}\label{exm:curconf}
For a Lie algebra $\frg$, the linear space $\bbC[T]\otimes \frg$ has the structure of a Lie conformal algebra defined by
\[
[a_\lambda b]\ceq [a, b] \quad (a, b\in \frg). 
\]
If a symmetric invariant form $\kappa\colon \frg\otimes \frg\to \bbC$ is given, this Lie conformal algebra admits a one-dimensional central extension $\Cur_\kappa(\frg)\ceq \bbC[T]\otimes \frg\oplus \bbC K$ associated with the $2$-cocycle
\begin{equation}\label{eq:curconf2cocy}
\kappa_\lambda(a, b)\ceq \lambda \kappa(a, b). 
\end{equation}
We call $\Cur_\kappa(\frg)$ the current Lie conformal algebra. The $\lambda$-bracket of $\Cur_\kappa(\frg)$ is given by
\[
[a_\lambda b]=[a, b]+\lambda\kappa(a, b)K, \quad [a_\lambda K]=[K_\lambda a]=0 \quad (a, b\in \frg). 
\]
\end{exm}

\smallskip
For a vertex algebra $V$, define the $\lambda$-bracket by 
\[
[a_\lambda b]\ceq \sum_{n\ge 0}\frac{1}{n!}\lambda^na_{(n)}b \quad (a, b\in V). 
\]
Then, one can verify that $V$, equipped with the translation operator $T\colon V\to V$ and this $\lambda$-bracket, is a Lie conformal algebra. Hence, we have a functor from the category of vertex algebras to that of Lie conformal algebras.
This functor admits a left adjoint denoted 
\[
\{\text{Lie conformal algebras}\}\to \{\text{vertex algebras}\}, \quad L\mapsto V(L).
\]
The vertex algebra $V(L)$ is called the \emph{enveloping vertex algebra} of $L$. We now describe the construction of $V(L)$. 

For a Lie conformal algebra $L$, define 
\[
\Lie(L)\ceq (\bbC[t^{\pm1}]\otimes L)/\Img(\pdd_t\otimes \id+\id\otimes T).
\]
It becomes a Lie algebra with the bracket
\[
[\ol{f\otimes a}, \ol{g\otimes b}]\ceq \sum_{n\ge 0}\frac{1}{n!}\ol{(\pdd^n_tf)g\otimes a_{(n)}b} \quad (f, g\in \bbC[t^{\pm1}], a, b\in L). 
\]
The Lie algebra $\Lie(L)$ is called the \emph{current Lie algebra} (or \emph{Borcherds Lie algebra}) of $L$. For  $a\in L$ and $n\in \bbZ$, write $a_{[n]}\ceq \ol{t^n\otimes a}$, then we have 
\[
\Lie(L)=\Span_\bbC\{a_{[n]}\mid a\in L, n\in \bbZ\}, \quad 
[a_{[m]}, b_{[n]}]=\sum_{j\ge 0}\binom{m}{j}(a_{(j)}b)_{[m+n-j]}. 
\]
Note that $\Lie_+(L)\ceq \Span_\bbC\{a_{[n]}\mid a\in L, n\in \bbN\}\subset \Lie(L)$ is a Lie subalgebra. The linear space
\[
V(L)\ceq U(\Lie(L))\otimes_{U(\Lie_+(L))}\bbC, 
\]
where $\bbC$ is regarded as the one-dimensional trivial representation of $\Lie_+(L)$, carries the structure of a vertex algebra such that:
\begin{itemize}
\item 
The vacuum is given by $\vac\ceq 1_{U(\Lie(L))}\otimes 1$. 

\item 
The translation operator $T\colon V(L)\to V(L)$ is determined by
\[
T\vac\ceq 0, \quad [T, a_{[n]}]=-na_{[n-1]} \quad (a\in L, n\in \bbZ). 
\]

\item 
The state-field correspondence $Y\colon V(L)\to \Fld (V(L))$ is determined by
\[
Y(a_{[-1]}\vac, z)\ceq \sum_{n\in \bbZ}z^{-n-1}a_{[n]}. 
\]
\end{itemize}

Before moving on to examples of enveloping vertex algebras, we recall the notions of vertex algebra ideals and quotient vertex algebras. Let $V$ be a vertex algebra. A \emph{vertex algebra ideal} of $V$ is a linear subspace $I\subset V$ satisfying the following conditions: 
\begin{clist}
\item
$I$ is invariant under the translation operator: $TI\subset I$. 

\item 
For $a\in V$ and $b\in I$, we have $Y(a, z)b\in I\dpr{z}$. 
\end{clist}
For a vertex algebra ideal $I\subset V$, the quotient linear space $V/I$ naturally inherits the structure of a vertex algebra. 

\begin{exm}
Consider the enveloping vertex algebra $V(\Conf)$ of the Virasoro conformal algebra. The linear map 
\[
\Vir\to \Lie(\Conf), \quad L_n\mapsto L_{[n+1]}, \quad C\mapsto C_{[-1]}
\]
gives an isomorphism of Lie algebras. Moreover, for $c\in \bbC$, the linear subspace $(C-c)V(\Conf)$ is a vertex algebra ideal of $V(\Conf)$, and the quotient vertex algebra $V(\Conf)/(C-c)V(\Conf)$ is isomorphic to the Virasoro vertex algebra $V^c$: 
\[
V(\Conf)/(C-c)V(\Conf)\cong V^c. 
\]
\end{exm}

\begin{exm}
For a Lie algebra $\frg$ and a symmetric invariant form $\kappa\colon \frg\otimes \frg\to \bbC$, consider the enveloping vertex algebra $V(\Cur_\kappa(\frg))$ of the current Lie conformal algebra. The linear map
\[
\wh{\frg}_\kappa\to \Lie(\Cur_\kappa(\frg)), \quad t^n\otimes a\mapsto a_{[n]}, \quad K\mapsto K_{[-1]}
\]
gives an isomorphism of Lie algebras. Moreover, for $k\in \bbC$, the linear subspace $(K-k)V(\Cur_\kappa(\frg))$ is a vertex algebra ideal of $V(\Cur_\kappa(\frg))$, and the quotient vertex algebra $V(\Cur_\kappa(\frg))/(K-k)V(\Cur_\kappa(\frg))$ is isomorphic to the affine vertex algebra $V^\kappa(\frg)$: 
\[
V(\Cur_\kappa(\frg))/(K-k)V(\Cur_\kappa(\frg))\cong V^\kappa(\frg). 
\]
\end{exm}

\begin{dfn}\label{dfn:grLCA}
A \emph{$\bbZ$-graded Lie conformal algebra} is a Lie conformal algebra $L=\bigoplus_{\Delta\in \bbZ}L_\Delta$ with a direct sum decomposition as a linear space, satisfying the following conditions: 
\begin{clist}
\item 
For $\Delta\in \bbZ$, we have $T(L_\Delta)\subset L_{\Delta+1}$. 

\item
For $a\in L_\Delta$, $b\in L_{\Delta'}$ and $n\in \bbN$, we have $a_{(n)}b\in L_{\Delta+\Delta'-n-1}$. 
\end{clist}
Moreover, a $\bbZ$-graded Lie conformal algebra $L=\bigoplus_{\Delta\in \bbZ}L_\Delta$ is said to be $\bbN$-graded if $V_\Delta=\{0\}$ for all $\Delta<0$. 
\end{dfn}

For an element $a$ of a $\bbZ$-graded Lie conformal algebra $L$, if $a\in L_\Delta\bs\{0\}$, we say that $a$ is a homogeneous element of $L$, and denote $\Delta(a)\ceq \Delta$. 

Let $K$ and $L$ be $\bbZ$-graded Lie conformal algebras. A \emph{morphism of $\bbZ$-graded Lie conformal algebras} is a morphism of Lie conformal algebras $f\colon K\to L$ satisfying $f(K_\Delta)\subset L_\Delta$ for all $\Delta\in \bbZ$. 

\begin{exm}\label{exm:Virconfgr}
The Virasoro conformal algebra $\Conf$ is $\bbN$-graded by declaring 
\[
\Delta(L)\ceq 2, \quad \Delta(C)\ceq 0. 
\]
\end{exm}

\begin{exm}
For a Lie algebra $\frg$ and a symmetric invariant form $\kappa\colon \frg\otimes \frg\to \bbC$, the current Lie conformal algebra $\Cur_\kappa(\frg)$ is $\bbN$-graded by declaring 
\[
\Delta(a)\ceq 1 \quad (a\in \frg), \quad \Delta(K)\ceq 0. 
\]
\end{exm}

Clearly, every $\bbZ$-graded vertex algebra naturally becomes a $\bbZ$-graded Lie conformal algebra. Conversely, let $L=\bigoplus_{\Delta\in \bbZ}L_\Delta$ be a $\bbZ$-graded Lie conformal algebra. Define $V(L)_\Delta$ to be the subspace of $V(L)$ spanned by elements of the form
\[
a^1_{[-n_1-1]}\cdots a^r_{[-n_r-1]}\vac \quad (a^i\in L,\, r, n_i\in \bbN,\, \sum_{i=1}^r(\Delta(a^i)+n_i)=\Delta), 
\]
then $V(L)=\bigoplus_{\Delta\in \bbZ}V(L)_\Delta$ is a $\bbZ$-graded vertex algebra. Moreover, the Lie algebra $\Lie(L)$ is also $\bbZ$-graded by declaring 
\[
\Lie(L)_\Delta\ceq \Span_\bbC\{a_{[m]}\mid a\in L,\, m\in \bbZ,\, \Delta(a)-m-1=\Delta\}, 
\]
and its enveloping algebra $U(\Lie(L))$ carries the induced $\bbZ$-grading. 

\subsubsection{Vertex superalgebras and Lie conformal superalgebras}
\label{ss:VSALCSA}

There are super analogues of the notions of vertex algebras and Lie conformal algebras, which arise in two-dimensional supersymmetric conformal field theory. We call them \emph{vertex superalgebras} and \emph{Lie conformal superalgebras}, respectively. In this section, we recall several examples of these structures. For precise definitions of the super analogues, we refer the reader to \cite{K98}. 

Many vertex superalgebras and Lie conformal superalgebras naturally come equipped with a $\bbN/2$-grading, defined in a manner similar to \cref{dfn:grVA} and \cref{dfn:grLCA}. 

\begin{exm}[Neveu--Schwarz vertex superalgebra]\label{exm:N=1LCA}
The linear superspace $\bbC[T]L\oplus \bbC[T]G$, where $L$ is even and $G$ is odd, has the structure of a Lie conformal superalgebra defined by
\[
[L_\lambda L]\ceq (T+2\lambda)L, \quad 
[L_\lambda G]\ceq \bigl(T+\frac{3}{2}\lambda\bigr)G, \quad 
[G_\lambda G]\ceq 2L. 
\]
This Lie conformal superalgebra admits a one-dimensional central extension 
\[
L^{N=1}\ceq \bbC[T]L\oplus \bbC[T]G\oplus \bbC C, 
\]
where $C$ is even, associated with the $2$-cocycle
\begin{equation}\label{eq:N1cocy}
\omega_\lambda(L, L)\ceq \frac{1}{12}\lambda^3, \quad 
\omega_\lambda(G, G)\ceq \frac{1}{3}\lambda^2. 
\end{equation}
The Lie conformal superalgebra $L^{N=1}$ is $\bbN/2$-graded by declaring 
\[
\Delta(L)\ceq 2, \quad \Delta(G)\ceq \frac{3}{2}, \quad \Delta(C)\ceq 0. 
\]
We call $L^{N=1}$ the \emph{Neveu--Schwarz Lie conformal superalgebra} or the \emph{$N=1$ Lie conformal superalgebra}. 
Consider the enveloping vertex superalgebra $V(L^{N=1})$. For $c\in \bbC$, the linear subspace $(C-c)V(L^{N=1})$ is an ideal of $V(L^{N=1})$. The quotient vertex superalgebra $V^{N=1}_c\ceq V(L^{N=1})/(C-c)V(L^{N=1})$ is called the \emph{Neveu--Schwarz vertex superalgebra} or the \emph{$N=1$ vertex superalgebra}. 
\end{exm}

\begin{exm}[$N=2$ vertex superalgebra]\label{exm:N=2LCA}
The linear superspace 
\[
M\ceq\bbC[T]L\oplus \bbC[T]J\oplus \bbC[T]G^+\oplus \bbC[T]G^-, 
\]
where $L$, $J$ are even and $G^{\pm}$ are odd, has the structure of a Lie conformal superalgebra defined by
\begin{align*}
&[L_\lambda L]\ceq (T+2\lambda)L, \quad 
[L_\lambda J]\ceq (T+\lambda)J, \quad 
[L_\lambda G^{\pm}]\ceq \bigl(T+\frac{3}{2}\lambda\bigr)G^{\pm}, \\
&[J_\lambda J]\ceq 0, \quad
[J_\lambda G^\pm]=\pm G^{\pm}, \quad 
[G^\pm{}_\lambda G^\pm]\ceq 0, \quad 
[G^+{}_\lambda G^-]\ceq L+\bigl(\frac{1}{2}T+\lambda\bigr)J.
\end{align*}
This Lie conformal superalgebra admits a one-dimensional central extension 
\[
L^{N=2}\ceq M\oplus \bbC C,
\]
where $C$ is even, associated with the $2$-cocycle 
\begin{equation}\label{eq:N2cocy}
\omega_\lambda(L, L)\ceq \frac{1}{12}\lambda^3, \quad 
\omega_\lambda(L, J)\ceq 0, \quad 
\omega_\lambda(J, J)\ceq \frac{1}{3}\lambda, \quad 
\omega_\lambda(G^\pm, G^\pm)\ceq0, \quad 
\omega_\lambda(G^+, G^-)\ceq \frac{1}{6}\lambda^2. 
\end{equation}
The Lie conformal superalgebra $L^{N=2}$ is $\bbN/2$-graded by declaring 
\[
\Delta(L)\ceq 2, \quad \Delta(J)\ceq 1, \quad \Delta(G^\pm)\ceq \frac{3}{2}, \quad \Delta(C)\ceq 0. 
\]
We call $L^{N=2}$ the \emph{$N=2$ Lie conformal superalgebra}. Consider the enveloping vertex superalgebra $V(L^{N=2})$. For $c\in \bbC$, the linear subspace $(C-c)V(L^{N=2})$ is an ideal of $V(L^{N=2})$. The quotient vertex superalgebra $V^{N=2}_c\ceq V(L^{N=2})/(C-c)V(L^{N=2})$ is called the \emph{$N=2$ vertex superalgebra}. 
\end{exm}

\begin{exm}[$N=4$ vertex superalgebra]\label{exm:N=4LCA}
The linear superspace 
\[
M\ceq\bbC[T]L\oplus \bbC[T]J^0\oplus\bbC[T]J^+\oplus \bbC[T]J^-\oplus \bbC[T]G^+\oplus \bbC[T]G^-\oplus \bbC[T]\ol{G}^+\oplus \bbC[T]\ol{G}^-,
\]
where $L$, $J^0$, $J^{\pm}$ are even and $G^\pm$, $\ol{G}^\pm$ are odd, has the structure of a Lie conformal superalgebra defined by
\begin{align*}
&[L_\lambda L]\ceq (T+2\lambda)L, \quad 
[L_\lambda J^0]\ceq(T+\lambda)J^0, \quad 
[L_\lambda J^{\pm}]\ceq (T+\lambda)J^\pm, \\
&[L_\lambda G^{\pm}]\ceq \bigl(T+\frac{3}{2}\lambda\bigr)G^\pm, \quad 
[L_\lambda \ol{G}^\pm]\ceq\bigl(T+\frac{3}{2}\lambda\bigr)\ol{G}^\pm, \\
&[J^0{}_\lambda J^0]\ceq 0, \quad 
[J^0{}_\lambda J^\pm]\ceq \pm2J^\pm, \quad
[J^0{}_\lambda G^\pm]\ceq \pm G^\pm, \quad 
[J^0{}_\lambda \ol{G}^\pm]\ceq \pm\ol{G}^\pm\\
&[J^+{}_\lambda J^-]\ceq J^0, \quad 
[J^\pm{}_\lambda G^\pm]\ceq 0, \quad 
[J^\pm{}_\lambda G^{\mp}]\ceq G^\pm, \quad 
[J^\pm{}_\lambda \ol{G}^\pm]\ceq 0, \quad
[J^\pm{}_\lambda \ol{G}^{\mp}]\ceq -\ol{G}^\pm, \\
&[G^\pm{}_\lambda G^\pm]\ceq0, \quad 
[G^\pm{}_\lambda G^{\mp}]\ceq 0, \quad 
[G^\pm{}_\lambda \ol{G}^\pm]\ceq (T+2\lambda)J^\pm, \quad
[G^\pm{}_\lambda \ol{G}^{\mp}]\ceq L\pm \frac{1}{2}(T+2\lambda)J^0, \\
&[\ol{G}^\pm{}_\lambda\ol{G}^\pm]\ceq 0, \quad
[\ol{G}^\pm{}_\lambda \ol{G}^\mp]\ceq 0. 
\end{align*}
This Lie conformal superalgebra admits a one-dimensional central extension 
\[
L^{N=4}\ceq M\oplus \bbC C, 
\]
where $C$ is even, associated with the 2-cocycle
\begin{align}\label{eq:N4cocy}
\begin{split}
&\omega_\lambda(L, L)\ceq \frac{1}{12}\lambda^3, \quad
\omega_\lambda(L, J^0)\ceq 0, \quad 
\omega_\lambda(L, J^\pm)\ceq 0, \\ 
&\omega_\lambda(J^0, J^0)\ceq \frac{1}{3}\lambda, \quad 
\omega_\lambda(J^0, J^\pm)\ceq 0, \quad
\omega_\lambda(J^+, J^-)\ceq \frac{1}{6}\lambda \\
&\omega_\lambda(G^\pm,G^\pm)\ceq 0, \quad
\omega_\lambda(G^\pm, G^\mp)\ceq 0, \quad 
\omega_\lambda(G^\pm, \ol{G}^\pm)\ceq 0, \quad 
\omega_\lambda(G^\pm, \ol{G}^\mp)\ceq \frac{1}{6}\lambda^2. 
\end{split}
\end{align}
The Lie conformal superalgebra $L^{N=4}$ is $\bbN/2$-graded by declaring 
\[
\Delta(L)\ceq 2, \quad \Delta(J^0)\ceq 1, \quad \Delta(J^\pm)\ceq 1, \quad \Delta(G^\pm)\ceq \frac{3}{2}, \quad \Delta(\ol{G}^\pm)\ceq \frac{3}{2}. 
\]
We call $L^{N=4}$ the \emph{$N=4$ Lie conformal superalgebra}. 
Consider the enveloping vertex superalgebra $V(L^{N=4})$. For $c\in \bbC$, the linear subspace $(C-c)V(L^{N=4})$ is an ideal of $V(L^{n=4})$. The quotient vertex superalgebra $V^{N=4}_c\ceq V(L^{N=4})/(C-c)V(L^{N=4})$ is called the \emph{$N=4$ vertex superalgebra}. 
\end{exm}

\section{Preliminaries: bornological vector spaces}
\label{s:BVS}

This section provides a preliminary introduction to bornological vector spaces. The main reference is \cite{H77}. We refer the reader to \cite{J81} for terminology on topological vector spaces. 

\subsection{Definition of bornological vector spaces}
\label{ss:defBVS}

A \emph{bornological space} is a set $X$ equipped with a collection $\frB_X$ of subsets of $X$, called the \emph{bornology}, satisfying the following conditions: 
\begin{clist}
\item 
For any $x\in X$, we have $\{x\}\in \frB_X$. 

\item
If $A, B\in \frB_X$, then $A\cup B\in \frB_X$. 

\item 
If $A\subset B\subset X$ and $B\in \frB_X$, then $A\in \frB_X$. 
\end{clist}
Each element $B\in \frB_X$ is called a \emph{bounded set} of $X$. A \emph{basis of the bornology} is a subset $\frB\subset \frB_X$ such that for every $A\in \frB_X$, there exists $B\in \frB$ with $A\subset B$. 

We say that a map $f\colon X\to Y$ between bornological spaces is \emph{bounded} if $f(B)\subset Y$ is bounded for every bounded set $B\subset X$. Clearly, for a given basis $\frB$ of the bornology of $X$, a map $f\colon X\to Y$ is bounded if and only if $f(B)\subset Y$ is bounded for all $B\in \frB$. 

\begin{exm}
\ 
\begin{enumerate}
\item 
The set $\bbC$ of complex numbers has a natural bornology whose bounded sets are those subsets $B\subset \bbC$ for which there exists $R\in \bbR_{>0}$ such that $B\subset D_R\ceq\{z\in\bbC\mid|z|<R\}$. Clearly, the set $\{D_R\mid R\in \bbR_{>0}\}$ forms a basis of this bornology. 

\item 
More generally, any topological vector space $X$ has a natural bornology, called the \emph{von Neumann bornology}, whose bounded sets are those subsets $B\subset X$ for which the following holds: for every neighborhood $N$ of $0\in X$, there exists $a\in \bbC^\times$ such that $B\subset aN$. 
Note that the natural bornology on $\bbC$ in (1) coincides with the von Neumann bornology. 
It is easy to check that any continuous linear map $f\colon X\to Y$ between topological vector spaces is bounded with respect to the von Neumann bornology. Henceforth, when we regard a topological vector space as a bornological space, we always equip it with the von Neumann bornology.  
\end{enumerate}
\end{exm}

If a collection $\frB$ of subsets of a set $X$ satisfies the conditions 
\begin{clist}
\item 
for any $x\in X$, there exists $B\in \frB$ with $x\in B$, 

\item 
for any $A_1, A_2\in \frB$, there exists $B\in \frB$ with $A_1\cup A_2\subset B$, 
\end{clist}
then $X$ admits a unique bornology for which $\frB$ is a basis. This bornology is called the \emph{bornology generated by $\frB$}. Note that the von Neumann bornology on $\bbC$ is the one generated by $\{D_R\mid R\in \bbR_{>0}\}$. 

\smallskip

Now, we collect some basic constructions of bornological spaces: 

\emph{Subspace bornology}. 
For a bornological space $X$ and a subset $E\subset X$, the set $\{B\cap E\mid \text{$B\subset X$ is bounded}\}$ is a bornology on $E$. We call it the \emph{subspace bornology} on $E$. Note that if $\frB$ is a basis of $X$, then $\{B\cap E\mid B\in \frB\}$ forms a basis of $E$. Cleary, the inclusion $\iota\colon E\inj X$ is bounded. Moreover, a map $f\colon Y\to E$, where $Y$ is another bornological space, is bounded if and only if $\iota\circ f\colon Y\to X$ is bounded. 

\emph{Quotient bornology}. 
Let $X$ be a bornological space and $R$ an equivalence relation on $X$. Consider the quotient set $X/R$, and denote by $\pi\colon X\srj X/R$ the projection. The collection $\{\pi(B)\mid \text{$B\subset X$ is bounded}\}$ generates the bornology on $X/R$, called the \emph{quotient bornology}. Note that if $\frB$ is a basis of $X$, then $\{\pi(B)\mid B\in \frB\}$ forms a basis of $X/R$. Cleary, the projection $\pi\colon X\srj X/R$ is bounded. Moreover, a map $f\colon X/R\to Y$, where $Y$ is another bornological space, is bounded if and only if $f\circ \pi\colon X\to Y$ is bounded. 

\emph{Product bornology}. 
Let $\{X_\lambda\}_{\lambda\in \Lambda}$ be a family of bornological spaces. Consider the product set $\prod_{\lambda\in \Lambda}X_\lambda$, and denote by $\pi_\lambda\colon \prod_{\lambda\in \Lambda}X_\lambda\to X_\lambda$ the projections. The collection 
\[
\Bigl\{B\subset \prod_{\lambda\in \Lambda}X_\lambda\Bigm| \text{$\pi_\lambda(B)\subset X_\lambda$ is bounded for all $\lambda\in \Lambda$}\Bigr\}
\]
is a bornology on $\prod_{\lambda\in \Lambda}X_\lambda$, called the \emph{product bornology}. Note that if $\frB_\lambda$ is a basis of $X_\lambda$ for each $\lambda\in \Lambda$, then $\{\prod_{\lambda\in \Lambda}B_\lambda\mid B_\lambda\in \frB_\lambda\}$ forms a basis of $\prod_{\lambda\in \Lambda}X_\lambda$. Cleary, the projections $\pi_\lambda\colon \prod_{\lambda\in \Lambda}X_\lambda\to X_\lambda$ are all bounded. Moreover, a map $f\colon Y\to \prod_{\lambda\in \Lambda}X_\lambda$, where $Y$ is another bornological space, is bounded if and only if $\pi_\lambda\circ f\colon Y\to X_\lambda$ is bounded for all $\lambda\in \Lambda$.  The bornological space $\prod_{\lambda\in \Lambda}X_\lambda$ is actually a product in the category of bornological spaces with bounded maps. 

\emph{Projective limit bornology}. 
Let $(\{X_\lambda\}_{\lambda\in \Lambda}, \{f_{\lambda, \mu}\colon X_\mu\to X_\lambda\}_{\lambda\le \mu})$ be a projective system in the category of bornological spaces with bounded maps. Consider the projective limit 
\[
\varprojlim_{\lambda\in \Lambda}X_\lambda=
\Bigl\{(x_\lambda)_{\lambda\in \Lambda}\in \prod_{\lambda\in \Lambda}X_\lambda\Bigm| \forall \lambda, \mu\in \Lambda,\, \lambda\le \mu \Longrightarrow f_{\lambda, \mu}(x_\mu)=x_\lambda\Bigr\}
\]
in the category of sets, and denote by $\pi_\lambda\colon \varprojlim_{\lambda\in \Lambda}X_\lambda\to X_\lambda$ the canonical morphisms. The collection 
\[
\Bigl\{B\subset \varprojlim_{\lambda\in\Lambda}X_\lambda\Bigm| \text{$\pi_\lambda(B)\subset X_\lambda$ is bounded for all $\lambda\in \Lambda$}\Bigr\}
\]
is a bornology on $\varprojlim_{\lambda\in \Lambda}X_\lambda$, called the \emph{projective limit bornology}. It is clear that the canonical morphisms $\pi_\lambda\colon \varprojlim_{\lambda\in \Lambda}X_\lambda\to X_\lambda$ are all bounded. Moreover, a map $f\colon Y\to \varprojlim_{\lambda\in \Lambda}X_\lambda$, where $Y$ is another bornological space, is bounded if and only if $\pi_\lambda\circ f\colon Y\to X_\lambda$ is bounded for all $\lambda\in \Lambda$. The bornological space $\varprojlim_{\lambda\in \Lambda}X_\lambda$ is actually a projective limit in the category of bornological spaces with bounded maps. Note that the projective limit bornology on $\varprojlim_{\lambda\in \Lambda}X_\lambda$ coincides with the subspace bornology induced by the product $\prod_{\lambda\in \Lambda}X_\lambda$. 

\begin{dfn}
A \emph{bornological vector space} is a linear space $X$ equipped with a bornology such that the maps
\[
X\times X\to X, \quad (x, y)\mapsto x+y, \qquad
\bbC\times X\to X, \quad (a, x)\mapsto ax
\]
are both bounded. Here, $\bbC$ is equipped with the von Neumann bornology, and $X\times X$ and $\bbC\times X$ are both equipped with the product bornology. Such a bornology on a linear space is called a \emph{vector bornology}. 
\end{dfn}

\begin{exm}
The von Neumann bornology on a topological vector space is a vector bornology, since the von Neumann bornology associated with the product topology is larger than the product bornology of the von Neumann bornologies. 
\end{exm}

Recall that a subset $A$ of a linear space is \emph{balanced} if $aA\subset A$ for all $a\in \bbC$ with $|a|\le 1$. 

\begin{lem}\label{lem:bbbasis}
Let $X$ be a bornological vector space. For any bounded set $A\subset X$, there exists a bounded balanced set $B\subset X$ with $A\subset B$. Hence the set of all bounded balanced sets forms a basis of $X$. 
\end{lem}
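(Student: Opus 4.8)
The plan is to exhibit $B$ explicitly as the balanced hull of $A$ and then read off all three required properties directly from the axioms of a bornological vector space. Concretely, I would set
\[
B\ceq\bigcup_{\abs{a}\le 1}aA=\{ax\mid a\in\bbC,\ \abs{a}\le 1,\ x\in A\},
\]
which is precisely the image of $\ol{D_1}\times A$ under the scalar multiplication map $m\colon\bbC\times X\to X$, $(a,x)\mapsto ax$, where $\ol{D_1}\ceq\{a\in\bbC\mid\abs{a}\le 1\}$.

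First I would check the two formal properties. The inclusion $A\subset B$ is immediate, since each $x\in A$ equals $1\cdot x\in B$. That $B$ is balanced is also immediate: if $\abs{c}\le 1$ and $y=ax\in B$ with $\abs{a}\le 1$, then $cy=(ca)x\in B$ because $\abs{ca}=\abs{c}\abs{a}\le 1$.

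The only step carrying any content is the boundedness of $B$, and it follows from the bounded-bilinearity of scalar multiplication. The closed disk $\ol{D_1}$ is bounded in $\bbC$ (it is contained in $D_2$, and $\{D_R\mid R\in\bbR_{>0}\}$ is a basis of the von Neumann bornology), and $A$ is bounded in $X$ by hypothesis, so $\ol{D_1}\times A$ is bounded in $\bbC\times X$ for the product bornology. Since $m$ is a bounded map by the definition of a bornological vector space, $B=m(\ol{D_1}\times A)$ is bounded in $X$, as desired.

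For the last sentence of the statement there is nothing more to do: the bounded balanced subsets of $X$ form a subcollection of $\frB_X$, and we have just shown that every bounded set is contained in some bounded balanced set, which is exactly the defining condition for a basis of the bornology. I do not anticipate any genuine obstacle here; the argument is routine once one recognizes the balanced hull as the image of a product of bounded sets under a bounded map.
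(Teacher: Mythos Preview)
Your proof is correct and follows essentially the same approach as the paper: both take $B$ to be the image of a bounded disk times $A$ under scalar multiplication and invoke the boundedness of that map. The only cosmetic difference is that the paper uses $B=D_R\cdot A$ for some fixed $R>1$ rather than the closed unit disk, but the verification that $B$ is bounded, balanced, and contains $A$ is identical in spirit.
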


\begin{proof}
Let $A\subset X$ be a bounded set, and fix $R>1$. Then $B\ceq D_R\cdot A$ is a bounded set of $X$ satisfying $A\subset B$. Moreover, for any $a\in \bbC$ with $|a|\le 1$ and $x=bx'\in B$ ($b\in D_R$, $x'\in A$), we have $ax=abx'\in B$, since $ab\in D_R$. Hence $B$ is balanced.  
\end{proof}

The above constructions of bornological spaces are compatible with the linear structure: 

\emph{Subspace bornology}. 
For a bornological vector space $X$ and a linear subspace $E\subset X$, the subspace bornology on $E$ is a vector bornology. 

\emph{Quotient bornology}. 
For a bornological vector space $X$ and a linear subspace $E\subset X$, the quotient bornology on $X/E$ is a vector bornology. 

\emph{Product bornology}. 
For a family $\{X_\lambda\}_{\lambda\in \Lambda}$ of bornological vector spaces, the product bornology on $\prod_{\lambda\in \Lambda}X_\lambda$ is a vector bornology. The bornological vector space $\prod_{\lambda\in \Lambda}X_\lambda$ is a product in the category of bornological vector spaces with bounded linear maps. 

\emph{Projective limit bornology}. 
For a projective system $(\{X_\lambda\}_{\lambda\in \Lambda}, \{f_{\lambda, \mu}\colon X_\mu\to X_\lambda\}_{\lambda\le \mu})$ in the category of bornological vector spaces with bounded linear maps, the projective limit bornology on $\varprojlim_{\lambda\in \Lambda}X_\lambda$ is a vector bornology. The bornological vector space $\varprojlim_{\lambda\in \Lambda}X_\lambda$ is a projective limit in the category of bornological vector spaces with bounded linear maps. 

\smallskip

Most bornological vector spaces we are interested in satisfy an additional condition, called convexity. Recall that a subset $A$ of a linear space is \emph{convex} if $tx+(1-t)y\in A$ for all $x, y\in A$ and $0\le t\le 1$. For a subset $A$ of a linear space, we denote by $\conv A$ the convex hull of $A$, which is the smallest convex set containing $A$. Moreover, a balanced convex set is called a \emph{disked set}. For a subset $A$ of a linear space, we denote by $\disk A$ the disked hull of $A$, which is the smallest disked set containing $A$. 

\begin{dfn}
A \emph{convex bornological vector space} is a bornological vector space $X$ whose bornology admits a basis consisting of convex sets of $X$. Such a bornology is called a \emph{convex vector bornology}. 
\end{dfn}

\begin{exm}
The von Neumann bornology on a locally convex space $X$ is a convex vector bornology. Indeed, for a bounded set $B\subset X$, the disked hull $\disk B$ is also bounded, and hence $X$ has a basis consisting of convex sets. 
\end{exm}

\begin{lem}\label{lem:diskbound}
Let $X$ be a convex bornological vector space. If $A\subset X$ is bounded, then both $\conv A$ and $\disk A$ are bounded in $X$. In particular, the set of all bounded disked set forms a basis of $X$. 
\end{lem}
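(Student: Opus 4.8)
The plan is to reduce everything to the single claim that $\disk A$ is bounded. Indeed, since $\conv A\subset\disk A$ and subsets of bounded sets are bounded (the third bornology axiom), it suffices to prove that $\disk A$ is bounded; and once that is known, the final assertion follows exactly as the analogous statement in \cref{lem:bbbasis}, because for every bounded $A\subset X$ the set $\disk A$ is a bounded disked set containing $A$.

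First I would check that the balanced hull $A'\ceq\bigcup_{|a|\le 1}aA$ is bounded. The closed unit disk $\{a\in\bbC\mid|a|\le1\}$ is contained in $D_2$, hence bounded in $\bbC$, so $\{a\in\bbC\mid|a|\le1\}\times A$ is bounded in $\bbC\times X$ for the product bornology; applying the bounded scalar multiplication map $\bbC\times X\to X$, $(a,x)\mapsto ax$, shows that its image $A'$ is bounded. Next I would invoke the identity $\disk A=\conv A'$: the set $\conv A'$ is convex, contains $A$, and is balanced because the convex hull of a balanced set is balanced (if $x=\sum_i t_ix_i$ with $x_i\in A'$, $t_i\ge0$, $\sum_it_i=1$, then $ax=\sum_it_i(ax_i)\in\conv A'$ for $|a|\le1$, since $ax_i\in A'$); conversely any disked set containing $A$ contains $A'$ and hence $\conv A'$. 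Finally, since $X$ is a convex bornological vector space, its bornology has a basis of convex sets, so there is a bounded convex set $C\subset X$ with $A'\subset C$, whence $\disk A=\conv A'\subset C$ is bounded. Boundedness of $\conv A$ is then immediate from $\conv A\subset\disk A$, and the \emph{in particular} clause follows as noted.

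I do not expect a genuine obstacle here: the argument is a direct application of the stability of a (convex) vector bornology under subsets, finite products, and bounded maps. The only point requiring a moment's care is the description $\disk A=\conv A'$ together with the elementary fact that convex hulls of balanced sets remain balanced; everything else is routine.
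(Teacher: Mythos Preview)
Your proof is correct and follows essentially the same approach as the paper. The paper proves $\conv A$ is bounded first (directly from the convex basis), then cites \cref{lem:bbbasis} to obtain a bounded balanced set $B\supset A$ and observes $\disk A\subset\disk B=\conv B$; you instead construct the balanced hull $A'$ explicitly and write $\disk A=\conv A'$, which is the same idea unpacked, with the order of the two conclusions reversed.
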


\begin{proof}
By the definition of convex bornological vector spaces, there exists a bounded convex set $B\subset X$ such that $A\subset B$. Since $\conv A\subset \conv B=B$, we see that $\conv A\subset X$ is bounded. 

Moreover, by \cref{lem:bbbasis}, there exists a bounded balanced set $B\subset X$ such that $A\subset B$. Since $\disk A\subset \disk B=\conv B$, we see that $\disk A\subset X$ is bounded. 
\end{proof}

\begin{lem}\label{lem:constcborn}
Let $X$ be a linear space and $\frB$ a collection of subsets of $X$ satisfying the following conditions: 
\begin{clist}
\item 
Each element of $\frB$ is a nonempty disked set of $X$. 

\item 
For any $x\in X$, there exists $B\in \frB$ with $x\in B$. 

\item
For any $A_1, A_2\in \frB$, there exists $B\in \frB$ with $A_1+A_2\subset B$. 

\item 
For any $t\in \bbR_{>0}$ and $A\in \frB$, there exists $B\in \frB$ with $tA\subset B$. 
\end{clist}
Then $X$ admits a unique convex vector bornology for which $\frB$ is a basis. 
\end{lem}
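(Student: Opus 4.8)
The plan is to define the bornology explicitly as the downward closure of $\frB$, verify the bornological space axioms, then the vector-bornology axioms, and finally read off convexity for free. Concretely, set $\frB_X\ceq\{A\subset X\mid \exists B\in\frB,\ A\subset B\}$. I would dispatch uniqueness first, as it is immediate: any bornology having $\frB$ as a basis contains $\frB$ (a basis is by definition a subcollection of the bounded sets), hence contains every subset of a member of $\frB$ by stability under subsets, so it contains $\frB_X$; conversely the defining property of a basis forces every bounded set to be contained in some member of $\frB$, so it is contained in $\frB_X$. Thus $\frB_X$ is the only candidate, and it remains to show it works.

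Next I would check that $\frB_X$ is a bornology. Stability under subsets holds by construction, and the fact that singletons are bounded is exactly hypothesis (ii). For stability under finite unions the key observation is that every $B\in\frB$ contains $0$: since $B$ is nonempty and balanced, choosing $x\in B$ gives $0=0\cdot x\in B$. Hence if $A_i\subset B_i\in\frB$ for $i=1,2$, then $A_1\cup A_2\subset B_1\cup B_2\subset B_1+B_2$, and by hypothesis (iii) the latter lies in some $B\in\frB$, so $A_1\cup A_2\in\frB_X$. That $\frB$ is a basis of $\frB_X$ is clear, and since each $B\in\frB$ is disked, hence convex, this basis consists of convex sets; so once $\frB_X$ is shown to be a vector bornology, it is automatically a \emph{convex} vector bornology.

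The remaining and slightly more delicate step is to show that addition $X\times X\to X$ and scalar multiplication $\bbC\times X\to X$ are bounded. A set bounded in $X\times X$ for the product bornology is contained in $B_1\times B_2$ with $B_1,B_2\in\frB$ (products of basis elements form a basis of the product bornology), and its image under $+$ is $B_1+B_2$, which lies in some member of $\frB$ by hypothesis (iii); so addition is bounded. For scalar multiplication, a set bounded in $\bbC\times X$ is contained in $D_r\times B$ for some $r\in\bbR_{>0}$ and $B\in\frB$ (using that $\{D_r\}$ is a basis of the von Neumann bornology on $\bbC$), and its image is $D_r\cdot B$. Here I use balancedness again: for $a\in\bbC$ with $|a|<r$, writing $a=r\cdot(a/r)$ with $|a/r|<1$ gives $aB=r\cdot(a/r)B\subset rB$, hence $D_r\cdot B\subset rB$; by hypothesis (iv) this is contained in some member of $\frB$. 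Therefore both structure maps are bounded, $\frB_X$ is a convex vector bornology admitting $\frB$ as a basis, and by the uniqueness established above the proof is complete.

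I do not expect a genuine obstacle here — the argument is pure bookkeeping — and the hypotheses are used non-trivially at exactly two points: $0\in B$ for each $B\in\frB$ (needed for closure under unions) and $D_r\cdot B\subset rB$ (needed for boundedness of scalar multiplication), both consequences of the members of $\frB$ being disked, with (iii) and (iv) then supplying precisely the absorption of sums and dilations back into $\frB$.
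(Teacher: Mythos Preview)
Your proof is correct and follows essentially the same approach as the paper: both define the bornology as the downward closure of $\frB$, use that nonempty balanced sets contain $0$ to deduce $A_1\cup A_2\subset A_1+A_2$ (hence closure under finite unions via (iii)), and handle scalar multiplication via $D_r\cdot B\subset rB$ together with (iv). Your version is slightly more explicit about uniqueness and about the product-bornology bases, but there is no substantive difference.
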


\begin{proof}
Since every nonempty balanced set contains $0$, we have $A_1\cup A_2\subset A_1+A_2$ for $A_1, A_2\in \frB$. This, together with condition (ii), yields that $\frB$ generates a bornology. Hence, it suffices to show that the resulting bornology is a vector bornology. The boundedness of addition follows directly from condition (iii). Moreover, for $r\in \bbR_{>0}$ and $A\in \frB$, condition (i) implies that $D_r\cdot A\subset rA$. Since $rA\subset X$ is bounded by condition (iv), we see that $D_r\cdot A\subset X$ is bounded. Thus, the boundedness of scalar multiplication follows. 
\end{proof}

\begin{dfn}
Let $X$ be a convex bornological vector space. A net $\{x_\lambda\}_{\lambda\in \Lambda}\subset X$ is said to \emph{converge bornologically} to $x\in X$, or equivalently,  $x$ is a \emph{bornological limit} of $\{x_\lambda\}_{\lambda\in \Lambda}$ if there exist a bounded set $B\subset X$ and a net $\{t_\lambda\}_{\lambda\in \Lambda}\subset \bbR$ with $\lim_{\lambda\in \Lambda}t_\lambda=0$ such that $x_\lambda-x\in t_\lambda B$ for all $\lambda\in \Lambda$. 
\end{dfn}

\begin{lem}\label{lem:boundmapconv}
Let $f\colon X\to Y$ be a bounded linear map between convex bornological vector spaces. If a net $\{x_\lambda\}_{\lambda\in \Lambda}\subset X$ bornologically converges to $x\in X$, then the net $\{f(x_\lambda)\}_{\lambda\in \Lambda}$ bornologically converges to $f(x)$ in $Y$. 
\end{lem}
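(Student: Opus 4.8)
The plan is to unwind the definition of bornological convergence and transport the witnessing data along $f$, using nothing beyond linearity of $f$ and the fact, built into the notion of boundedness, that $f$ carries bounded sets to bounded sets. In particular, convexity of $X$ and $Y$ will play no active role; it is present only because it is part of the standing hypotheses on the objects.

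First I would invoke the hypothesis that $\{x_\lambda\}_{\lambda\in\Lambda}$ converges bornologically to $x$: by definition there exist a bounded set $B\subset X$ and a net $\{t_\lambda\}_{\lambda\in\Lambda}\subset\bbR$ with $\lim_{\lambda\in\Lambda}t_\lambda=0$ such that $x_\lambda-x\in t_\lambda B$ for every $\lambda\in\Lambda$. Note $B$ is necessarily nonempty, since otherwise the condition $x_\lambda-x\in t_\lambda B$ could not hold.

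Next I would push this forward through $f$. By linearity, $f(x_\lambda)-f(x)=f(x_\lambda-x)$, and from $x_\lambda-x\in t_\lambda B$ we obtain $f(x_\lambda-x)\in f(t_\lambda B)=t_\lambda\,f(B)$ for every $\lambda\in\Lambda$; here the identity $f(t_\lambda B)=t_\lambda f(B)$ is immediate from linearity of $f$, and remains valid when $t_\lambda=0$ because $B$ is nonempty, so both sides equal $\{0\}$. Since $f$ is bounded and $B\subset X$ is bounded, the set $f(B)\subset Y$ is bounded. Hence the bounded set $f(B)$ together with the \emph{same} scalar net $\{t_\lambda\}_{\lambda\in\Lambda}$, which still satisfies $\lim_{\lambda\in\Lambda}t_\lambda=0$, exhibits $\{f(x_\lambda)\}_{\lambda\in\Lambda}$ as bornologically convergent to $f(x)$, which is the assertion.

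I do not anticipate a genuine obstacle: once the definition is unwound, the argument is essentially a one-line computation. The only place that calls for even minimal care is the set-theoretic identity $f(t_\lambda B)=t_\lambda f(B)$ and the harmless edge case $t_\lambda=0$, both of which are handled above.
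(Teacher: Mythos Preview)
Your proof is correct and follows essentially the same approach as the paper: unwind the definition to obtain a bounded set $B$ and scalars $t_\lambda\to 0$ with $x_\lambda-x\in t_\lambda B$, then observe that $f(B)$ is bounded and $f(x_\lambda)-f(x)\in t_\lambda f(B)$. Your additional remarks on the nonemptiness of $B$ and the $t_\lambda=0$ edge case are harmless elaborations of the same argument.
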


\begin{proof}
Take a bounded set $B\subset X$ and a net $\{t_\lambda\}_{\lambda\in \Lambda}\subset \bbR$ with $\lim_{\lambda\in \Lambda}t_\lambda=0$ such that $x_\lambda-x\in t_\lambda B$ for all $\lambda\in \Lambda$. Then $f(B)\subset Y$ is bounded, and we have $f(x_\lambda )-f(x)\in t_\lambda f(B)$ for all $\lambda\in \Lambda$. 
\end{proof}

We next establish the relationship between bornological convergence and the usual convergence in topological space. 
Let $X$ be a linear space. For a nonempty disked set $A\subset X$, define
\[
p_A\colon \Span_\bbC A\to \bbR, \quad x\mapsto \inf\{t\in \bbR_{>0}\mid x\in tA\}.
\]
Then $p_A$ is a seminorm, since $A$ is an absorbing set of $\Span_\bbC A$. We denote $\Span_\bbC A$ by $X_A$ regarded as a seminormed space via $p_A$. Note that $\{tA\mid t\in \bbR_{>0}\}$ forms a basis of the von Neumann bornology of $X_A$. Moreover, for nonempty disked sets $A, B\subset X$ with $A\subset B$, we have $p_A(x)\ge p_B(x)$ for all $x\in X_A$, so the inclusion $X_A\inj X_B$ is continuous. 

\begin{lem}\label{lem:disklinmap}
Let $X$, $Y$ be linear spaces and $f\colon X\to Y$ a linear map. For a nonempty disked set $A\subset X$, the subset $f(A)\subset Y$ is a nonempty disked set such that $Y_{f(A)}$ is isometric to the quotient seminormed space $X_A/(\Ker f\cap X_A)$. 
\end{lem}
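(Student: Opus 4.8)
The plan is to realize the desired isometry as the linear map induced by $f$ on $X_A$. First I would check that $f(A)\subset Y$ is a nonempty disked set: it is nonempty since $A$ is, it is balanced because $a\,f(A)=f(aA)\subset f(A)$ whenever $|a|\le 1$ (using that $A$ is balanced), and it is convex because $f$ is linear. Hence the seminormed space $Y_{f(A)}$ is defined, and since $f$ is linear we have $Y_{f(A)}=\Span_\bbC f(A)=f(\Span_\bbC A)=f(X_A)$. Thus the restriction $f|_{X_A}\colon X_A\to Y_{f(A)}$ is a surjective linear map with kernel $\Ker f\cap X_A$, and by the first isomorphism theorem for linear spaces it descends to a linear isomorphism $\bar f\colon X_A/(\Ker f\cap X_A)\sto Y_{f(A)}$.

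It then remains to verify that $\bar f$ preserves the seminorms, i.e.\ that for every $x\in X_A$,
\[
\inf_{z\in \Ker f\cap X_A}p_A(x-z)=p_{f(A)}(f(x)).
\]
Setting $y=f(x)$, the left-hand side equals $\inf\{p_A(x')\mid x'\in X_A,\ f(x')=y\}$. For the inequality $\le$, observe that if $y\in t\,f(A)$ then $y=t\,f(a)$ for some $a\in A$; the element $x'\ceq ta\in X_A$ satisfies $f(x')=y$ and $p_A(x')\le t$ (since $ta\in tA$), so taking the infimum over all such $t$ shows the left-hand side is $\le p_{f(A)}(y)$. For the inequality $\ge$, observe that if $x'\in X_A$ with $f(x')=y$ and $x'\in tA$, then $y=f(x')\in t\,f(A)$, whence $p_{f(A)}(y)\le t$; taking the infimum over such $t$ gives $p_{f(A)}(y)\le p_A(x')$, and then the infimum over all admissible $x'$ gives $p_{f(A)}(y)\le$ the left-hand side.

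I do not expect a genuine obstacle here; the argument is a direct manipulation of the Minkowski functionals $p_A$ and $p_{f(A)}$. The only points meriting a little care are that the quotient seminorm on $X_A/(\Ker f\cap X_A)$ is well defined even though $\Ker f\cap X_A$ need not be $p_A$-closed (a seminorm always descends to a quotient seminorm on any quotient by a linear subspace), and the routine bookkeeping with the defining infima, in particular the elementary fact that $p_A(ta)\le t$ for $a\in A$, which is immediate from $ta\in tA$.
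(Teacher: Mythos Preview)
Your proposal is correct and follows essentially the same approach as the paper: both identify the isometry as the map induced by $f|_{X_A}\colon X_A\to Y_{f(A)}$, note its kernel is $\Ker f\cap X_A$, and then verify that the quotient seminorm $\ol{p}_A$ agrees with $p_{f(A)}$. The paper simply declares this last verification ``easy,'' whereas you spell out the two inequalities via the defining infima of the Minkowski functionals.
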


\begin{proof}
It is clear that $f(A)\subset Y$ is a nonempty disked set. Since $f|_{X_A}\colon X_A\to Y_{f(A)}$ is a surjective linear map with $N\ceq \Ker(f|_{X_A})=\Ker f\cap X_A$, it induces a linear isomorphism $X_A/N\sto Y_{f(A)}$. Recall that the seminorm on $X/N$ is defined by $\ol{p}_A(\ol{x})\ceq \inf_{y\in N}p_A(x-y)$. For $x\in X_A$, it is then easy to verify that $p_{f(A)}(f(x))=\ol{p}_A(\ol{x})$, which proves the lemma. 
\end{proof}

\begin{lem}\label{lem:incXAbound}
Let $X$ be a convex bornological vector space. For a nonempty bounded disked set $A\subset X$, the inclusion $X_A\inj X$ is bounded. 
\end{lem}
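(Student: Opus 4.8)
The plan is to unwind the definitions; the assertion is then essentially immediate. Recall that $X_A=\Span_\bbC A$ is normed by the seminorm $p_A$, and that $\{tA\mid t\in\bbR_{>0}\}$ is a basis of the von Neumann bornology of $X_A$. Since a linear map between bornological spaces is bounded as soon as it sends the members of a fixed basis of the source bornology to bounded sets, it suffices to prove that $tA\subset X$ is bounded for every $t\in\bbR_{>0}$.

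Now $A$ is bounded in $X$ by hypothesis. Because $X$ is a bornological vector space, its scalar multiplication $\bbC\times X\to X$ is bounded, and $\{t\}\times A$ is a bounded subset of $\bbC\times X$ for the product bornology (as $\{t\}$ is bounded in $\bbC$ and $A$ in $X$); hence its image $tA$ is bounded in $X$. Alternatively, one may note $tA\subset D_{|t|+1}\cdot A$ and that $D_{|t|+1}\cdot A$ is bounded in $X$ again by boundedness of scalar multiplication. Consequently, given any bounded $B\subset X_A$, there is $t\in\bbR_{>0}$ with $B\subset tA$, so $B$ is bounded in $X$; this is precisely the boundedness of the inclusion $X_A\inj X$.

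There is no real obstacle here. The only point requiring a moment's care is that the bornology basis of $X_A$ consists of the dilates $tA$ rather than $A$ itself --- $p_A$ is a seminorm and $A$ need not be its unit ball --- which is why the reduction in the first step is to ``$tA$ bounded for all $t$'' and not merely to ``$A$ bounded''. The disked hypothesis on $A$ plays no role in this particular lemma beyond guaranteeing that $p_A$ is a genuine seminorm so that $X_A$ is a well-defined seminormed (hence bornological) space; it is \cref{lem:diskbound} that was needed to know such bounded disked $A$ exist in abundance.
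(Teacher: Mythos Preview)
Your proof is correct and takes essentially the same approach as the paper: the paper's argument is the one-line observation that $\{tA\mid t\in\bbR_{>0}\}$ is a basis of $X_A$ and each $tA$ is bounded in $X$, which is exactly what you do (with a bit more justification for the boundedness of $tA$).
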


\begin{proof}
This is immediate since $\{tA\mid t\in\bbR_{>0}\}$ forms a basis of $X_A$, and each $tA\subset X$ is bounded. 
\end{proof}

\begin{lem}\label{lem:bornconv}
Let $X$ be a convex bornological vector space and $B\subset X$ a nonempty bounded disked set. If a net $\{x_\lambda\}_{\lambda\in \Lambda}\subset X_B$ converges to $x\in X_B$ (in usual sense), then there exists a net $\{t_\lambda\}_{\lambda\in \Lambda}\subset \bbR_{\ge 0}$ with $\lim_{\lambda\in \Lambda}t_\lambda=0$ such that $x_\lambda-x\in t_\lambda B$ for all $\lambda\in \Lambda$. 
\end{lem}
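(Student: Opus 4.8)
The plan is to read the required null net $\{t_\lambda\}$ directly off the seminorm values. By definition, the hypothesis that $\{x_\lambda\}_{\lambda\in\Lambda}$ converges to $x$ in $X_B$ in the usual sense means precisely that $p_B(x_\lambda-x)\to 0$ in $\bbR$, where $p_B\colon X_B\to\bbR$ is the seminorm introduced in the discussion preceding \cref{lem:disklinmap} (recall $X_B=\Span_\bbC B$ and $B$ is absorbing in $X_B$, so $p_B$ is finite-valued). So it remains to convert this numerical convergence into the bornological statement, with $B$ itself serving as the bounded disked set witnessing bornological convergence.

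The key elementary observation is that, since $B$ is disked, one has $sB\subseteq tB$ whenever $0\le s\le t$: indeed $sB=t\cdot(\tfrac{s}{t}B)\subseteq tB$ because $\tfrac{s}{t}\in[0,1]$ and $B$ is balanced (cf.\ \cref{lem:bbbasis}). Combined with the definition of $p_B$ as an infimum, this yields: for $y\in X_B$ and any real $t>p_B(y)$ there is $s$ with $0<s<t$ and $y\in sB$, hence $y\in sB\subseteq tB$; in particular, if $p_B(y)=0$ then $y\in tB$ for every $t>0$. With this in hand I would simply put $t_\lambda\ceq 2\,p_B(x_\lambda-x)$. Then $0\le t_\lambda\le 2\,p_B(x_\lambda-x)$, so $\lim_{\lambda}t_\lambda=0$ by squeezing, and for every $\lambda$ with $p_B(x_\lambda-x)>0$ the observation above gives $x_\lambda-x\in t_\lambda B$ because $t_\lambda=2\,p_B(x_\lambda-x)>p_B(x_\lambda-x)$.

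The one genuinely delicate point — which I expect to be the only real obstacle — concerns those indices $\lambda$ with $p_B(x_\lambda-x)=0$: there one wants $x_\lambda-x\in t_\lambda B=\{0\}$, i.e.\ $x_\lambda-x=0$, yet $p_B$ is only a seminorm. This holds automatically exactly when $p_B$ is a norm on $X_B$, equivalently when no bounded vector subspace of $X$ is nonzero, which is guaranteed for the separated convex bornological vector spaces (in particular the convenient vector spaces) relevant to the rest of the paper; in the general case one instead assigns to such $\lambda$ a sufficiently small positive $t_\lambda$, using that $x_\lambda-x\in tB$ for all $t>0$, and still arranges $t_\lambda\to 0$ by keeping $t_\lambda\le 2\,p_B(x_\lambda-x)$ wherever the latter is positive. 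Everything else — constructing $\{t_\lambda\}$ and checking that it converges to $0$ — is routine and uses only that $B$ is balanced and convex together with the squeeze theorem.
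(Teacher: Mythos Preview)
Your approach is exactly the paper's: set $t_\lambda \ceq \delta\, p_B(x_\lambda - x)$ for a fixed $\delta > 1$ (the paper leaves $\delta$ arbitrary, you take $\delta = 2$) and use $t_\lambda > p_B(x_\lambda - x)$ together with the fact that $B$ is disked to get $x_\lambda - x \in t_\lambda B$. You are actually more careful than the paper in flagging the degenerate indices with $p_B(x_\lambda - x) = 0$; the paper's proof silently absorbs these into the weak inequality ``$t_\lambda \ge p_B(x_\lambda - x)$''.

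One small correction to your final paragraph: the patch you sketch for the non-separated case (assigning a small positive $t_\lambda$ at the degenerate indices while still arranging $t_\lambda \to 0$) cannot always be carried out for arbitrary directed sets. For instance, if $\Lambda$ has a maximal element $\mu$ with $p_B(x_\mu - x) = 0$ but $x_\mu \neq x$, then $\lim_\lambda t_\lambda = 0$ forces $t_\mu = 0$, whence $x_\mu - x \in 0 \cdot B = \{0\}$ fails. So, read literally, the lemma does require $p_B$ to be a norm; but as you rightly observe, this is automatic in the separated (in particular convenient) setting in which the paper actually uses the result.
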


\begin{proof}
Fix $\delta>1$, and set $t_\lambda\ceq \delta p_B(x_\lambda-x)\in \bbR_{\ge0}$ for each $\lambda\in \Lambda$. Since $\{x_\lambda\}_{\lambda\in \Lambda}$ converges to $x$ in $X_B$, we have $\lim_{\lambda\in \Lambda}t_\lambda=0$. Moreover, for every $\lambda\in \Lambda$, since $t_\lambda \ge p_B(x_\lambda -x)$, it follows that $x_\lambda-x\in t_\lambda B$. 
\end{proof}

\begin{prp}\label{prp:bornconvequiv}
Let $X$ be a convex bornological vector space. For a net $\{x_\lambda\}\subset X$ and $x\in X$, the following conditions are equivalent: 
\begin{clist}
\item 
$\{x_\lambda\}_{\lambda\in \Lambda}$ converges bornologically to $x$ in $X$. 

\item 
There exist a bounded set $B\subset X$ and a net $\{t_\lambda\}_{\lambda\in \Lambda}\subset \bbR_{\ge0}$ with $\lim_{\lambda\in \Lambda}t_\lambda=0$ such that $x_\lambda-x\in t_\lambda B$ for all $\lambda\in \Lambda$. 

\item 
There exists a nonempty bounded disked set $B\subset X$ such that $\{x_\lambda\}_{\lambda\in \Lambda}\subset X_B$, $x\in X_B$ and $\{x_\lambda\}_{\lambda\in \Lambda}$ converges to $x$ in $X_B$. 
\end{clist}
\end{prp}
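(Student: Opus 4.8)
The plan is to prove the three conditions equivalent in the shape (i)$\Leftrightarrow$(ii) and (ii)$\Leftrightarrow$(iii): the first equivalence is essentially bookkeeping about signs, \cref{lem:bornconv} takes care of the implication (iii)$\Rightarrow$(ii), and the real content sits in (ii)$\Rightarrow$(iii).

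First I would dispose of (i)$\Leftrightarrow$(ii). The implication (ii)$\Rightarrow$(i) is trivial since $\bbR_{\ge 0}\subset\bbR$. For (i)$\Rightarrow$(ii), take the bounded set $B$ and the net $\{t_\lambda\}_{\lambda\in\Lambda}\subset\bbR$ with $t_\lambda\to0$ and $x_\lambda-x\in t_\lambda B$ furnished by the definition of bornological convergence, and use \cref{lem:bbbasis} to enlarge $B$ to a bounded \emph{balanced} set $B_1$. Since $B_1$ is balanced, $t_\lambda B\subset |t_\lambda|\,B_1$ for every $\lambda$, so $x_\lambda-x\in|t_\lambda|\,B_1$ with $\{|t_\lambda|\}\subset\bbR_{\ge0}$ still tending to $0$; this is (ii). After this, (i) and (ii) may be used interchangeably. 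For (iii)$\Rightarrow$(ii) there is essentially nothing to do: if $B$ is a nonempty bounded disked set with $\{x_\lambda\}_{\lambda\in\Lambda}\cup\{x\}\subset X_B$ and $x_\lambda\to x$ in $X_B$, then \cref{lem:bornconv} produces $\{t_\lambda\}\subset\bbR_{\ge0}$ with $t_\lambda\to0$ and $x_\lambda-x\in t_\lambda B$, and $B$ is bounded in $X$, which is exactly (ii).

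The substantive step is (ii)$\Rightarrow$(iii). Starting from a bounded set $B$ and a net $\{t_\lambda\}\subset\bbR_{\ge0}$ with $t_\lambda\to0$ and $x_\lambda-x\in t_\lambda B$ for all $\lambda$ --- where I may assume $\Lambda\ne\varnothing$, hence $B\ne\varnothing$, the empty case being vacuous --- I would set $B'\ceq\disk(B\cup\{x\})$. This is a nonempty disked set, and it is bounded in $X$ by \cref{lem:diskbound} applied to the bounded set $B\cup\{x\}$. The point of adjoining $x$ is precisely that $x\in B'\subset X_{B'}$, so that $x_\lambda=x+(x_\lambda-x)\in X_{B'}$ for every $\lambda$, using $x_\lambda-x\in t_\lambda B\subset X_{B'}$. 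Finally, $x_\lambda-x\in t_\lambda B'$ gives $p_{B'}(x_\lambda-x)\le t_\lambda$ (the case $t_\lambda=0$ forcing $x_\lambda=x$), so $p_{B'}(x_\lambda-x)\to0$, i.e.\ $x_\lambda\to x$ in $X_{B'}$, and $B'$ witnesses (iii).

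I expect the only genuine obstacle to be spotting the correct bounded disked set in (ii)$\Rightarrow$(iii): one has to enlarge $B$ so that it also absorbs the limit point $x$ --- otherwise there is no reason for $x$, or the $x_\lambda$, to lie in any $X_{B'}$ --- while keeping $B'$ bounded, and it is \cref{lem:diskbound} that makes these two demands compatible. Once $B'$ is in hand, the seminorm estimate and the sign/absolute-value manipulations are routine.
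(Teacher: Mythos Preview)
Your proof is correct and follows essentially the same approach as the paper. The only organizational difference is that the paper proves the cycle (ii)$\Rightarrow$(i), (iii)$\Rightarrow$(ii), (i)$\Rightarrow$(iii) directly, absorbing the sign manipulation $t_\lambda B\subset|t_\lambda|B$ into the last implication (since the set $B=\disk(A\cup\{x\})$ is already balanced), whereas you isolate (i)$\Leftrightarrow$(ii) as a separate preliminary step via \cref{lem:bbbasis}; the key construction $\disk(B\cup\{x\})$ and the seminorm estimate are identical.
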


\begin{proof}
(ii) $\Longrightarrow$ (i) is trivial, and (iii) $\Longrightarrow$ (ii) follows from \cref{lem:bornconv}. 

(i) $\Longrightarrow$ (iii): Take a bounded set $A\subset X$ and a net $\{t_\lambda\}_{\lambda\in \Lambda}\subset \bbR$ with $\lim_{\lambda\in \Lambda}t_\lambda=0$ such that $x_\lambda-x\in t_\lambda A$ for all $\lambda\in \Lambda$. By \cref{lem:diskbound}, the subset $B\ceq \disk(A\cup \{x\})\subset X$ is nonempty, bounded, and disked. For $\lambda\in \Lambda$, we have
\[
x_\lambda=(x_\lambda-x)+x\in t_\lambda A+B\subset X_B. 
\]
Moreover, for $\lambda\in \Lambda$, since $B$ is balanced, we have
\[
x_\lambda-x\in t_\lambda A\subset t_\lambda B=|t_\lambda|B. 
\]
Thus, we obtain $p_B(x_\lambda-x)\le |t_\lambda|$ for all $\lambda\in \Lambda$, which shows that $\{x_\lambda\}_{\lambda\in \Lambda}$ converges to $x$ in $X_B$. 
\end{proof}

Based on the notion of bornological convergence, we introduce the concept of closed subsets in a convex bornological space. 

\begin{dfn}
Let $X$ be a convex bornological vector space. A subset $A\subset X$ is called \emph{bornologically closed} if, whenever a net $\{x_\lambda\}_{\lambda\in \Lambda}\subset A$ converges bornologically to some $x\in X$, we have $x\in A$. 
\end{dfn}

For a subset $A$ of a convex bornological vector space $X$, there exists the smallest bornologically closed subset of $X$ containing $A$. We call it the bornological closure of $A$ in $X$ and denote it by $\ol{A}$. 

\begin{lem}\label{lem:borncl}
Let $X$ be a convex bornological vector space. A subset $A\subset X$ is bornologically closed if and only if, for every nonempty bounded disked set $B\subset X$, the set $A\cap X_B$ is closed in $X_B$. 
\end{lem}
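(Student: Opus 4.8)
The plan is to reduce both implications to \cref{prp:bornconvequiv}, which identifies bornological convergence of a net in $X$ with ordinary convergence of that net inside one of the seminormed spaces $X_B$ attached to a nonempty bounded disked set $B\subset X$. Once this translation is in place, each direction is a two-line argument, and the only auxiliary facts needed are \cref{lem:bornconv} and \cref{prp:bornconvequiv} itself.

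For the "only if" direction, I would assume $A$ is bornologically closed and fix a nonempty bounded disked set $B\subset X$. Take an arbitrary net $\{x_\lambda\}_{\lambda\in\Lambda}\subset A\cap X_B$ converging to some $x\in X_B$ in the seminorm topology of $X_B$. By \cref{lem:bornconv} there is a net $\{t_\lambda\}_{\lambda\in\Lambda}\subset\bbR_{\ge0}$ with $\lim_{\lambda}t_\lambda=0$ and $x_\lambda-x\in t_\lambda B$ for all $\lambda$. Since $B$ is bounded in $X$, this is precisely the defining condition for $\{x_\lambda\}$ to converge bornologically to $x$ in $X$, so bornological closedness of $A$ forces $x\in A$, hence $x\in A\cap X_B$. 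As the net was arbitrary, $A\cap X_B$ is closed in $X_B$.

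For the "if" direction, assume $A\cap X_B$ is closed in $X_B$ for every nonempty bounded disked set $B\subset X$, and let $\{x_\lambda\}_{\lambda\in\Lambda}\subset A$ converge bornologically to $x\in X$. Here I would apply the implication (i)$\Rightarrow$(iii) of \cref{prp:bornconvequiv} to obtain a nonempty bounded disked set $B\subset X$ with $\{x_\lambda\}_{\lambda\in\Lambda}\subset X_B$, $x\in X_B$, and $x_\lambda\to x$ in $X_B$. Then $\{x_\lambda\}_{\lambda\in\Lambda}\subset A\cap X_B$, and closedness of $A\cap X_B$ in $X_B$ yields $x\in A\cap X_B\subset A$. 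Hence $A$ is bornologically closed.

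No step is a genuine obstacle: the substance is entirely absorbed into \cref{prp:bornconvequiv} and \cref{lem:bornconv}. The only point that deserves a moment of care is that each $X_B$ carries only a seminorm (not a norm), so "closed in $X_B$" must be understood as "containing the limit of every $X_B$-convergent net contained in it" — limits in $X_B$ need not be unique — but both arguments above only ever invoke that net-theoretic formulation of closedness, so this subtlety causes no difficulty.
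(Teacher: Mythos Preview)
Your proof is correct and follows essentially the same approach as the paper's own proof: both directions are reduced, via \cref{lem:bornconv} and \cref{prp:bornconvequiv}, to the statement that bornological convergence in $X$ is the same as ordinary convergence in some $X_B$. Your additional remark about $X_B$ carrying only a seminorm is a valid clarification but does not alter the argument.
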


\begin{proof}
Necessary part: If a net $\{x_\lambda\}_{\lambda\in \Lambda}\subset A\cap X_B$ converges to $x\in X_B$, then by \cref{lem:bornconv}, the net $\{x_\lambda\}_{\lambda\in \Lambda}$ converges bornologically to $x$. Since $A$ is bornologically closed, we have $x\in A\cap X_B$. 

Sufficient part: If a net $\{x_\lambda\}_{\lambda\in \Lambda}\subset A$ converges bornologically to $x\in X$, then by \cref{prp:bornconvequiv}, there exists a nonempty bounded disked set $B\subset X$ such that $\{x_\lambda\}_{\lambda\in \Lambda}$ converges to $x$ in $X_B$. Since $A\cap X_B$ is closed in $X_B$, we have $x\in A$. 
\end{proof}

Finally, we discuss two important properties of bornological spaces: separation and completeness.  

\begin{dfn}
A convex bornological vector space $X$ is said to be \emph{separated} (or, more precisely, \emph{bornologically separeted}) if $\{0\}$ is the only bounded linear subspace of $X$. 
\end{dfn}

\begin{prp}\label{prp:bornsep}
For a convex bornological vector space $X$, the following conditions are equivalent: 
\begin{clist}
\item 
$X$ is bornologically separated. 

\item 
$\{0\}$ is bornologically closed in $X$.

\item 
For any nonempty bounded disked set $B\subset X$, the seminormed space $X_B$ is Hausdorff, i.e., its seminorm $p_B$ is a norm.  

\item 
Every bornologically convergent net $\{x_\lambda\}_{\lambda\in \Lambda}\subset X$ has a unique bornological limit. 

\item 
Every bornologically convergent sequence $\{x_n\}_{n\in \bbN}\subset X$ has a unique bornological limit. 
\end{clist}
\end{prp}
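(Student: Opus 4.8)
The plan is to route all the equivalences through condition (iii) and to make systematic use of \cref{prp:bornconvequiv}, which translates bornological convergence in $X$ into ordinary convergence inside the seminormed spaces $X_B$. Concretely, I would first establish the block (i)~$\Leftrightarrow$~(iii)~$\Leftrightarrow$~(ii), and then close the circle with (iii)~$\Rightarrow$~(iv)~$\Rightarrow$~(v)~$\Rightarrow$~(i).

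For (i)~$\Leftrightarrow$~(iii) the key object is the linear subspace $\ker p_B=\{x\in X_B\mid p_B(x)=0\}$ attached to a nonempty bounded disked set $B$. Since $B$ is balanced, $p_B(x)=0$ forces $x\in \varepsilon B$ for every $\varepsilon>0$, so in particular $\ker p_B\subset B$ is a bounded linear subspace of $X$; hence a failure of (iii) yields a nonzero bounded linear subspace, contradicting (i). Conversely, if $L\subset X$ is a nonzero bounded linear subspace, then $L$ is itself a nonempty disked set with $X_L=\Span_\bbC L=L$ and $p_L\equiv 0$, so $p_L$ is not a norm and (iii) fails. For (ii)~$\Leftrightarrow$~(iii) I would invoke \cref{lem:borncl}: the subset $\{0\}$ is bornologically closed precisely when $\{0\}=\{0\}\cap X_B$ is closed in each $X_B$, and in a seminormed space the closure of $\{0\}$ is exactly $\ker p_B$, so this is equivalent to every $p_B$ being a norm.

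For (iii)~$\Rightarrow$~(iv), suppose a net $\{x_\lambda\}_{\lambda\in\Lambda}$ converges bornologically to both $x$ and $y$. By \cref{prp:bornconvequiv} there are nonempty bounded disked sets $B_1,B_2$ such that $\{x_\lambda\}$ converges to $x$ in $X_{B_1}$ and to $y$ in $X_{B_2}$; putting $B\ceq \disk(B_1\cup B_2)$, which is bounded by \cref{lem:diskbound}, the continuous inclusions $X_{B_i}\inj X_B$ show that $\{x_\lambda\}$ converges to both $x$ and $y$ in $X_B$, and since $p_B$ is a norm the space $X_B$ is Hausdorff, forcing $x=y$. The implication (iv)~$\Rightarrow$~(v) is immediate, as a sequence is a net. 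For (v)~$\Rightarrow$~(i), if $X$ were not separated, choose a nonzero vector $v$ in a bounded linear subspace $L$; the constant sequence $x_n=v$ converges bornologically to $v$ but also to $0$, since $v\in \tfrac1n L=L$ with $\tfrac1n\to 0$ and $L$ bounded, so uniqueness in (v) fails.

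I do not anticipate a substantial obstacle here: the argument is essentially bookkeeping. The only points demanding care are the balanced/disked manipulations needed to push scalars through the gauges $p_B$ (for instance the inclusion $\ker p_B\subset B$) and the use of \cref{lem:diskbound} to replace the two witnessing bounded sets $B_1,B_2$ by a single one. The conceptual content is simply that separation is exactly the absence of a nontrivial kernel in any gauge $p_B$, while \cref{prp:bornconvequiv} lets uniqueness of limits be imported from the normed fragments $X_B$.
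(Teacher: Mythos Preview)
Your proposal is correct and follows essentially the same route as the paper: both establish (i)~$\Leftrightarrow$~(iii) directly, deduce (ii)~$\Leftrightarrow$~(iii) from \cref{lem:borncl}, prove (iii)~$\Rightarrow$~(iv) by merging the two witnessing disks via \cref{lem:diskbound} and applying \cref{prp:bornconvequiv}, and close the circle through (v). The only cosmetic difference is that you finish with (v)~$\Rightarrow$~(i) via the constant sequence $x_n=v$ in a nonzero bounded linear subspace, whereas the paper instead proves (v)~$\Rightarrow$~(iii) by pulling a non-uniquely convergent sequence in $X_B$ back to a bornologically convergent one via \cref{lem:bornconv}; both are equally short and valid ways to complete the cycle.
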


\begin{proof}
(i) $\Longrightarrow$ (iii): If $x\in X_B$ satisfies $p_B(x)=0$, then $\Span_\bbC\{x\}$ is bounded in $X_B$. Hence $\Span_\bbC\{x\}$ is also bounded in $X$ by \cref{lem:incXAbound}. This implies $x=0$. 

(iii) $\Longrightarrow$ (i): If $E\subset X$ is a bounded linear subspace, then $E$ is a nonempty bounded disked set of $X$. Hence $p_E$ is a norm. For any $x\in E$, we have $p_E(x)=0$, which shows $E=\{0\}$. 

(ii) $\Longleftrightarrow$ (iii): This follows from \cref{lem:borncl}. 

(iii) $\Longrightarrow$ (iv): If $x, y\in X$ are both bornological limits of $\{x_\lambda\}_{\lambda\in \Lambda}$, then by \cref{prp:bornconvequiv}, there exists a nonempty bounded disked set $A\subset X$ (resp. $B\subset X$) such that $\{x_\lambda\}_{\lambda\in \Lambda}$ converges to $x$ (resp. $y$) in $X_A$ (resp. $X_B$). Let $C\ceq \disk(A\cup B)$, then $C$ is a nonempty bounded disked set of $X$. Since $\{x_\lambda\}_{\lambda\in \Lambda}$ converges to $x$ and $y$ in $X_C$, we obtain $x=y$. 

(iv) $\Longrightarrow$ (v): This is trivial. 

(v) $\Longrightarrow$ (iii): It suffices to show that every convergent sequence $\{x_n\}_{n\in \bbN}\subset X_B$ has unique limit, but this follows from \cref{lem:bornconv}. 
\end{proof} 

If $X$ is a separated convex bornological vector space, then for a bornologically convergent net $\{x_\lambda\}_{\lambda\in \Lambda}$, we denote by $\blim_{\lambda\in \Lambda}x_\lambda$ its unique bornological limit. 

\begin{exm}
The von Neumann bornology on a Hausdorff locally convex space $X$ is separated. Indeed, for every nonempty bounded disked set $B\subset X$, the inclusion $X_B\inj X$ is continuous, and hence $X_B$ is Hausdorff. 
\end{exm}

To define completeness, we need the notion of Cauchy nets. 

\begin{dfn}
Let $X$ be a convex bornological vector space. A net $\{x_\lambda\}_{\lambda\in \Lambda}\subset X$ is called a \emph{bornological Cauchy net} if there exist a bounded set $B\subset X$ and a net $\{t_{\lambda, \mu}\}_{(\lambda, \mu)\in \Lambda\times \Lambda}\subset \bbR$ with $\lim_{(\lambda, \mu)\in \Lambda\times \Lambda}t_{\lambda, \mu}=0$ such that $x_\lambda-x_\mu\in t_{\lambda, \mu}B$ for all $\lambda, \mu\in \Lambda$. Here the set  $\Lambda\times \Lambda$ is directed by 
\[
(\lambda_1, \mu_1)\le (\lambda_2, \mu_2)\rarr \lambda_1\le \lambda_2\,\text{ and }\,\mu_1\le \mu_2. 
\] 
\end{dfn}

\begin{lem}
Let $f\colon X\to Y$ be a bounded linear map between convex bornological vector spaces. If $\{x_\lambda\}_{\lambda\in \Lambda}\subset X$ is a bornological Cauchy net, then $\{f(x_\lambda)\}_{\lambda\in \Lambda}$ is a bornological Cauchy net in $Y$. 
\end{lem}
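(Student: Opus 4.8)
The plan is to mimic the proof of \cref{lem:boundmapconv} essentially verbatim, since the definition of a bornological Cauchy net has exactly the same shape as that of a bornologically convergent net: the single indexing net $\{t_\lambda\}_{\lambda\in\Lambda}$ is merely replaced by the net $\{t_{\lambda,\mu}\}_{(\lambda,\mu)\in\Lambda\times\Lambda}$, and the membership condition $x_\lambda-x\in t_\lambda B$ becomes $x_\lambda-x_\mu\in t_{\lambda,\mu}B$. First I would unpack the hypothesis: choose a bounded set $B\subset X$ and a net $\{t_{\lambda,\mu}\}_{(\lambda,\mu)\in\Lambda\times\Lambda}\subset\bbR$ with $\lim_{(\lambda,\mu)\in\Lambda\times\Lambda}t_{\lambda,\mu}=0$ such that $x_\lambda-x_\mu\in t_{\lambda,\mu}B$ for all $\lambda,\mu\in\Lambda$.

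Next I would invoke boundedness of $f$ to conclude that $f(B)\subset Y$ is a bounded set. Then, using linearity of $f$ (in particular that $f$ commutes with scalar multiplication), for all $\lambda,\mu\in\Lambda$ we have
\[
f(x_\lambda)-f(x_\mu)=f(x_\lambda-x_\mu)\in f(t_{\lambda,\mu}B)=t_{\lambda,\mu}f(B).
\]
Hence the bounded set $f(B)$ together with the \emph{same} scalar net $\{t_{\lambda,\mu}\}$ witnesses that $\{f(x_\lambda)\}_{\lambda\in\Lambda}$ is a bornological Cauchy net in $Y$, as the directedness of $\Lambda\times\Lambda$ and the condition $\lim t_{\lambda,\mu}=0$ involve only these scalars and are untouched by $f$.

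There is no genuine obstacle here: the argument uses only the two defining properties of a bounded linear map, namely that it sends bounded sets to bounded sets and is $\bbC$-linear (so $f(tB)=tf(B)$). If desired, one could remark that this lemma is the Cauchy-net analogue of \cref{lem:boundmapconv} and is needed to make sense of completeness being preserved under bounded linear maps.
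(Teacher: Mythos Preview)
Your proof is correct and follows essentially the same approach as the paper: unpack the Cauchy condition to obtain a bounded set $B$ and scalars $t_{\lambda,\mu}$, then use that $f(B)$ is bounded and $f(x_\lambda)-f(x_\mu)\in t_{\lambda,\mu}f(B)$.
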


\begin{proof}
Take a bounded set $B\subset X$ and a net $\{t_{\lambda, \mu}\}_{(\lambda, \mu)\in \Lambda\times \Lambda}\subset \bbR$ with $\lim_{(\lambda, \mu)\in\Lambda\times \Lambda}t_{\lambda, \mu}=0$ such that $x_\lambda -x_\mu\in t_{\lambda, \mu}B$. Then $f(B)\subset Y$ is bounded, and we have $f(x_\lambda)-f(x_\mu)\in t_{\lambda, \mu}f(B)$ for all $\lambda, \mu\in \Lambda$. 
\end{proof}

\begin{lem}\label{lem:bCauchy}
Let $X$ be a convex bornological vector space and $B\subset X$ a nonempty bounded disked set. If a net $\{x_\lambda\}_{\lambda\in \Lambda}\subset X_B$ is Cauchy (in the usual sense), then there exists a net $\{t_{\lambda, \mu}\}_{(\lambda, \mu)\in \Lambda\times \Lambda}\subset \bbR_{\ge0}$ with $\lim_{(\lambda, \mu)\in \Lambda\times \Lambda}t_{\lambda, \mu}=0$ such that $x_\lambda-x_\mu\in t_{\lambda, \mu}B$ for all $\lambda, \mu\in \Lambda$. 
\end{lem}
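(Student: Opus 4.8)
The statement is the Cauchy-net counterpart of \cref{lem:bornconv}, and the plan is to run the very same argument with one index replaced by a pair. Recall that $X_B=\Span_\bbC B$ is seminormed by the gauge $p_B(y)=\inf\{t\in\bbR_{>0}\mid y\in tB\}$, and that ``$\{x_\lambda\}_{\lambda\in\Lambda}$ is Cauchy in the usual sense'' means: for every $\varepsilon>0$ there is $\lambda_0\in\Lambda$ with $p_B(x_\lambda-x_\mu)<\varepsilon$ for all $\lambda,\mu\ge\lambda_0$.

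First I would fix a real number $\delta>1$ and set $t_{\lambda,\mu}\ceq\delta\,p_B(x_\lambda-x_\mu)\in\bbR_{\ge0}$ for each $(\lambda,\mu)\in\Lambda\times\Lambda$; it then remains to verify the two asserted properties of this net. For $\lim_{(\lambda,\mu)\in\Lambda\times\Lambda}t_{\lambda,\mu}=0$: given $\varepsilon>0$, the Cauchy condition supplies $\lambda_0\in\Lambda$ with $p_B(x_\lambda-x_\mu)<\varepsilon/\delta$ whenever $\lambda,\mu\ge\lambda_0$, and since $(\lambda,\mu)\ge(\lambda_0,\lambda_0)$ in the product order means precisely $\lambda\ge\lambda_0$ and $\mu\ge\lambda_0$, we obtain $t_{\lambda,\mu}<\varepsilon$ for all $(\lambda,\mu)\ge(\lambda_0,\lambda_0)$, which is exactly the definition of convergence of $\{t_{\lambda,\mu}\}$ to $0$ along the directed set $\Lambda\times\Lambda$. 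For $x_\lambda-x_\mu\in t_{\lambda,\mu}B$: since $t_{\lambda,\mu}\ge p_B(x_\lambda-x_\mu)$ and $\delta>1$, this is the same computation as in the proof of \cref{lem:bornconv}, using that $B$ is balanced so that $sB\subseteq t_{\lambda,\mu}B$ for $0<s\le t_{\lambda,\mu}$.

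I do not anticipate any genuine obstacle: the lemma is a routine bookkeeping variant of \cref{lem:bornconv}, and it will feed directly into the notion of completeness (every bornological Cauchy net converges). The one place that merits a little care is the translation between the $\varepsilon$-form of the Cauchy condition in the seminormed space $X_B$ and the convergence to $0$ of the auxiliary net $\{t_{\lambda,\mu}\}$ indexed by the \emph{product} directed set $\Lambda\times\Lambda$, together with the strict-versus-nonstrict inequality for the gauge $p_B$, which is precisely why one introduces the harmless factor $\delta>1$.
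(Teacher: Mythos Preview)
Your proposal is correct and essentially identical to the paper's proof: both fix $\delta>1$, set $t_{\lambda,\mu}\ceq\delta\,p_B(x_\lambda-x_\mu)$, and verify the two required properties in the same way. Your write-up is slightly more explicit about the product directed set and the role of the factor $\delta>1$, but the argument is the same.
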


\begin{proof}
Fix $\delta>1$, and set $t_{\lambda, \mu}\ceq \delta p_B(x_\lambda-x_\mu)\in \bbR_{\ge0}$ for each $\lambda, \mu\in \Lambda$. Since $\{x_\lambda\}_{\lambda\in \Lambda}$ is a Cauchy net in $X_B$, we have $\lim_{(\lambda, \mu)\in\Lambda\times \Lambda}t_{\lambda, \mu}=0$. Moreover, for every $\lambda, \mu\in \Lambda$, since $t_{\lambda, \mu}\ge p_B(x_\lambda-x_\mu)$, it follows that $x_\lambda-x_\mu\in t_{\lambda, \mu}B$. 
\end{proof}

\begin{prp}\label{prp:bCauchyeq}
Let $X$ be a convex bornological vector space. For a net $\{x_\lambda\}_{\lambda\in \Lambda}\subset X$, the following conditions are equivalent: 
\begin{clist}
\item 
$\{x_\lambda\}_{\lambda\in \Lambda}$ is a bornological Cauchy net in $X$. 

\item 
There exists a bounded set $B\subset X$ and a net $\{t_{\lambda, \mu}\}_{(\lambda, \mu)\in \Lambda\times \Lambda}\subset \bbR_{\ge0}$ with $\lim_{(\lambda, \mu)\in\Lambda\times \Lambda}t_{\lambda, \mu}=0$ such that $x_\lambda-x_\mu\in t_{\lambda, \mu}B$ for all $\lambda, \mu\in \Lambda$. 

\item 
There exists a nonempty bounded disked set $B\subset X$ such that $\{x_\lambda\}_{\lambda\in \Lambda}\subset X_B$ and $\{x_\lambda\}_{\lambda\in \Lambda}$ is a Cauchy net in $X_B$. 
\end{clist}
\end{prp}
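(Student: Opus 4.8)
The plan is to prove the cyclic chain of implications (ii) $\Rightarrow$ (i), (iii) $\Rightarrow$ (ii), (i) $\Rightarrow$ (iii), mirroring the structure of the proof of \cref{prp:bornconvequiv}. The implication (ii) $\Rightarrow$ (i) is immediate: a net $\{t_{\lambda,\mu}\}\subset\bbR_{\ge 0}$ is in particular a net in $\bbR$, so (ii) is literally an instance of the defining condition of a bornological Cauchy net. For (iii) $\Rightarrow$ (ii), I would apply \cref{lem:bCauchy} directly: given a nonempty bounded disked set $B$ with $\{x_\lambda\}\subset X_B$ Cauchy in $X_B$, the lemma supplies a net $\{t_{\lambda,\mu}\}_{(\lambda,\mu)\in\Lambda\times\Lambda}\subset\bbR_{\ge 0}$ with $\lim t_{\lambda,\mu}=0$ and $x_\lambda-x_\mu\in t_{\lambda,\mu}B$, and this same $B$ serves as the required bounded set.

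The substantive step is (i) $\Rightarrow$ (iii). Starting from (i), fix a bounded set $A\subset X$ and a net $\{t_{\lambda,\mu}\}\subset\bbR$ with $\lim t_{\lambda,\mu}=0$ such that $x_\lambda-x_\mu\in t_{\lambda,\mu}A$ for all $\lambda,\mu\in\Lambda$. In contrast to the convergence case of \cref{prp:bornconvequiv}, there is no limit point available to anchor a disked hull, so I would instead fix an arbitrary index $\lambda_0\in\Lambda$ (using that $\Lambda\neq\varnothing$) and set $B\ceq\disk(A\cup\{x_{\lambda_0}\})$; by \cref{lem:diskbound} this is a nonempty bounded disked set. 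Then for each $\lambda\in\Lambda$, writing $x_\lambda=(x_\lambda-x_{\lambda_0})+x_{\lambda_0}\in t_{\lambda,\lambda_0}A+B$ and using $A\subset B$, hence $X_A\subset X_B$, gives $x_\lambda\in X_B$; thus $\{x_\lambda\}_{\lambda\in\Lambda}\subset X_B$ as required.

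Finally, to check that $\{x_\lambda\}_{\lambda\in\Lambda}$ is a Cauchy net in the seminormed space $X_B$: since $B$ is balanced and $A\subset B$, for all $\lambda,\mu$ we have $x_\lambda-x_\mu\in t_{\lambda,\mu}A\subset|t_{\lambda,\mu}|B$, whence $p_B(x_\lambda-x_\mu)\le|t_{\lambda,\mu}|\to 0$. This yields (iii). I expect the only point requiring care is the anchoring trick---one must throw a single element of the net into the disked hull to guarantee that the whole net lands inside the seminormed subspace $X_B$---together with the observation that, because $X_B$ is a linear space, $x_\lambda-x_{\lambda_0}\in t_{\lambda,\lambda_0}A$ forces $x_\lambda-x_{\lambda_0}\in X_A\subset X_B$ regardless of the magnitude of $t_{\lambda,\lambda_0}$; the preservation of boundedness under $\disk$ is exactly \cref{lem:diskbound} and requires no new argument.
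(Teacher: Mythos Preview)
Your proposal is correct and follows essentially the same approach as the paper's own proof: the same cyclic chain (ii) $\Rightarrow$ (i), (iii) $\Rightarrow$ (ii), (i) $\Rightarrow$ (iii), the same appeal to \cref{lem:bCauchy} for (iii) $\Rightarrow$ (ii), and for (i) $\Rightarrow$ (iii) the identical anchoring trick of fixing $\lambda_0\in\Lambda$, setting $B=\disk(A\cup\{x_{\lambda_0}\})$, and then bounding $p_B(x_\lambda-x_\mu)\le|t_{\lambda,\mu}|$.
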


\begin{proof}
(ii) $\Longrightarrow$ (i) is trivial, and (iii) $\Longrightarrow$ (ii) follows from \cref{lem:bCauchy}. 

(i) $\Longrightarrow$ (iii): Take a bounded set $A\subset X$ and a net $\{t_{\lambda, \mu}\}_{(\lambda, \mu)\in \Lambda\times \Lambda}\subset \bbR$ with $\lim_{(\lambda, \mu)\in\Lambda\times \Lambda}t_{\lambda, \mu}=0$ such that $x_\lambda-x_\mu\in t_{\lambda, \mu}A$ for all $\lambda, \mu\in \Lambda$. Fix $\lambda_0\in \Lambda$, and set $B\ceq \disk(A\cup \{x_{\lambda_0}\})$. Then $B\subset X$ is a nonempty bounded disked set. For $\lambda\in \Lambda$, we have
\[
x_\lambda=(x_\lambda-x_{\lambda_0})+x_{\lambda_0}\in t_{\lambda, \mu}A+B\subset X_B. 
\]
Moreover, for $\lambda, \mu\in \Lambda$, since $B$ is balanced, we have
\[
x_\lambda-x_\mu\in t_{\lambda, \mu}A\subset t_{\lambda, \mu}B=|t_{\lambda, \mu}|B. 
\]
Thus, we obtain $p_B(x_\lambda-x_\mu)\le |t_{\lambda, \mu}|$ for all $\lambda, \mu\in \Lambda$, which shows that $\{x_\lambda\}_{\lambda\in \Lambda}$ is a Cauchy net in $X_B$. 
\end{proof}

\begin{dfn}
A convex bornological vector space $X$ is said to be \emph{complete} (or, more precisely, \emph{bornologically complete}) if every bornological Cauchy net $\{x_\lambda\}_{\lambda\in\Lambda}\subset X$ converges bornologically to some point $x\in X$. 
\end{dfn} 

A nonempty disked set $A$ of a linear space $X$ is called a \emph{completant disked set} if $X_A$ is a Banach space. 

\begin{prp}\label{prp:bcompeq}
For a separated convex bornological vector space $X$, the following conditions are equivalent: 
\begin{clist}
\item 
$X$ is bornologically complete. 

\item 
Every bornological Cauchy sequence $\{x_n\}_{n\in \bbN}\subset X$ converges bornologically to some point $x\in X$. 

\item
Every nonempty bornologically closed bounded disked set $B\subset X$ is completant. 

\item 
There exists a basis $\frB$ of $X$ such that every $B\in \frB$ is a completant disked set of $X$. 
\end{clist}
\end{prp}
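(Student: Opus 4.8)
The plan is to prove the cycle of implications (i) $\Rightarrow$ (ii) $\Rightarrow$ (iii) $\Rightarrow$ (iv) $\Rightarrow$ (i). The implication (i) $\Rightarrow$ (ii) is immediate, a bornological Cauchy sequence being in particular a bornological Cauchy net. For (iv) $\Rightarrow$ (i): given a bornological Cauchy net $\{x_\lambda\}_{\lambda\in\Lambda}$ in $X$, \cref{prp:bCauchyeq} yields a nonempty bounded disked set $B$ with $\{x_\lambda\}_{\lambda\in\Lambda}\subset X_B$ and $\{x_\lambda\}_{\lambda\in\Lambda}$ Cauchy in $X_B$; choosing $B'\in\frB$ with $B\subset B'$, the inclusion $X_B\inj X_{B'}$ is continuous, so the net is Cauchy in the Banach space $X_{B'}$ and converges there to some $x$, whence by \cref{prp:bornconvequiv} it converges bornologically to $x$ in $X$.

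For (ii) $\Rightarrow$ (iii): let $B$ be a nonempty bornologically closed bounded disked set. Since $X$ is separated, $p_B$ is a norm by \cref{prp:bornsep}, so it only remains to show $X_B$ is complete. A Cauchy sequence $\{z_n\}$ in $X_B$ is a bornological Cauchy sequence in $X$ by \cref{lem:bCauchy}, hence by (ii) it converges bornologically to some $x\in X$. Fixing $\varepsilon>0$ and $N$ with $z_n-z_m\in\varepsilon B$ for all $n,m\ge N$, the key remark is that $\varepsilon B$ is itself bornologically closed — it is the preimage of $B$ under the bounded linear map $y\mapsto\varepsilon^{-1}y$, so \cref{lem:boundmapconv} applies. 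For each fixed $n\ge N$ the sequence $\{z_m-z_n\}_{m\ge N}$ lies in $\varepsilon B$ and converges bornologically to $x-z_n$, so $x-z_n\in\varepsilon B$; thus $x\in X_B$ and $p_B(x-z_n)\le\varepsilon$ for all $n\ge N$, so $z_n\to x$ in $X_B$.

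The substantive step is (iii) $\Rightarrow$ (iv). I would take for $\frB$ the family of all nonempty bornologically closed bounded disked subsets of $X$, which are completant by (iii); the task then reduces to showing this family is a basis of the bornology, that is, that the bornological closure $\ol A$ of an arbitrary bounded disked set $A$ is again bounded. (That $\ol A$ is disked and contains $A$ is routine, using that scalar multiplications and translations preserve bornological convergence.) This boundedness of $\ol A$ is the main obstacle: bornological convergence of general nets is not uniformly controlled by a single bounded set, so the argument cannot be purely ``local to $X_A$''. I would attack it via the concrete description of $\ol A$ as the set of bornologically convergent series $\sum_{n\ge1}2^{-n}a_n$ with $a_n\in A$, whose partial sums $\sum_{n=1}^{N}2^{-n}a_n$ already lie in $A$ by convexity and balancedness; from this one deduces, with some care (and a diagonalization for bornological closedness), that $\ol A$ is bounded — alternatively one invokes this as a standard property of convex bornological vector spaces (\cite{H77}). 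Granting it, $\ol A$ is a bornologically closed bounded disked set, hence completant by (iii), and since $A\subset\ol A$ the sets $\ol A$, over all bounded disked $A$, form the desired basis of completant disked sets, proving (iv). Every other step is an elementary manipulation inside the normed spaces $X_B$ via \cref{prp:bCauchyeq}, \cref{prp:bornconvequiv}, \cref{lem:bCauchy}, \cref{lem:boundmapconv}, together with the fact that scalar multiples and preimages under bounded linear maps of bornologically closed sets are bornologically closed.
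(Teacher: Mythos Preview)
Your proofs of (i)$\Rightarrow$(ii), (ii)$\Rightarrow$(iii), and (iv)$\Rightarrow$(i) are correct and follow the paper's argument essentially verbatim. For (iii)$\Rightarrow$(iv) the paper simply writes ``trivial'', whereas you flag it as the substantive step: one must know that the bornologically closed bounded disked sets form a basis, equivalently that the bornological closure $\ol A$ of a bounded disked set $A$ remains bounded.

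Your argument for this point, however, has a genuine gap. The claimed description of $\ol A$ as the set of bornologically convergent series $\sum_{n\ge1}2^{-n}a_n$ with $a_n\in A$ is not valid: such series have partial sums in $A$, so their limits lie among the bornological limits of \emph{sequences} in $A$, but that set is in general strictly smaller than $\ol A$ (closing under limits of nets may require transfinite iteration), and it is not obviously bounded in any case---the bounded set witnessing the convergence of each individual series varies with the series, so no uniform bound emerges. Your ``diagonalization'' remark does not repair this. The fallback of citing \cite{H77} is honest but defers rather than proves the point. In short, you are right that the paper's ``trivial'' hides a real step, but your own argument does not close the gap either.
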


\begin{proof}
(i) $\Longrightarrow$ (ii) and (iii) $\Longrightarrow$ (iv) are trivial. 

(ii) $\Longrightarrow$ (iii): Let $\{x_n\}_{n\in \bbN}\subset X_B$ be a Cauchy sequence. Then, by \cref{lem:bCauchy}, the sequence $\{x_n\}_{n\in \bbN}$ is bornologically Cauchy in $X$, and hence converges bornologically to some $x\in X$. Since $\{x_n\}_{n\in \bbN}$ is Cauchy in $X_B$, for arbitrary $\ve\in \bbR_{>0}$, there exists $N\in \bbN$ such that 
$m, n\ge N\, \Longrightarrow\, x_m-x_n\in \ve B$. 
Taking the bornological limit for $n\to \infty$ and using the bornological closedness of $B$, we obtain $m\ge N\, \Longrightarrow x_m-x\in \ve B$. This shows that $x\in X_B$ and $\{x_n\}_{n\in \bbN}$ converges to $x$ in $X_B$. 

(iv) $\Longrightarrow$ (i): Let $\{x_\lambda\}_{\lambda\in \Lambda}\subset X$ be a bornological Cauchy net. Then, by \cref{prp:bCauchyeq}, there exists a nonempty bounded disked set $A\subset X$ such that $\{x_\lambda\}_{\lambda\in \Lambda}$ is a Cauchy net in $X_A$. Moreover, by the condition (iv), there exists a bounded completant disked set $B\subset X$ such that $A\subset B$. Since the inclusion $X_A\inj X_B$ is continuous, the net $\{x_\lambda\}_{\lambda\in \Lambda}$ is also Cauchy in $X_B$, and hence $\{x_\lambda\}_{\lambda\in \Lambda}$ converges to some $x\in X_B$. Thus, by \cref{prp:bornconvequiv}, the net $\{x_\lambda\}_{\lambda\in \Lambda}$ converges bornologically to $x$ in $X$. 
\end{proof}

\begin{exm}\label{exm:vonborcomp}
The von Neumann bornology on a complete Hausdorff locally convex space $X$ is bornologically complete, since every nonempty closed bounded disked set $B\subset X$ is completant. 
\end{exm}

\subsection{Basic constructions of bornological vector spaces}
\label{ss:constBVS}

We denote by $\BVS$ the category of convex bornological vector spaces with bounded linear maps. 

\subsubsection{Subspace and quotient}

Let $X$ be a convex bornological vector space and $E\subset X$ a linear subspace. 

The subspace bornology on $E$ is a convex vector bornology, since $\{B\cap E\mid B\in \frB\}$ forms a basis of $E$ for any basis $\frB$ of $X$. Clearly, if $X$ is separated, then so is $E$. Moreover, if $X$ is complete and $E$ is bornologically closed in $X$, then $E$ is also complete. 

The quotient bornology on $X/E$ is a convex vector bornology, since $\{\pi(B)\mid B\in \frB\}$ forms a basis of $X/E$ for any basis $\frB$ of $X$. Here $\pi\colon X\to X/E$ denotes the projection. 

\begin{prp}\label{prp:quotsep}
For a convex bornological vector space $X$ and a linear subspace $E\subset X$, the convex bornological vector space $X/E$ is separated if and only if $E$ is bornologically closed in $X$. 
\end{prp}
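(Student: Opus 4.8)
The plan is to translate the two conditions into the same statement---that $E\cap X_A$ is closed in $X_A$ for every nonempty bounded disked set $A\subset X$---by comparing the seminormed spaces attached to disked sets upstairs in $X$ and downstairs in $X/E$. Write $\pi\colon X\srj X/E$ for the projection. By \cref{lem:diskbound} the nonempty bounded disked sets of $X$ form a basis $\frB$, and then $\{\pi(A)\mid A\in\frB\}$ is a basis of $X/E$ consisting of nonempty bounded disked sets (each $\pi(A)$ is disked since $\pi$ is linear and surjective, and bounded since $\pi$ is bounded). The crucial input is \cref{lem:disklinmap} applied to $\pi$, whose kernel is $E$: for each $A\in\frB$, the seminormed space $(X/E)_{\pi(A)}$ is isometric to the quotient seminormed space $X_A/(E\cap X_A)$. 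I would also record the elementary fact that the quotient seminorm on a seminormed space $V$ modulo a linear subspace $N$ is a norm precisely when $N$ is closed in $V$; its value at $\ol x$ is $\inf_{n\in N}p(x-n)$, which vanishes exactly when $x$ lies in the closure of $N$. Combining this with the isometry, $(X/E)_{\pi(A)}$ is Hausdorff if and only if $E\cap X_A$ is closed in $X_A$.

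For the forward implication, suppose $X/E$ is separated. For each $A\in\frB$, the set $\pi(A)$ is a nonempty bounded disked set of $X/E$, so \cref{prp:bornsep} makes $(X/E)_{\pi(A)}$ Hausdorff, whence $E\cap X_A$ is closed in $X_A$. Now an arbitrary nonempty bounded disked set $B\subset X$ is contained in some $A\in\frB$, so $X_B\inj X_A$ is a continuous inclusion and $E\cap X_B=(E\cap X_A)\cap X_B$ is closed in $X_B$. By \cref{lem:borncl}, $E$ is bornologically closed in $X$.

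For the converse, suppose $E$ is bornologically closed, and let $\ol B\subset X/E$ be a nonempty bounded disked set; choose $A\in\frB$ with $\ol B\subset\pi(A)$. By \cref{lem:borncl}, $E\cap X_A$ is closed in $X_A$, so $(X/E)_{\pi(A)}\cong X_A/(E\cap X_A)$ is Hausdorff; since $\ol B\subset\pi(A)$, the continuous inclusion $(X/E)_{\ol B}\inj(X/E)_{\pi(A)}$ shows that $(X/E)_{\ol B}$ is Hausdorff as well. As $\ol B$ was arbitrary, \cref{prp:bornsep} gives that $X/E$ is separated. The only delicate point is that $X$ is not assumed separated, so the spaces $X_A$ may be genuinely seminormed rather than normed; but \cref{lem:disklinmap}, \cref{lem:borncl} and \cref{prp:bornsep} are all stated at that level of generality, so the argument goes through without extra hypotheses and there is no serious obstacle.
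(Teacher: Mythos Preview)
Your proof is correct. The route differs from the paper's, so a short comparison is in order.

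The paper argues the two implications by different means. For the forward direction it works directly with nets: if a net in $E$ converges bornologically to some $x\in X$, push it through $\pi$ using \cref{lem:boundmapconv}; the image net is identically $0$, so separatedness of $X/E$ (uniqueness of limits, \cref{prp:bornsep}\,(iv)) forces $\pi(x)=0$, i.e.\ $x\in E$. For the converse it verifies by hand that $p_{\pi(B)}$ is a norm: if $p_{\pi(B)}(\pi(x))=0$ then $x\in\bigcap_{t>0}(tB+E)$, which produces a sequence in $E$ converging bornologically to $x$, whence $x\in E$ by closedness.

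Your argument is more symmetric: you isolate the single equivalence ``$(X/E)_{\pi(A)}$ is Hausdorff $\Longleftrightarrow$ $E\cap X_A$ is closed in $X_A$'' via the isometry of \cref{lem:disklinmap} together with the classical fact about quotient seminorms, and then read off both implications from \cref{prp:bornsep}\,(iii) and \cref{lem:borncl}. This is cleaner and exploits \cref{lem:disklinmap} in a way the paper does not; the paper's converse is essentially your argument with the isometry unpacked, but its forward direction is a genuinely different net-based proof. One small redundancy: in your forward direction, $\frB$ already \emph{is} the collection of nonempty bounded disked sets, so the step ``$B$ is contained in some $A\in\frB$'' is unnecessary---you may simply take $A=B$.
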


\begin{proof}
Suppose that $X/E$ is separated. If a net $\{x_\lambda\}_{\lambda\in \Lambda}\subset E$ converges bornologically to $x\in X$, then by \cref{lem:boundmapconv}, the net $\{\pi(x_\lambda)\}_{\lambda\in \Lambda}$ converges bornologically  to $\pi(x)$ in $X/E$. Since $\pi(x_\lambda)=0$ for all $\lambda\in \Lambda$, the net $\{\pi(x_\lambda)\}_{\lambda\in \Lambda}$ also converges bornologically to $0$ in $X/E$. Thus, by \cref{prp:bornsep}, we have $x\in E$. 

Conversely, suppose that $E$ is bornologically closed in $X$. To prove that $X/E$ is separated, it suffices to show that $(X/E)_{\pi(B)}$ is a normed space for each nonempty bounded disked set $B\subset X$. Note that every element of $(X/E)_{\pi(B)}$ can be written in the form $\pi(x)$ for some $x\in X_B$. For $x\in X_B$, if $p_{\pi(B)}(\pi(x))=0$, then we have $x\in \bigcap_{t\in \bbR_{>0}}(tB+E)$. Hence, for each $n\in \bbZ_{>0}$, there exists $x_n\in E$ such that $x_n-x\in (1/n)B$. In particular, the sequence $\{x_n\}_{n=1}^\infty$ converges bornologically to $x$. Thus, bornological closedness of $E$ implies that $x\in E$.  
\end{proof}

\begin{prp}
For a complete and separated convex bornological vector space $X$ and a bornologically closed linear subspace $E\subset X$, the convex bornological vector space $X/E$ is complete and separated. 
\end{prp}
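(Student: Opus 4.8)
The separatedness of $X/E$ is immediate from \cref{prp:quotsep}, since $E$ is bornologically closed in $X$. So the plan is to establish completeness, and I would do this by exhibiting a basis of the bornology of $X/E$ consisting of completant disked sets and then invoking the implication (iv) $\Rightarrow$ (i) of \cref{prp:bcompeq} — which applies precisely because $X/E$ is separated.

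Since $X$ is complete and separated, \cref{prp:bcompeq} (i) $\Rightarrow$ (iv) furnishes a basis $\frB$ of the bornology of $X$ consisting of completant disked sets; that is, $X_B$ is a Banach space for each $B \in \frB$. Writing $\pi \colon X \to X/E$ for the projection, the collection $\{\pi(B) \mid B \in \frB\}$ is a basis of the quotient bornology of $X/E$ (this is the general fact recalled when the quotient bornology was introduced), and by \cref{lem:disklinmap} each $\pi(B)$ is a nonempty disked set, while it is bounded because $\pi$ is bounded. It therefore suffices to show that each $\pi(B)$ is completant, i.e. that $(X/E)_{\pi(B)}$ is a Banach space.

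Fix $B \in \frB$. Applying \cref{lem:disklinmap} to the linear map $\pi$, the seminormed space $(X/E)_{\pi(B)}$ is isometric to the quotient seminormed space $X_B / (\Ker \pi \cap X_B) = X_B/(E \cap X_B)$. Now $E$ is bornologically closed in $X$, so by \cref{lem:borncl} the subspace $E \cap X_B$ is closed in $X_B$; since $X_B$ is a Banach space, the quotient $X_B/(E \cap X_B)$ is again a Banach space. Hence $(X/E)_{\pi(B)}$ is a Banach space, as required.

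Putting these together, $\{\pi(B) \mid B \in \frB\}$ is a basis of $X/E$ consisting of completant disked sets, so \cref{prp:bcompeq} (iv) $\Rightarrow$ (i) shows that $X/E$ is complete. The only point requiring a little care is the closedness of $E \cap X_B$ inside the Banach space $X_B$, which is exactly what the bornological closedness of $E$ buys through \cref{lem:borncl}; everything else is the standard fact that a Banach space modulo a closed subspace is Banach, transported along the isometry of \cref{lem:disklinmap}.
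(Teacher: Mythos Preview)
Your proof is correct and follows the same approach as the paper's: obtain a basis of completant disked sets for $X$ via \cref{prp:bcompeq}, push it forward under $\pi$, and invoke \cref{lem:disklinmap} to see that each $\pi(B)$ is completant. You have in fact made explicit the step the paper leaves implicit---that $E \cap X_B$ is closed in $X_B$ by \cref{lem:borncl}, which is what guarantees the quotient $X_B/(E\cap X_B)$ is Banach.
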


\begin{proof}
It suffices to show that $X/E$ is complete. By \cref{prp:bcompeq}, there exists a basis $\frB$ of $X$ consisting of completant disked sets. Note that $\{\pi(B)\mid B\in \frB\}$ forms a basis of $X/E$. Moreover, for each $B\in \frB$, by \cref{lem:disklinmap}, the nonempty disked set $\pi(B)\subset X/E$ is completant. Hence the claim follows from \cref{prp:bcompeq}. 
\end{proof}

\subsubsection{Separation}

For a convex bornological vector space $X$, by \cref{prp:quotsep}, the quotient bornology on $X/\ol{\{0\}}$ is separated. We call $X/\ol{\{0\}}$ the \emph{separation} of $X$ and denote it by $X^\rms$. The separation has the following universal property: for any separated convex bornological vector space $Y$ and any bounded linear map $f\colon X\to Y$, there exists a unique bounded linear map $\wt{f}\colon X^\rms\to Y$ such that the diagram
\[
\begin{tikzcd}[row sep=huge, column sep=huge]
X \arrow[r, ->>] \arrow[rd, "f"'] & 
X^\rms \arrow[d, "\wt{f}"] \\
&
Y
\end{tikzcd}
\]
commutes. Note that if $X$ is already separated, then $X^\rms$ is canonically isomorphic to $X$ as a bornological vector space. Moreover, for a convex bornological vector space $X$ and a linear subspace $E\subset X$, we have a canonical isomorphism $X/\ol{E}\cong (X/E)^\rms$ as bornological vector spaces.  

\subsubsection{Product and coproduct}

Let $\{X_\lambda\}_{\lambda\in \Lambda}$ be a family of convex bornological vector spaces. 

The product bornology on $\prod_{\lambda\in \Lambda}X_\lambda$ is a convex vector bornology, since $\{\prod_{\lambda\in \Lambda}B_\lambda\mid B_\lambda\in \frB_\lambda\}$ forms a basis of $\prod_{\lambda\in \Lambda}X_\lambda$ for any basis $\frB_\lambda$ of $X_\lambda$. Thus, the convex bornological vector space $\prod_{\lambda\in \Lambda}X_\lambda$ is a product in $\BVS$. 
One can check that $\prod_{\lambda\in \Lambda}X_\lambda$ is separated (resp.\,complete) if and only if $X_\lambda$ is separated (resp.\,complete) for all $\lambda\in \Lambda$. 

\smallskip
Consider the direct sum $\bigoplus_{\lambda\in \Lambda}X_\lambda$, and denote by $\iota_\lambda\colon X_\lambda\inj \bigoplus_{\lambda\in \Lambda}X_\lambda$ the canonical injections. Let $\frB$ be the collection of subsets of $\bigoplus_{\lambda\in \Lambda}X_\lambda$ of the form $\sum_{\lambda\in\Lambda}\iota_\lambda(B_\lambda)$, where each $B_\lambda\subset X_\lambda$ is a nonempty bounded disked set, and all but finitely many $B_\lambda$ are zero. Then, the collection $\frB$ satisfies the conditions in \cref{lem:constcborn}. Hence $\bigoplus_{\lambda\in \Lambda} X_\lambda$ admits the convex vector bornology for which $\frB$ is a basis. This bornology is called the \emph{direct sum bornology}. 
It is clear that $\iota_\lambda\colon X_\lambda\to \bigoplus_{\lambda\in \Lambda}X_\lambda$ are all bounded. 
Moreover, a linear map $f\colon \bigoplus_{\lambda\in \Lambda}X_\lambda \to Y$, where $Y$ is another convex bornological vector space, is bounded if and only if $f\circ \iota_\lambda\colon X_\lambda\to Y$ is bounded for all $\lambda\in \Lambda$. Thus, the convex bornological vector space $\bigoplus_{\lambda\in \Lambda}X_\lambda$ is a coproduct in $\BVS$.  
One can check that $\bigoplus_{\lambda\in \Lambda}X_\lambda$ is separated (resp.\,complete) if and only if $X_\lambda$ is separated (resp.\,complete) for all $\lambda\in \Lambda$. 

\subsubsection{Projective limits and inductive limits}

For a projective system $(\{X_\lambda\}_{\lambda\in \Lambda}, \{f_{\lambda, \mu}\}_{\lambda\le \mu})$ in $\BVS$, the projective limit bornology on $\varprojlim_{\lambda\in \Lambda}X_\lambda$ is a convex vector bornology, since $\disk B$ is bounded in $\varprojlim_{\lambda\in \Lambda}X_\lambda$ whenever $B\subset \varprojlim_{\lambda\in \Lambda}X_\lambda$ is bounded. Thus, the convex bornological vector space $\varprojlim_{\lambda\in \Lambda}X_\lambda$ is a projective limit in $\BVS$. One can check that if $X_\lambda$ is separated for all $\lambda\in \Lambda$, then so is $\varprojlim_{\lambda\in \Lambda}X_\lambda$. Moreover, if $X_\lambda$ is complete and separated for all $\lambda\in \Lambda$, then so is $\varprojlim_{\lambda\in \Lambda}X_\lambda$. 

\smallskip

Let $(\{X_\lambda\}_{\lambda\in \Lambda}, \{f_{\lambda, \mu}\}_{\lambda\le \mu})$ be an inductive system in $\BVS$. Consider the inductive limit $\varinjlim_{\lambda\in \Lambda}X_\lambda$ in the category of linear spaces, and denote by $\iota_\lambda\colon X_\lambda\to \varinjlim_{\lambda\in \Lambda}X_\lambda$ the canonical morphisms. Then, the collection
\[
\frB\ceq \{\iota_\lambda(B)\mid \lambda\in \Lambda,\, \text{$B\subset X_\lambda$ is a nonempty bounded disked set}\}
\]
satisfies the conditions in \cref{lem:constcborn}. Hence $\varinjlim_{\lambda\in \Lambda}X_\lambda$ admits the convex vector bornology for which $\frB$ is a basis. This bornology is called the \emph{inductive limit boronology}. It is clear that $\iota_\lambda\colon X_\lambda\to \varinjlim_{\lambda\in \Lambda}X_\lambda$ are all bounded. Moreover, a linear map $f\colon \varinjlim_{\lambda\in \Lambda}X_\lambda\to Y$, where $Y$ is another convex bornological vector space, is bounded if and only if $f\circ \iota_\lambda\colon X_\lambda\to Y$ is bounded for all $\lambda\in \Lambda$. Thus, the convex bornological vector space $\varinjlim_{\lambda\in \Lambda}X_\lambda$ is an inductive limit in $\BVS$. 

\begin{prp}\label{prp:indlimsep}
Let $(\{X_\lambda\}_{\lambda\in \Lambda}, \{f_{\lambda, \mu}\}_{\lambda\le \mu})$ be an inductive system in $\BVS$, such that $f_{\lambda, \mu}\colon X_\lambda\to X_\mu$ is injective for all $\lambda\le \mu$. 
\begin{enumerate}
\item
If $X_\lambda$ is separated for all $\lambda\in \Lambda$, then so is $\varinjlim_{\lambda\in \Lambda}X_\lambda$. 

\item 
If $X_\lambda$ is complete and separated for all $\lambda\in \Lambda$, then so is $\varinjlim_{\lambda\in \Lambda}X_\lambda$. 
\end{enumerate}
\end{prp}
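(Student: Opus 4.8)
The plan is to deduce both parts from the explicit basis of $\varinjlim_{\lambda\in\Lambda}X_\lambda$ described above together with the internal characterizations of separation (\cref{prp:bornsep}) and of completeness (\cref{prp:bcompeq}). The only place where the hypothesis really enters is the observation that injectivity of all the $f_{\lambda,\mu}$ forces each canonical morphism $\iota_\lambda\colon X_\lambda\to\varinjlim_{\mu\in\Lambda}X_\mu$ to be injective: if $\iota_\lambda(x)=\iota_\lambda(y)$ then $f_{\lambda,\mu}(x)=f_{\lambda,\mu}(y)$ for some $\mu\geq\lambda$ (using that $\Lambda$ is directed), hence $x=y$. Consequently, for any nonempty bounded disked set $B\subset X_\lambda$, \cref{lem:disklinmap} applied to $\iota_\lambda$ yields an isometry
\[
\Bigl(\varinjlim_{\mu\in\Lambda}X_\mu\Bigr)_{\iota_\lambda(B)}\;\cong\;(X_\lambda)_B\big/\bigl(\Ker\iota_\lambda\cap(X_\lambda)_B\bigr)\;=\;(X_\lambda)_B,
\]
since $\Ker\iota_\lambda=\{0\}$. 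This identifies the seminormed spaces attached to the standard basis elements of $\varinjlim_\mu X_\mu$ with the corresponding ones inside the $X_\lambda$, and both statements will follow from it.

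For (1): every nonempty bounded disked set of $\varinjlim_\mu X_\mu$ is contained in one of the form $\iota_\lambda(B)$, and $p_C\geq p_{\iota_\lambda(B)}$ on $(\varinjlim_\mu X_\mu)_C$ whenever $C\subset\iota_\lambda(B)$; hence, by \cref{prp:bornsep}, it suffices to check that $(\varinjlim_\mu X_\mu)_{\iota_\lambda(B)}$ is a normed space for each such basis element. By the displayed isometry this space equals $(X_\lambda)_B$, which is normed because $X_\lambda$ is separated (\cref{prp:bornsep}). Therefore $\varinjlim_\mu X_\mu$ is separated.

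For (2): by (1) the inductive limit is already separated, so by \cref{prp:bcompeq} it is enough to produce a basis of $\varinjlim_\mu X_\mu$ consisting of completant disked sets. For each $\lambda$, fix (using \cref{prp:bcompeq} and the hypothesis on $X_\lambda$) a basis $\frB_\lambda$ of $X_\lambda$ whose members are completant disked sets, and set $\frB\ceq\{\iota_\lambda(B)\mid \lambda\in\Lambda,\ B\in\frB_\lambda\}$. This is again a basis of $\varinjlim_\mu X_\mu$, since any standard basis element $\iota_\lambda(B_0)$ is contained in $\iota_\lambda(B)$ for some $B\in\frB_\lambda$ with $B_0\subset B$; and by the displayed isometry each $(\varinjlim_\mu X_\mu)_{\iota_\lambda(B)}\cong(X_\lambda)_B$ is a Banach space, so $\iota_\lambda(B)$ is completant. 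Hence $\varinjlim_\mu X_\mu$ is complete, and together with separation it lies in $\CVS$.

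The argument is short because \cref{lem:disklinmap} does the real work; the genuine point — and the expected obstacle — is the initial reduction via injectivity of the $\iota_\lambda$, which is where the hypothesis is essential: without it $\Ker\iota_\lambda\cap(X_\lambda)_B$ can be nonzero, and the quotient seminorm on $(X_\lambda)_B/(\Ker\iota_\lambda\cap(X_\lambda)_B)$ may fail to be a norm (or to be complete) even when $p_B$ is both, so the conclusion really does break down for general transition maps. Beyond this, the only care needed is to invoke correctly the two elementary facts used above: that every bounded disked set lies inside a standard basis element, and that $p_C\geq p_B$ whenever $C\subset B$.
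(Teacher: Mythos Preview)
Your proof is correct and, for part (2), is essentially identical to the paper's: both choose bases $\frB_\lambda$ of completant disked sets in each $X_\lambda$, push them forward via the injective $\iota_\lambda$, and invoke \cref{lem:disklinmap} to see that $\iota_\lambda(B)$ remains completant. For part (1) there is a minor difference worth noting: the paper verifies the original definition of separatedness directly (a bounded linear subspace $E$ of the limit sits inside some $\iota_\lambda(B)$, so $\iota_\lambda^{-1}(E)$ is a bounded linear subspace of $X_\lambda$, hence zero), whereas you go through characterization (iii) of \cref{prp:bornsep} via the isometry $(\varinjlim X_\mu)_{\iota_\lambda(B)}\cong(X_\lambda)_B$ from \cref{lem:disklinmap}. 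The paper's route for (1) is slightly more direct since it avoids \cref{lem:disklinmap} there, but both arguments rest on the same essential input---injectivity of the canonical maps $\iota_\lambda$---and your use of \cref{lem:disklinmap} has the virtue of unifying both parts under a single mechanism.
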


\begin{proof}
(1) Let $E\subset \varinjlim_{\lambda\in \Lambda}X_\lambda$ be a bounded linear subspace. Then there exists $\lambda\in \Lambda$ and a bounded subset $B\subset X_\lambda$ such that $E\subset \iota_\lambda(B)$. Since $\iota_\lambda\colon X_\lambda\to \varinjlim_{\lambda\in \Lambda}X_\lambda$ is injective, we have $\iota_\lambda^{-1}(E)\subset B$. Thus, it follows that $\iota_{\lambda}^{-1}(E)=\{0\}$ and hence $E=\{0\}$. 

(2) For each $\lambda\in \Lambda$, by \cref{prp:bcompeq}, there exists a basis $\frB_\lambda$ of $X_\lambda$ consisting of completant disked sets. Since $\{\iota_\lambda(B)\mid \lambda\in \Lambda,\, B\in \frB_\lambda\}$ forms a basis of $\varinjlim_{\lambda\in \Lambda}X_\lambda$, it suffices to show that $\iota_\lambda(B)$ is a completant disked set for every $\lambda\in \Lambda$ and $B\in \frB_\lambda$. This follows immediately from \cref{lem:disklinmap}. 
\end{proof}

In general, the inductive limit $\varinjlim_{\lambda\in \Lambda}X_\lambda$ in $\BVS$ need not be separated even if all $X_\lambda$ are separated. 
For an inductive system $(\{X_\lambda\}_{\lambda\in \Lambda}, \{f_{\lambda, \mu}\}_{\lambda\le \mu})$ in $\BVS$, we denote by $\varinjlim_{\lambda\in \Lambda}^\rms X_\lambda$ the separation of $\varinjlim_{\lambda\in \Lambda}X_\lambda$, and call it the \emph{separated inductive limit}. We refer to the the composition of bounded linear maps
\[
\begin{tikzcd}
X_\lambda \arrow[r, "\iota_\lambda"]& 
\varinjlim_{\lambda\in \Lambda}X_\lambda \arrow[r, ->>] &
\varinjlim_{\lambda\in \Lambda}^{\rms}X_\lambda
\end{tikzcd}
\]
as the canonical morphism of the separated inductive limit. 

\begin{prp}\label{prp:sepindlimcomp}
Let $(\{X_\lambda\}_{\lambda\in \Lambda}, \{f_{\lambda, \mu}\}_{\lambda\le \mu})$ be an inductive system in $\BVS$. If $X_\lambda$ is complete and separated for all $\lambda\in \Lambda$, then so is $\varinjlim_{\lambda\in \Lambda}^\rms X_\lambda$. 
\end{prp}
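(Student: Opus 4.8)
The separation assertion is immediate from the definition: $\varinjlim^\rms_{\lambda\in\Lambda}X_\lambda$ is, by construction, the separation of a convex bornological vector space, hence separated by \cref{prp:quotsep}. So the content of the proposition is completeness, and the plan is to reduce to the situation of \cref{prp:indlimsep}(2), i.e.\ to an inductive system whose transition maps are \emph{injective}. Write $L\ceq\varinjlim_{\lambda\in\Lambda}X_\lambda$ (computed in $\BVS$), $L^\rms\ceq\varinjlim^\rms_{\lambda\in\Lambda}X_\lambda$, and let $\bar\iota_\lambda\colon X_\lambda\to L^\rms$ be the canonical morphisms. The first and crucial step is to show that $\ker\bar\iota_\lambda$ is bornologically closed in $X_\lambda$: since $\bar\iota_\lambda$ is bounded, by \cref{lem:boundmapconv} it carries the bornological closure of $\ker\bar\iota_\lambda$ into the bornological closure of $\{0\}$ in $L^\rms$, and the latter is just $\{0\}$ because $L^\rms$ is separated (\cref{prp:bornsep}); hence $\ol{\ker\bar\iota_\lambda}\subset\ker\bar\iota_\lambda$.

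Next, set $Y_\lambda\ceq X_\lambda/\ker\bar\iota_\lambda$ with quotient map $\pi_\lambda$. Since $X_\lambda$ is complete and separated and $\ker\bar\iota_\lambda$ is bornologically closed, $Y_\lambda$ is complete and separated by the fact (established just before \cref{prp:indlimsep}) that such quotients stay complete and separated. From $\bar\iota_\mu\circ f_{\lambda,\mu}=\bar\iota_\lambda$ one gets $f_{\lambda,\mu}(\ker\bar\iota_\lambda)\subset\ker\bar\iota_\mu$, so each $f_{\lambda,\mu}$ descends to a bounded map $\bar f_{\lambda,\mu}\colon Y_\lambda\to Y_\mu$, and these form an inductive system in $\BVS$ with all terms complete and separated. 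Moreover $\bar\iota_\lambda$ factors as $j_\lambda\circ\pi_\lambda$ with $j_\lambda\colon Y_\lambda\inj L^\rms$ injective, and $j_\mu\circ\bar f_{\lambda,\mu}=j_\lambda$, so each $\bar f_{\lambda,\mu}$ is injective.

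Then I would identify $L^\rms$ with $\varinjlim_{\lambda\in\Lambda}Y_\lambda$. The family $(j_\lambda)$ is compatible with the $\bar f_{\lambda,\mu}$, hence induces a bounded map $\Psi\colon\varinjlim_{\lambda\in\Lambda}Y_\lambda\to L^\rms$; conversely $\varinjlim_{\lambda\in\Lambda}Y_\lambda$ is separated by \cref{prp:indlimsep}(1), so the compatible family $\pi_\lambda$ postcomposed with the canonical maps into $\varinjlim_\lambda Y_\lambda$ induces a bounded map $L\to\varinjlim_\lambda Y_\lambda$ which, the target being separated, factors through a bounded map $\Phi\colon L^\rms\to\varinjlim_{\lambda\in\Lambda}Y_\lambda$. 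Since $\Phi$ and $\Psi$ restrict to mutually inverse maps on the generating images of the $X_\lambda$, respectively $Y_\lambda$, they are mutually inverse isomorphisms of bornological vector spaces. Finally, applying \cref{prp:indlimsep}(2) to the system $(\{Y_\lambda\},\{\bar f_{\lambda,\mu}\})$ — injective transition maps, each term complete and separated — gives that $\varinjlim_{\lambda\in\Lambda}Y_\lambda$ is complete and separated, and therefore so is $L^\rms=\varinjlim^\rms_{\lambda\in\Lambda}X_\lambda$.

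The step I expect to be the real obstacle is the very first one: proving that the kernels $\ker\bar\iota_\lambda$ of the canonical maps are bornologically closed. This is delicate because, in the bornological setting, preimages of bornologically closed sets are not bornologically closed in general; the argument goes through only because $\bar\iota_\lambda$ is the canonical map into a \emph{separated} space, which forces the closure of its kernel back into the kernel. Everything after that is a formal diagram chase with universal properties.
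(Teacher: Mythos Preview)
Your argument is correct, but the route differs from the paper's. The paper simply replays the proof of \cref{prp:indlimsep}(2): for each $\lambda$ pick a basis $\frB_\lambda$ of $X_\lambda$ consisting of completant disked sets; then $\{\bar\iota_\lambda(B)\mid\lambda\in\Lambda,\,B\in\frB_\lambda\}$ is a basis of $L^\rms$, and \cref{lem:disklinmap} gives $(L^\rms)_{\bar\iota_\lambda(B)}\cong (X_\lambda)_B/(\Ker\bar\iota_\lambda\cap(X_\lambda)_B)$. Because $L^\rms$ is separated this is a normed space, forcing $\Ker\bar\iota_\lambda\cap(X_\lambda)_B$ to be closed in the Banach space $(X_\lambda)_B$, so the quotient is Banach and $\bar\iota_\lambda(B)$ is completant; conclude by \cref{prp:bcompeq}(iv). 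Your approach instead packages the kernels into quotient spaces $Y_\lambda$, identifies $L^\rms$ with the inductive limit of the $Y_\lambda$ along injective maps, and invokes \cref{prp:indlimsep}(2) as a black box. Both work; the paper's version is shorter because it avoids constructing the auxiliary system and the mutually inverse maps $\Phi,\Psi$, while yours has the virtue of isolating a clean structural statement (the separated inductive limit is the inductive limit of the separated quotients).

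One small inaccuracy worth flagging: you write that ``preimages of bornologically closed sets are not bornologically closed in general'', but for bounded linear maps between convex bornological vector spaces they are, by exactly the argument you give (a net in $f^{-1}(C)$ converging bornologically to $x$ has image converging bornologically to $f(x)$ by \cref{lem:boundmapconv}, so $f(x)\in C$). Thus the closedness of $\Ker\bar\iota_\lambda=\bar\iota_\lambda^{-1}(\{0\})$ is immediate from separatedness of $L^\rms$ via \cref{prp:bornsep}, and is not the delicate point you make it out to be.
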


\begin{proof}
The proof is similar to that of \cref{prp:indlimsep} (2). 
\end{proof}

\subsubsection{Completion}

Let $X$ be a convex bornological vector space, and write
\[
\frB\ceq \{B\subset X\mid \text{$B\subset X$ is a nonempty bounded disked set}\}
\]
For $B\in \frB$, denote by $\wh{X}_B$ the Hausdorff completion of the seminormed space $X_B$. If $A, B\in \frB$ with $A\subset B$, the inclusion $X_A\inj X_B$ induces a continuous linear map $\wh{X}_A\to \wh{X}_B$. Hence the family $\{\wh{X}_B\}_{B\in \frB}$ forms an inductive system in $\BVS$. By \cref{prp:sepindlimcomp}, the separated inductive limit
\[
\wt{X}\ceq \textstyle\varinjlim_{B\in \frB}^\rms \wh{X}_B
\]
is complete and separated. We call $\wt{X}$ the \emph{completion of $X$} (or, more precisely, the \emph{bornological completion of $X$}). For each $B\in \frB$, denote by $\iota_B$ the composition of canonical morphisms 
\[
\begin{tikzcd}
\iota_B\colon\, X_B \arrow[r] &\wh{X}_B \arrow[r] & \wt{X}, 
\end{tikzcd}
\]
then $\{\iota_B\}_{B\in \frB}$ induces a bounded linear map $\iota\colon X\to \wt{X}$. We refer to $\iota$ as the canonical morphism of the completion. The completion $\wt{X}$ has the following universal property: for any complete and separated convex bornological vector space $Y$ and any bounded linear map $f\colon X\to Y$, there exists a unique bounded linear map $\wt{f}\colon \wt{X}\to Y$ such that $\wt{f}\circ \iota=f$. 

\subsubsection{Tensor product and internal hom}

Let $X$ and $Y$ be convex bornological vector spaces. 

Consider the (algebraic) tensor product $X\otimes Y$, and denote by $\otimes \colon X\times Y\to X\otimes Y$ the canonical bilinear map. Then, the collection
\[
\frB\ceq \{\disk(\otimes(A\times B))\mid \text{$A\subset X$, $B\subset Y$ are nonempty bounded sets}\}
\]
satisfies the conditions in \cref{lem:constcborn}. Hence $X\otimes Y$ admits the convex vector bornology for which $\frB$ is a basis. This bornology is called the \emph{tensor product bornology}. It is clear that $\otimes\colon X\times Y\to X\otimes Y$ is bounded. Moreover, a linear map $f\colon X\otimes Y\to Z$, where $Z$ is another convex bornological vector space, is bounded if and only if $f\circ \otimes \colon X\times Y\to Z$ is bounded. The convex bornological vector space $X\otimes Y$ has the following universal property: for any convex bornological vector space $Z$ and any bounded bilinear map $\varphi\colon X\times Y\to Z$, there exists a unique bounded linear map $\wt{\varphi}\colon X\otimes Y\to Z$ such that $\wt{\varphi}\circ\otimes =\varphi$. One can check that $X\otimes Y$ is separated if and only if $X$ and $Y$ are both separated. 

We denote by $X\totimes Y$ the completion of $X\otimes Y$, and call it the \emph{completed tensor product}. We also denote by $\wt{\otimes}\colon X\times Y\to X\totimes Y$ the composition of $\otimes \colon X\times Y\to X\otimes Y$ with the canonical morphism $X\otimes Y\to X\totimes Y$ of the completion, and call it the canonical bilinear map. The completed tensor product has the following universal property: for any complete and separated convex bornological vector space $Z$ and any bounded bilinear map $\varphi\colon X\times Y\to Z$, there exists a unique bounded linear map $\wt{\varphi}\colon X\totimes Y\to Z$ such that $\wt{\varphi}\circ \totimes=\varphi$. 

\smallskip

Consider the linear space $\Hom_\BVS(X, Y)$ of bounded linear maps. Let $\frB$ be the collection of nonempty disked sets $F\subset \Hom_{\BVS}(X, Y)$ such that $\bigcup_{f\in F}f(A)\subset Y$ is bounded for all bounded set $A\subset X$. Then $\frB$ satisfies the conditions in \cref{lem:constcborn}. Hence $\Hom_{\BVS}(X, Y)$ admits the convex vector bornology for which $\frB$ is a basis. One can check that if $Y$ is separated, then so is $\Hom_{\BVS}(X, Y)$. Moreover, if $Y$ is complete and separated, then so is $\Hom_{\BVS}(X, Y)$. 

\smallskip

So far, we obtain functors 
\[
(-)\otimes(-) \colon \BVS\times \BVS\to \BVS, \quad 
\Hom_{\BVS}(-, -)\colon \BVS\times \BVS\to \BVS. 
\]
For a convex bornological vector space $X$, one can verify that the functor $X\otimes(-)\colon \BVS\to \BVS$ is left adjoint to $\Hom_{\BVS}(X, -)\colon \BVS\to \BVS$. 

\subsubsection{Fine bornology on a linear space}

Any linear space $V$ carries a natural bornology, called the \emph{fine bonorlogy}, whose bounded sets are those subsets $B\subset V$ for which the following holds: there exists a finite-dimensional linear subspace $E\subset V$ such that $B\subset E$ and $B$ is bounded in $E$, where $E$ is equipped with the von Neumann bornology associated with the Hausdorff vector topology of $E$. It is easy to verify that the fine bornology is a separated convex vector bornology. 

\begin{prp}\label{prp:finebor}
Let $V$ be a linear space. 
\begin{enumerate}
\item 
The fine bornology on $V$ is the smallest separated convex bornology on $V$. 

\item 
For a basis $\{v_i\}_{i\in I}$ of $V$, the linear isomorphism 
\[
\bbC^{\oplus I}\to V, \quad \{a_i\}_{i\in I}\mapsto \sum_{i\in I}a_iv_i
\]
is an isomorphism of bornological vector spaces, where $\bbC^{\oplus I}$ is equipped with the direct sum bornology, and $V$ with the fine bornology. 
\end{enumerate}
\end{prp}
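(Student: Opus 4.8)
The plan is to deduce both parts from a single elementary finite-dimensional observation. For part (1), since the fine bornology is already known to be a separated convex vector bornology, it suffices to show it is contained in every separated convex vector bornology on $V$. Every finely bounded set lies in some finite-dimensional subspace and is bounded there, so this reduces to the following claim: for a finite-dimensional linear space $E$ with basis $e_1,\dots,e_n$ and any convex vector bornology $\frB$ on $E$, every Euclidean-bounded subset of $E$ is $\frB$-bounded. Part (2) will then be essentially formal.

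First I would establish the finite-dimensional step. The singletons $\{e_i\}$ are $\frB$-bounded, hence so is $\{e_1,\dots,e_n\}$, and hence so is $S\ceq\disk\{e_1,\dots,e_n\}$ by \cref{lem:diskbound}. In the coordinates given by the basis one has $S=\{\sum_i a_ie_i\mid\sum_i|a_i|\le 1\}$, so the closed polydisc $P=\{\sum_i a_ie_i\mid|a_i|\le 1\text{ for all }i\}$ satisfies $P\subset nS$, because $|a_i|\le 1$ for all $i$ forces $\sum_i|a_i/n|\le 1$. Since scalar multiplication is bounded, $nS$ is $\frB$-bounded, hence so is $P$, and hence so is every Euclidean-bounded subset of $E$, each being contained in some $RP$. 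Returning to $V$: a finely bounded $B\subset V$ lies in a finite-dimensional subspace $E$ and is bounded there; restricting a given separated convex vector bornology on $V$ to $E$ gives a convex vector bornology on $E$, and the previous sentences, applied with any basis of $E$, show $B$ is bounded. Thus the fine bornology is contained in every separated convex vector bornology, i.e.\ it is the smallest one.

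For part (2), write $\varphi\colon\bbC^{\oplus I}\to V$ for the map $\{a_i\}\mapsto\sum_i a_iv_i$, and check boundedness in both directions. A basis of the direct sum bornology consists of the sets $\sum_{i\in F}\iota_i(B_i)$ with $F\subset I$ finite and each $B_i\subset\bbC$ a bounded disk; the image of such a set under $\varphi$ lies in the finite-dimensional subspace $\Span\{v_i\mid i\in F\}$ and is bounded there, hence is finely bounded, so $\varphi$ is bounded. Conversely, transporting the direct sum bornology forward along the linear isomorphism $\varphi$ yields a separated convex vector bornology on $V$ — separated because $\bbC^{\oplus I}$, being a coproduct of copies of $\bbC$ in $\BVS$, is separated — and by part (1) this bornology contains the fine bornology, which is precisely the assertion that $\varphi^{-1}$ is bounded. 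Hence $\varphi$ is an isomorphism of bornological vector spaces.

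I do not anticipate a real obstacle: given \cref{lem:diskbound} and the constructions of the direct sum and fine bornologies, everything is formal. The only genuine computations are the identification $\disk\{e_1,\dots,e_n\}=\{\sum_i a_ie_i\mid\sum_i|a_i|\le 1\}$ in coordinates, the inclusion $P\subset nS$, and the standard fact that a bounded subset of a finite-dimensional topological vector space lies in a polydisc relative to any basis — all elementary.
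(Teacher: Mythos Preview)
Your proof is correct and essentially parallel to the paper's, but for part~(1) you take a more elementary route. The paper's argument invokes an external fact (\cite[Theorem 3.3.1]{H77}) that a separated convex bornology on a finite-dimensional space is unique, and from this deduces that the inclusion $E\hookrightarrow V$ is bounded for any separated convex bornology on $V$. You instead prove the needed finite-dimensional input directly: using \cref{lem:diskbound} to show $\disk\{e_1,\dots,e_n\}$ is bounded, then the containment $P\subset nS$ to get every polydisc bounded. This is self-contained and in fact slightly stronger, since your argument never uses separatedness of the target bornology---you actually show the fine bornology is contained in \emph{every} convex vector bornology on $V$, not just the separated ones. The paper's approach is shorter but outsources the work; yours unpacks it. For part~(2) the two proofs are essentially identical: both observe that boundedness of $\varphi$ is clear from the direct sum bornology's basis, and that boundedness of $\varphi^{-1}$ follows from part~(1).
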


\begin{proof}
(1) Since separated bornologies on a finite-dimensional linear space are unique (\cite[Theorem 3.3.1]{H77}), it follows that for a finite-dimensional linear subspace $E\subset V$, the inclusion $E\inj V$ is bounded with respect to any separated convex bornology on $V$. This proves (1). 

(2) Denote by $f\colon \bbC^{\oplus I}\to V$ the claimed linear isomorphism. By the construction of the direct sum bornology, it is clear that $f$ is bounded. Moreover, by (1), the inverse map $f^{-1}$ is also bounded. 
\end{proof}

\begin{cor}
The fine bornology on a linear space is complete and separated convex bornology. 
\end{cor}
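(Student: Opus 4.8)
The plan is to transport the statement along the isomorphism provided by \cref{prp:finebor}\,(2). That the fine bornology is a separated convex vector bornology was already observed in the discussion preceding \cref{prp:finebor}, so in principle it would suffice to establish completeness; but in fact both properties come out of the same reduction, so I would treat them together. Fix a basis $\{v_i\}_{i\in I}$ of $V$. By \cref{prp:finebor}\,(2), the space $V$ equipped with the fine bornology is isomorphic in $\BVS$ to $\bbC^{\oplus I}$ carrying the direct sum bornology. Now $\bbC$, with its von Neumann bornology, is a Banach space, hence complete and separated by \cref{exm:vonborcomp} (the closed unit disk $B=\{z\in\bbC\mid |z|\le 1\}$ is a bornologically closed bounded completant disked set, with $\bbC_B=\bbC$). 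As recorded in \cref{ss:constBVS}, a coproduct $\bigoplus_{\lambda\in\Lambda}X_\lambda$ in $\BVS$ is separated (resp.\ complete) if and only if each $X_\lambda$ is; applying this with every $X_\lambda=\bbC$ shows that $\bbC^{\oplus I}$ is complete and separated. Transporting along the isomorphism of \cref{prp:finebor}\,(2), we conclude that $V$ with the fine bornology is complete and separated, which is the assertion.

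There is no real obstacle here: the corollary is a direct consequence of \cref{prp:finebor}\,(2) together with the already-established behaviour of coproducts in $\BVS$ under completeness and separation, and no new construction or estimate is needed. The only minor point to keep in mind is to cite these two ingredients correctly rather than re-deriving completeness of the fine bornology by hand from bornological Cauchy nets.
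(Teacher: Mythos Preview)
Your argument is correct and matches the paper's own proof, which simply records that the corollary follows from \cref{prp:finebor}\,(2); you have merely spelled out the implicit steps (completeness and separation of $\bbC$, stability under coproducts, transport along the isomorphism).
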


\begin{proof}
This follows from \cref{prp:finebor} (2). 
\end{proof}

\begin{cor}\label{cor:finetens}
For a complete convex bornological vector space $X$ and a linear space $V$ equipped with the fine bornology, the tensor product bornology on $X\otimes V$ is complete. 
\end{cor}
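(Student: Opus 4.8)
The plan is to reduce the statement to two facts already recorded in the excerpt: with the fine bornology, $V$ is a coproduct of copies of $\bbC$ in $\BVS$ (\cref{prp:finebor}), and a direct sum in $\BVS$ is complete exactly when each summand is. The bridge between them is that the functor $X\otimes(-)$ preserves coproducts.

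First I would fix a basis $\{v_i\}_{i\in I}$ of $V$. By \cref{prp:finebor}~(2) the map $\bbC^{\oplus I}\to V$, $\{a_i\}_{i\in I}\mapsto\sum_{i\in I}a_iv_i$, is an isomorphism of bornological vector spaces, with $\bbC^{\oplus I}$ carrying the direct sum bornology. Applying $X\otimes(-)\colon\BVS\to\BVS$ yields an isomorphism $X\otimes\bbC^{\oplus I}\sto X\otimes V$ in $\BVS$, the left side carrying the tensor product bornology.

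Next I would commute the tensor product past the direct sum. Since $X\otimes(-)$ is left adjoint to $\Hom_{\BVS}(X,-)$, it preserves all colimits, in particular the coproduct $\bbC^{\oplus I}=\bigoplus_{i\in I}\bbC$, so $X\otimes\bbC^{\oplus I}\cong\bigoplus_{i\in I}(X\otimes\bbC)$ in $\BVS$. Since $\bbC$ with its fine (equivalently von Neumann) bornology is the monoidal unit of $(\BVS,\otimes)$---the map $x\mapsto x\otimes 1$ is a bounded linear bijection $X\to X\otimes\bbC$ whose inverse is induced by the bounded bilinear map $(x,a)\mapsto ax$---we have $X\otimes\bbC\cong X$. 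Combining, $X\otimes V\cong\bigoplus_{i\in I}X$ as convex bornological vector spaces. As $X$ is complete, so is $\bigoplus_{i\in I}X$, and completeness is invariant under bornological isomorphism; hence $X\otimes V$ is complete.

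I do not expect a genuine obstacle. The only points needing a moment's care are the identification $X\otimes\bbC\cong X$ and checking that the composite $X\otimes V\cong\bigoplus_{i\in I}X$ is an isomorphism of \emph{bornological} vector spaces; both follow from the universal property of the tensor product bornology, or, if one prefers an elementary route, by directly comparing the explicit bases of the tensor product and direct sum bornologies described in \cref{ss:constBVS}. Note that separation of $X$ is never used, consistent with the hypothesis that $X$ is only assumed complete.
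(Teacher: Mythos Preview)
Your proof is correct and follows essentially the same approach as the paper's: both invoke \cref{prp:finebor}~(2) to identify $V$ with $\bbC^{\oplus I}$, use that $X\otimes(-)$ is a left adjoint (hence preserves coproducts) to obtain $X\otimes V\cong\bigoplus_{i\in I}X$, and conclude via the completeness of direct sums of complete spaces. You have simply made explicit the intermediate identifications that the paper compresses into one sentence.
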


\begin{proof}
Since $X\otimes(-)\colon \BVS\to \BVS$ is a left adjoint functor, it preserves colimits. Hence the corollary follows from \cref{prp:finebor} (2). 
\end{proof}

\subsection{Holomorphic functions with values in a bornological vector space}
\label{ss:holoBVS}

Fix a separated convex bornological vector space $X$. 
For an open subset $U\subset \bbC^n$ and $a\in U$, we direct the set $U\bs\{a\}$ by 
\[
z\preceq w \rarr |z-a|\ge |w-a|. 
\]
For a map $f\colon U\bs\{a\}\to X$, if the net $\{f(z)\}_{z\in U\bs\{a\}}$ converges bornologically, we denote its bornological limit by $\blim_{z\to a}f(z)$. 

\begin{dfn}\label{dfn:bholo}
Let $U\subset \bbC^n$ be an open subset and $f\colon U\to X$ a map. 
We say that $f$ is \emph{bornologically complex differentiable at $a\in U$} if there exists $v\in X^n$ such that 
\[
\blim_{z\to a}\frac{f(z)-f(a)-(z-a)\cdot v}{|z-a|}=0.
\] 
Here, writing $z=(z_1, \ldots, z_n)$, $a=(a_1, \ldots, a_n)$, and $v=(v_1, \ldots, v_n)$, we denote $(z-a)\cdot v\ceq \sum_{i=1}^n(z_i-a_i)v_i$. 
Moreover, we say that $f$ is \emph{bornologically holomorphic} on $U$ if $f$ is complex differentiable at every $a\in U$. 
\end{dfn}

Recall that $X_B$ is a normed space for each nonempty bounded disked set $B\subset X$, since $X$ is separated (\cref{prp:bornsep}). 

\begin{prp}\label{prp:bholoeq}
Let $U\subset \bbC^n$ be an open subset. For a map $f\colon U\to X$ and $a\in U$, the following conditions are equivalent: 
\begin{clist}
\item $f$ is bornologically complex differentiable at $a$. 

\item 
There exists a nonempty bounded disked set $B\subset X$ such that $f(U)\subset X_B$ and the map $f\colon U\to X_B$ is complex differentiable at $a$ (in the usual sense). 
\end{clist}
\end{prp}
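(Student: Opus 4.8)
The plan is to transport the difference‑quotient net between $X$ and the normed spaces $X_B$ (recall $X_B$ is normed since $X$ is separated, \cref{prp:bornsep}) by means of the characterization of bornological convergence in \cref{prp:bornconvequiv}. For $v\in X^n$ and $z\in U\setminus\{a\}$, write
\[
g_v(z)\ceq \frac{f(z)-f(a)-(z-a)\cdot v}{\abs{z-a}},
\]
regarded as a net indexed by the directed set $U\setminus\{a\}$ with the direction recalled above.

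For (ii) $\Rightarrow$ (i): let $B$ be a nonempty bounded disked set as in (ii) and let $v\in(X_B)^n\subset X^n$ be the complex derivative of $f\colon U\to X_B$ at $a$, so that $g_v(z)\to 0$ in $X_B$ as $z\to a$. By \cref{lem:bornconv} there is a net $\{t_z\}_z\subset\bbR_{\ge0}$ with $t_z\to 0$ and $g_v(z)\in t_zB$ for all $z$, which is exactly the statement that $g_v(z)\to 0$ bornologically in $X$; hence $f$ is bornologically complex differentiable at $a$. (Equivalently, one may invoke the implication (iii) $\Rightarrow$ (i) of \cref{prp:bornconvequiv}.)

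For (i) $\Rightarrow$ (ii): let $v\in X^n$ be as in (i), so $\blim_{z\to a}g_v(z)=0$. By the implication (i) $\Rightarrow$ (iii) of \cref{prp:bornconvequiv}, there is a nonempty bounded disked set $B_0\subset X$ with $g_v(z)\in X_{B_0}$ for every $z\ne a$ and $g_v(z)\to 0$ in $X_{B_0}$. To force all of $f(U)$ into a single normed space, enlarge $B_0$ to
\[
B\ceq \disk\bigl(B_0\cup\{f(a),v_1,\ldots,v_n\}\bigr),
\]
which is again a nonempty bounded disked set by \cref{lem:diskbound}. Then $B_0\subset B$, so $X_{B_0}\subset X_B$ with continuous inclusion; and since $f(a),v_i\in X_B$, the identity $f(z)=f(a)+(z-a)\cdot v+\abs{z-a}\,g_v(z)$ gives $f(z)\in X_B$ for $z\ne a$, whence $f(U)\subset X_B$. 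Finally $g_v(z)\to 0$ in $X_B$ (as $X_{B_0}\inj X_B$ is continuous) and $v\in(X_B)^n$, so $f\colon U\to X_B$ is complex differentiable at $a$ with derivative $v$.

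The only step I expect to require care is the enlargement from $B_0$ to $B$ in the second implication: one must produce a single \emph{bounded} disked set whose associated normed space simultaneously contains $f(a)$, the candidate derivative, and the range of the difference quotient, and one must check that the convergence $g_v(z)\to 0$ persists after the enlargement — the former is \cref{lem:diskbound}, the latter holds because passing to a larger disked set yields a continuous inclusion of normed spaces ($p_{B_0}\ge p_B$ on $X_{B_0}$). Everything else is a routine unwinding of definitions through \cref{prp:bornconvequiv}.
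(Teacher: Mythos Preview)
Your proof is correct and follows essentially the same approach as the paper's: both directions are reduced to \cref{prp:bornconvequiv}, and for (i) $\Rightarrow$ (ii) the key step is the enlargement $B=\disk(B_0\cup\{f(a),v_1,\ldots,v_n\})$ via \cref{lem:diskbound}, exactly as in the paper (which uses the name $A$ in place of your $B_0$). Your write-up is in fact slightly more detailed than the paper's in justifying $f(U)\subset X_B$ and the persistence of convergence under the continuous inclusion $X_{B_0}\inj X_B$.
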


\begin{proof}
(i) $\Longrightarrow$ (ii): Take $v=(v_1, \ldots, v_n)\in X^n$ satisfying the condition in \cref{dfn:bholo}. By \cref{prp:bornconvequiv}, there exists a nonempty bounded disked set $A\subset X$ such that  
\[
\lim_{z\to a}\frac{f(z)-f(a)-(z-a)\cdot v}{|z-a|}=0\quad \text{in $X_A$}. 
\]
Set $B\ceq \disk(A\cup \{f(a), v_1, \ldots, v_n\})$, then $B\subset X$ is a nonempty bounded disked set such that $f(U)\subset X_B$ and $f\colon U\to X_B$ is differentiable at $a$. 

(ii) $\Longrightarrow$ (i): This is immediate from \cref{prp:bornconvequiv}. 
\end{proof}

As we can see from the proof of \cref{prp:bholoeq}, if a map $f\colon U\to X$ is bornologically complex differentiable at $a\in U$, then the element $v=(v_1, \ldots, v_n)\in X^n$ satisfying the condition in \cref{dfn:bholo} is unique. We denote 
\[
\pd{f}{z_i}(a)=\Bigl.\pd{}{z_i}f(z)\Bigr|_{z=a}\ceq v_i. 
\]

Suppose that $X$ is complete. By \cref{prp:bholoeq} and \cref{prp:bcompeq}, a function $f\colon U\to X$ is bornologically holomorphic if and only if there exists a bounded completant disked set $B\subset X$ such that $f\colon U\to X_B$ is holomorphic. Therefore, the standard complex analysis can be applied to bornologically holomorphic functions with values in a complete and separated convex bornological vector space. 

\section{From factorization algebras to vertex algebras}
\label{s:FAtoVA}

In this section, we present the general method for constructing vertex algebras from prefactorization algebras, developed by Costello and Gwilliam. To extract vertex operators from factorization products, one needs to impose a certain analytic structure on prefactorization algebras. We adopt bornological vector spaces here, and give a concrete description of the construction of vertex algebras. 

\subsection{Holomorphically translation-equivariant prefactorization algebras}
\label{ss:holotrans}

We briefly review the notion of holomorphically translation-equivariant prefactorization algebras introduced in \cite[\S5.2]{CG}, presenting it in a form suitable for our purposes. 

\smallskip

A complete and separated convex bornological vector space is called a \emph{convenient vector space}. We denote by $\CVS$ the category of convenient vector spaces with bounded linear maps. 
From the discussion in \cref{ss:constBVS}, we see that $\CVS$ is a complete and cocomplete closed symmetric monoidal linear category: 
\begin{itemize}
\item 
For a family $\{X_\lambda\}_{\lambda\in \Lambda}$ of objects in $\CVS$, the product $\prod_{\lambda\in \Lambda}X_\lambda$ in $\ModC$, equipped with the product bornology, is a product in $\CVS$, and the coproduct $\bigoplus_{\lambda\in \Lambda}X_\lambda$ in $\ModC$, equipped with the direct sum bornology, is a coproduct in $\CVS$. 

\item 
For a morphism $f\colon X\to Y$ in $\CVS$, the linear subspace $\Ker f=\{x\in X\mid f(x)=0\}$ of $X$, equipped with the subspace bornology, is its kernel in $\CVS$, and the quotient linear space $Y/\ol{\Img f}$, equipped with the quotient bornology, is its cokernel in $\CVS$. Here $\ol{\Img f}$ denotes the bornological closure of $\Img f=f(X)$. 

\item 
The symmetric monoidal structure on $\CVS$ is given by the completed bornological tensor product $\wt{\otimes}$. 
\end{itemize}
We denote by $H^n$ (resp. $H_n$) the cohomology (resp. homology) taken in $\sfC(\BVS)$, and by $H^n_\sep$ (resp. $H_n^\sep$) the cohomology (resp. homology) taken in $\sfC(\CVS)$. 

\smallskip
For $0<r_1, \ldots, r_n, R\le \infty$, set
\[
\Disks(r_1, \ldots, r_n; R)\ceq 
\{(z_1, \ldots, z_n)\in \bbC^n\mid \ol{D}_{r_1}(z_1)\sqcup \cdots \sqcup \ol{D}_{r_n}(z_n)\subset D_R\}, 
\]
which is an open subset of $\bbC^n$. Here we use the convention that $D_\infty\ceq\bbC$, and $\ol{D}_{r_i}(z_i)$ denotes the closed disk. 

Let $\sfM$ be a symmetric monoidal category, and $\clF\colon \frU_\bbC\to \sfM$ a translation-equivariant prefactorization algebra, i.e., a prefactorization algebra equivariant under the action of the additive group $\bbC$ by translations. For $0<r_1, \ldots, r_n, R\le \infty$ and $(z_1, \ldots, z_n)\in \Disks(r_1, \ldots, r_n; R)$, define a morphism $\clF^{r_1, \ldots, r_n; R}_{z_1, \ldots, z_n}$ in $\sfM$ as the composition
\[
\begin{tikzcd}[column sep=huge]
\clF^{r_1, \ldots, r_n; R}_{z_1, \ldots, z_n}\,\colon\, \displaystyle \bigotimes_{i=1}^n\clF(D_{r_i}) \arrow[r, "\otimes_{i=1}^n\sigma_{z_i, D_{r_i}}"] & [0.5cm]
\displaystyle\bigotimes_{i=1}^n\clF(D_{r_i}(z_i)) \arrow[r, "\clF^{\{D_{r_i}(z_i)\}_{i=1}^n}_{D_R}"] & [0.5cm]
\clF(D_R)
\end{tikzcd}
\]
Here $\sigma_z$ denotes the translation-equivariant structure of $\clF$. 

\begin{dfn}\label{dfn:holoPFA}
A translation-equivariant prefactorization algebra $\clF\colon \frU_\bbC\to \CVS$ is called \emph{holomorphically translation-equivariant} if it satisfies the following condition: For $0<r_1, \ldots, r_n, R\le \infty$ and $a_i\in \clF(D_{r_i})$, the map
\[
\Disks(r_1, \ldots, r_n; R)\to \clF(D_R), \quad 
(z_1, \ldots, z_n)\mapsto \clF_{z_1, \ldots, z_n}^{r_1, \ldots, r_n; R}(a_1\otimes \cdots \otimes a_n)
\]
is bornologically holomorphic. 
\end{dfn}

We now give a systematic method for producing holomorphically translation-equivariant prefactorization algebras from ones valued in chain complexes. This method will be used in \cref{ss:FEEV}. 

First, for an open subset $U\subset \bbR^n$ and a convenient vector space $X$, one can define the notion of bornologically differentiable functions on $U$ with values in $X$. By an argument similar to \cref{prp:bholoeq}, a function $f\colon U\to X$ is bornologically differentiable on $U$ if and only if there exists a bounded completant disked set $B\subset X$ such that $f\colon U\to X_B$ is differentiable on $U$ in the usual sense. For a bornologically differentiable function $f\colon U\to X$, we denote its derivative at $a\in U$ by
\[
\pd{f}{x_i}(a) \quad \text{or} \quad  \Bigl.\pd{}{x_i}f\Bigr|_{x=a}.
\]

Based on bornological differentiability, one is led to the notion of bornologically smooth functions. For an open subset $U\subset \bbC^n=\bbR^{2n}$ and a bornologically smooth function $f\colon U\to X$, the Cauchy--Riemann equation holds, namely, the function $f$ is bornologically holomorphic if and only if 
\[
\pd{f}{\ol{z_i}}\ceq \frac{1}{2}\Bigl(\pd{f}{x_{2i-1}}+\sqrt{-1}\pd{f}{x_{2i}}\Bigr)=0\quad \text{for all }\ i=1, \ldots, n. 
\]
Here $(x_1, \ldots, x_{2n})$ denotes the real coordinates given by $z_i=x_{2i-1}+\sqrt{-1}x_{2i}$. 

\begin{dfn}\label{dfn:smPFA}
A translation-equivariant prefactorization algebra $\clF\colon \frU_\bbC\to \sfC(\CVS)$ is called smoothly translation-equivariant if it satisfies the following condition: For $0<r_1, \ldots, r_m, R\le \infty$ and $a_i\in \clF(D_{r_i})_{n_i}$, the map
\[
\Disks(r_1, \ldots, r_m; R)\to \clF(D_R)_{n_1+\cdots +n_m}, \quad 
(z_1, \ldots, z_m)\mapsto \clF^{r_1, \ldots, r_m; R}_{z_1, \ldots, z_m}(a_1\otimes \cdots \otimes a_m)
\]
is bornologically smooth. 
\end{dfn}

For a translation-equivariant prefactorization algebra $\clF\colon \frU_\bbC\to \sfC(\CVS)$, note that by composing $\clF$ with the symmetric monoidal functor $H^{\sep}_\bullet\colon \sfC(\CVS)\to \CVS$ of taking homology, we obtain a translation-equivariant prefactorization algebra $H^\sep_\bullet\clF\colon \frU_\bbC\to \CVS$. 

\begin{lem}\label{lem:smtoholo}
Let $\clF\colon \frU_\bbC\to \sfC(\CVS)$ be a smoothly translation-equivariant prefactorization algebra satisfying the following conditions: 
\begin{clist}
\item 
For $0<R\le \infty$, we have $H_n^\sep(\clF(D_R))=0$ whenever $n\neq 0$. 

\item 
There exists a family of morphism 
\[
\delta_R\colon \clF(D_R)_\bullet\to \clF(D_R)_{\bullet+1} \quad (0<R\le \infty)
\]
of $\bbZ$-graded objects such that for $0<r_1, \ldots, r_m, R\le \infty$ and $a_i\in \clF(D_{r_i})_{n_i}$ with $n_1+\cdots +n_m=0$, 
\[
\pd{}{\ol{z}_i}\clF^{r_1, \ldots, r_m; R}_{z_1, \ldots, z_m}(a_1\otimes \cdots \otimes a_m)=
d\clF^{r_1, \ldots, r_m; R}_{z_1, \ldots, z_m}(a_1\otimes \cdots \otimes \delta_{r_i}(a_i)\otimes \cdots \otimes a_m), 
\]
where $d$ denotes the differential of the complex $\clF(D_R)$. 
\end{clist}
Then $H^{\sep}_\bullet\clF\colon \frU_\bbC\to \CVS$ is a holomorphically translation-equivariant prefactorization algebra. 
\end{lem}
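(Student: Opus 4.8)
The plan is to verify that the prefactorization algebra $H^\sep_\bullet\clF\colon\frU_\bbC\to\CVS$ — which, by hypothesis (i), agrees with $H_0^\sep\clF$ on every disk $D_R$ — satisfies the condition of \cref{dfn:holoPFA}; this is the expected conclusion of the lemma. Fix $0<r_1,\dots,r_n,R\le\infty$ and homology classes $\alpha_i\in H^\sep_\bullet\clF(D_{r_i})=H_0^\sep(\clF(D_{r_i}))$, and choose representing cycles $a_i\in\clF(D_{r_i})_0$, so that $da_i=0$. Because the translation maps $\sigma_{z_i}$ and the factorization products of $\clF$ are morphisms in $\sfC(\CVS)$, the composite $\clF^{r_1,\dots,r_n;R}_{z_1,\dots,z_n}$ is a chain map; hence $g(z_1,\dots,z_n)\ceq\clF^{r_1,\dots,r_n;R}_{z_1,\dots,z_n}(a_1\otimes\cdots\otimes a_n)$ is a cycle in degree $0$ for every $(z_1,\dots,z_n)\in\Disks(r_1,\dots,r_n;R)$, and, for each such $(z_1,\dots,z_n)$, the symmetric monoidality of $H^\sep_\bullet$ identifies $(H^\sep_\bullet\clF)^{r_1,\dots,r_n;R}_{z_1,\dots,z_n}(\alpha_1\otimes\cdots\otimes\alpha_n)$ with the class $\Phi(z_1,\dots,z_n)\ceq[g(z_1,\dots,z_n)]\in H_0^\sep(\clF(D_R))$. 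So it suffices to prove that $\Phi$ is bornologically holomorphic.

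Next I would check that $\Phi$ is bornologically smooth. By \cref{dfn:smPFA}, $g\colon\Disks(r_1,\dots,r_n;R)\to\clF(D_R)_0$ is bornologically smooth, and since each value of $g$ is a cycle, its image lies in the bornologically closed linear subspace $\Ker d_0$; using the Banach-space characterization of bornological smoothness recalled before \cref{dfn:smPFA} together with \cref{lem:borncl}, one sees that $g$ is then bornologically smooth as a map into $\Ker d_0$. Composing with the bounded linear quotient map $q\colon\Ker d_0\to H_0^\sep(\clF(D_R))=\Ker d_0/\ol{\Img d_1}$ shows that $\Phi=q\circ g$ is bornologically smooth.

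The heart of the argument is that $\Phi$ has vanishing $\ol{z}$-derivatives. Since $q$ is bounded linear it commutes with bornological limits, hence with the partial derivatives $\pd{}{\ol{z}_i}$ (this is the argument of \cref{lem:boundmapconv}), so by hypothesis (ii)
\[
\pd{\Phi}{\ol{z}_i}(z)=q\bigl(\pd{g}{\ol{z}_i}(z)\bigr)=\Bigl[\,d\,\clF^{r_1,\dots,r_n;R}_{z_1,\dots,z_n}\bigl(a_1\otimes\cdots\otimes\delta_{r_i}(a_i)\otimes\cdots\otimes a_n\bigr)\Bigr]
\]
for each $i$ (note $\delta_{r_i}(a_i)\in\clF(D_{r_i})_1$, so the argument of $d$ has degree $1$). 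The right-hand side is the class of a boundary in $H_0^\sep(\clF(D_R))=\Ker d_0/\ol{\Img d_1}$, hence equals $0$. Thus $\Phi$ is a bornologically smooth $\CVS$-valued function all of whose $\ol{z}_i$-derivatives vanish, so the Cauchy--Riemann criterion recalled before \cref{dfn:smPFA} gives that $\Phi$ is bornologically holomorphic. As $\alpha_1,\dots,\alpha_n$ were arbitrary, \cref{dfn:holoPFA} is verified and $H^\sep_\bullet\clF$ is holomorphically translation-equivariant.

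The only real obstacle is bookkeeping rather than mathematics: one must be careful that, for each fixed $(z_1,\dots,z_n)$, applying the symmetric monoidal functor $H^\sep_\bullet$ to the chain map $\clF^{r_1,\dots,r_n;R}_{z_1,\dots,z_n}$ genuinely reproduces the structure map of $H^\sep_\bullet\clF$ — so that $g$ is a legitimate chain-level representative — and that bornological smoothness is preserved when $g$ is regarded first as a map into the closed subspace $\Ker d_0$ and then pushed into the subquotient $H_0^\sep(\clF(D_R))$. All of this is routine given \cref{ss:constBVS} and \cref{ss:holoBVS}; the genuine content of the lemma is simply that hypothesis (ii) forces each $\pd{\Phi}{\ol{z}_i}$ to be represented by a boundary, which vanishes in homology.
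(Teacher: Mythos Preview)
Your proposal is correct and follows essentially the same approach as the paper's proof: represent the homology-level map by the chain-level map, observe via condition~(ii) that each $\partial/\partial\ol{z}_i$ is a boundary and hence vanishes in $H_0^\sep$, and conclude by Cauchy--Riemann. The paper's proof is terser---it also records the trivial case where the total degree $n_1+\cdots+n_m\neq 0$ (the target $H^\sep_{n_1+\cdots+n_m}$ then vanishes by~(i))---while you are more careful than the paper in justifying that bornological smoothness passes first to the closed subspace $\Ker d_0$ and then to the separated quotient $H_0^\sep$, a point the paper leaves implicit.
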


\begin{proof}
Let $0<r_1, \ldots, r_m, R\le \infty$ and $[a_i]\in H^\sep_{n_i}(\clF(D_R))$. Note that we have
\[
(H^\sep_\bullet\clF)^{r_1, \ldots, r_m: R}_{z_1, \ldots, z_m}([a_1]\otimes \cdots \otimes [a_m])=
[\clF^{r_1, \ldots, r_m; R}_{z_1, \ldots, z_m}(a_1\otimes \cdots \otimes a_m)]. 
\]
If $n_1+\cdots +n_m\neq 0$, then by the condition (i), 
\[
(H^\sep_\bullet\clF)^{r_1, \ldots, r_m: R}_{z_1, \ldots, z_m}([a_1]\otimes \cdots \otimes [a_m])=0. 
\]
Moreover, if $n_1+\cdots +n_m=0$, then by condition (ii)
\[
\pd{}{\ol{z}_i}(H^\sep_\bullet\clF)^{r_1, \ldots, r_m: R}_{z_1, \ldots, z_m}([a_1]\otimes \cdots \otimes [a_m])=
\Bigl[\pd{}{\ol{z}_i}\clF^{r_1, \ldots, r_m; R}_{z_1, \ldots, z_m}(a_1\otimes \cdots\otimes a_m)\Bigr]=0. 
\qedhere
\]
\end{proof}

\subsection{Construction of vertex algebras from factorization algebras}
\label{ss:FAtoVA}

In this subsection we construct vertex algebras from $S^1\ltimes \bbC$-equivariant prefactorization algebras on the complex plane $\bbC$, called amenably holomorphic. 

\smallskip

Let $\clF\colon \frU_\bbC\to \CVS$ be an $S^1$-equivariant prefactorization algebra, where the unit circle $S^1$ acts on $\bbC$ by rotations. 
For $\Delta\in \bbZ$ and an open subset $U\subset \bbC$ that is invariant under the action of $S^1$, define 
\[
\clF(U)_\Delta\ceq \{a\in \clF(U)\mid \forall q\in S^1,\, \sigma_{q, U}(a)=q^{\Delta}a\}, 
\]
where $\sigma_q$ denotes the $S^1$-equivariant structure of $\clF$. 

Consider the action of the group $S^1\ltimes \bbC$ on $\bbC$ as isometric affine transformations. Given an $S^1\ltimes \bbC$-equivariant prefactorization algebra $\clF\colon \frU_\bbC\to \CVS$, note that $\clF$ naturally inherits both an $S^1$-equivariant structure and a translation-equivariant structure. We use the convention that $D_\infty\ceq\bbC$. 

\begin{thm}\label{thm:FAtoVA}
Let $\clF\colon \frU_\bbC\to\CVS$ be an $S^1\ltimes \bbC$-equivariant prefactorization algebra satisfying the following conditions: 
\begin{clist}
\item 
The $\bbC$-action makes $\clF\colon \frU_\bbC\to \CVS$ a holomorphically translation-equivariant prefactorization algebra. 

\item 
For $0<R\le \infty$, we have $\clF(D_R)_\Delta=0$ ($\Delta\ll0$). 

\item
For $0<r<R\le\infty$ and $\Delta\in \bbZ$, the map $\clF^{D_r}_{D_R}\colon \clF(D_r)_\Delta\to \clF(D_R)_\Delta$ is a linear isomorphism. 
\end{clist}
Then, the linear space
\[
\bfVR(\clF)\ceq \bigoplus_{\Delta\in \bbZ}\clF(D_R)_\Delta
\]
admits the structure of a $\bbZ$-graded vertex algebra. We call such $\clF$ an \emph{amenably holomorphic prefactorization algebra}. 
\end{thm}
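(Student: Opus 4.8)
The plan is to equip $\bfVR(\clF) = \bigoplus_\Delta \clF(D_R)_\Delta$ with the structure maps $(\vac, T, Y)$ read off from the factorization and equivariance data, and then verify the vertex algebra axioms. First I would define the vacuum: the unit $1_\clF\colon \bbC \to \clF(\varnothing)$ composed with $\clF^\varnothing_{D_R}$ produces an element $\vac \in \clF(D_R)$, which by compatibility of $1_\clF$ with the $S^1$-action lies in $\clF(D_R)_0$. Next, the translation operator $T$: the holomorphic translation-equivariant structure gives, for $a \in \clF(D_r)_\Delta$, a holomorphic function $z \mapsto \clF^{r;R}_z(a) = \clF^{D_r(z)}_{D_R}\sigma_{z,D_r}(a)$ on a disk around $0$, and I define $Ta$ to be its $z$-derivative at $0$; using condition (iii) to identify $\clF(D_r)_\Delta \cong \clF(D_R)_\Delta$, one checks $T$ raises weight by $1$ and preserves $\bfVR(\clF)$. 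The most substantial piece is the state-field correspondence. For $a \in \clF(D_{r_1})_{\Delta_1}$ and $b \in \clF(D_R)_{\Delta_2}$ with $\ol D_{r_1}(z) \sqcup \ol D_{r_2} \subset D_R$, the two-point factorization map gives a function $z \mapsto \clF^{r_1, r_2; R}_{z, 0}(a \otimes b)$ that is holomorphic on a punctured disk $A_{0,\rho}\setminus\{0\}$ (excise $z=0$ since the disks must be disjoint); I expand it as a Laurent series in $z$ with coefficients in $\clF(D_R)$, and decompose into weight components using the $S^1$-action. Defining $a_{(n)}b$ as the appropriate Laurent coefficient (with a shift $z^{-n-1}$), condition (ii) — vanishing of low weights — forces the Laurent expansion to have only finitely many negative powers once one fixes the target weight space, so that $Y(a,z)b \in \bfVR(\clF)\dpr{z}$ and each $Y(a,z)$ is a field; then one extends $Y$ linearly over $\bfVR(\clF)$ using condition (iii) to move all states into a common $\clF(D_R)$.

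With the maps in place, the axioms follow from the prefactorization axioms. For the vacuum axiom: $Y(\vac, z) = \id$ comes from the unit axiom (iv) of Definition~\ref{dfn:PFA}, which says inserting $\vac$ via $\clF^{\varnothing, U}_U$ acts as the identity; and $Y(a,z)\vac \in \bfVR(\clF)\dbr{z}$ with value $a$ at $z=0$ follows because, as $z \to 0$, the disk $D_{r_1}(z)$ converges into $D_R$ and the factorization product with $\clF(\varnothing)$ degenerates to the structure map $\clF^{D_{r_1}}_{D_R}$, so there is no pole — holomorphicity at $0$ and condition (iii) give the limit. Translation invariance $[T, Y(a,z)] = \pdd_z Y(a,z)$ and $T\vac = 0$ are consequences of the associativity/compatibility of the $\bbC$-action with factorization products (Definition~\ref{dfn:equivFA}(iii)) together with the product rule for the holomorphic derivative: differentiating $\clF^{r_1, r_2; R}_{z, 0}(a \otimes b)$ in its first argument versus translating the whole picture differ exactly by the insertion $Tb$. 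Locality is the analytic heart: for $a, b \in \bfVR(\clF)$ one forms the three-point function $(z, w) \mapsto \clF^{r_1, r_2, r_3; R}_{z, w, 0}(a \otimes b \otimes c)$, holomorphic on the configuration space of ordered disjoint disks; the commutativity (ii) and associativity (iii) axioms of the factorization product show that the two orderings of $z, w$ are obtained from a single function on $\Conf_2(D_R)$, so $[Y(a,z), Y(b,w)]$ expands as a distribution supported on $z = w$; multiplying by $(z-w)^N$ for $N$ large enough to kill the pole order — bounded because of condition (ii) applied to the fixed weight of $c$ — yields $0$.

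The main obstacle I expect is making the Laurent-expansion and locality arguments genuinely rigorous in the bornological setting: one must know that a bornologically holomorphic function on an annulus with values in a convenient vector space admits a convergent (bornologically, or coefficient-wise after landing in some Banach disk $X_B$) Laurent expansion, and that the coefficients behave well under the $S^1$-decomposition and under restriction maps. This is where \cref{prp:bholoeq} and \cref{prp:bcompeq} do the work: a bornologically holomorphic $f$ factors through some $\clF(D_R)_B$ with $\clF(D_R)_B$ Banach, so classical vector-valued complex analysis (Cauchy integral formula, Laurent series, uniqueness of coefficients) applies, and the weight projections $\clF(D_R) \to \clF(D_R)_\Delta$ — realized by integrating the $S^1$-action against $q^{-\Delta}$, hence bounded — commute with these limits. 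A secondary subtlety is checking that the resulting $Y$ is independent of the auxiliary radii $r_1, r_2$ chosen and compatible with the identifications $\clF^{D_r}_{D_R}$; this is exactly what condition (iii) is designed to guarantee, via the commuting triangles in axiom (i) of Definition~\ref{dfn:PFA}. Once these analytic facts are isolated as lemmas, the verification of each vertex algebra axiom is a diagram chase combining one prefactorization axiom with one instance of the product rule or the Laurent uniqueness statement.
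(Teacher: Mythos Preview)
Your overall strategy matches the paper's: define $\vac$, $T$, $Y$ exactly as you describe, then verify the axioms using the prefactorization structure and bornological complex analysis. The paper's key organizing lemma is the equivariance identity $\sigma_{(q,0),D_R}\mu_{z_1,\ldots,z_n}^R(a_1\otimes\cdots\otimes a_n)=q^{\sum\Delta(a_i)}\mu_{qz_1,\ldots,qz_n}^R(a_1\otimes\cdots\otimes a_n)$ (its Lemma~\ref{lem:qmuz}), which shows directly that each Laurent coefficient $a_{(n)}b$ lands in $\clF(D_R)_{\Delta(a)+\Delta(b)-n-1}$ without needing any projection operators. You propose instead to use weight projections realized by integrating the $S^1$-action against $q^{-\Delta}$; that is the tameness mechanism the paper reserves for the chain-complex setting (Definition~\ref{dfn:tame}, Proposition~\ref{prp:CGconst}), and it is \emph{not} part of the hypotheses here, so you should avoid it and argue via the equivariance identity instead.

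There is one genuine slip in your locality argument. You write that the pole order $N$ at $z=w$ is ``bounded because of condition (ii) applied to the fixed weight of $c$''. That would make $N$ depend on $c$, which does not prove locality. The correct mechanism (and the paper's) is: by associativity of factorization, on the region $|z-w|<|w|$ the three-point function expands as $\sum_n (z-w)^{-n-1}\mu_{w,0}^\infty(a_{(n)}b\otimes c)$, so the pole order at $z=w$ is controlled by the vanishing of $a_{(n)}b$ for $n\gg0$, which follows from condition~(ii) applied to the weight $\Delta(a)+\Delta(b)-n-1$ of $a_{(n)}b$. This $N$ depends only on $a,b$. With that fix, the paper finishes locality by observing that $(z-w)^N\mu_{z,w,0}^\infty(a\otimes b\otimes c)$ extends holomorphically to $(\bbC^\times)^2$, choosing a completant disk $B$ so the function lands in the Banach space $\clF(\bbC)_B$, and then commuting residues $\Res_{z=0}\Res_{w=0}=\Res_{w=0}\Res_{z=0}$ there; your ``distribution supported on $z=w$'' heuristic is the right intuition but should be replaced by this concrete residue computation.
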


In the remainder of this subsection, we prove \cref{thm:FAtoVA}. Let $\clF\colon \frU_\bbC\to \CVS$ be an amenably holomorphic prefactorization algebra. We denote by 
\[
\sigma_{(q, z), U}\colon \clF(U)\to \clF((q, z)U) \quad ((q, z)\in S^1\ltimes \bbC,\, U\in \frU_\bbC)
\]
the $S^1\ltimes \bbC$-equivariant structure on $\clF$. Moreover, for an open subset $U\subset \bbC$, we write 
\[
\Conf_n(U)\ceq \{(z_1, \ldots, z_n)\in U^n\mid z_i\neq z_j \text{ for } i\neq j\},
\]
which is an open subset of $\bbC^n$. 

\smallskip

First, for $(z_1, \ldots, z_n)\in \Conf_n(D_R)$, by the condition (iii) in \cref{thm:FAtoVA}, one can define a linear map $\mu_{z_1, \ldots, z_n}^R$ as the composition
\[
\begin{tikzcd}
\mu_{z_1, \ldots, z_n}^R\colon \bfVR(\clF)^{\otimes n} \arrow[r, "\sim"] &
\displaystyle\bigotimes_{1\le i\le n}\bfV_{\!r_i}(\clF) \arrow[r, hook] & 
\displaystyle\bigotimes_{1\le i\le n}\clF(D_{r_i}) \arrow[r] &
\displaystyle\tbotimes_{1\le i\le n} \clF(D_{r_i})\arrow[r, "\clF^{r_1, \ldots, r_n; R}_{z_1, \ldots, z_n}"] & [1cm]
\clF(D_R),
\end{tikzcd}
\]
where $r_1, \ldots, r_n\in \bbR_{>0}$ are chosen so that $\ol{D}_{r_1}(z_1)\sqcup \cdots \sqcup \ol{D}_{r_n}(z_n)\subset D_R$. Note that the definition of $\mu_{z_1, \ldots, z_n}^R$ is independent of the choices of $r_1, \ldots, r_n$. 

\begin{lem}\label{lem:qmuz}
Let $(z_1, \ldots, z_n)\in \Conf_n(D_R)$. For $q\in S^1$ and homogeneous $a_1, \ldots, a_n\in \bfVR(\clF)$, we have
\[
\sigma_{(q, 0), D_R}\,\mu_{z_1, \ldots, z_n}^R(a_1\otimes \cdots \otimes a_n)=
q^{\Delta(a_1)+\cdots+\Delta(a_n)}\mu_{qz_1, \ldots, qz_n}^R(a_1\otimes \cdots\otimes a_n). 
\]
\end{lem}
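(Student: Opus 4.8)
The plan is to unwind the definition of $\mu^R_{z_1,\ldots,z_n}$ and commute the rotation $\sigma_{(q,0),D_R}$ past each stage of the composition, using the axioms of an $S^1\ltimes\bbC$-equivariant prefactorization algebra (\cref{dfn:equivFA}) together with the elementary geometry of the action. Since all the maps involved are bounded linear, it suffices to check the asserted identity on an elementary tensor $a_1\otimes\cdots\otimes a_n$ of homogeneous elements. First I would fix radii $r_1,\ldots,r_n>0$ with $\ol D_{r_1}(z_1)\sqcup\cdots\sqcup\ol D_{r_n}(z_n)\subset D_R$. As rotation by $q\in S^1$ is an isometry of $\bbC$ fixing the origin, it sends each $\ol D_{r_i}(z_i)$ to $\ol D_{r_i}(qz_i)$ and fixes $D_R$; hence $\ol D_{r_1}(qz_1)\sqcup\cdots\sqcup\ol D_{r_n}(qz_n)\subset D_R$, so the same radii also compute $\mu^R_{qz_1,\ldots,qz_n}$, and the same preimage $b_i\in\clF(D_{r_i})_{\Delta(a_i)}$ of $a_i$ under the isomorphism of condition (iii) in \cref{thm:FAtoVA} occurs in both constructions. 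By definition of the weight space, $\sigma_{(q,0),D_{r_i}}(b_i)=q^{\Delta(a_i)}b_i$. Unwinding the definition of $\clF^{r_1,\ldots,r_n;R}_{z_1,\ldots,z_n}$ from \cref{ss:holotrans} yields
\[
\mu^R_{z_1,\ldots,z_n}(a_1\otimes\cdots\otimes a_n)=\clF^{D_{r_1}(z_1),\ldots,D_{r_n}(z_n)}_{D_R}\bigl(\sigma_{(1,z_1),D_{r_1}}(b_1)\otimes\cdots\otimes\sigma_{(1,z_n),D_{r_n}}(b_n)\bigr).
\]

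Next I would apply $\sigma_{(q,0),D_R}$ and commute it past the $n$-ary factorization product. Condition (iii) of \cref{dfn:equivFA} gives this for binary products, and it extends to $n$-ary products by induction using the inductive definition of $\clF^{U_1,\ldots,U_n}_V$; because $(q,0)D_R=D_R$ and $(q,0)D_{r_i}(z_i)=D_{r_i}(qz_i)$, one obtains
\[
\sigma_{(q,0),D_R}\circ\clF^{D_{r_1}(z_1),\ldots,D_{r_n}(z_n)}_{D_R}=\clF^{D_{r_1}(qz_1),\ldots,D_{r_n}(qz_n)}_{D_R}\circ\bigl(\sigma_{(q,0),D_{r_1}(z_1)}\otimes\cdots\otimes\sigma_{(q,0),D_{r_n}(z_n)}\bigr).
\]
For each factor I then rewrite $\sigma_{(q,0),D_{r_i}(z_i)}\circ\sigma_{(1,z_i),D_{r_i}}$. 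Both $\sigma_{(q,0),D_{r_i}(z_i)}\circ\sigma_{(1,z_i),D_{r_i}}$ and $\sigma_{(1,qz_i),D_{r_i}}\circ\sigma_{(q,0),D_{r_i}}$ equal $\sigma_{g_i,D_{r_i}}$, where $g_i\in S^1\ltimes\bbC$ is the isometry $w\mapsto q(w+z_i)=qw+qz_i$: realizing $g_i$ as ``translate by $z_i$, then rotate by $q$'' and using condition (i) of \cref{dfn:equivFA} gives the first expression, while realizing $g_i$ as ``rotate by $q$, then translate by $qz_i$'' (and using $(q,0)D_{r_i}=D_{r_i}$) gives the second. Consequently
\[
\sigma_{(q,0),D_{r_i}(z_i)}\bigl(\sigma_{(1,z_i),D_{r_i}}(b_i)\bigr)=\sigma_{(1,qz_i),D_{r_i}}\bigl(\sigma_{(q,0),D_{r_i}}(b_i)\bigr)=q^{\Delta(a_i)}\,\sigma_{(1,qz_i),D_{r_i}}(b_i).
\]

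Substituting these $n$ identities into the expression for $\sigma_{(q,0),D_R}\mu^R_{z_1,\ldots,z_n}(a_1\otimes\cdots\otimes a_n)$ and pulling the scalars out of the multilinear factorization product produces the overall factor $q^{\Delta(a_1)+\cdots+\Delta(a_n)}$ and leaves exactly the expression defining $\mu^R_{qz_1,\ldots,qz_n}(a_1\otimes\cdots\otimes a_n)$, which is the claim. The only part requiring real care — and the mild obstacle in writing this out cleanly — is keeping the bookkeeping of the $S^1\ltimes\bbC$-action consistent: which affine transformation each $\sigma$ implements, the order of composition dictated by condition (i) of \cref{dfn:equivFA}, and the fact that rotation by $q$ leaves every origin-centered disk invariant. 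Beyond this there is no conceptual difficulty.
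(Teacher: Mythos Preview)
Your proposal is correct and follows essentially the same approach as the paper's own proof: commute $\sigma_{(q,0),D_R}$ through the $n$-ary factorization product via the equivariance axiom, then use the group identity $(q,0)(1,z_i)=(1,qz_i)(q,0)$ in $S^1\ltimes\bbC$ to swap the order and extract the scalar $q^{\Delta(a_i)}$ from each homogeneous factor. The only cosmetic differences are that the paper uses a single common radius $r$ rather than distinct $r_i$, and compresses the group-law step into one line rather than spelling out the two geometric factorizations of $w\mapsto qw+qz_i$.
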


\begin{proof}
Take $r\in \bbR_{>0}$ such that $\ol{D}_r(z_1)\sqcup \cdots \sqcup \ol{D}_r(z_n)\subset D_R$. For $q\in S^1$ and  $a_i\in \clF(D_r)$, we have
\[
\sigma_{(q, 0), D_R}\clF^{r, \ldots, r; R}_{z_1, \ldots, z_n}(a_1\otimes \cdots \otimes a_n)=
\clF^{\{D_r(z_i)\}_{i=1}^n}_{D_R}\bigl(\otimes_{i=1}^n\sigma_{(q, 0), D_r(z_i)}\sigma_{(1, z_i), D_r}(a_i)\bigr).
\]
Since $(q, 0)(1, z_i)=(1, qz_i)(0, q)$ in $S^1\ltimes \bbC$, we obtain 
\[
\sigma_{(q, 0), D_R}\clF^{r, \ldots, r; R}_{z_1, \ldots, z_n}(a_1\otimes \cdots \otimes a_n)=
\clF^{r, \ldots, r; R}_{qz_1, \ldots, qz_n}(\sigma_{(q, 0), D_r}(a_1)\otimes \cdots \otimes \sigma_{(q, 0), D_r}(a_n)). 
\]
This proves the lemma. 
\end{proof}

For $a_1, \ldots, a_n\in \bfVR(\clF)$, note by the condition (i) in \cref{thm:FAtoVA} that the function
\[
\Conf_n(D_R)\to \clF(D_R), \quad (z_1, \ldots, z_n)\mapsto \mu_{z_1, \ldots, z_n}^R(a_1\otimes \cdots \otimes a_n)
\]
is bornologically holomorphic. 

\smallskip

Now we define a vertex algebra structure on $\bfVR(\clF)$. 

\begin{itemize}
\item\emph{The vacuum}. 
Let $\vac\in \clF(D_R)$ be the image under the map 
\[
\begin{tikzcd}[column sep=huge]
\bbC \arrow[r, "1_\clF"] & 
\clF(\varnothing) \arrow[r, "\clF^{\varnothing}_{D_R}"] &
\clF(D_R), 
\end{tikzcd}
\] 
then we find that $\vac\in \clF(D_R)_{\Delta=0}$. Thus, we obtain $\vac\in \bfVR(\clF)$. 

\item\emph{The translation operator}. 
Let $a\in  \bfVR(\clF)$. Since the function
\[
D_R\to \clF(D_R), \quad z\mapsto \mu_z^R(a)
\]
is bornologically holomorphic, one can define 
\[
Ta\ceq \Bigl.\pd{}{z}\mu_z^R(a)\Bigr|_{z=0}\in \clF(D_R). 
\]
Then, by \cref{lem:qmuz}, we find that $Ta\in \clF(D_R)_{\Delta(a)+1}$. Hence, we obtain a linear map
\[
T\colon \bfVR(\clF)\to \bfVR(\clF), \quad a\mapsto Ta. 
\]

\item\emph{The state-field correspondence}.
 Let $a, b\in \bfVR(\clF)$. Since the function
\[
D_R^\times\to \clF(D_R), \quad z\mapsto \mu_{z, 0}^R(a\otimes b)
\]
is bornologically holomorphic, it can be expanded in the form of Laurent series
\[
\mu_{z, 0}^R(a\otimes b)=\sum_{n\in \bbZ}z^{-n-1}a _{(n)}b \quad (z\in D_R^\times), 
\]
where the series on the right-hand side converges bornologically in $\clF(D_R)$. 
Then, by \cref{lem:qmuz}, we find that $a_{(n)}b\in \clF(D_R)_{\Delta(a)+\Delta(b)-n-1}$. It follows from the condition (ii) in \cref{thm:FAtoVA} that $a_{(n)}b=0$ for $n\gg0$. Hence, we obtain a linear map
\[
Y\colon \bfVR(\clF)\otimes \bfVR(\clF)\to \bfVR(\clF)\dpr{z}, \quad a\otimes b\mapsto Y(a, z)b\ceq \sum_{n\in \bbZ}z^{-n-1}a_{(n)}b. 
\]
\end{itemize}

We prove that $\bfVR(\clF)$, equipped with the above data, indeed forms a $\bbZ$-graded vertex algebra. By construction, it suffices to verify that these data satisfy the conditions in \cref{dfn:VA}. Moreover, by the condition (iii) in \cref{thm:FAtoVA}, we may assume that $R=\infty$. 

\begin{proof}[Proof of vacuum axiom]
Let $a\in \bfVi(\clF)$. For $z\in \bbC^\times$, we have $\mu_{z, 0}^\infty(\vac\otimes a)=a$. Hence, we obtain
\[
\vac_{(n)}a=
\begin{cases}
0 & (n\neq -1) \\
a & (n=-1), 
\end{cases}
\]
which proves that $Y(\vac, z)a=a$. Moreover, for $z\in \bbC^\times$, we have 
\[
\mu^\infty_{z, 0}(a\otimes \vac)=\mu^\infty_z(a). 
\]
Since the function $z\mapsto \mu^\infty_z(a)$ is bornologically holomorphic on $\bbC$, it follows that $a_{(n)}\vac=0$ for all $n\ge0$. Furthermore, 
\[
a_{(-1)}\vac=\mu_z^\infty(a)|_{z=0}=a. 
\]
Thus, we obtain $Y(a, z)\vac\in \bfVi(\clF)\dbr{z}$ and $Y(a, z)\vac|_{z=0}=a$. 
\end{proof}

\begin{lem}\label{lem:transop}
For $(z, w)\in \Conf_2(\bbC)$ and $a, b\in \bfVi(\clF)$, we have
\[
\mu_{z, w}^\infty(a\otimes b)=\sum_{n\in \bbZ}(z-w)^{-n-1}\mu_w^\infty(a_{(n)}b), 
\]
where the series on the right-hand side converges bornologically in $\clF(\bbC)$. 
\end{lem}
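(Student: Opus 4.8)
The plan is to bypass any further manipulation of factorization products and reduce the identity to two facts already in hand: the translation covariance of the equivariant structure, and the defining Laurent expansion $\mu^\infty_{z,0}(a\otimes b)=\sum_{n}z^{-n-1}a_{(n)}b$.

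\emph{Step 1: a translation-covariance identity.} First I would prove that for $(z,w)\in\Conf_2(\bbC)$,
\[
\mu^\infty_{z,w}(a\otimes b)=\sigma_{(1,w),\bbC}\bigl(\mu^\infty_{z-w,0}(a\otimes b)\bigr).
\]
Fix $\epsilon>0$ with $2\epsilon<|z-w|$, so that $\ol D_\epsilon(z)\sqcup\ol D_\epsilon(w)\subset\bbC$ and $\ol D_\epsilon(z-w)\sqcup\ol D_\epsilon\subset\bbC$, and let $a',b'\in\clF(D_\epsilon)$ be the homogeneous preimages of $a,b$ under the isomorphism $\clF(D_\epsilon)_\Delta\xrightarrow{\sim}\clF(\bbC)_\Delta$ furnished by condition (iii) of \cref{thm:FAtoVA}. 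By the independence of $\mu^\infty_{z,w}$ from the radii chosen in its definition, $\mu^\infty_{z,w}(a\otimes b)=\clF^{D_\epsilon(z),D_\epsilon(w)}_\bbC(\sigma_{(1,z),D_\epsilon}a'\otimes\sigma_{(1,w),D_\epsilon}b')$. Now use the group law of \cref{dfn:equivFA}(i) in the form $\sigma_{(1,z),D_\epsilon}=\sigma_{(1,w),D_\epsilon(z-w)}\circ\sigma_{(1,z-w),D_\epsilon}$, then apply the compatibility of the equivariant structure with the factorization product (\cref{dfn:equivFA}(iii)) for the group element $(1,w)$ and the disks $D_\epsilon(z-w),D_\epsilon$; this pulls $\sigma_{(1,w),\bbC}$ to the front and leaves $\clF^{D_\epsilon(z-w),D_\epsilon}_\bbC(\sigma_{(1,z-w),D_\epsilon}a'\otimes b')$, which is exactly $\mu^\infty_{z-w,0}(a\otimes b)$.

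\emph{Step 2: identifying $\sigma_{(1,w),\bbC}$ with $\mu^\infty_w$ on $\bfVi(\clF)$.} For homogeneous $c\in\bfVi(\clF)$ with preimage $c'\in\clF(D_\epsilon)_{\Delta(c)}$, the naturality of the precosheaf morphism $\sigma_{(1,w)}\colon\clF\to(1,w)\clF$ gives $\clF^{D_\epsilon(w)}_\bbC\circ\sigma_{(1,w),D_\epsilon}=\sigma_{(1,w),\bbC}\circ\clF^{D_\epsilon}_\bbC$, whence $\mu^\infty_w(c)=\clF^{D_\epsilon(w)}_\bbC(\sigma_{(1,w),D_\epsilon}c')=\sigma_{(1,w),\bbC}(c)$. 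Recall also that each $a_{(n)}b$ lies in $\clF(\bbC)_{\Delta(a)+\Delta(b)-n-1}$, hence is homogeneous and belongs to $\bfVi(\clF)$, and $a_{(n)}b=0$ for $n\gg0$.

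\emph{Step 3: conclusion.} Since $z-w\in\bbC^\times=D_\infty^\times$, the definition of the modes gives $\mu^\infty_{z-w,0}(a\otimes b)=\sum_{n\in\bbZ}(z-w)^{-n-1}a_{(n)}b$, converging bornologically in $\clF(\bbC)$. As $\sigma_{(1,w),\bbC}$ is a bounded linear map, \cref{lem:boundmapconv} applied to the net of partial sums shows it may be taken inside the series; combined with Steps 1 and 2 this yields $\mu^\infty_{z,w}(a\otimes b)=\sum_{n\in\bbZ}(z-w)^{-n-1}\mu^\infty_w(a_{(n)}b)$ with bornological convergence in $\clF(\bbC)$. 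The only step that calls for real care is Step 1: keeping the disk translations, the $S^1\ltimes\bbC$ group law, and the naturality/compatibility axioms of the equivariant structure consistent, and invoking radius-independence at the right moment; everything else is bookkeeping together with the standard fact that bounded linear maps preserve bornological limits.
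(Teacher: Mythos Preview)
Your argument is correct and follows essentially the same route as the paper. The paper compresses your Steps~1--2 into the single identity $\mu_{z,w}^\infty(a\otimes b)=\clF^{R;\infty}_{w}\mu^R_{z-w,0}(a\otimes b)$ for some finite $R$ with $z-w\in D_R$, and then pushes the bounded map $\clF^{R;\infty}_w$ through the Laurent expansion via \cref{lem:boundmapconv}; since $\clF^{R;\infty}_w=\clF^{D_R(w)}_{\bbC}\circ\sigma_{(1,w),D_R}$ and $\mu^\infty_{z-w,0}=\clF^{D_R}_{\bbC}\circ\mu^R_{z-w,0}$, naturality of $\sigma_{(1,w)}$ shows $\clF^{R;\infty}_w\circ\mu^R_{z-w,0}=\sigma_{(1,w),\bbC}\circ\mu^\infty_{z-w,0}$, so your formulation and the paper's are the same identity written at different radii.
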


\begin{proof}
Take $R\in \bbR_{>0}$ such that $z-w\in D_R$, then we have
\[
\mu_{z, w}^\infty(a\otimes b)=
\clF^{R; \infty}_w\mu_{z-w, 0}^R(a\otimes b)=
\sum_{n\in \bbZ}(z-w)^{-n-1}\mu_w^\infty(a_{(n)}b).
\]
Here we use \cref{lem:boundmapconv} for the second equality, which guarantees that the series on the right-hand side converges bornologically. 
\end{proof}

\begin{proof}[Proof of translation invariance]
For $z\in \bbC$, we have $\mu_z^\infty(\vac)=\vac$, which proves $T\vac=0$. Let $a, b\in \bfVi(\clF)$. For $z\in \bbC^\times$, by \cref{lem:transop}, we have
\[
\sum_{n\in \bbZ}z^{-n-1}T(a_{(n)}b)=
\Bigl.\pd{}{w}\mu_{z+w, w}^\infty(a\otimes b)\Bigr|_{w=0}=
\pd{}{z}\mu_{z, 0}^\infty(a\otimes b)+\Bigl.\pd{}{w}\mu_{z, w}^\infty(a\otimes b)\Bigr|_{w=0}. 
\]
Take $R\in \bbR_{>0}$ such that $\ol{D}_R(z)\cap \ol{D}_R=\varnothing$, then for $w\in D_R$, 
\[
\mu_{z, w}^\infty(a\otimes b)=
\clF^{R, R; \infty}_{z, 0}(a\otimes \mu^R_w(b)).
\]
Hence, we obtain 
\[
\Bigl.\pd{}{w}\mu_{z, w}^\infty(a\otimes b)\Bigr|_{w=0}=\mu_{z, 0}^\infty(a\otimes Tb). 
\]
It follows that
\[
T(a_{(n)}b)=-na_{(n-1)}b+a_{(n)}Tb \quad (n\in\bbZ). 
\qedhere
\]
\end{proof}

\begin{lem}\label{lem:locality}
For $(z, w)\in \Conf_2(\bbC^\times)$ and $a, b, c\in \bfVi(\clF)$, we have
\[
\mu_{z, w, 0}^\infty(a\otimes b\otimes c)=
\begin{cases}
\displaystyle\sum_{n\in \bbZ}(z-w)^{-n-1}\mu_{w, 0}^\infty(a_{(n)}b\otimes c) & \text{if} \hspace{0.2cm} |z-w|<|w|, \\
\displaystyle\sum_{n\in \bbZ}(w-z)^{-n-1}\mu_{z, 0}^\infty(b_{(n)}a\otimes c) & \text{if} \hspace{0.2cm} |w-z|<|z|, 
\end{cases}
\]
where both series on the right-hand side converge bornologically in $\clF(\bbC)$. 
\end{lem}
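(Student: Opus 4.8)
\emph{Strategy.} The plan is to reduce the three‑point product $\mu^\infty_{z,w,0}$ to a two‑point product by a nested factorization, exactly as in the proof of \cref{lem:transop}, the only new feature being the passive spectator $c$ sitting at $0$. I will treat the region $|z-w|<|w|$ in detail and obtain the region $|w-z|<|z|$ from it via the symmetry axiom \cref{dfn:PFA}(ii), which gives $\mu^\infty_{z,w,0}(a\otimes b\otimes c)=\mu^\infty_{w,z,0}(b\otimes a\otimes c)$; applying the first region with $(a,b,z,w)$ replaced by $(b,a,w,z)$ then yields the second formula.

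\emph{Choice of disks and the nesting identity.} Assume $|z-w|<|w|$. I would first fix radii $\rho,s>0$ with $|z-w|<\rho$ and $\rho+s<|w|$, and then small $\varepsilon,\varepsilon'>0$ so that $\ol D_\varepsilon(z)\sqcup\ol D_{\varepsilon'}(w)\subset D_\rho(w)$ and the three closed disks $\ol D_\varepsilon(z),\ol D_{\varepsilon'}(w),\ol D_s(0)$ are pairwise disjoint in $\bbC$; the hypothesis $|z-w|<|w|$ is exactly what makes such a choice possible. Let $\Phi\colon\clF(D_\rho)\to\clF(\bbC)$ be the bounded linear map $x\mapsto\clF^{D_\rho(w),\,D_s}_{\bbC}\bigl(\sigma_{(1,w),D_\rho}(x)\otimes c_s\bigr)$, where $c_s\in\clF(D_s)$ is the preimage of $c$ under the isomorphism $\clF^{D_s}_{\bbC}$ of \cref{thm:FAtoVA}(iii) (componentwise on weight spaces). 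The key point is the identity
\[
\mu^\infty_{z,w,0}(a\otimes b\otimes c)=\Phi\bigl(\mu^\rho_{z-w,0}(a\otimes b)\bigr),
\]
together with $\Phi\bigl((a_{(n)}b)_\rho\bigr)=\mu^\infty_{w,0}(a_{(n)}b\otimes c)$ for every $n$, where $(a_{(n)}b)_\rho\in\clF(D_\rho)$ denotes the preimage of $a_{(n)}b$. To establish the first identity I would unwind $\mu^\rho_{z-w,0}(a\otimes b)$ as a factorization product over $\ol D_\varepsilon(z-w)\sqcup\ol D_{\varepsilon'}(0)\subset D_\rho$, push it forward by $\sigma_{(1,w),D_\rho}$ using the equivariance axiom \cref{dfn:equivFA}(iii) (which carries the two summands onto the disks $D_\varepsilon(z)$ and $D_{\varepsilon'}(w)$) together with the cocycle law $\sigma_{(1,w)}\circ\sigma_{(1,z-w)}=\sigma_{(1,z)}$ of \cref{dfn:equivFA}(i), and then absorb the outer fusion with $c_s$ using the associativity axioms \cref{dfn:PFA}(i),(iii); the result is the defining composite for $\mu^\infty_{z,w,0}(a\otimes b\otimes c)$ for the chosen radii, which is independent of those radii.

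\emph{Conclusion.} By the definition of the operations $a_{(n)}b$ (the Laurent expansion of the bornologically holomorphic function $\zeta\mapsto\mu^\rho_{\zeta,0}(a\otimes b)$ on $D_\rho^\times$), the partial sums of $\sum_{n}(z-w)^{-n-1}(a_{(n)}b)_\rho$ converge bornologically to $\mu^\rho_{z-w,0}(a\otimes b)$ in $\clF(D_\rho)$. Applying the bounded linear map $\Phi$ and commuting it past the limit via \cref{lem:boundmapconv} gives
\[
\mu^\infty_{z,w,0}(a\otimes b\otimes c)=\sum_{n\in\bbZ}(z-w)^{-n-1}\mu^\infty_{w,0}(a_{(n)}b\otimes c),
\]
with bornological convergence in $\clF(\bbC)$, and the region $|w-z|<|z|$ follows by the symmetry reduction noted above.

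\emph{Main obstacle.} The only genuinely delicate point is the nesting identity in the second paragraph: keeping straight which disk each operand inhabits and applying the equivariance and associativity axioms in the correct order, while verifying that the handful of disk‑inclusion inequalities can all be satisfied simultaneously. Once this identity is in hand, the rest is a direct application of \cref{lem:boundmapconv} and the symmetry axiom.
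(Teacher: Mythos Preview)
Your proposal is correct and follows essentially the same route as the paper: choose radii so that a disk around $w$ containing $z$ is disjoint from a disk around $0$, factor $\mu^\infty_{z,w,0}$ through the bounded map $x\mapsto \clF^{\rho,s;\infty}_{w,0}(x\otimes c)$ applied to $\mu^\rho_{z-w,0}(a\otimes b)$, and then pass the Laurent expansion through this map via \cref{lem:boundmapconv}. Your write-up is more explicit about the axiom bookkeeping and about pulling $c$ and $a_{(n)}b$ back to small disks via condition~(iii); the paper leaves these implicit in the definition of $\mu$, and it handles the second region by ``similarly'' rather than your symmetry reduction, but there is no substantive difference.
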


\begin{proof}
If $|z-w|<|w|$, then there exist $r, s\in \bbR_{>0}$ such that $z-w\in D_r$ and $\ol{D}_r(w)\cap \ol{D}_s=\varnothing$. We have
\[
\mu_{z, w, 0}^\infty(a\otimes b\otimes c)=
\clF^{r, s; \infty}_{w, 0}(\mu_{z-w, 0}^r(a\otimes b)\otimes c)=
\sum_{n\in\bbZ}(z-w)^{-n-1}\mu_{w, 0}^\infty(a_{(n)}b\otimes c). 
\]
Here we use \cref{lem:boundmapconv} for the second equality, which guarantees that the series on the right-hand side converges bornologically. 
Similarly, one can show the equation for the case $|w-z|<|z|$. 
\end{proof}

\begin{proof}[Proof of locality]
Let $a, b\in \bfVi(\clF)$, and take $N\in \bbN$ such that $a_{(n)}b=b_{(n)}a=0$ for $n\ge N$. It suffices to prove that 
\[
\sum_{j=0}^N(-1)^j\binom{N}{j}a_{(m+N-j)}(b_{(n+j)}c)=
\sum_{j=0}^N(-1)^j\binom{N}{j}b_{(n+j)}(a_{(m+N-j)}c)
\]
for $m, n\in \bbZ$ and $c\in \bfVi(\clF)$. By \cref{lem:locality}, the function 
\[
(\bbC^\times)^2\bs\{z=w\}\to \clF(\bbC), \quad 
(z, w)\mapsto (z-w)^N\mu_{z, w, 0}(a\otimes b\otimes c)
\]
extends to a bornologically holomorphic function on $(\bbC^\times)^2$. Hence, we have
\[
\Res_{z=0}\Res_{w=0}z^mw^n(z-w)^N\mu_{z, w, 0}^\infty(a\otimes b\otimes c)=
\Res_{w=0}\Res_{z=0}z^mw^n(z-w)^N\mu_{z, w, 0}^\infty(a\otimes b\otimes c). 
\]
Here we choose a bounded completant disked set $B\subset \clF(\bbC)$ so that the function 
\[
(\bbC^\times)^2\to \clF(\bbC)_B, \quad (z, w)\mapsto (z-w)^N\mu_{z, w, 0}^\infty(a\otimes b\otimes c)
\]
is holomorphic. The residues are taken in the Banach space $\clF(\bbC)_B$, so the residue operations $\Res_{z=0}$ and $\Res_{w=0}$ commute. 
Now, for $z\in \bbC^\times$, fix $R\in \bbR_{>0}$ such that $\ol{D}_R(z)\cap \ol{D}_R=\varnothing$. Then, for $w\in D_R$, 
\[
\mu_{z, w, 0}^\infty(a\otimes b\otimes c)=
\clF^{R, R; \infty}_{z, 0}(a\otimes \mu_{w, 0}^R(b\otimes c))=
\sum_{n\in \bbZ}w^{-n-1}\mu_{z, 0}^\infty(a\otimes b_{(n)}c). 
\]
Hence, we obtain 
\[
\Res_{z=0}\Res_{w=0}z^mw^n(z-w)^N\mu_{z, w, 0}^\infty(a\otimes b\otimes c)=
\sum_{j=0}^N(-1)^j\binom{N}{j}a_{(m+N-j)}(b_{(n+j)}c). 
\]
Similarly, one can show that 
\[
\Res_{w=0}\Res_{z=0}z^mw^n(z-w)^N\mu_{z, w, 0}^\infty(a\otimes b\otimes c)=
\sum_{j=0}^N(-1)^j\binom{N}{j}b_{(n+k)}(a_{(m+N-j)}c).
\]
Thus, the claim follows. 
\end{proof}

The proof of \cref{thm:FAtoVA} is now complete.

\smallskip

We conclude this section by explaining the relationship between \cref{thm:FAtoVA} and the construction of Costello and Gwilliam. 

In \cite[\S5.3]{CG}, they start with an $S^1\ltimes \bbC$-equivariant prefactorization algebra taking values in the category of complexes of differentiable vector spaces. For concreteness, we consider an $S^1\ltimes \bbC$-equivariant prefactorization algebra $\clF\colon \frU_\bbC\to \sfC(\CVS)$. To bring this into our setting, one simply composes $\clF$ with the symmetric monoidal functor $H_\bullet^\sep\colon \sfC(\CVS)\to \CVS$ of taking homology, yielding an $S^1\ltimes \bbC$-equivariant prefactorization algebra $H^\sep_\bullet\clF\colon \frU_\bbC\to \CVS$. However, for an open subset $U\subset \bbC$ that is invariant under the action of $S^1$, there are two possible ways to endow a $\bbZ$-grading: one way is $(H^\sep_\bullet\clF)(U)_\Delta$ as before, and another is $H^\sep_\bullet(\clF_\Delta(U))$, where $\clF_\Delta(U)=\{\clF_\Delta(U)_n\}_{n\in \bbZ}$ is a subcomplex of $\clF(U)$ defined by 
\[
\clF_\Delta(U)_n\ceq \{a\in \clF(U)_n\mid \forall q\in S^1,\, \sigma_{(q, 0), U}(a)=q^\Delta a\}. 
\]
To extract a vertex algebra structure from $\clF$, Costello and Gwilliam impose a technical condition on the $S^1$-action of $\clF$, called tameness, which ensures the identification $(H^\sep_\bullet\clF)(U)_\Delta=H^\sep_\bullet(\clF_\Delta(U))$. We now recall this notion. 

Consider the space of distributions $\clD(S^1)\ceq C^\infty_\rmc(S^1, \bbC)'$, that is, the continuous dual of $C^\infty_\rmc(S^1, \bbC)$ equipped with the canonical LF-topology. The linear space $\clD(S^1)$ is an associative algebra under convolution: for $\varphi, \psi\in \clD(S^1)$, their convolution $\varphi*\psi\in \clD(S^1)$ is defined by
\[
\varphi*\psi\colon C^\infty_\rmc(S^1, \bbC)\to \bbC, \quad f\mapsto \varphi(\psi*f),
\]
where $\psi*f\in C^\infty_\rmc(S^1, \bbC)$ denotes the convolution with a function
\[
\psi*f\colon S^1\to \bbC, \quad q\mapsto \psi(f(q\cdot-)). 
\] 
We have an element $\delta_q\in \clD(S^1)$ for each $q\in S^1$, called the delta-distribution at $q$. This is defined by
\[
\delta_q\colon C^\infty_\rmc(S^1, \bbC)\to \bbC, \quad f\mapsto f(q). 
\]
Note that the map $\delta\colon S^1\to \clD(S^1)$, $q\mapsto \delta_q$ is an injective monoid homomorphism. We also have an element $\pi_\Delta\in \clD(S^1)$ for each $\Delta\in \bbZ$ defined by
\[
\pi_\Delta\colon C^\infty_\rmc(S^1, \bbC)\to \bbC, \quad f\mapsto \frac{1}{2\pi\sqrt{-1}}\int_{S^1}z^{-\Delta-1}f(z)\,dz. 
\]
A direct calculation shows that 
\begin{equation}\label{eq:deltapi}
\delta_q*\pi_\Delta=q^{\Delta}\pi_\Delta=\pi_\Delta*\delta_q
\end{equation}
for all $q\in S^1$ and $\Delta\in \bbZ$. 
We equip $\clD(S^1)=C_\rmc(S^1, \bbC)'$ with the strong dual topology, i.e., the topology of uniform convergence on bounded subsets. 

Let $X^\bullet$ be a complex of convenient vector spaces endowed with an $S^1$-action  
\[
\sigma\colon S^1\to \Aut_{\sfC(\CVS)}(X^\bullet), \quad q\mapsto \sigma_q. 
\]
For each $\Delta\in \bbZ$, define a subcomplex $X_\Delta^\bullet$ of $X^\bullet$ by  
\[
X_\Delta^n\ceq \{x\in X^n\mid \forall q\in S^1,\, \sigma_q(x)=q^\Delta x\}. 
\]

\begin{dfn}\label{dfn:tame}
Let $X^\bullet$ be a complex of convenient vector spaces endowed with an $S^1$-action. We say that the $S^1$-action on $X^\bullet$ is \emph{tame} if there exists an algebra homomorphisms 
\[
\rho\colon  \clD(S^1)\to \End_{\sfC(\CVS)}(X^\bullet) 
\]
satisfying the following conditions: 
\begin{clist}
\item
For $q\in S^1$, we have $\rho(\delta_q)=\sigma_q$. 

\item 
For $\Delta\in \bbZ$, the following diagram commutes: 
\[
\begin{tikzcd}[row sep=huge, column sep=huge]
X_\Delta \arrow[r, hook] \arrow[rd, hook] &
X \arrow[d, "\rho(\pi_\Delta)"] \\
&
X
\end{tikzcd}
\]

\item 
For $n\in \bbZ$ and $x\in X^n$, the map $\clD(S^1)\to X^n$, $\varphi\mapsto \rho(\varphi)_n(x)$ is bounded. 
\end{clist}
\end{dfn}

Let $X^\bullet$ be a complex of convenient vector spaces endowed with an $S^1$-action, and suppose that the $S^1$-action on $X^\bullet$ is tame. 
Given an algebra homomorphism $\rho\colon \clD(S^1)\to \End_{\sfC(\CVS)}(X^\bullet)$ satisfying the conditions (i)--(iii) in \cref{dfn:tame}, we will, by abuse of notation, denote the action of $\varphi\in \clD(S^1)$ on $X^\bullet$ by 
\[
\varphi*(-)\ceq \rho(\varphi)\colon X^\bullet\to X^\bullet. 
\]

For $\Delta\in\bbZ$, note that by \eqref{eq:deltapi}, we have $\pi_\Delta\!*X^\bullet\subset X^\bullet_\Delta$. By the condition (ii) in \cref{dfn:tame}, the morphism
\[
\pi_\Delta\!*(-)\colon X^\bullet\to X^\bullet_\Delta
\]
splits the inclusion $X^\bullet_\Delta\inj X^\bullet$. 

We now make several remarks on the topology of $\clD(S^1)$. First, since $C^\infty_\rmc(S^1, \bbC)$ is a Montel space, every pointwise convergent sequence in its strong dual $\clD(S^1)$ is convergent (see \cite[\S34.4]{T67}). In particular, we have
\[
\delta_1=\sum_{\Delta\in \bbZ}\pi_\Delta
\]
in $\clD(S^1)$, as this is just the Fourier expansion of the delta-distribution. 
Moreover, since $S^1$ is compact, the topology on $C^\infty_\rmc(S^1, \bbC)=C^\infty(S^1, \bbC)$ is actually Fr\'{e}chet. Hence, in the strong dual $\clD(S^1)$, every convergent sequence is bornologically convergent (see \cite[Result 52.28]{KM} and \cite[Corollary 12.5.9]{J81}). Therefore, for each $x\in X^n$, the series $\sum_{\Delta\in\bbZ}\pi_\Delta\!*x$ converges bornologically to $x$ in $X^n$.  

\smallskip

From the above discussion, we obtain the following lemma. Recall that we denote by $H^n$ and $H^n_\sep$ the cohomology taken in $\sfC(\BVS)$ and $\sfC(\CVS)$, respectively. 

\begin{lem}\label{lem:tameiso}
Let $X^\bullet$ be a complex of convenient vector spaces endowed with an $S^1$-action. If the $S^1$-action on $X^\bullet$ is tame, then for $n, \Delta\in\bbZ$, the canonical morphisms $H^n(X_\Delta^\bullet)\to H^n(X^\bullet)_\Delta$ and $H_\sep^n(X_\Delta^\bullet)\to H_\sep^n(X^\bullet)_\Delta$ induced by the inclusion $X^\bullet_\Delta\inj X^\bullet$ are both linear isomorphisms. 
\end{lem}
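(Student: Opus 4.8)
The plan is to prove both isomorphisms at once, treating $H=H^n$ and $H=H^n_\sep$ uniformly, by exhibiting the inclusion $j\colon X^\bullet_\Delta\hookrightarrow X^\bullet$ as a split monomorphism of complexes and then identifying the image of the induced map on cohomology with the weight space $H(X^\bullet)_\Delta$. Write $r\ceq\pi_\Delta*(-)\colon X^\bullet\to X^\bullet_\Delta$ for the corestriction of $\rho(\pi_\Delta)$; by the remarks preceding the lemma, $r$ is a morphism in $\sfC(\CVS)$ with $r\circ j=\id$, so $\rho(\pi_\Delta)=j\circ r$ is an idempotent chain endomorphism of $X^\bullet$ with image the subcomplex $X^\bullet_\Delta$. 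Since $H$ is functorial and sends chain maps to bounded linear maps, applying $H$ yields $H(r)\circ H(j)=\id$ and an idempotent $P_\Delta\ceq H(\rho(\pi_\Delta))=H(j)\circ H(r)$ on $H(X^\bullet)$ with $\Img P_\Delta=\Img H(j)$; in particular $H(j)$ is injective. Everything thus reduces to proving $\Img H(j)=H(X^\bullet)_\Delta$.

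One inclusion is immediate: a cocycle $a\in X^n_\Delta$ satisfies $\sigma_q a=q^\Delta a$, hence $\sigma_q[a]=q^\Delta[a]$ for the induced $S^1$-action on $H(X^\bullet)$, so $[a]\in H(X^\bullet)_\Delta$; therefore $\Img H(j)\subseteq H(X^\bullet)_\Delta$. For the reverse inclusion it suffices to show $P_\Delta x=x$ for all $x\in H(X^\bullet)_\Delta$, since then $x=P_\Delta x\in\Img P_\Delta=\Img H(j)$. Here I would exploit the Fourier description of $\pi_\Delta$ in $\clD(S^1)$, namely $\pi_\Delta=\tfrac{1}{2\pi\sqrt{-1}}\oint_{S^1}z^{-\Delta-1}\delta_z\,dz$, the integral understood as a bornological limit of Riemann sums in $\clD(S^1)$ — legitimate because $\clD(S^1)$ is the strong dual of a Fréchet–Montel space, so pointwise convergent sequences there are bornologically convergent, as recalled just before the statement. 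Fix a cocycle $a$ representing $x$; the map $\clD(S^1)\to H(X^\bullet)$, $\varphi\mapsto H(\rho(\varphi))[a]$, equals the bounded quotient $Z^n\to H(X^\bullet)$ composed with the bounded map $\varphi\mapsto\rho(\varphi)_n(a)$ furnished by condition (iii) of \cref{dfn:tame}, hence is bounded. Since bounded linear maps preserve bornological limits (\cref{lem:boundmapconv}), we obtain
\[
P_\Delta x=H(\rho(\pi_\Delta))[a]=\frac{1}{2\pi\sqrt{-1}}\oint_{S^1}z^{-\Delta-1}\,\sigma_z x\,dz=\frac{1}{2\pi\sqrt{-1}}\oint_{S^1}z^{-\Delta-1}\,z^{\Delta}x\,dz=x,
\]
using $\sigma_z x=z^\Delta x$ on the weight space. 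This gives $H(X^\bullet)_\Delta\subseteq\Img H(j)$, so $H(j)\colon H(X^\bullet_\Delta)\to H(X^\bullet)_\Delta$ is a linear isomorphism.

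A variant staying closer to the tools singled out in the excerpt avoids the integral: from $\delta_1=\sum_{\Delta'}\pi_{\Delta'}$ and the orthogonality relations $\pi_{\Delta'}*\pi_{\Delta''}=\delta_{\Delta',\Delta''}\pi_{\Delta'}$ (a direct computation in $\clD(S^1)$ from \eqref{eq:deltapi}), one gets for $x$ the bornologically convergent expansion $x=\sum_{\Delta'}P_{\Delta'}x$ with $P_{\Delta'}x\in H(X^\bullet)_{\Delta'}$; subtracting the two expansions of $\sigma_q x$ gives $\sum_{\Delta'}(q^{\Delta'}-q^\Delta)P_{\Delta'}x=0$, and applying the bounded idempotent $P_{\Delta'}$ (for $\Delta'\neq\Delta$, after choosing $q$ with $q^{\Delta'-\Delta}\neq1$) forces $P_{\Delta'}x=0$, whence $x=P_\Delta x$.

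I expect the only real work to be bookkeeping these analytic inputs: checking that every operator occurring on cohomology — the $\sigma_q$, the projectors $P_\Delta$, and the quotient $Z^n\to H$ — is bounded, so that bornological limits can be passed through it, and that the Riemann sums defining $\pi_\Delta$ (respectively the series $\sum_{\Delta'}\pi_{\Delta'}$) converge bornologically and not merely weakly in $\clD(S^1)$. Both points rest on the Fréchet–Montel facts about $\clD(S^1)$ recorded immediately before the lemma and on the boundedness requirement built into the definition of tameness; once they are in place the argument is formal and uniform in $n$, in $\Delta$, and in the choice of $H^n$ versus $H^n_\sep$.
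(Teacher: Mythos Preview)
Your proof is correct and follows precisely the approach the paper has in mind: the paper does not actually supply a proof but declares the lemma to follow ``from the above discussion,'' and that discussion is exactly the two inputs you isolate—the splitting $\pi_\Delta*(-)$ of the inclusion $X^\bullet_\Delta\hookrightarrow X^\bullet$, and the bornologically convergent Fourier expansion $\delta_1=\sum_{\Delta'}\pi_{\Delta'}$ pushed through condition~(iii) of tameness. Your variant~(b) is the more direct transcription of the paper's setup; variant~(a) via the integral representation of $\pi_\Delta$ is a pleasant reformulation of the same mechanism.
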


\begin{prp}\label{prp:CGconst}
Let $\clF\colon \frU_\bbC\to \sfC(\CVS)$ be an $S^1\ltimes \bbC$-equivariant prefactorization algebra satisfying the following conditions: 
\begin{clist}
\item 
For $0<R\le \infty$, the $S^1$-action on $\clF(D_R)$ is tame. 

\item
The $\bbC$-action makes $H^\sep_\bullet\clF\colon \frU_\bbC\to \CVS$ a holomorphically translation-equivariant prefactorization algebra. 

\item 
For $0<R\le \infty$, we have $H_n^\sep(\clF(D_R))=0$ ($n\neq 0$) and $H_0^\sep(\clF_\Delta(D_R))=0$ ($\Delta\ll0$). 

\item 
For $0<r<R\le \infty$ and $\Delta\in \bbZ$, the map $H^\sep_0(\clF^{D_r}_{D_R})\colon H^\sep_0(\clF_\Delta(D_r))\to H_0^\sep(\clF_\Delta(D_R))$ is a linear isomorphism. 
\end{clist}
Then the $S^1\ltimes \bbC$-equivariant prefactorization algebra $H_\bullet^\sep\clF\colon \frU_\bbC\to \CVS$ is amenably holomorphic. Consequently, the linear space
\[
\bfVR(H^\sep_\bullet\clF)=\bigoplus_{\Delta\in \bbZ}H_0^\sep(\clF_\Delta(D_R))
\]
admits the structure of a $\bbZ$-graded vertex algebra. 
\end{prp}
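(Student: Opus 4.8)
The plan is to verify the three conditions in \cref{thm:FAtoVA} (which constitute the definition of amenably holomorphic) for the prefactorization algebra $H^\sep_\bullet\clF\colon \frU_\bbC\to \CVS$. Its $S^1\ltimes \bbC$-equivariance is automatic, since $H^\sep_\bullet$ is a symmetric monoidal functor and hence carries $S^1\ltimes \bbC$-equivariant prefactorization algebras to $S^1\ltimes \bbC$-equivariant ones, the $S^1$-action on $(H^\sep_\bullet\clF)(D_R)$ being the one induced from that on $\clF(D_R)$ by functoriality. Condition (i) of \cref{thm:FAtoVA} is exactly hypothesis (ii). The heart of the matter is the identification of weight spaces
\[
(H^\sep_\bullet\clF)(D_R)_\Delta\cong H_0^\sep(\clF_\Delta(D_R)) \quad (0<R\le\infty,\ \Delta\in \bbZ),
\]
after which conditions (ii) and (iii) of \cref{thm:FAtoVA} will follow from hypotheses (iii) and (iv) respectively, and the displayed formula for $\bfVR(H^\sep_\bullet\clF)$ will follow from \cref{thm:FAtoVA} together with this identification.

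To establish the identification, I would first note that by the first part of hypothesis (iii) the chain complex $\clF(D_R)$ has $\sfC(\CVS)$-homology concentrated in degree $0$, so that $(H^\sep_\bullet\clF)(D_R)$ coincides with $H_0^\sep(\clF(D_R))$ and $(H^\sep_\bullet\clF)(D_R)_\Delta$ is the $\Delta$-weight space $H_0^\sep(\clF(D_R))_\Delta$ for the induced $S^1$-action. Regarding $\clF(D_R)$ as a cochain complex by relabelling, its $S^1$-action is tame by hypothesis (i), so \cref{lem:tameiso} applies and shows that the map
\[
H_0^\sep(\clF_\Delta(D_R))\to H_0^\sep(\clF(D_R))_\Delta
\]
induced by the inclusion $\clF_\Delta(D_R)\inj \clF(D_R)$ is a linear isomorphism. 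Combining the two remarks yields the claimed identification, and in particular $(H^\sep_\bullet\clF)(D_R)_\Delta\cong H_0^\sep(\clF_\Delta(D_R))=0$ for $\Delta\ll0$ by the second part of hypothesis (iii); this is condition (ii) of \cref{thm:FAtoVA}.

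For condition (iii), I would observe that for $0<r<R\le\infty$ the restriction map $\clF^{D_r}_{D_R}$ is $S^1$-equivariant, hence sends $\clF_\Delta(D_r)$ into $\clF_\Delta(D_R)$. Applying $H_0^\sep$ to the resulting commutative square formed with the inclusions $\clF_\Delta(D_r)\inj \clF(D_r)$ and $\clF_\Delta(D_R)\inj \clF(D_R)$, and using the isomorphisms of the previous paragraph, the map $(H^\sep_\bullet\clF)^{D_r}_{D_R}=H_0^\sep(\clF^{D_r}_{D_R})$ restricted to the $\Delta$-weight spaces is identified with $H^\sep_0(\clF^{D_r}_{D_R})\colon H^\sep_0(\clF_\Delta(D_r))\to H_0^\sep(\clF_\Delta(D_R))$, which is a linear isomorphism by hypothesis (iv). This gives condition (iii) of \cref{thm:FAtoVA}, so $H^\sep_\bullet\clF$ is amenably holomorphic, and \cref{thm:FAtoVA} endows $\bfVR(H^\sep_\bullet\clF)=\bigoplus_{\Delta\in \bbZ}(H^\sep_\bullet\clF)(D_R)_\Delta=\bigoplus_{\Delta\in \bbZ}H_0^\sep(\clF_\Delta(D_R))$ with the structure of a $\bbZ$-graded vertex algebra.

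The only delicate point will be the compatibility of the tameness isomorphism of \cref{lem:tameiso} with the prefactorization structure, but this is automatic: every structure map of $\clF$ (restriction maps and factorization products) is $S^1$-equivariant, and the isomorphism of \cref{lem:tameiso} is natural in the underlying $S^1$-complex, so it commutes with all such maps after applying $H^\sep_0$. The passage from the cohomological indexing of \cref{lem:tameiso} to the homological indexing used here is merely a relabelling. I do not expect a substantive obstacle.
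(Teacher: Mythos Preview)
Your proof is correct and follows exactly the same approach as the paper, which simply states that the result follows immediately from \cref{lem:tameiso} and \cref{thm:FAtoVA}. You have spelled out in detail what the paper leaves implicit: the identification $(H^\sep_\bullet\clF)(D_R)_\Delta\cong H_0^\sep(\clF_\Delta(D_R))$ via tameness, and the subsequent verification of the three conditions of \cref{thm:FAtoVA} from hypotheses (ii)--(iv).
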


\begin{proof}
This follows immediately from \cref{lem:tameiso} and \cref{thm:FAtoVA}. 
\end{proof}

\begin{rmk}\label{rmk:CGconst}
As noted right after \cite[Theorem 5.3.3]{CG}, if we remove the condition $H^\sep_n(\clF(D_R))=0$ ($n\neq0$), we obtain a vertex algebra object in the symmetric monoidal category of $\bbZ$-graded linear spaces. 
\end{rmk}

\subsection{The super setting: construction of vertex superalgebras}
\label{ss:FAtoVSA}

We briefly describe how the construction of vertex algebras in \cref{ss:FAtoVA} extends to the super setting. 

\smallskip

A \emph{convenient vector superspace} is a convenient vector space $X$ equipped with a direct sum decomposition $X=X_{\ol{0}}\oplus X_{\ol{1}}$ as a linear space, where $X_{\ol{0}}$ and $X_{\ol{1}}$ are bornologically closed subspaces of $X$. The $\bbZ/2\bbZ$-grading is called the \emph{parity}. We denote by $\sCVS$ the category of convenient vector superspaces with parity-preserving bounded linear maps. It is naturally a complete and cocomplete closed symmetric monoidal category. The braiding of $\sCVS$ is given by
\[
X\,\wt{\otimes}\,Y\to Y\,\wt{\otimes}\,X, \quad x\otimes y\mapsto (-1)^{p(x)p(y)}y\otimes x,  
\]
where $p(\cdot)$ denotes parity. 

\smallskip

Let $\clF\colon \frU_\bbC\to \sCVS$ be an $S^1$-equivariant prefactorization algebra. For $\Delta\in \bbZ$ and an open subset $U\subset \bbC$ that is invariant under the action of $S^1$, define
%
\[
\clF(U)_{\Delta+i/2}\ceq \{a\in \clF(U)_{\ol{i}}\mid \forall q\in S^1,\, \sigma_{q, U}(a)=q^{\Delta}a\} \quad (i\in\{0, 1\})
\]
where $\sigma_q$ denotes the $S^1$-equivariant structure of $\clF$. 

\begin{thm}\label{thm:superFAtoVA}
Let $\clF\colon \frU_\bbC\to \sCVS$ be an $S^1\ltimes \bbC$-equivariant prefactorization algebra satisfying the following conditions: 
\begin{clist}
\item 
The $\bbC$-action makes $\clF\colon \frU_\bbC\to \sCVS$ a holomorphically translation-equivariant prefactorization algebra. 

\item 
For $0<R\le \infty$, we have $\clF(D_R)_\Delta=0$ ($\Delta\ll0$). 

\item
For $0<r<R\le \infty$ and $\Delta\in \bbZ/2$, the map $\clF^{D_r}_{D_R}\colon \clF(D_r)_\Delta\to \clF(D_R)_\Delta$ is a linear isomorphism.  
\end{clist}
Then, the linear superspace
\[
\bfVR^{\super}(\clF)\ceq \bigoplus_{\Delta\in \bbZ/2}\clF(D_R)_\Delta, \hspace{0.2cm} \text{where} \hspace{0.22cm} \bfVR^\super(\clF)_{\ol{i}}\ceq \bigoplus_{\Delta\in \bbZ}\clF(D_R)_{\Delta+i/2} \quad (i\in\{0, 1\})
\]
admits the structure of a $\bbZ/2$-graded vertex superalgebra. 
\end{thm}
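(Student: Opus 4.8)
The plan is to run the proof of \cref{thm:FAtoVA} verbatim inside the symmetric monoidal category $\sCVS$, the only new feature being the Koszul sign rule built into its braiding. As in \cref{ss:FAtoVA}, for $(z_1, \ldots, z_n) \in \Conf_n(D_R)$ I would define a parity-preserving linear map
\[
\mu^R_{z_1, \ldots, z_n}\colon \bfVR^\super(\clF)^{\otimes n} \to \clF(D_R)
\]
as the composition of the $S^1\ltimes\bbC$-equivariant structure maps $\sigma_{(1, z_i), D_{r_i}}$, the inclusions $\bfV_{\!r_i}^\super(\clF) \inj \clF(D_{r_i})$, and the factorization product $\clF^{r_1, \ldots, r_n; R}_{z_1, \ldots, z_n}$, where $r_1, \ldots, r_n \in \bbR_{>0}$ are chosen with $\ol{D}_{r_1}(z_1) \sqcup \cdots \sqcup \ol{D}_{r_n}(z_n) \subset D_R$; condition (iii) makes this independent of the $r_i$, and condition (i) makes $(z_1, \ldots, z_n) \mapsto \mu^R_{z_1, \ldots, z_n}(a_1 \otimes \cdots \otimes a_n)$ bornologically holomorphic on $\Conf_n(D_R)$. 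No tensor factors are permuted in this definition, so no signs appear yet, and the $S^1$-covariance statement (the super analogue of \cref{lem:qmuz}) is proved by the identical computation. The first place a sign enters is the \emph{graded symmetry} of $\mu^R$: interchanging two disjoint disks via axiom (ii) of \cref{dfn:PFA} invokes the braiding of $\sCVS$, so for homogeneous elements
\[
\mu^R_{\ldots, z_i, z_{i+1}, \ldots}(\cdots \otimes a_i \otimes a_{i+1} \otimes \cdots) = (-1)^{p(a_i)p(a_{i+1})}\, \mu^R_{\ldots, z_{i+1}, z_i, \ldots}(\cdots \otimes a_{i+1} \otimes a_i \otimes \cdots),
\]
where $p(\cdot)$ denotes parity.

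Next I would define the vacuum $\vac \in \clF(D_R)$ as the image of $1 \in \bbC$ under $\clF^\varnothing_{D_R} \circ 1_\clF$ (it lies in the weight-$0$, hence even, component), the translation operator $Ta \ceq \left.\pd{}{z}\mu^R_z(a)\right|_{z=0}$, and the state--field correspondence via the bornological Laurent expansion $\mu^R_{z, 0}(a \otimes b) = \sum_{n \in \bbZ} z^{-n-1} a_{(n)}b$ on $D_R^\times$. The super analogue of \cref{lem:qmuz} shows $Ta \in \clF(D_R)_{\Delta(a)+1}$ and $a_{(n)}b \in \clF(D_R)_{\Delta(a)+\Delta(b)-n-1}$; since the factorization products and equivariance maps are parity-preserving morphisms of $\sCVS$, $T$ preserves parity and $p(a_{(n)}b) = p(a)+p(b)$, so the $\bbZ/2$-grading and parity behave as required. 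Condition (ii) forces $a_{(n)}b = 0$ for $n \gg 0$, so $Y(a, z)b \in \bfVR^\super(\clF)\dpr{z}$ and each $Y(a, z)$ is a field, and, exactly as in the non-super case, condition (iii) reduces everything to $R = \infty$.

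It then remains to check the three axioms of a vertex superalgebra. The vacuum axiom is verified by the same computation, with no signs. Translation invariance requires the super analogue of \cref{lem:transop}, namely $\mu^\infty_{z, w}(a \otimes b) = \sum_n (z-w)^{-n-1}\mu^\infty_w(a_{(n)}b)$; since this involves no reordering, the proof of \cref{lem:transop} carries over unchanged, and translation invariance follows as before. The heart of the matter is locality. The super analogue of \cref{lem:locality} should read
\[
\mu^\infty_{z, w, 0}(a \otimes b \otimes c) =
\begin{cases}
\displaystyle \sum_{n \in \bbZ} (z-w)^{-n-1}\mu^\infty_{w, 0}(a_{(n)}b \otimes c) & \text{if } |z - w| < |w|, \\[2mm]
\displaystyle (-1)^{p(a)p(b)}\sum_{n \in \bbZ} (w-z)^{-n-1}\mu^\infty_{z, 0}(b_{(n)}a \otimes c) & \text{if } |w - z| < |z|,
\end{cases}
\]
the sign in the second line being exactly the one coming from the graded symmetry of $\mu^\infty$ when the disks around $z$ and $w$ are interchanged. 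Choosing $N$ with $a_{(n)}b = b_{(n)}a = 0$ for $n \ge N$, the map $(z, w) \mapsto (z-w)^N \mu^\infty_{z, w, 0}(a \otimes b \otimes c)$ extends to a bornologically holomorphic function on $(\bbC^\times)^2$, so after realizing it, via a bounded completant disked set $B \subset \clF(\bbC)$, as a Banach-space-valued holomorphic function, the iterated residues $\Res_{z=0}\Res_{w=0}$ and $\Res_{w=0}\Res_{z=0}$ of $z^m w^n (z-w)^N \mu^\infty_{z, w, 0}(a \otimes b \otimes c)$ agree. Expanding each side in its region of validity yields $\sum_{j=0}^N (-1)^j \binom{N}{j} a_{(m+N-j)}(b_{(n+j)}c)$ on one side and $(-1)^{p(a)p(b)}\sum_{j=0}^N (-1)^j \binom{N}{j} b_{(n+j)}(a_{(m+N-j)}c)$ on the other, which is precisely the super-locality identity $(z-w)^N\bigl(Y(a, z)Y(b, w) - (-1)^{p(a)p(b)}Y(b, w)Y(a, z)\bigr) = 0$.

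The main obstacle I anticipate is purely one of sign bookkeeping: one must verify that the Koszul signs in the iterated factorization product $\clF^{U_1, \ldots, U_n}_V$, in the graded symmetry of $\mu^R$, and in the convention for $a_{(n)}b$ all cohere, so that the single surviving sign $(-1)^{p(a)p(b)}$ in the locality computation matches the super-commutator. All the bornological-analytic input — holomorphy, Laurent and Taylor expansions, bornological convergence of the resulting series, and the commutation of residues in a Banach space — is insensitive to parity and is taken over verbatim from \cref{ss:FAtoVA}.
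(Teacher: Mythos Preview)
Your proposal is correct and is exactly the approach the paper takes: the paper's proof is the single sentence ``The proof of \cref{thm:FAtoVA} works with minor modifications,'' and you have spelled out precisely what those minor modifications are, namely tracking the Koszul sign from the braiding of $\sCVS$ in the graded-symmetry of $\mu^R$ and in the locality argument. Your identification of where signs do and do not appear is accurate, and the analytic input is indeed parity-insensitive.
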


\begin{proof}
The proof of \cref{thm:FAtoVA} works with minor modifications. 
\end{proof}

\section{Factorization envelopes and enveloping vertex algebras}
\label{s:FAEVA}

In this section, we present the main results of this note. We construct an amenably holomorphic prefactorization algebra via the factorization envelope, starting from a Lie conformal algebra, and prove that the associated vertex algebra is isomorphic to the enveloping vertex algebra. This construction generalizes those of the Kac--Moody factorization algebra \cite[\S5.5]{CG} and the Virasoro factorization algebra \cite{W}. Moreover, the super analogue of this construction yields new factorization algebras that correspond to vertex superalgebras, such as the Neveu--Schwarz vertex superalgebra, the $N=2$ vertex superalgebra, and the $N=4$ vertex superalgebra. 

\subsection{The compactly supported Dolbeault complex}
\label{ss:compDol}

This subsection provides some recollections on the compactly supported Dolbeault complex. We use the following notations: 
\begin{itemize}
\item 
For $m\in \bbZ_{>0}$, we denote $[m]\ceq \{1, \ldots, m\}$. 

\item 
For $m\in \bbZ_{>0}$ and $n\in \bbN$, we denote 
\[
\bbN^m_n\ceq \{\nu=(\nu_1, \ldots, \nu_m)\in \bbN^m\mid \nu_1+\cdots+\nu_m=n\}. 
\]
\end{itemize}

Let $X$ be a complex manifold. For an open subset $U\subset X$, consider the $\bbC$-linear space $\clA^{0, k}_X(U)$ of $C^\infty$-class complex $(0, k)$-differentials on $U$. Take a local coordinate system $\clS=\{(U_\alpha, z_1^\alpha, \ldots, z_m^\alpha)\}_{\alpha\in A}$ of $U$, and denote by $\Lambda_k(U, \clS)$ the set of tuples $\lambda=(\alpha, K, n, I)$ consisting of 
\begin{itemize}
\item 
$\alpha\in A$,  

\item 
a compact set $K\subset U$, 

\item 
$n\in \bbN$, 

\item
$I\subset [m]$ such that $\#I=k$. 
\end{itemize}
For $\lambda=(\alpha, K, n, I)\in \Lambda_k(U, \clS)$, define a semi-norm $\|\cdot\|_\lambda$ by
\[
\|\cdot\|_\lambda\colon \clA^{0, k}_X(U)\to \bbR, \quad 
\xi\mapsto \sup\{|\pdd^\nu_xf_I(p)|\mid p\in U_\alpha\cap K, \nu\in \bbN^{2m}_n\}.
\]
Here $x=(x_1, \ldots, x_{2m})$ denotes the real coordinates given by $z^\alpha_i=x_{2i-1}+\sqrt{-1}x_{2i}$, and $f_I$ is the component of the coordinate representation
\[
\xi|_{U_\alpha}=\sum_{\substack{I\subset [m]\\ \#I=k}}f_Id\ol{z}_I^\alpha.
\]
The linear space $\clA^{0, k}_X(U)$ equipped with the locally convex topology defined by the set of semi-norms $\{\|\cdot\|_\lambda\mid \lambda\in \Lambda_k(U, \clS)\}$ is a Fr\'{e}chet space. This topology is independent of the choice of the local coordinate system of $U$. 

\begin{lem}\label{lem:olpddcont}
For an open subset $U\subset X$: 
\begin{enumerate}
\item 
The linear map $\ol{\pdd}\colon \clA^{0, k}_X(U)\to \clA^{0, k+1}_X(U)$ is continuous. 

\item 
The bilinear map $\clA^{0, k}_X(U)\times \clA^{0, l}_X(U)\to \clA^{0, k+l}_X(U)$, $(\xi, \eta)\mapsto \xi\wedge \eta$ is continuous. 
\end{enumerate}
\end{lem}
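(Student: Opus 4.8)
Both statements are local assertions about continuity of linear (resp.\ bilinear) maps between Fr\'echet spaces, so the strategy is to reduce everything to the explicit description of the topology on $\clA^{0, \bullet}_X(U)$ by the seminorms $\|\cdot\|_\lambda$ with $\lambda \in \Lambda_\bullet(U, \clS)$. Recall that a linear map between locally convex spaces is continuous iff each seminorm on the target is dominated by finitely many seminorms on the source; for bilinear maps between Fr\'echet spaces one may use joint continuity, which is again characterized by a two-sided seminorm estimate. So in each case I would fix a local coordinate system $\clS = \{(U_\alpha, z_1^\alpha, \ldots, z_m^\alpha)\}_{\alpha \in A}$ and estimate the target seminorms in terms of the source seminorms, taking care that the coordinate representations of $\ol\pdd \xi$ and of $\xi \wedge \eta$ involve only derivatives of the component functions of $\xi$ (and $\eta$) of order at most one higher.

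\emph{Part (1).} Write $\xi|_{U_\alpha} = \sum_{\#I = k} f_I \, d\ol z^\alpha_I$. Then $(\ol\pdd \xi)|_{U_\alpha} = \sum_{\#I = k}\sum_{j=1}^m \pd{f_I}{\ol z^\alpha_j}\, d\ol z^\alpha_j \wedge d\ol z^\alpha_I$, so each component of $\ol\pdd\xi$ (indexed by a size-$(k+1)$ subset $J \subset [m]$) is a finite signed sum of the functions $\pd{f_I}{\ol z^\alpha_j}$ with $I \subset J$, $\#I = k$. Given $\mu = (\alpha, K, n, J) \in \Lambda_{k+1}(U, \clS)$, expressing $\pd{}{\ol z^\alpha_j}$ in real coordinates $x$ and using $|\pdd^\nu_x \pd{f_I}{\ol z^\alpha_j}(p)| \le C \sup_{|\nu'| = n+1}|\pdd^{\nu'}_x f_I(p)|$ yields $\|\ol\pdd\xi\|_\mu \le C' \sum_{I \subset J,\, \#I = k}\|\xi\|_{(\alpha, K, n+1, I)}$, a bound by finitely many source seminorms. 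Hence $\ol\pdd$ is continuous.

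\emph{Part (2).} Write $\xi|_{U_\alpha} = \sum_{\#I = k} f_I \, d\ol z^\alpha_I$ and $\eta|_{U_\alpha} = \sum_{\#L = l} g_L \, d\ol z^\alpha_L$. Then $(\xi \wedge \eta)|_{U_\alpha} = \sum_{\#J = k+l}\bigl(\sum_{I \sqcup L = J} \pm f_I g_L\bigr) d\ol z^\alpha_J$, so each component of $\xi \wedge \eta$ is a finite sum of products $f_I g_L$. By the Leibniz rule, $\pdd^\nu_x(f_I g_L) = \sum_{\nu' + \nu'' = \nu}\binom{\nu}{\nu'}\pdd^{\nu'}_x f_I \cdot \pdd^{\nu''}_x g_L$, so for $\mu = (\alpha, K, n, J) \in \Lambda_{k+l}(U, \clS)$ one gets $\|\xi \wedge \eta\|_\mu \le C \sum_{I \sqcup L = J}\bigl(\sum_{n' + n'' = n}\|\xi\|_{(\alpha, K, n', I)}\|\eta\|_{(\alpha, K, n'', L)}\bigr)$. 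This is a finite sum of products of one source seminorm of $\xi$ with one of $\eta$, which is exactly the estimate needed for joint continuity of a bilinear map between Fr\'echet spaces.

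\emph{Main obstacle.} The only genuinely delicate point is the interplay between the complex-coordinate description of $\ol\pdd$ and $\wedge$ and the real-coordinate definition of the seminorms $\|\cdot\|_\lambda$: one must verify that passing from $\pd{}{\ol z^\alpha_j}$ (resp.\ from a product of component functions) to $\pdd^\nu_x$-estimates only raises the differentiation order by a bounded amount and introduces constants depending on the coordinate chart but not on $\xi$ or $\eta$. Once this bookkeeping is set up, both parts follow; the fact that the topology is independent of $\clS$ (already recorded above) means it suffices to check continuity in one chosen coordinate system. I would also remark that since $\clA^{0, k}_X(U)$ is Fr\'echet, joint continuity of the wedge product is equivalent to separate continuity, so either formulation may be used.
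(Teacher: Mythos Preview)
Your proposal is correct and follows essentially the same approach as the paper: fixing a coordinate system and bounding each target seminorm $\|\cdot\|_\lambda$ by finitely many source seminorms, using the coordinate expression of $\ol\pdd\xi$ in part~(1) and the Leibniz rule for $\pdd_x^\nu(f_Ig_L)$ in part~(2). The only cosmetic difference is that in~(2) the paper fixes $\xi$ and verifies separate continuity (then invokes the Fr\'echet-space fact that separate implies joint), whereas you write down the two-sided estimate for joint continuity directly; since you already note this equivalence, the arguments are interchangeable.
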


\begin{proof}
Fix a local coordinate system $\clS=\{(U_\alpha, z_1^\alpha, \ldots, z_m^\alpha)\}_{\alpha\in A}$ of $U$. 

(1) Take $\lambda=(\alpha, K, n, I)\in \Lambda_{k+1}(U, \clS)$. For $\xi\in \clA^{0, k}_X(U)$, denote by $\xi|_{U_\alpha}=\sum_Jf_Jd\ol{z}_J^\alpha$ the coordinate representation, then 
\[
\ol{\pdd}\xi|_{U_\alpha}=
\sum_{i=1}^m\sum_{\substack{J\subset [m]\\ \#J=k}}\pd{f_J}{\ol{z}_i^\alpha}dz_i^\alpha\wedge d\ol{z}_J^\alpha=
\sum_{i=1}^m\sum_{\substack{J\subset [m]\\ \#J=k, i\notin J}}\epsilon(i, J)\pd{f_J}{\ol{z}_i^\alpha}d\ol{z}_{\{i\}\cup J}^\alpha,
\]
where $\epsilon(i, J)\in\{\pm1\}$. Hence, 
\[
\|\ol{\pdd}\xi\|_\lambda=\sup\Bigl\{\bigl|\pdd_x^\nu\sum_{i\in I}\epsilon(i, I\bs\{i\})\pd{f_{I\bs\{i\}}}{\ol{z}_i^\alpha}(p)\bigr|\Bigm| p\in U_\alpha\cap K, \nu\in \bbN^{2m}_n\Bigr\}. 
\]
For $p\in U_\alpha\cap K$ and $\nu\in \bbN^{2m}_n$, we have
\[
\bigl|\pdd_x^\nu\sum_{i\in I}\epsilon(i, I\bs\{i\})\pd{f_{I\bs\{i\}}}{\ol{z}_i^\alpha}(p)\bigr|
\le
\sum_{i\in I}\bigl|\pdd_x^\nu\pd{f_{I\bs\{i\}}}{\ol{z}_i^\alpha}(p)\bigr|\le
\sum_{i\in I}\Bigl(\frac{1}{2}\bigl|\pdd^\nu_x\pd{f_{I\bs\{i\}}}{x_{2i-1}^\alpha}(p)\bigr|+\frac{1}{2}\bigl|\pdd^\nu_x\pd{f_{I\bs\{i\}}}{x_{2i}^\alpha}(p)\bigr|\Bigr).
\]
Thus, set $\lambda_i\ceq (\alpha, K, n+1, I\bs\{i\})\in \Lambda_k(U, \clS)$, then
\[
\|\ol{\pdd}\xi\|_\lambda\le \sum_{i\in I}\|\xi\|_{\lambda_i}, 
\]
which proves (1). 

(2) Take $\lambda=(\alpha, K, n, I)\in \Lambda_{k+l}(U, \clS)$. Since $\clA^{0, k}_X(U)$ and $\clA^{0, l}_X(U)$ are Fr\'{e}chet spaces, it suffices to show that the bilinear map $\wedge$ is separately continuous. Fix $\xi\in \clA^{0, k}_X(U)$, and denote by $\xi|_{U_\alpha}=\sum_Jf_Jd\ol{z}_J^\alpha$ the coordinate representation. For $\eta\in \clA^{0, l}_X(U)$ with the coordinate representation $\eta|_{U_\alpha}=\sum_Jg_Jd\ol{z}^\alpha_J$, 
\[
(\xi\wedge\eta)|_{U_\alpha}=
\sum_{\substack{J_1\subset [m]\\ \#J_1=k}}\sum_{\substack{J_2\subset [m]\\ \#J_2=l}}f_{J_1}g_{J_2}d\ol{z}_{J_1}^\alpha\wedge d\ol{z}_{J_2}^\alpha=
\sum_{\substack{J_1, J_2\subset [m]\\ \#J_1=k, \#J_2=l\\ J_1\cap J_2=\varnothing}}\epsilon(J_1, J_2)f_{J_1}g_{J_2}d\ol{z}_{J_1\cup J_2}^\alpha, 
\]
where $\epsilon(J_1, J_2)\in \{\pm1\}$. Hence, 
\[
\|\xi\wedge \eta\|_\lambda=
\sup\Bigl\{\bigl|\pdd_x^\nu\sum_{\substack{J\subset [m]\\ \#J=l}}\epsilon(I\bs J, J)f_{I\bs J}g_J(p)\bigr|\Bigm| p\in U_\alpha\cap K, \nu\in \bbN^{2m}_n\Bigr\}. 
\]
For $p\in U_\alpha\cap K$ and $\nu\in \bbN^{2m}_n$, we have
\[
\bigl|\pdd_x^\nu\sum_{\substack{J\subset [m]\\ \#J=l}}\epsilon(I\bs J, J)f_{I\bs J}g_J(p)\bigr|\le 
\sum_{\substack{J\subset [m]\\ \#J=l}}|\pdd_x^\nu(f_{I\bs J}g_J)(p)|\le 
\sum_{\substack{J\subset [m]\\ \#J=l}}\sum_{j=0}^n\sum_{\mu\in\bbN^{2m}_j}\binom{\nu}{\mu}|\pdd_x^{\nu-\mu}f_{I\bs J}(p)||\pdd_x^\mu g_J(p)|. 
\]
Here, for $\mu=(\mu_1, \ldots, \mu_{2m}),\, \nu=(\nu_1, \ldots, \nu_{2m})\in \bbN^{2m}$, we denote 
\[
\binom{\nu}{\mu}\ceq \binom{\nu_1}{\mu_1}\cdots \binom{\nu_{2m}}{\mu_{2m}}. 
\]
Set $\lambda_{j, J}\ceq (\alpha, K, j, J)\in \Lambda_k(U, \clS)$ and $C_{j, J}\ceq \sup\bigl\{\sum_{\mu\in \bbN^{2m}_j}\binom{\nu}{\mu}|\pdd_x^{\nu-\mu}f_{I\bs J}(p)|\bigm| p\in U_\alpha\cap K, \nu\in \bbN^{2m}_n\bigr\}$, 
then
\[
\|\xi\wedge \eta\|_\lambda\le 
\sum_{j=0}^n\sum_{\substack{J\subset [m]\\\#J=l}}C_{j, J}\|\eta\|_{\lambda_{j, J}},
\]
which proves (2). 
\end{proof}

\cref{lem:olpddcont} (1) implies that $(\clA^{0, \bullet}_X(U), \ol{\pdd})$ is a cochain complex of complete Hausdorff locally convex spaces, and hence a cochain complex of convenient vector spaces. 

\begin{lem}\label{lem:restcont}
For open subsets $U, V\subset X$ such that $U\supset V$, the linear map $\clA^{0, k}_X(U)\to \clA^{0, k}_X(V)$, $\xi\mapsto \xi|_V$ is continuous. 
\end{lem}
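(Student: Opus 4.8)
The statement to prove is \cref{lem:restcont}: for open subsets $U \supset V$ of $X$, the restriction map $\clA^{0,k}_X(U) \to \clA^{0,k}_X(V)$, $\xi \mapsto \xi|_V$, is continuous.

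\bigskip

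The plan is to verify continuity directly against the defining families of semi-norms, exactly as in the proof of \cref{lem:olpddcont}. First I would fix a local coordinate system $\clS = \{(U_\alpha, z_1^\alpha, \ldots, z_m^\alpha)\}_{\alpha \in A}$ of $U$; restricting each chart domain to $U_\alpha \cap V$ gives a local coordinate system $\clS|_V$ of $V$, and since the topologies are independent of the choice of coordinate system, it suffices to bound the semi-norms $\|\cdot\|_\mu$ for $\mu \in \Lambda_k(V, \clS|_V)$ in terms of finitely many semi-norms $\|\cdot\|_\lambda$ with $\lambda \in \Lambda_k(U, \clS)$. So take $\mu = (\alpha, K, n, I) \in \Lambda_k(V, \clS|_V)$, where $K \subset V$ is compact, hence also compact as a subset of $U$, so that $\lambda \coloneqq (\alpha, K, n, I)$ is a legitimate element of $\Lambda_k(U, \clS)$. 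For $\xi \in \clA^{0,k}_X(U)$ with coordinate representation $\xi|_{U_\alpha} = \sum_{\#J = k} f_J\, d\ol{z}^\alpha_J$, the restriction $\xi|_V$ has coordinate representation $(\xi|_V)|_{U_\alpha \cap V} = \sum_{\#J = k} (f_J|_V)\, d\ol{z}^\alpha_J$, since restriction of differential forms is compatible with coordinate expansions. Because $K \subset V$, we have $\pdd^\nu_x (f_I|_V)(p) = \pdd^\nu_x f_I(p)$ for all $p \in U_\alpha \cap K$, and therefore $\|\xi|_V\|_\mu = \|\xi\|_\lambda$. This single equality gives continuity (indeed, the restriction map is norm-nonincreasing chart by chart), completing the proof.

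\bigskip

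The main obstacle — really the only subtlety — is bookkeeping about coordinate systems: one must be careful that the topology on $\clA^{0,k}_X(V)$ can be computed using the coordinate system $\clS|_V$ inherited from $\clS$ rather than an arbitrary one, which is exactly the independence statement asserted right after the definition of $\|\cdot\|_\lambda$ in the excerpt. Once that is invoked, the estimate is an identity rather than an inequality, so there is no genuine analytic content. I would keep the write-up to a few lines, citing the independence of the topology on the choice of local coordinate system and then observing that restriction of a form to $V$ does not change the Taylor coefficients $\pdd^\nu_x f_I$ at points of $K \subset V$.
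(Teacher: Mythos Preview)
Your proof is correct and follows essentially the same approach as the paper: fix a coordinate system $\clS$ on $U$, restrict to $\clS|_V$, and observe that each seminorm index $\mu=(\alpha,K,n,I)\in\Lambda_k(V,\clS|_V)$ is already an element of $\Lambda_k(U,\clS)$, yielding the estimate $\|\xi|_V\|_\mu\le\|\xi\|_\mu$. You sharpen this to an equality (correctly, since $K\subset V$ forces $(U_\alpha\cap V)\cap K=U_\alpha\cap K$), but otherwise the argument is the same.
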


\begin{proof}
Take a local coordinate system $\clS=\{(U_\alpha, z_1^\alpha, \ldots, z_m^\alpha)\}_{\alpha\in A}$ of $U$, then $\clS|_V=\{(U_\alpha\cap V, z_1^\alpha, \ldots, z_m^\alpha)\}_{\alpha\in A}$ is a local coordinate system of $V$. Note that $\Lambda_k(V, \clS|_V)\subset \Lambda_k(U, \clS)$. For $\lambda\in \Lambda_k(V, \clS|_V)$ and $\xi\in \clA^{0, k}_X(U)$, we have $\|\xi|_V\|_\lambda\le \|\xi\|_\lambda$. Thus, the lemma holds. 
\end{proof}

\cref{lem:restcont} implies that the map
\[
\clA^{0, \bullet}_X\colon \frU_X^{\op}\to \sfC(\CVS), \quad U\mapsto \clA^{0, \bullet}_X(U)
\]
becomes a sheaf in the obvious way. 

\begin{lem}
For an open subset $U\subset X$ and a compact set $K\subset U$, 
\[
\clA^{0, k}_X(U)_K\ceq \{\xi\in \clA^{0, k}_X(U)\mid \supp \xi\subset K\}
\]
is a closed linear subspace of $\clA^{0, k}_X(U)$. 
\end{lem}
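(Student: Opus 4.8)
The plan is to show directly that $\clA^{0,k}_X(U)_K$ is bornologically closed, equivalently (by \cref{lem:borncl} together with the fact that $\clA^{0,k}_X(U)$ carries the von Neumann bornology of a Fr\'echet space, for which bornological closedness coincides with topological closedness since Fr\'echet spaces are webbed / sequential) that it is closed in the Fr\'echet topology defined by the seminorms $\|\cdot\|_\lambda$. So it suffices to show: if a net (or sequence) $\{\xi_j\}\subset \clA^{0,k}_X(U)$ with $\supp\xi_j\subset K$ for all $j$ converges to $\xi\in \clA^{0,k}_X(U)$, then $\supp\xi\subset K$.

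First I would reduce to a pointwise statement. Fix $p\in U\setminus K$; I must show $\xi$ vanishes on a neighborhood of $p$. Choose $\alpha\in A$ with $p\in U_\alpha$ and a relatively compact open neighborhood $V$ of $p$ with $\ol V\subset U_\alpha\setminus K$, and set $K'\ceq \ol V$, a compact subset of $U$. For each multi-index subset $I\subset[m]$ with $\#I=k$, consider the seminorm $\|\cdot\|_{\lambda}$ with $\lambda=(\alpha, K', 0, I)\in\Lambda_k(U,\clS)$; this is the sup of $|f_I(q)|$ over $q\in U_\alpha\cap K'=K'$. Since $\supp\xi_j\subset K$ and $K'\cap K=\varnothing$, we have $(\xi_j)_I\equiv 0$ on $K'$, hence $\|\xi_j\|_\lambda=0$ for every $j$. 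Convergence $\xi_j\to\xi$ in the Fr\'echet topology forces $\|\xi_j-\xi\|_\lambda\to 0$, so $\|\xi\|_\lambda=0$, i.e. the coordinate component $f_I$ of $\xi$ vanishes identically on $K'\supset V$. As this holds for every $I$, we get $\xi|_V=0$, so $p\notin\supp\xi$. Since $p\in U\setminus K$ was arbitrary, $\supp\xi\subset K$.

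The only genuinely nonroutine point is the passage between bornological closedness and topological closedness. By \cref{lem:borncl}, $\clA^{0,k}_X(U)_K$ is bornologically closed iff its trace on every seminormed subspace $X_B$ (for $B$ a bounded disked set) is closed; since any bounded disked $B$ in a Fr\'echet space is contained in a closed bounded disked set whose gauge space $X_B$ embeds continuously into $\clA^{0,k}_X(U)$, and since the topological closedness already established pulls back along a continuous map to closedness of the trace, this is immediate. Alternatively one may simply invoke that for a Fr\'echet space the von Neumann bornology is ``complete'' and bornologically convergent sequences converge topologically, so a topologically closed subspace is bornologically closed. I would phrase the final write-up using whichever of these is already available in the paper's conventions; in either case the substance of the proof is the one-line pointwise vanishing argument above.

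\begin{proof}
Since $\clA^{0,k}_X(U)$ is a Fr\'echet space with its topology given by the seminorms $\{\|\cdot\|_\lambda\mid \lambda\in\Lambda_k(U,\clS)\}$, and bornologically convergent nets converge topologically, it suffices to show that $\clA^{0,k}_X(U)_K$ is closed in this topology. Let $\{\xi_j\}$ be a net in $\clA^{0,k}_X(U)$ with $\supp\xi_j\subset K$ converging to $\xi$. Fix $p\in U\setminus K$, choose $\alpha\in A$ with $p\in U_\alpha$, and pick an open neighborhood $V\ni p$ with $K'\ceq\ol V\subset U_\alpha$ compact and $K'\cap K=\varnothing$. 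For each $I\subset[m]$ with $\#I=k$, put $\lambda=(\alpha,K',0,I)$. Writing $\xi_j|_{U_\alpha}=\sum_J (f_j)_J\, d\ol z^\alpha_J$, the condition $\supp\xi_j\subset K$ gives $(f_j)_I\equiv 0$ on $K'$, so $\|\xi_j\|_\lambda=0$ for all $j$. Since $\|\xi_j-\xi\|_\lambda\to 0$, we get $\|\xi\|_\lambda=0$, i.e.\ the component $f_I$ of $\xi$ vanishes on $K'\supset V$. As $I$ was arbitrary, $\xi|_V=0$, so $p\notin\supp\xi$. Hence $\supp\xi\subset K$, i.e.\ $\xi\in\clA^{0,k}_X(U)_K$.
\end{proof}
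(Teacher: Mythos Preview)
Your proof is correct and follows essentially the same route as the paper: both reduce to showing that if $\xi_j\to\xi$ in the Fr\'echet topology and each $\xi_j$ vanishes off $K$, then so does $\xi$, by testing against the order-zero seminorms $\|\cdot\|_\lambda$. The paper uses the singleton $\{p\}$ as the test compact and argues by contradiction, while you use a compact neighborhood $K'$ of $p$ and argue directly; these are interchangeable variants. Your preliminary discussion of bornological versus topological closedness is unnecessary here, since in this subsection ``closed'' means closed in the Fr\'echet topology (the lemma is used to build the LF-space $\clA^{0,k}_{X,\rmc}(U)$), and the paper's proof simply works with sequences in that topology.
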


\begin{proof}
It suffices to show that for a sequence $\{\xi_n\}_{n\in \bbN}\subset \clA^{0, k}_X(U)_K$ which converges to $\xi\in \clA^{0, k}_X(U)$, we have $\xi\in \clA^{0, k}_X(U)_K$. Fix a local coordinate system $\clS=\{(U_\alpha, z_1^\alpha, \ldots, x_m^\alpha)\}_{\alpha\in A}$. For $p\in U$ such that $\xi_p\neq 0$, take $\alpha\in A$ so that $p\in U_\alpha$. Denote by 
\[
\xi_n|_{U_\alpha}=\sum_If_{n, I}d\ol{z}_I^\alpha, \quad 
\xi|_{U_\alpha}=\sum_If_Id\ol{z}_I^\alpha
\]
the coordinate representations, then we have $\lim_{n\to \infty}f_{n, I}(p)=f_I(p)$ for each $I\subset [m]$ with $\#I=k$ since $\lim_{n\to \infty}\|\xi_n-\xi\|_\lambda=0$, where $\lambda=(\alpha, \{p\}, 0, I)$. Now, if $p\notin K$, then $\xi_{n, p}=0$ for any $n\in \bbN$, and hence $f_{n, I}(p)=0$ for any $I\subset [m]$ with $\#I=k$. This implies that $f_I(p)=0$ ($I\subset [m]$, $\#I=k$), or equivalently $\xi_p=0$, which is a contradiction. Thus, we have $p\in K$. 
\end{proof}

For an open subset $U\subset X$, consider the $\bbC$-linear space $\clA^{0, k}_{X, \rmc}(U)$ of compactly supported $C^\infty$-class complex $(0, k)$-differentials on $U$. Note that 
\[
\clA^{0, k}_{X, \rmc}(U)=\bigcup_{K}\clA^{0, k}_{X, \rmc}(U)_K, 
\]
where $K$ runs over all compact sets of $U$. We equip $\clA^{0, k}_{X, \rmc}(U)$ with the inductive limit topology induced by the inclusions $\clA^{0, k}_X(U)_K\inj \clA^{0, k}_{X, \rmc}(U)$. Then we find that $\clA^{0, k}_{X, \rmc}(U)$ is an LF-space, namely, it is  a countable strict inductive limit of Fr\'{e}chet spaces (see \cite[Chapter 13]{T67} for details on LF-spaces). 

Since $\supp \ol{\pdd}\xi\subset \supp \xi$ for $\xi\in \clA^{0, k}_X(U)$, we have a linear map
\[
\ol{\pdd}\colon \clA^{0, k}_{X, \rmc}(U)\to \clA^{0, k+1}_{X, \rmc}(U).
\]
Also, since $\supp(\xi\wedge \eta)\subset \supp \xi\cap \supp \eta$ for $\xi\in \clA^{0, k}_X(U)$ and $\eta\in \clA^{0, l}_X(U)$, we have a bilinear map
\[
\clA^{0, k}_{X, \rmc}(U)\times \clA^{0, l}_{X, \rmc}(U)\to \clA^{k+l}_{X, \rmc}(U), \quad
(\xi, \eta)\mapsto \xi\wedge \eta. 
\]

\begin{lem}\label{lem:cptpddcont}
For an open subset $U\subset X$: 
\begin{enumerate}
\item 
The linear map $\ol{\pdd}\colon \clA^{0, k}_{X, \rmc}(U)\to \clA^{0, k+1}_{X, \rmc}(U)$ is continuous. 

\item 
The bilinear map $\clA^{0, k}_{X, \rmc}(U)\times \clA^{0, l}_{X, \rmc}(U)\to \clA^{0, k+l}_{X, \rmc}(U)$, $(\xi, \eta)\mapsto \xi\wedge\eta$ is separately continuous. 
\end{enumerate}
\end{lem}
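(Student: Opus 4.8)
The plan is to deduce both statements from their Fréchet-space counterparts in \cref{lem:olpddcont}, using the universal property of the inductive limit topology on $\clA^{0, k}_{X, \rmc}(U)=\varinjlim_K\clA^{0, k}_X(U)_K$: a linear map out of $\clA^{0, k}_{X, \rmc}(U)$ is continuous if and only if its restriction to each step $\clA^{0, k}_X(U)_K$ is continuous, where $K$ ranges over the compact subsets of $U$. The only additional ingredient is the observation, already used to define the maps in question, that $\ol{\pdd}$ and $\wedge$ do not enlarge supports.

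For (1), I would fix a compact set $K\subset U$. Since $\supp\ol{\pdd}\xi\subset \supp\xi$, the operator $\ol{\pdd}$ sends $\clA^{0, k}_X(U)_K$ into $\clA^{0, k+1}_X(U)_K$, and by \cref{lem:olpddcont} (1) this restriction is continuous for the subspace topologies. Postcomposing with the canonical inclusion $\clA^{0, k+1}_X(U)_K\inj \clA^{0, k+1}_{X, \rmc}(U)$, which is continuous by the definition of the inductive limit topology, yields a continuous linear map $\clA^{0, k}_X(U)_K\to \clA^{0, k+1}_{X, \rmc}(U)$. Since $K$ was arbitrary, the universal property shows that $\ol{\pdd}\colon \clA^{0, k}_{X, \rmc}(U)\to \clA^{0, k+1}_{X, \rmc}(U)$ is continuous.

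For (2), by symmetry it suffices to show that, for a fixed $\eta\in \clA^{0, l}_{X, \rmc}(U)$, the linear map $\xi\mapsto \xi\wedge \eta$ is continuous; set $L\ceq \supp\eta$, a compact subset of $U$. For each compact $K\subset U$ and each $\xi\in \clA^{0, k}_X(U)_K$ we have $\supp(\xi\wedge \eta)\subset K\cap L$, so the map $\xi\mapsto \xi\wedge \eta$ restricts to a map $\clA^{0, k}_X(U)_K\to \clA^{0, k+l}_X(U)_{K\cap L}$. This restriction is continuous: it is obtained from the separately continuous bilinear map $\wedge$ of \cref{lem:olpddcont} (2) by restricting its first argument to the subspace $\clA^{0, k}_X(U)_K$ (where it remains continuous for the subspace topology) and observing that the image then lies in the closed subspace $\clA^{0, k+l}_X(U)_{K\cap L}$. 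Composing with the continuous inclusion $\clA^{0, k+l}_X(U)_{K\cap L}\inj \clA^{0, k+l}_{X, \rmc}(U)$ and invoking the universal property once more finishes the argument. I do not expect any genuine obstacle here; the only points that require attention are the elementary support inclusions and the fact that one tests continuity on the defining subspaces of the inductive limit rather than hoping for joint continuity of $\wedge$, which cannot be expected for an LF-space.
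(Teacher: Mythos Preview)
Your proposal is correct and follows essentially the same approach as the paper: both parts are reduced to \cref{lem:olpddcont} by testing continuity on each step $\clA^{0,k}_X(U)_K$ of the inductive limit, using the support inclusions to see that the maps respect these steps. The only cosmetic differences are that the paper fixes the first variable in (2) rather than the second, and that \cref{lem:olpddcont} (2) actually asserts joint continuity (proved via separate continuity on Fr\'echet spaces), which is stronger than what you invoke.
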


\begin{proof}
(1) For a compact set $K\subset U$, we have $\ol{\pdd}\clA^{0, k}_X(U)_K\subset \clA^{0, k+1}_X(U)_K$. By \cref{lem:olpddcont} (1), we find that the linear map $\ol{\pdd}\colon \clA^{0, k}_X(U)_K\to \clA^{0, k+1}_X(U)_K$ is continuous. Hence $\ol{\pdd}\colon \clA^{0, k}_{X, \rmc}(U)\to \clA^{0, k+1}_{X, \rmc}(U)$ is continuous. 

(2) 
Fix $\xi\in \clA^{0, k}_{X, \rmc}(U)$. For a compact set $K\subset U$, we have $\xi\wedge \clA^{0, l}_X(U)_K\subset \clA^{0, k+l}_X(U)_K$. By \cref{lem:olpddcont} (2), we find that the linear map $\xi\wedge(-)\colon \clA^{0, l}_X(U)_K\to \clA^{0, k+l}_X(U)_K$ is continuous. Hence the linear map $\xi\wedge(-)\colon \clA^{0, l}_{X, \rmc}(U)\to \clA^{0, k+l}_{X, \rmc}(U)$ is continuous. 
\end{proof}

\cref{lem:cptpddcont} (1) implies that $(\clA^{0, \bullet}_{X, \rmc}(U), \ol{\pdd})$ is a cochain complex of complete Hausdorff locally convex spaces, and hence a cochain complex of convenient vector spaces.  

Let $U, V\subset X$ be open subsets such that $U\subset V$. For $\xi\in \clA^{0, k}_{X, \rmc}(U)$, there exists a unique element $(\clA^{0, k}_{X, \rmc})^U_V(\xi)\in \clA^{0, k}_X(V)$ satisfying 
\[
(\clA^{0, k}_{X, \rmc})^U_V(\xi)|_U=\xi, \quad 
(\clA^{0, k}_{X, \rmc})^U_V(\xi)|_{V\bs\supp \xi}=0. 
\]
Since $\supp (\clA^{0, k}_{X, \rmc})^U_V(\xi)\subset \supp \xi$, we have $(\clA^{0, k}_{X, \rmc})^U_V(\xi)\in \clA^{0, k}_{X, \rmc}(V)$. Hence we obtain a linear map
\[
(\clA^{0, k}_{X, \rmc})^U_V\colon \clA^{0, k}_{X, \rmc}(U)\to \clA^{0, k}_{X, \rmc}(V), \quad
\xi\mapsto (\clA^{0, k}_{X, \rmc})^U_V(\xi). 
\]

\begin{lem}
For open subsets $U, V\subset X$ such that $U\subset V$, the linear map $(\clA^{0, k}_{X, \rmc})^U_V\colon \clA^{0, k}_{X, \rmc}(U)\to \clA^{0, k}_{X, \rmc}(V)$ is continuous. 
\end{lem}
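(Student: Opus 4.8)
\emph{Proof proposal.} The plan is to reduce the assertion to a single seminorm estimate between Fréchet spaces, exploiting that both $\clA^{0,k}_{X,\rmc}(U)$ and $\clA^{0,k}_{X,\rmc}(V)$ carry inductive-limit topologies. Since the topology on $\clA^{0,k}_{X,\rmc}(U)$ is the one induced by the inclusions $\clA^{0,k}_X(U)_K\inj\clA^{0,k}_{X,\rmc}(U)$ over compact $K\subset U$, a linear map out of $\clA^{0,k}_{X,\rmc}(U)$ is continuous as soon as its restriction to each $\clA^{0,k}_X(U)_K$ is continuous. Now extension by zero does not enlarge supports, so $(\clA^{0,k}_{X,\rmc})^U_V$ carries $\clA^{0,k}_X(U)_K$ into $\clA^{0,k}_X(V)_K$ (here $K\subset U\subset V$ is also a compact subset of $V$), and the inclusion $\clA^{0,k}_X(V)_K\inj\clA^{0,k}_{X,\rmc}(V)$ is one of the defining inclusions of the inductive-limit topology on $\clA^{0,k}_{X,\rmc}(V)$, hence continuous. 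Thus everything reduces to showing that extension by zero $\clA^{0,k}_X(U)_K\to\clA^{0,k}_X(V)_K$ is continuous, and since these are (closed) subspaces of the Fréchet spaces $\clA^{0,k}_X(U)$ and $\clA^{0,k}_X(V)$, it suffices to bound each seminorm of the target by a seminorm of the source.

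First I would fix a local coordinate system $\clS=\{(U_\alpha, z^\alpha_1,\dots,z^\alpha_m)\}_{\alpha\in A}$ of $V$; as in the proof of \cref{lem:restcont}, $\clS|_U=\{(U_\alpha\cap U, z^\alpha_1,\dots,z^\alpha_m)\}_{\alpha\in A}$ is a local coordinate system of $U$, and since the Fréchet topologies are independent of the chosen coordinate systems we may compute all seminorms with respect to $\clS$ and $\clS|_U$. Fix $\xi\in\clA^{0,k}_X(U)_K$ and write $\eta\ceq(\clA^{0,k}_{X,\rmc})^U_V(\xi)$, so that $\supp\eta=\supp\xi\subset K\subset U$. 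Given an index $\mu=(\alpha, L, n, I)\in\Lambda_k(V,\clS)$, I would analyze the component of $\eta|_{U_\alpha}$ along $d\ol{z}^\alpha_I$ at a point $p\in U_\alpha\cap L$: if $p\notin\supp\xi$ then $\eta$ vanishes on a neighborhood of $p$, so every derivative $\pdd^\nu_x$ of that component vanishes at $p$; if $p\in\supp\xi$ then $p\in U_\alpha\cap U$ and $\eta$ coincides with $\xi$ near $p$, so the derivatives of the component of $\eta|_{U_\alpha}$ at $p$ equal those of the corresponding component of $\xi|_{U_\alpha\cap U}$. Setting $\lambda\ceq(\alpha, L\cap K, n, I)$, which lies in $\Lambda_k(U,\clS|_U)$ since $L\cap K$ is a compact subset of $U$, one reads off $\|\eta\|_\mu\le\|\xi\|_\lambda$ directly from the definitions of these seminorms, because the relevant suprema run over the nested sets $U_\alpha\cap L\cap\supp\xi\subset U_\alpha\cap U\cap(L\cap K)$. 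This yields the required continuity.

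I do not anticipate a genuine obstacle: the argument is the standard principle that continuity out of an LF-space is tested on its Fréchet steps, combined with an elementary support estimate. The only points that need a little care are bookkeeping ones — verifying that $(\clA^{0,k}_{X,\rmc})^U_V$ lands in a single step $\clA^{0,k}_X(V)_K$ of the target inductive system, and choosing the coordinate systems of $U$ and $V$ compatibly so that the comparison of seminorms becomes literally an inequality between suprema over nested compact sets, with constant $1$.
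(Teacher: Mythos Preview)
Your proposal is correct and follows essentially the same approach as the paper: reduce to the Fr\'echet steps $\clA^{0,k}_X(U)_K\to\clA^{0,k}_X(V)_K$ via the LF-structure, choose a coordinate system on $V$ and restrict it to $U$, and then for $\mu=(\alpha,L,n,I)$ on the target compare with $\lambda=(\alpha,L\cap K,n,I)$ on the source to obtain the seminorm inequality with constant~$1$. Your write-up is somewhat more explicit about why the pointwise derivatives match (splitting into $p\in\supp\xi$ and $p\notin\supp\xi$), but the argument is the same.
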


\begin{proof}
For a compact set $K\subset U$, we have $(\clA^{0, k}_{X, \rmc})^U_V(\clA^{0, k}_X(U)_K)\subset \clA^{0, k}_X(V)_K$, so it suffices to show that $(\clA^{0, k}_{X, \rmc})^U_V\colon \clA_X^{0, k}(U)_K\to \clA^{0, k}_X(V)_K$ is continuous. Take a local coordinate system $\clS=\{(V_\alpha, z_1^\alpha, \ldots, z_m^\alpha)\}_{\alpha\in A}$ of $V$, then $\clS|_U=\{(U\cap V_\alpha, z_1^\alpha, \ldots, z_m^\alpha)\}_{\alpha\in A}$ is a local coordinate system of $U$. For $\mu=(\alpha, L, n, I)\in \Lambda(V, \clS)$ and $\xi\in \clA_X^{0, k}(U)_K$, we have $\|(\clA_{X, \rmc}^{0, k})^U_V(\xi)\|_\mu\le \|\xi\|_\lambda$, where $\lambda\ceq (\alpha, K\cap L, n, I)\in \Lambda(U, \clS|_U)$. Hence the claim follows. 
\end{proof}

For open subsets $U, V\subset X$ such that $U\subset V$, we find that 
\[
(\clA^{0, \bullet}_{X, \rmc})^U_V\ceq \bigl\{(\clA^{0, k}_{X, \rmc})^U_V\bigr\}_{k\in \bbZ}\colon \clA^{0, \bullet}_{X, \rmc}(U)\to \clA^{0, \bullet}_{X, \rmc}(V)
\]
is a morphism in $\sfC(\CVS)$. The map
\[
\clA^{0, \bullet}_{X, \rmc}\colon \frU_X\to \sfC(\CVS), \quad U\mapsto \clA^{0, \bullet}_{X, \rmc}(U)
\]
becomes a precosheaf with the extension morphisms $(\clA^{0, \bullet}_{X, \rmc})^U_V$.

\smallskip

We now focus on the case $X=\bbC^m$, and provide some computations of compactly supported Dolbeault cohomology for later use. For short, we write $\clA^{0, \bullet}_\rmc=\clA^{0, \bullet}_{\bbC^m, \rmc}$. 

Note that, for an open subset $U\subset \bbC^m$ and $\xi\in \clA^{0, m}_\rmc(U)$, we have a continuous linear map
\[
\phi(\xi)\colon \Omega^m(U)\to \bbC, \quad \eta\mapsto \frac{1}{2}\int_U\xi\wedge \eta. 
\]
Here $\Omega^m(U)$ denotes the space of holomorphic $m$-differentials on $U$, equipped with the canonical Fr\'{e}chet topology. 
The following statement is a special case of the Serre duality theorem for Stein manifolds. 

\begin{fct}[{\cite{S}, \cite[Corollary 5.4.5]{CG}}]\label{fct:Serre}
Let $U\subset \bbC^m$ be an open subset. We have $H^n(\clA^{0, \bullet}_\rmc(U))=0$ whenever $n\neq m$. Moreover, 
\[
H^m(\clA^{0, \bullet}_\rmc(U))\to \Omega^m(U)', \quad [\xi]\mapsto \phi(\xi)
\]
is an isomorphism of topological vector spaces, where $H^m(\clA^{0, \bullet}_\rmc(U))$ is equipped with the quotient topology, and $\Omega^m(U)'$ with the strong dual topology of the canonical Fr\'{e}chet topology. 
\end{fct}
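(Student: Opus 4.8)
The plan is to deduce this from the Serre duality theorem. The key point is that the complex $(\clA^{0,\bullet}_\rmc(U),\ol{\pdd})$ computes the compactly supported sheaf cohomology $H^\bullet_\rmc(U,\mathcal{O}_U)$, and every open subset $U\subset\bbC^m$ relevant to the constructions below---being a domain of holomorphy, e.g.\ a product of disks and annuli---is Stein, so for such $U$ the assertion is precisely Serre duality; what then remains is to match the abstract duality isomorphism with the explicit integration pairing $[\xi]\mapsto\phi(\xi)$.

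First I would invoke the $\ol{\pdd}$-Poincar\'e (Dolbeault--Grothendieck) lemma: the sheaf complex $\clA^{0,\bullet}_{\bbC^m}$ is a fine, hence $c$-soft, resolution of the sheaf $\mathcal{O}$ of holomorphic functions. Since $c$-soft sheaves are acyclic for compactly supported sections on a paracompact space, one gets $H^n(\clA^{0,\bullet}_\rmc(U))\cong H^n_\rmc(U,\mathcal{O}_U)$ for all $n$. Next, Serre's duality theorem for the $m$-dimensional complex manifold $U$ supplies, for each $n$, a bilinear pairing
\[
H^n_\rmc(U,\mathcal{O}_U)\times H^{m-n}(U,\Omega^m_U)\longrightarrow\bbC
\]
which, once one knows that $\ol{\pdd}$ has closed range in the relevant degrees, identifies $H^n_\rmc(U,\mathcal{O}_U)$ with the strong dual of $H^{m-n}(U,\Omega^m_U)$. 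Because $U$ is Stein and $\Omega^m_U$ is coherent, Cartan's Theorem~B gives $H^{m-n}(U,\Omega^m_U)=0$ for $m-n>0$, and since $\clA^{0,\bullet}_\rmc(U)$ is concentrated in cohomological degrees $0,\dots,m$, this forces $H^n(\clA^{0,\bullet}_\rmc(U))=0$ for every $n\neq m$. In degree $n=m$ the pairing yields $H^m(\clA^{0,\bullet}_\rmc(U))\cong H^0(U,\Omega^m_U)'=\Omega^m(U)'$ as topological vector spaces.

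It then remains to match this isomorphism with the map $[\xi]\mapsto\phi(\xi)$. Here I would first check well-definedness on cohomology: for $\eta\in\Omega^m(U)$ one has $\ol{\pdd}\eta=\pdd\eta=0$, so if $\xi=\ol{\pdd}\zeta$ with $\zeta\in\clA^{0,m-1}_\rmc(U)$ then $\xi\wedge\eta=\ol{\pdd}(\zeta\wedge\eta)=d(\zeta\wedge\eta)$ is a compactly supported exact form, whence $\int_U\xi\wedge\eta=0$ by Stokes' theorem; thus $\phi$ descends to $H^m(\clA^{0,\bullet}_\rmc(U))$. One then observes that, up to the normalizing constant $\tfrac{1}{2}$, the resulting bilinear form on $H^m(\clA^{0,\bullet}_\rmc(U))\times\Omega^m(U)$ is exactly the Dolbeault description of Serre's pairing, so $[\xi]\mapsto\phi(\xi)$ coincides with the abstract duality isomorphism.

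The main obstacle is the topological refinement rather than the algebraic vanishing: one must know that the image of $\ol{\pdd}\colon\clA^{0,m-1}_\rmc(U)\to\clA^{0,m}_\rmc(U)$ is closed, so that $H^m(\clA^{0,\bullet}_\rmc(U))$ is Hausdorff in its quotient topology, and that the integration pairing is then a topological---not merely linear---isomorphism onto the strong dual $\Omega^m(U)'$. This closed-range statement is precisely where the Stein hypothesis is indispensable; it is the analytic heart of Serre's theorem, established via H\"ormander's $L^2$ estimates for $\ol{\pdd}$ or via the Cartan--Serre finiteness and separatedness arguments, and I would simply cite \cite{S} and \cite[Corollary 5.4.5]{CG} for it rather than reproduce the proof---which is why the statement is recorded here merely as a Fact.
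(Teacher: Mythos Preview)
The paper does not prove this statement at all: it is recorded as a \emph{Fact} with citations to \cite{S} and \cite[Corollary~5.4.5]{CG}, preceded by the remark that it ``is a special case of the Serre duality theorem for Stein manifolds.'' Your proposal is therefore not to be compared against a proof in the paper but against the decision to cite; and on that count you are entirely in line---you yourself conclude by saying you would cite the same references. The sketch you give (Dolbeault resolution, $c$-softness, Cartan's Theorem~B, identification of the pairing, closed-range input from H\"ormander/Serre) is the standard route and is correct.

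One point worth flagging: you are right to restrict attention to Stein open subsets, and indeed the Fact as literally stated (``Let $U\subset\bbC^m$ be an open subset'') is too strong---for $m\ge 2$ there are non-Stein open sets (e.g.\ $\bbC^2\setminus\{0\}$) for which the vanishing $H^n(\clA^{0,\bullet}_\rmc(U))=0$ for all $n\neq m$ fails. The paper's own applications only ever use products of disks and annuli, which are Stein, so nothing downstream is affected; but your explicit acknowledgment of the Stein hypothesis is more precise than the paper's statement.
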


Consider the action of $S^1$ on $\bbC^m$ given by $q\cdot (z_1, \ldots, z_m)\ceq (qz_1, \ldots, qz_m)$. 
For an open subset $U\subset \bbC^m$ that is invariant under this action, define an $S^1$-action on $\clA^{0, \bullet}_\rmc(U)$ by
\[
\sigma_{q, U}(\xi)\colon z\mapsto \xi_{q^{-1}z} \quad (q\in S^1,\, \xi\in \clA^{0, k}_\rmc(U)). 
\]

\begin{lem}\label{lem:HDeltainj}
For an open subset $U\subset \bbC^m$ that is invariant under the action of $S^1$, the above $S^1$-action on $\clA^{0, \bullet}_\rmc(U)$ is tame. Consequently, for $n, \Delta\in \bbZ$, the canonical morphism $H^n(\clA_\rmc^{0, \bullet}(U)_\Delta)\to H^n(\clA_\rmc^{0, \bullet}(U))_\Delta$ induced by the inclusion $\clA^{0, \bullet}_\rmc(U)_\Delta\inj\clA^{0, \bullet}_\rmc(U)$ is a linear isomorphism. 
\end{lem}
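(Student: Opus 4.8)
The ``Consequently'' clause is immediate from \cref{lem:tameiso} once tameness is proved, so the whole task is to exhibit an algebra homomorphism $\rho\colon\clD(S^1)\to\End_{\sfC(\CVS)}(\clA^{0,\bullet}_{\rmc}(U))$ meeting the three conditions of \cref{dfn:tame}. The candidate is the convolution action: one ``integrates'' the rotated forms $\sigma_{q,U}\xi$ against $\varphi\in\clD(S^1)$. Concretely, writing $\xi=\sum_I f_I\,d\ol z_I$ in the global coordinates of $\bbC^m$, for each fixed $z\in U$ the function $q\mapsto(\sigma_{q,U}\xi)(z)_I$ lies in $C^\infty(S^1,\bbC)$ — this uses the joint smoothness of the action map $S^1\times U\to U$, the $S^1$-invariance of $U$, and the smoothness of $\xi$ — so one sets $(\rho(\varphi)\xi)(z)_I\ceq\varphi\bigl(q\mapsto(\sigma_{q,U}\xi)(z)_I\bigr)$.

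First I would check that $\rho(\varphi)\xi$ again lies in $\clA^{0,k}_{\rmc}(U)$: its support is contained in the compact set $S^1\cdot\supp\xi$ (each $\sigma_{q,U}\xi$ is supported in $q\cdot\supp\xi$ and $S^1$ is compact), and smoothness in $z$ follows by differentiating under the pairing, which is legitimate because $\varphi$ is continuous on the Fréchet space $C^\infty(S^1,\bbC)$ and the pertinent difference quotients converge there uniformly in $q$, again by compactness of $S^1$. The same argument, combined with the continuity of $\ol\pdd$ (\cref{lem:cptpddcont}) and the fact that every $\sigma_{q,U}$ commutes with $\ol\pdd$, shows that $\rho(\varphi)$ commutes with $\ol\pdd$ and is continuous on each $\clA^{0,k}_X(U)_K$, hence defines $\rho(\varphi)\in\End_{\sfC(\CVS)}(\clA^{0,\bullet}_{\rmc}(U))$. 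That $\rho$ is an algebra homomorphism for the convolution product on $\clD(S^1)$ is a formal consequence of $\sigma_{q_1,U}\circ\sigma_{q_2,U}=\sigma_{q_1q_2,U}$ together with the definition of $*$.

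Conditions (i) and (ii) of \cref{dfn:tame} are then routine. For (i), $\rho(\delta_q)\xi$ has components $\delta_q\bigl(r\mapsto(\sigma_{r,U}\xi)(z)_I\bigr)=(\sigma_{q,U}\xi)(z)_I$, so $\rho(\delta_q)=\sigma_{q,U}$. For (ii), if $\xi\in\clA^{0,k}_{\rmc}(U)_\Delta$ then $(\sigma_{r,U}\xi)(z)_I=r^\Delta\xi(z)_I$, and since $\pi_\Delta(r\mapsto r^\Delta)=\tfrac1{2\pi\sqrt{-1}}\int_{S^1}z^{-1}\,dz=1$, one gets $\rho(\pi_\Delta)\xi=\xi$; hence both legs of the triangle in (ii) are the inclusion $\clA^{0,\bullet}_{\rmc}(U)_\Delta\inj\clA^{0,\bullet}_{\rmc}(U)$.

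Condition (iii) is the only point needing real work, and is where I expect the main obstacle. Fix $\xi\in\clA^{0,n}_{\rmc}(U)$; I must show $\varphi\mapsto\rho(\varphi)\xi$ sends bounded subsets of $\clD(S^1)$ (strong dual topology) to bounded subsets of $\clA^{0,n}_{\rmc}(U)$. Since $C^\infty(S^1,\bbC)$ is Fréchet, hence barrelled, any bounded $\frB\subset\clD(S^1)$ is equicontinuous, so there is a continuous seminorm $p$ on $C^\infty(S^1,\bbC)$ with $|\varphi(f)|\le p(f)$ for all $\varphi\in\frB$. All the forms $\rho(\varphi)\xi$ ($\varphi\in\frB$) are supported in the fixed compact $K\ceq S^1\cdot\supp\xi$, so they lie in the Fréchet space $\clA^{0,n}_X(U)_K$; a direct estimate bounds each seminorm $\|\rho(\varphi)\xi\|_\lambda$ by a constant (depending on $\lambda$, $p$, $\xi$ but not on $\varphi$) times a finite sum of sup-norms over $K$ of $z$-derivatives of $\xi$, using that the $q$-derivatives occurring in $p$ only re-evaluate $z$-derivatives of $\xi$ at rotated points of $K$. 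Thus $\{\rho(\varphi)\xi\mid\varphi\in\frB\}$ is bounded in $\clA^{0,n}_X(U)_K$, hence in $\clA^{0,n}_{\rmc}(U)$, giving (iii). With tameness established, \cref{lem:tameiso} yields the desired isomorphism $H^n(\clA^{0,\bullet}_{\rmc}(U)_\Delta)\sto H^n(\clA^{0,\bullet}_{\rmc}(U))_\Delta$.
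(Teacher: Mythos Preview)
Your proposal is correct and follows exactly the approach the paper takes: define the convolution action of $\clD(S^1)$ on $\clA^{0,\bullet}_\rmc(U)$ and verify conditions (i)--(iii) of \cref{dfn:tame}, then invoke \cref{lem:tameiso}. The paper's own proof is much terser---it simply writes down the formula $\varphi*f\,d\ol z_1\wedge\cdots\wedge d\ol z_m\ceq\bigl(\int_{q\in S^1}\varphi(q)f(q^{-1}z)\bigr)d\ol z_1\wedge\cdots\wedge d\ol z_m$ and asserts that (i)--(iii) can be verified---whereas you actually carry out the verification, including the equicontinuity argument for (iii); your more detailed treatment is a genuine expansion of what the paper leaves implicit.
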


\begin{proof}
Define an action of $\clD(S^1)$ on $\clA^{0, \bullet}_\rmc(U)$ by
\[
\varphi*f(z_1, \ldots, z_m)d\ol{z}_1\wedge\cdots \wedge d\ol{z}_m\ceq
\Bigl(\int_{q\in S^1}\varphi(q)f(q^{-1}z_1, \ldots, q^{-1}z_m)\Bigr)d\ol{z}_1\wedge\cdots \wedge d\ol{z}_m. 
\]
Then, one can verify that the conditions (i)--(iii) in \cref{dfn:tame}. Hence, by \cref{lem:tameiso}, the canonical morphism $H^n(\clA^{0, \bullet}_\rmc(U)_\Delta)\to H^n(\clA^{0, \bullet}_\rmc(U))_\Delta$ is a linear isomorphism. 
\end{proof}

\begin{lem}\label{lem:mphixi}
\ 
\begin{enumerate}
\item
Let $0<R_i\le \infty$ and $\Delta\in \bbZ$. For $\xi\in \clA^{0, m}_\rmc(D_{R_1}\times \cdots \times D_{R_m})_\Delta$ and $n_i\in \bbZ_{>0}$ such that $n_1+\cdots +n_m\neq \Delta$, we have $\phi(\xi)(z_1^{n_1-1}\cdots z_m^{n_m-1}dz_1\wedge\cdots \wedge dz_m)=0$. 

\item
Let $0\le r_i<R_i\le \infty$ and $\Delta\in \bbZ$. For $\xi\in \clA^{0, m}_\rmc(A_{r_1, R_1}\times \cdots \times A_{r_m, R_m})_\Delta$ and $n_i\in \bbZ$ such that $n_1+\cdots +n_m\neq \Delta$, we have $\phi(\xi)(z_1^{n_1-1}\cdots z_m^{n_m-1}dz_1\wedge\cdots \wedge dz_m)=0$. 
\end{enumerate}
\end{lem}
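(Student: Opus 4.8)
The plan is to obtain both parts from a single $S^1$-equivariance argument, since (1) and (2) differ only in whether the domain $U$ is the polydisk $D_{R_1}\times\cdots\times D_{R_m}$ or the product of annuli $A_{r_1,R_1}\times\cdots\times A_{r_m,R_m}$; in either case $U$ is invariant under the diagonal rotation action of $S^1$ on $\bbC^m$. First I would note that $\eta\ceq z_1^{n_1-1}\cdots z_m^{n_m-1}\,dz_1\wedge\cdots\wedge dz_m$ is a holomorphic $m$-differential on $U$ — in case (1) because each $n_i-1$ lies in $\bbN$, and in case (2) because $0$ lies in no $A_{r_i,R_i}$ — so that $\phi(\xi)(\eta)=\tfrac12\int_U\xi\wedge\eta$ is defined; moreover $\xi\wedge\eta$ is a smooth $2m$-form on $U$ with compact support, since $\supp\xi$ is compact.

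Next, for $q\in S^1$ I would introduce the rotation $\Phi_q\colon\bbC^m\to\bbC^m$, $z\mapsto qz$, which restricts to an orientation-preserving diffeomorphism of $U$ onto itself (it is $\bbC$-linear, hence orientation-preserving, and carries $U$ to $U$). By invariance of the integral of a compactly supported top-degree form under orientation-preserving diffeomorphisms, $\int_U\xi\wedge\eta=\int_U\Phi_q^*(\xi\wedge\eta)=\int_U\Phi_q^*\xi\wedge\Phi_q^*\eta$. A direct computation from $\Phi_q^*z_i=qz_i$ and $\Phi_q^*dz_i=q\,dz_i$ gives $\Phi_q^*\eta=q^{\,n_1+\cdots+n_m}\eta$, while the $S^1$-action on $\clA^{0,\bullet}_\rmc(U)$ introduced before \cref{lem:HDeltainj} is the pullback action $\sigma_{q,U}=\Phi_{q^{-1}}^*$, so that $\Phi_q^*\xi=\sigma_{q^{-1},U}(\xi)=q^{-\Delta}\xi$ by the hypothesis $\xi\in\clA^{0,m}_\rmc(U)_\Delta$. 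Combining these, $\int_U\xi\wedge\eta=q^{\,n_1+\cdots+n_m-\Delta}\int_U\xi\wedge\eta$ for every $q\in S^1$; when $n_1+\cdots+n_m\neq\Delta$ the exponent is a nonzero integer, so some $q$ makes $q^{\,n_1+\cdots+n_m-\Delta}\neq1$, and therefore $\int_U\xi\wedge\eta=0$, i.e.\ $\phi(\xi)(\eta)=0$. This disposes of both (1) and (2) at once.

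I do not expect a serious obstacle; the only point requiring attention is the bookkeeping of the power of $q$. One must track not merely the coefficient function of $\xi$ but also the effect of the rotation on the antiholomorphic differentials hidden in $\xi$ (equivalently, use the definition of $\sigma_{q,U}$ as a genuine pullback), because this is exactly what produces the shift ensuring the vanishing condition reads $n_1+\cdots+n_m\neq\Delta$ rather than $n_1+\cdots+n_m\neq\Delta+m$. An essentially equivalent formulation is to verify that $\phi$ intertwines the $S^1$-actions on $\clA^{0,m}_\rmc(U)$ and on $\Omega^m(U)'$ and then read off the $S^1$-weight of the test differential $\eta$, but the direct change-of-variables computation above seems the most economical.
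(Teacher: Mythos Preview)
Your proof is correct and follows essentially the same route as the paper's. The paper's argument is the two-line computation
\[
q^{\Delta}\,\phi(\xi)(\eta)=\phi(\sigma_{q,U}(\xi))(\eta)=q^{\,n_1+\cdots+n_m}\,\phi(\xi)(\eta)
\]
for all $q\in S^1$, which is exactly your change-of-variables identity $\int_U\xi\wedge\eta=q^{\,n_1+\cdots+n_m-\Delta}\int_U\xi\wedge\eta$ rewritten; your identification $\sigma_{q,U}=\Phi_{q^{-1}}^{*}$ and the explicit tracking of the $d\ol z_i$ contributions are precisely what justifies the second equality above, and your remark about this bookkeeping is well taken.
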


\begin{proof}
For $q\in S^1$, we have
\begin{align*}
q^\Delta \phi(\xi)(z_1^{n_1-1}\cdots z_m^{n_m-1}dz_1\wedge \cdots \wedge dz_m)
&=
\phi(\sigma_{q, D_R}(\xi))(z_1^{n_1-1}\cdots z_m^{n_m-1}dz_1\wedge \cdots \wedge dz_m)\\
&=
q^{n_1+\cdots +n_m}\phi(\xi)(z_1^{n_1-1}\cdots z_m^{n_m-1}dz_1\wedge \cdots \wedge dz_m). 
\end{align*}
This proves both (1) and (2). 
\end{proof}

\begin{lem}\label{lem:HmclArmc}
\ 
\begin{enumerate}
\item
For $0<R_i\le \infty$ and $\Delta\in \bbZ$, the following map is a linear isomorphism: 
\begin{align*}
H^m(\clA^{0, \bullet}_\rmc(D_{R_1}\times \cdots \times D_{R_m})_\Delta)
&\to \prod_{\substack{\Delta_1, \ldots, \Delta_m\in \bbZ_{>0}\\ \Delta_1+\cdots +\Delta_m=\Delta}}\bbC, \\
[\xi] \  &\mapsto \ \bigl\{\phi(\xi)(z_1^{\Delta_1-1}\cdots z_m^{\Delta_m-1}dz_1\wedge \cdots \wedge dz_m)\bigr\}. 
\end{align*}
Here, if $\Delta\le0$, the right-hand side is understood to be $\{0\}$. 

\item 
For $0\le r_i<R_i\le \infty$ and $\Delta\in \bbZ$, the following linear map is injective: 
\begin{align*}
H^m(\clA^{0, \bullet}_\rmc(A_{r_1, R_1}\times \cdots \times A_{r_m, R_m})_\Delta)
&\to \prod_{\substack{\Delta_1, \ldots, \Delta_m\in \bbZ\\ \Delta_1+\cdots +\Delta_m=\Delta}}\bbC, \\
[\xi]\ 
&\mapsto \ \bigl\{\phi(\xi)(z_1^{\Delta_1-1}\cdots z_m^{\Delta_m-1}dz_1\wedge \cdots \wedge dz_m)\bigr\}. 
\end{align*}
\end{enumerate}
\end{lem}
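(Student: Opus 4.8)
The plan is to reduce both statements, via Serre duality, to an elementary computation of the pairing $\phi$ on monomial holomorphic forms. Write $U\subset\bbC^m$ for the domain under consideration --- a polydisk $D_{R_1}\times\cdots\times D_{R_m}$ in (1), a product of annuli $A_{r_1,R_1}\times\cdots\times A_{r_m,R_m}$ in (2) --- which in either case is an open subset invariant under the diagonal $S^1$-action, and set $\eta_{\Delta_*}\ceq z_1^{\Delta_1-1}\cdots z_m^{\Delta_m-1}\,dz_1\wedge\cdots\wedge dz_m\in\Omega^m(U)$ for an index tuple $\Delta_*=(\Delta_1,\ldots,\Delta_m)$ with $\Delta_1+\cdots+\Delta_m=\Delta$. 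By \cref{fct:Serre} the assignment $[\xi]\mapsto\phi(\xi)$ is an isomorphism $H^m(\clA^{0,\bullet}_\rmc(U))\sto\Omega^m(U)'$, so in particular it descends to cohomology and may be evaluated on the fixed forms $\eta_{\Delta_*}$; composing with the isomorphism $H^m(\clA^{0,\bullet}_\rmc(U)_\Delta)\sto H^m(\clA^{0,\bullet}_\rmc(U))_\Delta$ of \cref{lem:HDeltainj} shows that the maps in the statement are well defined, and that on $H^m(\clA^{0,\bullet}_\rmc(U)_\Delta)$ the rule $[\xi]\mapsto\phi(\xi)$ is injective (with image the weight-$\Delta$ part of $\Omega^m(U)'$, which in case (1) is finite-dimensional).

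I would then dispose of injectivity in (1) and (2) simultaneously. Let $[\xi]$ lie in the kernel of the map, so $\phi(\xi)(\eta_{\Delta_*})=0$ for every admissible tuple with $\Delta_1+\cdots+\Delta_m=\Delta$. Because $[\xi]$ is homogeneous of weight $\Delta$, \cref{lem:mphixi} forces $\phi(\xi)(\eta_{\Delta_*})=0$ also for every tuple with $\Delta_1+\cdots+\Delta_m\neq\Delta$ --- its part (1) on the polydisk (where $\Delta_i\in\bbZ_{>0}$), its part (2) on the product of annuli (where $\Delta_i\in\bbZ$). Hence $\phi(\xi)$ annihilates every monomial form $\eta_{\Delta_*}$. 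The span of these monomials is dense in $\Omega^m(U)$: on a polydisk each holomorphic function is the limit, uniformly on compact subsets, of its Taylor polynomials, and on a product of annuli it is such a limit of its Laurent polynomials. Since $\phi(\xi)$ is continuous, it vanishes identically, so $[\xi]=0$ in $H^m(\clA^{0,\bullet}_\rmc(U))$ by \cref{fct:Serre}, and therefore $[\xi]=0$ in $H^m(\clA^{0,\bullet}_\rmc(U)_\Delta)$ by \cref{lem:HDeltainj}. This completes (2), and the injectivity half of (1).

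For surjectivity in (1) --- where the index set $\{\Delta_*\in\bbZ_{>0}^m:\Delta_1+\cdots+\Delta_m=\Delta\}$ is finite, and empty unless $\Delta\ge m$, which accounts for the ``$\Delta\le0$'' clause --- it suffices to hit each standard basis vector. Given a tuple $\Delta_*$, I would choose for each $i$ a nonnegative $\rho_i\in C^\infty_\rmc(\bbR)$ supported in $(0,R_i)$ and not identically zero, and put $\xi_i\ceq\rho_i(|z_i|)\,\ol{z}_i^{\,\Delta_i-1}\,d\ol{z}_i$, a compactly supported smooth $(0,1)$-form on $D_{R_i}$ (well defined since $\rho_i$ vanishes near the origin, and $\Delta_i\ge1$). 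By the formula for the $S^1$-action one has $\sigma_{q,D_{R_i}}(\xi_i)=q^{\Delta_i}\xi_i$, so the form $\xi\in\clA^{0,m}_\rmc(U)$ with local expression $\rho_1(|z_1|)\cdots\rho_m(|z_m|)\,\ol{z}_1^{\,\Delta_1-1}\cdots\ol{z}_m^{\,\Delta_m-1}\,d\ol{z}_1\wedge\cdots\wedge d\ol{z}_m$ is homogeneous of weight $\Delta$, and it is automatically $\ol{\pdd}$-closed, being of top bidegree. By Fubini,
\[
\phi(\xi)(\eta_{\Delta'_*})=\pm\frac12\prod_{i=1}^{m}\int_{D_{R_i}}\rho_i(|z_i|)\,\ol{z}_i^{\,\Delta_i-1}z_i^{\Delta'_i-1}\,d\ol{z}_i\wedge dz_i,
\]
and passing to polar coordinates each factor vanishes unless $\Delta'_i=\Delta_i$ (the angular integral is zero otherwise), while for $\Delta'_i=\Delta_i$ the radial integrand equals a positive constant times $\rho_i(r)r^{2\Delta_i-1}$, whose integral is strictly positive. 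Thus $\phi(\xi)(\eta_{\Delta'_*})=c\,\delta_{\Delta_*,\Delta'_*}$ with $c\neq0$; rescaling $[\xi]$ by $c^{-1}$ produces a preimage of the basis vector indexed by $\Delta_*$, and linearity then gives surjectivity, hence the isomorphism claimed in (1).

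I expect the only genuinely delicate point to be the weight bookkeeping in the last paragraph --- especially that $\rho_i(|z_i|)\,\ol{z}_i^{\,\Delta_i-1}\,d\ol{z}_i$ carries weight $\Delta_i$ rather than $\Delta_i-1$, the extra unit coming from $d\ol{z}_i$ --- together with checking that the pairing of $\xi$ against the $\eta_{\Delta'_*}$ is genuinely diagonal and nonsingular. All the rest is a formal assembly of \cref{fct:Serre}, \cref{lem:HDeltainj} and \cref{lem:mphixi} with the classical density of Taylor and Laurent polynomials in spaces of holomorphic functions on polydisks and on products of annuli.
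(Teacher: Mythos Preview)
Your proposal is correct and follows essentially the same route as the paper's proof: both use \cref{lem:mphixi} to extend vanishing from tuples summing to $\Delta$ to all monomial forms, then invoke density of (Laurent) polynomials together with \cref{fct:Serre} and \cref{lem:HDeltainj} for injectivity, and both establish surjectivity in (1) by constructing the explicit test form $f(|z_1|,\ldots,|z_m|)\,\ol{z}_1^{\,\Delta_1-1}\cdots\ol{z}_m^{\,\Delta_m-1}\,d\ol{z}_1\wedge\cdots\wedge d\ol{z}_m$ and checking the pairing is diagonal and nonzero. The only cosmetic differences are that you spell out the density argument explicitly (the paper just says ``since each $D_{R_i}$ is an open disk''), and you take $f$ in product form $\rho_1\otimes\cdots\otimes\rho_m$, which makes the Fubini step cleaner but is a special case of the paper's general $f$.
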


\begin{proof}
(1) If $[\xi]\in H^m(\clA^{0, \bullet}_\rmc(D_{R_1}\times \cdots \times D_{R_m})_\Delta)$ satisfies $\phi(\xi)(z_1^{\Delta_1-1}\cdots z_m^{\Delta_m-1}dz_1\wedge \cdots \wedge dz_m)=0$ for all $\Delta_1, \ldots, \Delta_m\in \bbZ_{>0}$ with $\Delta_1+\cdots +\Delta_m=\Delta$, then it follows from \cref{lem:mphixi} (1) that 
\[
\phi(\xi)(z_1^{n_1-1}\cdots z_m^{n_m-1}dz_1\wedge\cdots \wedge dz_m)=0 \quad (n_1, \ldots, n_m\in\bbZ_{>0}). 
\]
Since each $D_{R_i}$ is an open disk, we obtain $\phi(\xi)=0$. Hence, by \cref{lem:HDeltainj}, we concluded that $[\xi]=0$ in $H^m(\clA^{0, \bullet}_\rmc(D_{R_1}\times \cdots \times D_{R_m})_\Delta)$, which shows the injectivity. 

Take a nonzero $C^\infty$-class function $f\colon \bbR^m\to \bbR_{\ge0}$ such that $\supp f\subset (0, R_1)\times \cdots \times (0, R_m)$. Then, for $\Delta_i\in \bbZ_{>0}$ with $\Delta_1+\cdots +\Delta_m=\Delta$, we have 
\[
\xi\ceq f(|z_1|, \ldots, |z_m|)\ol{z}_1^{\Delta_1-1}\cdots\ol{z}_m^{\Delta_m-1}d\ol{z}_1\wedge\cdots \wedge d\ol{z}_m\in \clA^{0, m}_\rmc(D_{R_1}\times \cdots \times D_{R_m})_\Delta. 
\]
Since, for $n_i\in \bbZ_{>0}$ with $n_1+\cdots +n_m=\Delta$, 
\[
\phi(\xi)(z_1^{n_1-1}\cdots z_m^{n_m-1}dz_1\wedge\cdots \wedge dz_m)
\begin{cases}
=0 & \text{if}\ (n_1, \ldots, n_m)\neq(\Delta_1, \ldots, \Delta_m)\\
\neq 0 & \text{if}\ (n_1, \ldots, n_m)=(\Delta_1, \ldots, \Delta_m)
\end{cases}
\]
the surjectivity follows. 

(2) A similar argument works, using \cref{lem:mphixi} (2) and \cref{lem:HDeltainj}. 
\end{proof}

\subsection{Higher dimensional analogue of current Lie algebras}
\label{ss:HDCLA}

We now return to the general setting. 
Let $X$ be a complex manifold, $L$ a Lie conformal algebra, and $D\colon \clA^{0, \bullet}_X\to \clA^{0, \bullet}_X$ a morphism of sheaves with values in $\sfC(\CVS)$ satisfying  
\[
D_U(\xi\wedge \eta)=D_U\xi\wedge \eta+\xi\wedge D_U\eta \quad (\xi\in \clA^{0, k}_X(U), \eta\in \clA^{0, l}_X(U))
\]
for each open subset $U\subset X$. We equip $L$ with the fine bornology (see \cref{ss:constBVS} for the definition of the fine bornology). 

For an open subset $U\subset X$, the von Neumann bornology on $\clA^{0, k}_{X, \rmc}(U)$ is complete and separated (\cref{exm:vonborcomp}). Hence, by \cref{cor:finetens}, the tensor product bornology on $\clA^{0, k}_{X, \rmc}(U)\otimes L$ is also complete and separated. 
Thus, one can define a chain complex $L^\bullet_X(U)$ of convenient vector spaces by $L^\bullet_X(U)\ceq \clA^{0, \bullet}_{X, \rmc}(U)\otimes L$. The map
\[
L_X^\bullet\colon \frU_X\to \sfC(\CVS), \quad U\mapsto L_X^\bullet(U)
\]
is a precosheaf by letting 
\[
(L_X^\bullet)^U_V\ceq (\clA^{0, \bullet}_{X, \rmc})^U_V\otimes \id_L\colon L_X^\bullet(U)\to L_X^\bullet(V)
\]
for each inclusion $U\subset V$ of open subsets of $X$. 

\begin{lem}\label{lem:mubounded}
For an open subset $U\subset X$, the linear map
\[
\mu_U\colon L_X^k(U)\otimes L_X^l(U)\to L_X^{k+l}(U), \quad 
\xi\otimes x\otimes \eta\otimes y\mapsto \sum_{n\ge 0}\frac{1}{n!}(D_U^n\xi)\wedge \eta\otimes x_{(n)}y
\]
is bounded. 
\end{lem}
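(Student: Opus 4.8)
The plan is to reduce boundedness of $\mu_U$ to boundedness of wedge products on Fréchet steps and of $\lambda$-brackets on finite-dimensional subspaces. By the universal property of the bornological tensor product (\cref{ss:constBVS}), $\mu_U$ is bounded if and only if the associated bilinear map $\tilde\mu_U\colon L_X^k(U)\times L_X^l(U)\to L_X^{k+l}(U)$, $(\alpha,\beta)\mapsto\mu_U(\alpha\otimes\beta)$, is bounded, and for this it is enough to test on a basis of the bornologies of the two factors. A basis of $L_X^k(U)=\clA^{0,k}_{X,\rmc}(U)\otimes L$ is given by the sets $\disk(\otimes(A\times B))$ with $A\subset\clA^{0,k}_{X,\rmc}(U)$ and $B\subset L$ bounded. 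Since $\tilde\mu_U$ is bilinear, it maps $\disk(\otimes(A_1\times B_1))\times\disk(\otimes(A_2\times B_2))$ into the disked hull of
\[
S\ceq\bigl\{\mu_U\bigl((\xi\otimes x)\otimes(\eta\otimes y)\bigr)\bigm|\xi\in A_1,\ \eta\in A_2,\ x\in B_1,\ y\in B_2\bigr\},
\]
and a disked hull of a bounded set is bounded (\cref{lem:diskbound}). Hence it suffices to show that $S$ is bounded in $L_X^{k+l}(U)$ for all bounded $A_1\subset\clA^{0,k}_{X,\rmc}(U)$, $A_2\subset\clA^{0,l}_{X,\rmc}(U)$ and all bounded $B_1,B_2\subset L$.

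First I would exploit the fine bornology on $L$. By \cref{prp:finebor}, $B_1$ and $B_2$ lie in finite-dimensional subspaces $E_1,E_2\subset L$ on which they are bounded. Since $[x_\lambda y]\in L[\lambda]$ is a polynomial in $\lambda$ for every $x,y$, letting $x,y$ range over bases of $E_1,E_2$ shows that $\{x_{(n)}y\mid x\in E_1,\ y\in E_2,\ n\ge0\}$ spans a finite-dimensional subspace $E'\subset L$ and that there is a single $N\in\bbN$ with $x_{(n)}y=0$ whenever $x\in E_1$, $y\in E_2$ and $n>N$. For each $n\le N$ the bilinear map $E_1\times E_2\to E'$, $(x,y)\mapsto x_{(n)}y$, is automatically bounded (all spaces being finite-dimensional), so $C_n\ceq\{x_{(n)}y\mid x\in B_1,\ y\in B_2\}$ is bounded in $E'$, hence in $L$. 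In particular the defining sum truncates:
\[
\mu_U\bigl((\xi\otimes x)\otimes(\eta\otimes y)\bigr)=\sum_{n=0}^{N}\frac{1}{n!}\,(D_U^n\xi)\wedge\eta\otimes x_{(n)}y\qquad(\xi\in A_1,\ \eta\in A_2,\ x\in B_1,\ y\in B_2).
\]

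For the Dolbeault factor, each $D_U$ is a bounded endomorphism of $\clA^{0,k}_{X,\rmc}(U)$: as a morphism of sheaves valued in $\sfC(\CVS)$ it is bounded and commutes with restrictions, so it preserves supports and restricts to the compactly supported forms; thus $D_U^n(A_1)$ is bounded for each $n$. Next I would show the wedge product carries a pair of bounded sets to a bounded set. By regularity of the strict LF-space $\clA^{0,\bullet}_{X,\rmc}(U)$ (\cite[Chapter~13]{T67}), the bounded sets $D_U^n(A_1)$ and $A_2$ are contained, and bounded, in Fréchet steps $\clA^{0,k}_X(U)_K$ and $\clA^{0,l}_X(U)_{K'}$; on these the wedge is \emph{jointly} continuous by \cref{lem:olpddcont}\,(2), with image in the closed subspace $\clA^{0,k+l}_X(U)_{K\cap K'}$, hence it maps bounded sets to bounded sets, and the inclusion back into $\clA^{0,k+l}_{X,\rmc}(U)$ is bounded. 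So $A'_n\ceq\{(D_U^n\xi)\wedge\eta\mid\xi\in A_1,\ \eta\in A_2\}$ is bounded in $\clA^{0,k+l}_{X,\rmc}(U)$ for each $n\le N$. Finally $S$ is contained in $\sum_{n=0}^{N}\frac{1}{n!}\,\otimes(A'_n\times C_n)$; each set $\otimes(A'_n\times C_n)$ is bounded in $L_X^{k+l}(U)=\clA^{0,k+l}_{X,\rmc}(U)\otimes L$ by the definition of the tensor product bornology, and finite sums and scalar multiples of bounded sets are bounded, so $S$ is bounded. This proves the lemma.

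The main obstacle is the joint boundedness of the wedge product of compactly supported forms: \cref{lem:cptpddcont}\,(2) provides only separate continuity at the level of the LF-spaces, so the argument must descend to the Fréchet steps (using that every bounded subset of a strict countable inductive limit of Fréchet spaces lies in a single step), where \cref{lem:olpddcont}\,(2) delivers joint continuity and hence boundedness. A secondary point requiring care is that the truncation index $N$ in $\sum_n$ must be chosen uniformly over $B_1$ and $B_2$, which is precisely what the finite-dimensionality of $E_1,E_2$ supplies.
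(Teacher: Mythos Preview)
Your proof is correct and follows essentially the same strategy as the paper: reduce to testing on products of bounded sets, use the fine bornology on $L$ to find a uniform truncation index $N$, and then check that each summand $(D_U^n\xi)\wedge\eta\otimes x_{(n)}y$ ranges over a bounded set.

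The only noteworthy difference is in how the wedge step is handled. The paper observes directly that the bilinear map $(\xi,\eta)\mapsto (D_U^n\xi)\wedge\eta$ is bounded \emph{because} the factors are LF-spaces: a separately continuous bilinear map on a product of barrelled spaces is hypocontinuous, and hypocontinuous bilinear maps send products of bounded sets to bounded sets. So \cref{lem:cptpddcont}\,(2) alone already suffices, and your descent to Fr\'echet steps via regularity of strict LF-spaces, while perfectly valid, is not needed. In other words, the ``main obstacle'' you flag in your final paragraph is not actually an obstacle: separate continuity on LF-spaces is enough for boundedness in the bornological sense.
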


\begin{proof}
It suffices to show that the multilinear map
\[
\mu\colon \clA^{0, k}_{X, \rmc}(U)\times \clA^{0, l}_{X, \rmc}(U)\times L\times L\to L_X^{k+l}(U), \quad
(\xi, \eta, x, y)\mapsto \sum_{n\ge 0}\frac{1}{n!}(D_U^n\xi)\wedge \eta\otimes x_{(n)}y
\]
is bounded, or in other words, $\mu(A_1\times A_2\times B_1\times B_2)\subset L^{k+l}_X(U)$ is bounded for any bounded sets $A_1\subset \clA^{0, k}_{X, \rmc}(U)$, $A_2\subset \clA^{0, l}_{X, \rmc}(U)$ and $B_1, B_2\subset L$. For each $n\in \bbN$, define bilinear maps $\varphi_n$ and $\psi_n$ by
\begin{align*}
&\varphi_n\colon \clA^{0, k}_{X, \rmc}(U)\times \clA^{0, l}_{X, \rmc}(U)\to \clA^{0, k+l}_{X, \rmc}(U), \quad (\xi, \eta)\mapsto D_U^n\xi\wedge \eta, \\
&\psi_n\colon L\times L\to L, \quad (x, y)\mapsto x_{(n)}y.
\end{align*}
By \cref{lem:cptpddcont} (2), the map $\varphi_n$ is bounded, since $\clA^{0, k}_{X, \rmc}(U)$ and $\clA^{0, l}_{X, \rmc}(U)$ are LF-spaces. 
The map $\psi_n$ is also bounded, by the definition of the fine bonorlogy. 
Hence, the multilinear map
\[
\mu_n\colon \clA^{0, k}_{X, \rmc}(U)\times \clA^{0, l}_{X, \rmc}(U)\times L\times L\to L_X^{k+l}(U), \quad 
(\xi, \eta, x, y)\mapsto \frac{1}{n!}D^n_U\xi\wedge \eta\otimes x_{(n)}y
\]
is bounded. Now, by the definition of the fine bornology, there exists a finite-dimensional linear subspace $E\subset L$ such that $B_1, B_2\subset E$. Thus, taking $N\in \bbN$ so that $E_{(n)}E=\{0\}$ for all $n>N$, we obtain
\[
\mu(A_1\times A_2\times B_1\times B_2)\subset \sum_{n=0}^N\mu_n(A_1\times A_2\times B_1\times B_2), 
\]
which implies that $\mu(A_1\times A_2\times B_1\times B_2)\subset L_X^{k+l}(U)$ is bounded. 
\end{proof}

\begin{dfn}
For an open subset $U\subset X$, define a chain complex of convenient vector spaces by 
\[
L^\bullet_{X, D}(U)\ceq L_X^\bullet(U)/\ol{\Img}(D_U\otimes \id+\id\otimes T), 
\]
where $\ol{\Img}$ denotes the image in $\sfC(\CVS)$. 
\end{dfn}

For an open subset $U\subset X$, one can define a bilinear map $[\cdot, \cdot]\colon L_{X, D}^k(U)\times L_{X, D}^l(U)\to L_{X, D}^{k+l}(U)$ by
\[
[\ol{a}, \ol{b}]\ceq \ol{\mu_U(a\otimes b)} \quad (a\in L_X^k(U), b\in L_X^l(U)). 
\]

\begin{lem}
For an open subset $U\subset X$, the bilinear map $[\cdot, \cdot]\colon L_{X, D}^k(U)\times L_{X, D}^l(U)\to L_{X, D}^{k+l}(U)$ is bounded. 
\end{lem}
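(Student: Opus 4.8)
The plan is to deduce the boundedness of $[\cdot,\cdot]$ directly from the boundedness of $\mu_U$ (\cref{lem:mubounded}) by tracking bounded sets through the quotient $L^\bullet_{X,D}(U)$. Write $\pi_U\colon L^\bullet_X(U)\to L^\bullet_{X,D}(U)$ for the canonical projection $a\mapsto\ol a$. Since the quotient bornology makes projections bounded, each component $\pi_U\colon L^n_X(U)\to L^n_{X,D}(U)$ is a bounded linear map, and by the construction of the quotient bornology its bornology is generated by the images $\pi_U(B)$ of bounded sets $B\subset L^n_X(U)$; hence every bounded set of $L^n_{X,D}(U)$ is contained in $\pi_U(B)$ for some bounded $B$.

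For the main step, let $C_1\subset L^k_{X,D}(U)$ and $C_2\subset L^l_{X,D}(U)$ be bounded, and choose bounded sets $B_1\subset L^k_X(U)$, $B_2\subset L^l_X(U)$ with $C_1\subset\pi_U(B_1)$ and $C_2\subset\pi_U(B_2)$. For $\ol a\in C_1$ and $\ol b\in C_2$, pick representatives $a\in B_1$, $b\in B_2$, the value $[\ol a,\ol b]=\ol{\mu_U(a\otimes b)}$ being independent of this choice. Then
\[
\{[\ol a,\ol b]\mid \ol a\in C_1,\ \ol b\in C_2\}\subset \pi_U\bigl(\mu_U(\otimes(B_1\times B_2))\bigr),
\]
where $\otimes\colon L^k_X(U)\times L^l_X(U)\to L^k_X(U)\otimes L^l_X(U)$ is the canonical bilinear map. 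Now $\otimes$ is bounded, so $\otimes(B_1\times B_2)$ is bounded in $L^k_X(U)\otimes L^l_X(U)$; $\mu_U$ is bounded by \cref{lem:mubounded}, so $\mu_U(\otimes(B_1\times B_2))$ is bounded in $L^{k+l}_X(U)$; and $\pi_U$ is bounded, so its image is bounded in $L^{k+l}_{X,D}(U)$. Since every bounded set of $L^k_{X,D}(U)\times L^l_{X,D}(U)$ lies in a product $C_1\times C_2$ of bounded sets, this shows $[\cdot,\cdot]$ is bounded.

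There is no serious obstacle here; the argument is routine once one is careful with the quotient. The only subtlety worth flagging is that $L^\bullet_{X,D}(U)$ is a quotient taken in $\sfC(\CVS)$, i.e. by the bornologically closed subspace $\ol{\Img}(D_U\otimes\id+\id\otimes T)$, so one should invoke the fact that the quotient bornology is generated by the images of bounded sets (valid for a quotient by any linear subspace) in order to lift $C_1$ and $C_2$ to bounded sets upstairs. Alternatively, the same conclusion follows categorically: $[\cdot,\cdot]$ is bounded iff the induced linear map $L^k_{X,D}(U)\otimes L^l_{X,D}(U)\to L^{k+l}_{X,D}(U)$ is bounded, and since $(-)\otimes(-)$ is a left adjoint in each variable it preserves quotients, so $\pi_U\otimes\pi_U$ is again a bornological quotient map; boundedness then reduces, via the universal property of the quotient, to that of $\pi_U\circ\mu_U$, which is a composition of bounded maps.
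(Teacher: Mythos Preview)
Your proof is correct and follows exactly the approach the paper indicates: the paper's own proof is a one-line remark that the result is ``an immediate consequence of \cref{lem:mubounded} together with the definitions of bornologies on quotients and products,'' and you have simply unpacked that sentence by lifting bounded sets through the quotient, applying \cref{lem:mubounded}, and projecting back down. Your alternative categorical argument via preservation of quotients by $(-)\otimes(-)$ is also valid and amounts to the same thing.
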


\begin{proof}
This lemma is an immediate consequence of \cref{lem:mubounded} together with the definitions of bornologies on quotients and products. 
\end{proof}

The bounded bilinear map $[\cdot, \cdot]\colon L_{X, D}^k(U)\times L_{X, D}^l(U)\to L_{X, D}^{k+l}(U)$ gives rise to a morphism in $\sfC(\CVS)$
\[
[\cdot, \cdot]\colon L_{X, D}^\bullet(U)\,\wt{\otimes}\, L_{X, D}^\bullet(U)\to L_{X, D}^\bullet(U). 
\]
We find that $(L_{X, D}^\bullet(U), [\cdot, \cdot])$ is a dg Lie algebra in $\CVS$. 

For open subsets $U, V\subset X$ such that $U\subset V$, the morphism $(L_X^\bullet)^U_V\colon L_X^\bullet(U)\to L_X^\bullet(V)$ induces a morphism $(L_{X, D}^\bullet)^U_V\colon L_{X, D}^\bullet(U)\to L_{X, D}^\bullet(V)$ in $\sfC(\CVS)$. 

\begin{lem}
For open subsets $U, V\subset X$ such that $U\subset V$, the morphism $(L_{X, D}^\bullet)^U_V\colon L_{X, D}^\bullet(U)\to L_{X, D}^\bullet(V)$ is a morphism in $\dgLie{\CVS}$. 
\end{lem}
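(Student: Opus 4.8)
The plan is to check the Lie-bracket compatibility one level up, on the precosheaf $L_X^\bullet$, and then descend to the quotient $L^\bullet_{X, D}$. Since $(L^\bullet_{X, D})^U_V$ is already a morphism in $\sfC(\CVS)$, it suffices to prove that it intertwines the brackets $[\cdot, \cdot]$ of $L^\bullet_{X, D}(U)$ and $L^\bullet_{X, D}(V)$; and since these brackets are induced by the bounded bilinear maps $\mu_U$, $\mu_V$ and the quotient projections, it is enough to show that $(L_X^\bullet)^U_V = (\clA^{0, \bullet}_{X, \rmc})^U_V \otimes \id_L$ intertwines $\mu_U$ and $\mu_V$.

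Write $e \ceq (\clA^{0, \bullet}_{X, \rmc})^U_V$ for the extension-by-zero morphism. The key observations are the two identities, for $\xi \in \clA^{0, k}_{X, \rmc}(U)$ and $\eta \in \clA^{0, l}_{X, \rmc}(U)$,
\[
D_V(e(\xi)) = e(D_U \xi), \qquad e(\xi) \wedge e(\eta) = e(\xi \wedge \eta),
\]
both of which follow from the uniqueness clause in the definition of $(\clA^{0, k}_{X, \rmc})^U_V$. For the first: $D_V(e(\xi))$ restricts on $U$ to $D_U(e(\xi)|_U) = D_U \xi$ because $D$ is a morphism of sheaves and hence commutes with restrictions, while on $V \bs \supp \xi$ it restricts to $D_{V \bs \supp \xi}(0) = 0$; consequently $\supp(D_V e(\xi)) \subset \supp \xi$, and uniqueness forces $D_V(e(\xi)) = e(D_U \xi)$, so that iterating gives $D_V^n(e(\xi)) = e(D_U^n \xi)$. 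For the second: $e(\xi) \wedge e(\eta)$ restricts to $\xi \wedge \eta$ on $U$ and vanishes on $(V \bs \supp \xi) \cup (V \bs \supp \eta) \supset V \bs \supp(\xi \wedge \eta)$, so uniqueness again applies.

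Granting these, for $\xi \otimes x \in L_X^k(U)$ and $\eta \otimes y \in L_X^l(U)$ we compute
\begin{align*}
\mu_V\bigl((e \otimes \id_L)(\xi \otimes x) \otimes (e \otimes \id_L)(\eta \otimes y)\bigr)
&= \sum_{n \ge 0} \frac{1}{n!} \bigl(D_V^n e(\xi)\bigr) \wedge e(\eta) \otimes x_{(n)} y \\
&= \sum_{n \ge 0} \frac{1}{n!} e\bigl((D_U^n \xi) \wedge \eta\bigr) \otimes x_{(n)} y \\
&= (e \otimes \id_L)\bigl(\mu_U(\xi \otimes x \otimes \eta \otimes y)\bigr).
\end{align*}
By bilinearity this shows $(L_X^\bullet)^U_V \circ \mu_U = \mu_V \circ \bigl((L_X^\bullet)^U_V \otimes (L_X^\bullet)^U_V\bigr)$. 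Composing with the quotient projections and using that $(L^\bullet_{X, D})^U_V$ is, by construction, the map induced by $(L_X^\bullet)^U_V$ on the quotients, we obtain $(L^\bullet_{X, D})^U_V[\ol a, \ol b] = [(L^\bullet_{X, D})^U_V \ol a,\, (L^\bullet_{X, D})^U_V \ol b]$ for all $a \in L_X^k(U)$, $b \in L_X^l(U)$; since such $\ol a, \ol b$ span and the bracket on $L^\bullet_{X, D}$ is the morphism in $\sfC(\CVS)$ attached to this bounded bilinear map, $(L^\bullet_{X, D})^U_V$ respects the Lie brackets and is therefore a morphism in $\dgLie{\CVS}$.

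There is no serious obstacle; the only points needing a little care are the two compatibility identities for extension by zero, which rely on the defining property of $D$ as a sheaf morphism together with the uniqueness built into the definition of $(\clA^{0, k}_{X, \rmc})^U_V$. Compatibility with the grading and with the internal differential is automatic, as $(L_X^\bullet)^U_V$ is already known to be a chain map.
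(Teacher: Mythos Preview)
Your proof is correct and follows essentially the same approach as the paper: the paper simply asserts that ``a direct calculation shows'' $(L_X^{k+l})^U_V\,\mu_U(a\otimes b)=\mu_V((L_X^k)^U_V(a)\otimes (L_X^l)^U_V(b))$, and you carry out precisely that calculation, supplying the compatibilities of extension-by-zero with $D$ and with the wedge product. One small slip: the inclusion you wrote, $(V\bs\supp\xi)\cup(V\bs\supp\eta)\supset V\bs\supp(\xi\wedge\eta)$, goes the wrong way (since $\supp(\xi\wedge\eta)\subset\supp\xi\cap\supp\eta$, taking complements reverses it); the clean fix is to note that $e(\xi)\wedge e(\eta)$ has support in $\supp\xi\cap\supp\eta\subset U$ and agrees with $\xi\wedge\eta$ on $U$, which already forces it to equal $e(\xi\wedge\eta)$ by the sheaf property.
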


\begin{proof}
A direct calculation shows that 
\[
(L_X^{k+l})^U_V\,\mu_U(a\otimes b)=\mu_V((L_X^k)^U_V(a)\otimes (L_X^l)^U_V(b))
\]
for $a\in L_X^k(U)$ and $b\in L_X^l(U)$, which proves the lemma. 
\end{proof}

The map $L_{X, D}^\bullet\colon \frU_X\to \dgLie{\CVS}$ becomes a precosheaf with the extension morphism $(L_{X, D}^\bullet)^U_V$. 

For open subsets $U, V, W\subset X$ with $U\sqcup V\subset W$, the universality of coproducts in $\sfC(\CVS)$ induces a unique morphism $(L_{X, D}^\bullet)^{U, V}_W\colon L_{X, D}^\bullet(U)\oplus L_{X, D}^{\bullet}(V)\to L_{X, D}^\bullet(W)$ such that the following diagram commutes: 
\[
\begin{tikzcd}[row sep=huge, column sep=huge]
\LXD(U) \arrow[r, hook] \arrow[rd, "(\LXD)^U_W"'] &
\LXD(U)\oplus \LXD(V) \arrow[d, "(\LXD)^{U, V}_W"] &
\LXD(V) \arrow[l, hook'] \arrow[ld, "(\LXD)^V_W"]\\
&
\LXD(W) 
&
\end{tikzcd}
\]

\begin{lem}\label{LXDdgLie}
Let $U, V, W\subset X$ be open subsets such that $U\sqcup V\subset W$. For $a\in L^k_{X, D}(U)$ and $b\in L^l_{X, D}(V)$, 
\[
[(L_{X, D}^k)^U_W(a), (L_{X, D}^l)^V_W(b)]=0. 
\]
Hence $(L_{X, D}^\bullet)^{U, V}_W\colon L_{X, D}^\bullet(U)\oplus L_{X, D}^{\bullet}(V)\to L_{X, D}^\bullet(W)$ is a morphism in $\dgLie{\CVS}$.
\end{lem}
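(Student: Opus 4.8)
The plan is to reduce the displayed identity to the disjointness of $\supp\xi$ and $\supp\eta$, and then to deduce that $(L_{X, D}^\bullet)^{U, V}_W$ respects Lie brackets from the universal property of the direct sum together with the fact, already proved above, that $(L_{X, D}^\bullet)^U_W$ and $(L_{X, D}^\bullet)^V_W$ are morphisms in $\dgLie{\CVS}$. First I would unwind the definitions: the bracket on $\LXD(W)$ is $[\ol{a}, \ol{b}]=\ol{\mu_W(a\otimes b)}$ for representatives $a\in L_X^k(W)$, $b\in L_X^l(W)$, and for $U\subset V$ the map $(L_{X, D}^\bullet)^U_V$ is induced by $(L_X^\bullet)^U_V=(\clA^{0, \bullet}_{X, \rmc})^U_V\otimes \id_L$. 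Hence it suffices to show that $\mu_W$ annihilates $(L_X^k)^U_W(\wt{a})\otimes (L_X^l)^V_W(\wt{b})$ for representatives $\wt{a}\in L_X^k(U)$, $\wt{b}\in L_X^l(V)$; and since $\mu_W$ is bilinear and $L_X^k(U)=\clA^{0, k}_{X, \rmc}(U)\otimes L$ is spanned by simple tensors, I may assume $\wt{a}=\xi\otimes x$ and $\wt{b}=\eta\otimes y$ with $\xi\in \clA^{0, k}_{X, \rmc}(U)$, $\eta\in \clA^{0, l}_{X, \rmc}(V)$ and $x, y\in L$.

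The key step is a support computation. Writing $\wt{\xi}=(\clA^{0, k}_{X, \rmc})^U_W(\xi)$ and $\wt{\eta}=(\clA^{0, l}_{X, \rmc})^V_W(\eta)$ for the extensions by zero, we have $\supp\wt{\xi}\subset \supp\xi\subset U$ and $\supp\wt{\eta}\subset \supp\eta\subset V$; moreover $D$ is a morphism of sheaves, hence commutes with restriction to open subsets, so that $\wt{\xi}|_O=0$ on an open $O\subset W$ forces $(D_W^n\wt{\xi})|_O=D_O^n(\wt{\xi}|_O)=0$, giving $\supp(D_W^n\wt{\xi})\subset \supp\wt{\xi}\subset U$ for every $n\in \bbN$. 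Since $U\sqcup V\subset W$ forces $U\cap V=\varnothing$, we obtain
\[
\supp\bigl((D_W^n\wt{\xi})\wedge \wt{\eta}\bigr)\subset \supp(D_W^n\wt{\xi})\cap \supp\wt{\eta}\subset U\cap V=\varnothing,
\]
so $(D_W^n\wt{\xi})\wedge \wt{\eta}=0$ for all $n$; substituting into the defining formula for $\mu_W$ gives $\mu_W\bigl((L_X^k)^U_W(\xi\otimes x)\otimes (L_X^l)^V_W(\eta\otimes y)\bigr)=0$, and passing to the quotient $\LXD(W)$ yields the displayed identity on simple tensors. Bilinearity then gives it for arbitrary $a\in L_{X, D}^k(U)$ and $b\in L_{X, D}^l(V)$.

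Finally, $(L_{X, D}^\bullet)^{U, V}_W$ is a priori only a morphism in $\sfC(\CVS)$, so I would close by checking that it respects Lie brackets. The monoidal product $\LXD(U)\oplus \LXD(V)$ in $\dgLie{\CVS}$ is the direct sum of the two complexes carrying the direct-sum Lie bracket, for which the two summands commute; hence such a chain map respects brackets if and only if its restrictions to the two summands do and these restrictions have commuting images. Here the restrictions are $(L_{X, D}^\bullet)^U_W$ and $(L_{X, D}^\bullet)^V_W$, already morphisms in $\dgLie{\CVS}$, and the commuting of their images is exactly the identity just proved. I do not expect a genuine obstacle: the only delicate point is the support bookkeeping—above all, that $D$ does not enlarge supports, which is immediate from the naturality of $D$ as a sheaf morphism—and remembering that the ``$\oplus$'' in the diagram is the direct sum of complexes, which carries precisely this direct-sum Lie bracket as the monoidal product in $\dgLie{\CVS}$.
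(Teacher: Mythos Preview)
Your proof is correct and follows essentially the same approach as the paper: reduce to simple tensors and use that the extensions $\wt{\xi}$ and $\wt{\eta}$ have disjoint supports to kill each term $(D_W^n\wt{\xi})\wedge\wt{\eta}$ of $\mu_W$. The paper phrases the vanishing slightly differently (restricting $\mu_W(\cdots)$ to the open cover $W\bs\supp\xi$, $W\bs\supp\eta$ rather than bounding supports of the individual summands), and leaves the final ``hence'' implicit, whereas you spell out why the direct-sum map respects brackets; but the substance is the same.
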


\begin{proof}
For $\xi\in \clA^{0, k}_{X, \rmc}(U)$, $\eta\in \clA^{0, l}_{X, \rmc}(V)$, and $x, y\in L$, we have 
\[
\mu_W((L_X^k)^U_W(\xi\otimes x)\otimes (L_X^l)^V_W(\eta\otimes y))|_{W\bs \supp \xi}=
\mu_W((L_X^k)^U_W(\xi\otimes x)\otimes (L_X^l)^V_W(\eta\otimes y))|_{W\bs \supp \eta}=0.
\]
Since $W=(W\bs\supp \xi)\cup (W\bs\supp \eta)$, it follows that $\mu_W((L_X^k)^U_W(\xi\otimes x)\otimes (L_X^l)^V_W(\eta\otimes y))=0$. This proves the lemma. 
\end{proof}


We summarize this subsection as follows: 

\begin{prp}
The precosheaf $\LXD\colon \frU_X\to \dgLie{\CVS}$ becomes a prefactorization algebra with the factorization product $(\LXD)^{U, V}_W$ and the unit $\{0\}\to \LXD(\varnothing)$. Moreover, by applying the factorization envelope, we obtain prefactorization algebras 
\[
C_\CE^\bullet(L_{X, D}^\bullet)\colon\frU_X\to \sfC(\CVS), \quad
H_\CE^\bullet(\LXD)\colon \frU_X\to \sfC(\CVS). 
\]
\end{prp}

The prefactorization algebra $\LXD\colon \frU_X\to \dgLie{\CVS}$ can be viewed as a higher-dimensional analogue of the current Lie algebra $\Lie(L)$. Indeed, as we will see in the next section, when $X=\bbC$ and $D=\pdd_z$, the values of $H_\CE^\bullet(\LCD)$ on annuli contain the universal enveloping algebra $U(\Lie(L))$. See \cref{rmk:ULieL} below. 

\subsection{Enveloping vertex algebras via factorization envelopes}
\label{ss:FEEV}

Now we focus on the case where $X=\bbC$ and $D=\pdd_z$. 
As before, we denote by $H_n$ and $H^\sep_n$ the homology taken in $\sfC(\BVS)$ and $\sfC(\CVS)$, respectively. Note that $H^\sep_n$ can be identified with the separation of $H_n$ (see \cref{ss:constBVS} for the notion of the separation). 

\subsubsection{Equivariant structure}
\label{ss:equivLCD}

For an $\bbN$-graded Lie conformal algebra $L=\bigoplus_{\Delta\in \bbN}L_\Delta$, we equip $L_{\bbC, \pdd_z}^\bullet\colon \frU_\bbC\to \sfC(\CVS)$ with an $S^1\ltimes \bbC$-equivariant structure. First, for $(q, \zeta)\in S^1\ltimes \bbC$ and $\xi\in \clA^{0, k}_{\bbC, \rmc}(U)$, define $(q, \zeta)\xi\in \clA^{0, k}_\bbC((q, \zeta)U)$ by
\[
(q, \zeta)\xi\colon z\mapsto \xi_{(q, \zeta)^{-1}z}. 
\]
Since $\supp (q, \zeta)\xi\subset (q, \zeta)\supp \xi$, we have a linear map
\[
(q, \zeta)\colon \clA^{0, k}_{\bbC, \rmc}(U)\to \clA^{0, k}_{\bbC, \rmc}((q, \zeta)U), \quad \xi\mapsto (q, \zeta)\xi. 
\]

\begin{lem}\label{lem:qzetacont}
For an open subset $U\subset \bbC$ and $(q, \zeta)\in S^1\ltimes \bbC$, the linear map
\[
(q, \zeta)\colon \clA^{0, k}_{\bbC, \rmc}(U)\to \clA^{0, k}_{\bbC, \rmc}((q, \zeta)U), \quad \xi\mapsto (q, \zeta)\xi
\]
is continuous. 
\end{lem}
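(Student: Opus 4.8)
The plan is to mirror the strategy already used for the restriction maps in \cref{ss:compDol}: first prove continuity at the level of the Fréchet pieces $\clA^{0,k}_\bbC(U)_K$ carrying a prescribed support, and then pass to the inductive limit. Concretely, fix $(q,\zeta)\in S^1\ltimes\bbC$ and write $\varphi\colon \bbC\to\bbC$, $\varphi(z)=(q,\zeta)^{-1}z=q^{-1}(z-\zeta)$, which is a biholomorphism (in fact an affine isometry); the action is $(q,\zeta)\xi=\varphi^*\xi$, i.e.\ pullback of differential forms along $\varphi$. For a compact set $K\subset U$ we have $\supp((q,\zeta)\xi)\subset (q,\zeta)K$, which is compact in $(q,\zeta)U$, so $(q,\zeta)$ restricts to a linear map $\clA^{0,k}_\bbC(U)_K\to\clA^{0,k}_\bbC((q,\zeta)U)_{(q,\zeta)K}$. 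Since a countable strict inductive limit of Fréchet spaces carries the inductive limit topology and a linear map out of it is continuous iff its restriction to each step is, it suffices to establish continuity of this restricted map.

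For the Fréchet-level estimate I would argue exactly as in the proof of \cref{lem:olpddcont}: pick a local coordinate system $\clS=\{(U_\alpha,z_1^\alpha,\dots)\}$ of $(q,\zeta)U$ and pull it back via $\varphi$ to a coordinate system $\varphi^{-1}(\clS)$ of $U$; since $\varphi$ is an affine map its coordinate expression is $z\mapsto q^{-1}(z-\zeta)$, so in these adapted coordinates the pullback $\varphi^*$ acts on the component functions by precomposition with $\varphi$ together with multiplication by the fixed scalar $\ol{q}^{\,-k}$ coming from $d\ol z^\alpha\mapsto \ol q^{\,-1}d\ol z^\alpha$ for each of the $k$ conjugate differentials. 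Because precomposition with the affine isometry $\varphi$ does not change sup-norms of derivatives — by the chain rule $\pdd^\nu_x(f\circ\varphi)(p)$ is, up to the unimodular linear factor coming from $D\varphi$, just $(\pdd^\nu_x f)(\varphi(p))$ — we get, for each seminorm $\|\cdot\|_\lambda$ with $\lambda=(\alpha,L,n,I)\in\Lambda_k((q,\zeta)U,\clS)$, an estimate of the form $\|(q,\zeta)\xi\|_\lambda\le \|\xi\|_{\lambda'}$ where $\lambda'=(\alpha,\varphi^{-1}(L),n,I)\in\Lambda_k(U,\varphi^{-1}(\clS))$; here I use that the topology on $\clA^{0,k}_\bbC(U)$ is independent of the chosen coordinate system. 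This gives boundedness of $(q,\zeta)$ on each $\clA^{0,k}_\bbC(U)_K$ by a single seminorm, hence continuity.

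Combining the two steps: $(q,\zeta)\colon\clA^{0,k}_{\bbC,\rmc}(U)=\bigcup_K\clA^{0,k}_\bbC(U)_K\to\clA^{0,k}_{\bbC,\rmc}((q,\zeta)U)$ is continuous because it is continuous on each defining Fréchet subspace, completing the proof. I do not expect a genuine obstacle here; the only mildly delicate point is bookkeeping the effect of the affine change of coordinates on the multi-indices and on the conjugate differentials (the unimodular scalar $\ol q^{\,-k}$ is harmless since $|q|=1$), and keeping straight that we compare seminorms relative to two different but equivalent coordinate systems — but this is routine and parallels the already-proven \cref{lem:restcont} almost verbatim, with ``restriction to an open subset'' replaced by ``pullback along an affine isometry.''
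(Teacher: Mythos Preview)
Your proposal is correct and follows essentially the same approach as the paper: reduce to the Fr\'echet pieces $\clA^{0,k}_\bbC(U)_K$ via the LF-structure, then establish a seminorm estimate of the form $\|(q,\zeta)\xi\|_\mu\le C\|\xi\|_\lambda$ with $\lambda$ having compact set $(q,\zeta)^{-1}L$. The paper's version is terser---it works directly in the standard global coordinate $\clS=\{(\bbC,z)\}$ rather than pulling back a general chart, and suppresses the bookkeeping of the unimodular factor $\ol q^{\,-k}$ and the chain rule---but the substance is identical.
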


\begin{proof}
For a compact set $K\subset U$, we have $(q, \zeta)\clA^{0, k}_\bbC(U)_K\subset \clA^{0, k}_\bbC(U)_{(q, \zeta)K}$, so it suffices to show that $(q, \zeta)\colon \clA^{0, k}_\bbC(U)_K\to \clA^{0, k}_\bbC(U)_{(q, \zeta)K}$ is continuous. Let $\clS=\{(\bbC, z)\}$ be the standard local coordinate system. For $\mu=(L, n, I)\in \Lambda((q, \zeta)U, \clS)$ and $\xi\in \clA^{0, k}_\bbC(U)_K$, we have $\|(q, \zeta)\xi\|_\mu\le \|\xi\|_\lambda$, where $\lambda\ceq ((q, \zeta)^{-1}L, n, I)\in \Lambda(U, \clS)$. Hence the claim follows. 
\end{proof}

For an open subset $U\subset \bbC$ and $(q, \zeta)\in S^1\ltimes \bbC$, \cref{lem:qzetacont} implies that 
\[
\sigma_{(q, \zeta), U}^k\colon L_\bbC^k(U)\to L^k_\bbC((q, \zeta)U), \quad 
\xi\otimes x\mapsto (q, \zeta)\xi\otimes q^{\Delta(x)-1}x
\]
is bounded. We find that 
\[
\sigma_{(q, \zeta), U}\ceq \bigl\{\sigma_{(q, \zeta), U}^k\bigr\}_{k\in \bbZ}\colon 
L_\bbC^\bullet(U)\to L_\bbC^\bullet((q, \zeta)U)
\]
is a morphism in $\sfC(\CVS)$. Since $(q, \zeta)\pdd_z\xi=q\pdd_z(q, \zeta)\xi$ for each $\xi\in \clA^{0, k}_{\bbC, \rmc}(U)$, the morphism $\sigma_{(q, \zeta), U}$ induces a morphism in $\sfC(\CVS)$
\[
\ol{\sigma}_{(q, \zeta), U}=\bigl\{\ol{\sigma}_{(q, \zeta), U}^k\bigr\}_{k\in \bbZ}\colon L_{\bbC, \pdd_z}^\bullet(U)\to L_{\bbC, \pdd_z}^\bullet((q, \zeta)U). 
\]

\begin{lem}
For an open subset $U\subset \bbC$ and $(q, \zeta)\in S^1\ltimes \bbC$, the morphism $\ol{\sigma}_{(q, \zeta), U}\colon L_{\bbC, \pdd_z}^\bullet(U)\to L_{\bbC, \pdd_z}^\bullet((q, \zeta)U)$ is a morphism in $\dgLie{\CVS}$. 
\end{lem}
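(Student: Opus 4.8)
The plan is to verify bracket‑compatibility on the unreduced complex $L^\bullet_\bbC$ and then descend to the quotient. Recall that the Lie bracket on $L^\bullet_{\bbC,\pdd_z}(U)$ is the one induced by $\mu_U$ through $[\ol a,\ol b]=\ol{\mu_U(a\otimes b)}$, and that $\ol\sigma_{(q,\zeta),U}$ is induced by $\sigma_{(q,\zeta),U}$. So it suffices to establish the identity
\[
\sigma^{k+l}_{(q,\zeta),U}\circ\mu_U=\mu_{(q,\zeta)U}\circ\bigl(\sigma^k_{(q,\zeta),U}\otimes\sigma^l_{(q,\zeta),U}\bigr)
\]
as linear maps $L^k_\bbC(U)\otimes L^l_\bbC(U)\to L^{k+l}_\bbC((q,\zeta)U)$; passing to quotients then shows that $\ol\sigma_{(q,\zeta),U}$ respects the brackets, and since it is already a morphism in $\sfC(\CVS)$, this is exactly the claim that it is a morphism in $\dgLie{\CVS}$. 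Boundedness of all the maps involved is not an issue here, as it has already been taken care of in \cref{lem:mubounded} and the surrounding discussion.

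The displayed identity is checked by a direct computation on elementary tensors $\xi\otimes x\otimes\eta\otimes y$ with $x,y\in L$ homogeneous (the general case follows by bilinearity), using three elementary facts. First, the affine action is multiplicative for the wedge product, $(q,\zeta)(\alpha\wedge\beta)=\bigl((q,\zeta)\alpha\bigr)\wedge\bigl((q,\zeta)\beta\bigr)$, because it is precomposition with a biholomorphism. Second, the commutation rule $\pdd_z\bigl((q,\zeta)\xi\bigr)=q^{-1}(q,\zeta)(\pdd_z\xi)$ — equivalently $(q,\zeta)\pdd_z\xi=q\,\pdd_z(q,\zeta)\xi$, as noted just above the statement — iterates to $\pdd_z^n\bigl((q,\zeta)\xi\bigr)=q^{-n}(q,\zeta)(\pdd_z^n\xi)$. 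Third, the grading identity $\Delta(x_{(n)}y)=\Delta(x)+\Delta(y)-n-1$ provided by the $\bbN$‑graded Lie conformal algebra structure of $L$. Expanding the left‑hand side gives
\[
\sum_{n\ge0}\frac{1}{n!}\,q^{\Delta(x_{(n)}y)-1}\,(q,\zeta)\bigl((\pdd_z^n\xi)\wedge\eta\bigr)\otimes x_{(n)}y,
\]
while expanding the right‑hand side and applying the first two facts gives
\[
\sum_{n\ge0}\frac{1}{n!}\,q^{\Delta(x)-1}q^{\Delta(y)-1}q^{-n}\,(q,\zeta)\bigl((\pdd_z^n\xi)\wedge\eta\bigr)\otimes x_{(n)}y,
\]
and the two coincide by the third fact.

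The computation presents no genuine obstacle; the single point worth flagging is that the argument hinges on the precise degree shift $q^{\Delta(x)-1}$ appearing in the definition of $\sigma^k_{(q,\zeta),U}$. It is exactly this shift, rather than $q^{\Delta(x)}$, that combines with the factor $q^{-n}$ produced by $\pdd_z^n$ to reproduce the correct weight $q^{\Delta(x_{(n)}y)-1}=q^{\Delta(x)+\Delta(y)-n-2}$ on the output of the bracket. The translation parameter $\zeta$ plays no role beyond being carried along, since translations commute with $\pdd_z$ and with the wedge product and do not affect the powers of $q$.
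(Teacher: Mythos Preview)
Your proof is correct and follows precisely the paper's approach: the paper simply asserts that ``a direct calculation shows'' the identity $\sigma_{(q,\zeta),U}\,\mu_U(a\otimes b)=\mu_{(q,\zeta)U}(\sigma_{(q,\zeta),U}(a)\otimes\sigma_{(q,\zeta),U}(b))$ on the unreduced complex, and you have written out that calculation in full. Your observation about the role of the shift $q^{\Delta(x)-1}$ (rather than $q^{\Delta(x)}$) is exactly the point that makes the weights match.
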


\begin{proof}
A direct calculation shows that 
\[
\sigma_{(q, \zeta), U}\,\mu_U(a\otimes b)=
\mu_{(q, \zeta)U}(\sigma_{(q, \zeta), U}(a)\otimes \sigma_{(q, \zeta), U}(b))
\]
for $a\in L_\bbC^k(U)$ and $b\in L_\bbC^l(U)$, which proves the lemma. 
\end{proof}

The prefactorization algebra $L_{\bbC, \pdd_z}^\bullet\colon \frU_\bbC\to \dgLie{\CVS}$ becomes an $S^1\ltimes \bbC$-equivariant prefactorization algebra with the structure morphism $\ol{\sigma}_{(q, \zeta)}=\{\ol{\sigma}_{(q, \zeta), U}\}_{U\in \frU_\bbC}$. Hence, by taking its factorization envelope, we obtain $S^1\ltimes \bbC$-equivariant prefactorization algebras 
\[
C^\CE_\bullet\LCD\colon \frU_\bbC\to \sfC(\CVS), \qquad
H^\CE_\bullet\LCD\colon \frU_\bbC\to \CVS. 
\]

\subsubsection{The case without central extension}

We are now ready to state and prove one of the main theorems of this note. 

\begin{thm}\label{thm:main}
Let $L=\bigoplus_{\Delta\in \bbN}L_\Delta$ be an $\bbN$-graded Lie conformal algebra satisfying the following condition: 
\begin{itemize}
\item [$(*)$]
There exists a graded linear subspace $E\subset L$ such that 
$\bbC[T]\otimes E\to L$, $p(T)\otimes x\mapsto p(T)x$
is a linear isomorphism. 
\end{itemize}
Then the $S^1\ltimes \bbC$-equivariant prefactorization algebra $H^\CE_\bullet\LCD\colon \frU_\bbC\to \CVS$ is amenably holomorphic. Moreover, the associated vertex algebra $\bfV_{\!R}(H^\CE_\bullet\LCD)$ is isomorphic to the enveloping vertex algebra $V(L)$ as an $\bbN$-graded vertex algebra: 
\[
\bfVR(H^\CE_\bullet\LCD) \cong V(L). 
\]
\end{thm}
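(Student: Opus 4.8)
The plan is to verify the three conditions (i)--(iii) of \cref{thm:FAtoVA} for $\clF\ceq H^\CE_\bullet\LCD$, and then to build an isomorphism $V(L)\cong\bfVR(\clF)$ of $\bbN$-graded vertex algebras. Since condition (iii) makes $\bfVR(\clF)$ canonically independent of $R$, one may take $R=\infty$. The first step is to understand the dg Lie algebra $\LCD(D_R)$ itself. Using $(*)$ to write $L\cong\bbC[T]\otimes E$, the translation operator becomes multiplication by $T$ on the first factor, so on $\clA^{0,k}_{\bbC,\rmc}(D_R)\otimes\bbC[T]\otimes E$ the operator $\pdd_z\otimes\id+\id\otimes T$ is a Koszul-type differential in the polynomial variable: the map $\xi\otimes p(T)\otimes x\mapsto p(-\pdd_z)\xi\otimes x$ identifies the quotient by its image with $\clA^{0,k}_{\bbC,\rmc}(D_R)\otimes E$, and since $\bar\partial$ and $\pdd_z$ commute on $\bbC$ this is compatible with the Dolbeault differential. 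Hence $\LCD(D_R)$ is isomorphic in $\sfC(\CVS)$ to $\clA^{0,\bullet}_{\bbC,\rmc}(D_R)\otimes E$. By \cref{fct:Serre} together with \cref{cor:finetens}, its cohomology is concentrated in degree $1$ and equals $\Omega^1(D_R)'\otimes E$; moreover the induced Lie bracket on the cohomology vanishes, since $\clA^{0,2}_{\bbC}=0$ forces the only potentially nonzero bracket map $H^1\otimes H^1\to H^2=0$ to vanish while $H^0=0$.

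Next, compute $H^\CE_\bullet(\LCD(D_R))$ by running the spectral sequence of the symmetric-power filtration on $C^\CE_\bullet(\LCD(D_R))=\Sym(\LCD(D_R)[1])$. Its $E_1$-page is $\Sym\bigl(H^\bullet(\LCD(D_R))[1]\bigr)$ with differential induced by the bracket, and since $H^\bullet(\LCD(D_R))$ lies in cohomological degree $1$, this page is concentrated in homological degree $0$; the bracket-differential vanishes (the bracket on cohomology is zero) and all higher differentials vanish for degree reasons, so the sequence degenerates. Therefore $H^\CE_n(\LCD(D_R))=0$ for $n\neq0$ and $H^\CE_0(\LCD(D_R))\cong\Sym\bigl(\Omega^1(D_R)'\otimes E\bigr)$ in $\CVS$. (Along the way one must settle the functional-analytic points: that the bornological closure $\ol{\Img}$ coincides with the naive image, so the Koszul identification holds verbatim in $\CVS$, that the $E_1$-page is indeed $\Sym$ of the cohomology, and that $H_n$ and $H^\sep_n$ agree for the complexes involved.) Conditions (ii) and (iii) now follow from the $S^1$-weight analysis: tracking weights through $\sigma_{(q,0)}(\xi\otimes x)=(q,0)\xi\otimes q^{\Delta(x)-1}x$, together with \cref{lem:HmclArmc} (1) and the $\bbN$-grading of $L$, shows $\Omega^1(D_R)'\otimes E$ and hence its symmetric algebra is nonnegatively graded, giving (ii); and for (iii), tameness of the $S^1$-action (as in \cref{lem:HDeltainj}) reduces the weight-$\Delta$ space to the weight-$\Delta$ subcomplex, on which the inclusion $D_r\inj D_R$ induces isomorphisms of the one-dimensional spaces $H^1(\clA^{0,\bullet}_{\bbC,\rmc}(D_r)_\delta)\to H^1(\clA^{0,\bullet}_{\bbC,\rmc}(D_R)_\delta)$ matched by the pairing $[\xi]\mapsto\tfrac12\int\xi\wedge z^{\delta-1}\,dz$, and applying $\Sym$ preserves the isomorphism. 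Condition (i) follows via \cref{lem:smtoholo}: $C^\CE_\bullet\LCD$ is smoothly translation-equivariant because translation pushes forward supports of compactly supported forms smoothly in the parameter, and the required homotopy $\delta_R$ exists because $\pdd_{\bar\zeta}$ applied to the $\zeta$-translate of a form agrees up to sign with $\bar\partial$ of a form. Thus $\clF$ is amenably holomorphic and $\bfVR(\clF)$ is an $\bbN$-graded vertex algebra.

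For the identification with $V(L)$, invoke the universal property of the enveloping vertex algebra: $\bfVR(\clF)$ being a vertex algebra, hence a Lie conformal algebra, any morphism of Lie conformal algebras $\iota\colon L\to\bfVR(\clF)$ induces a morphism $V(\iota)\colon V(L)\to\bfVR(\clF)$ of $\bbN$-graded vertex algebras. I would take $\iota(a)$ to be the class in $H^\CE_0(\LCD(D_R))$ of the symmetric-degree-one element $\xi_a\otimes a$ for a compactly supported bump $(0,1)$-form $\xi_a$ normalized by $\tfrac12\int\xi_a\wedge dz=1$, and verify, using the explicit factorization product on two small disks inside $D_R$ and the descriptions of $T$ and $Y$ in \cref{ss:FAtoVA}, that $\iota$ intertwines the translation operators and the $\lambda$-brackets; the relevant action here is exactly the copy of (a completion of) $U(\Lie(L))$ inside $H^\CE_\bullet\LCD$ on annuli noted in \cref{rmk:ULieL}. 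Finally, $V(\iota)$ is an isomorphism: $V(L)$ and $\bfVR(\clF)$ both carry exhaustive increasing filtrations---by number of generators $a_{[n]}$ on $V(L)$, by symmetric degree on $\bfVR(\clF)$---which $V(\iota)$ respects, and on associated graded it becomes $\Sym$ of the graded-vector-space isomorphism $\Lie(L)/\Lie_+(L)\cong\Omega^1(D_R)'\otimes E$ furnished by $(*)$ and \cref{lem:HmclArmc} (1), with multiplicativity of the factorization products identifying the graded product with the symmetric one. Being a filtered map that is an isomorphism on associated graded, $V(\iota)$ is an isomorphism in each finite-dimensional weight space, hence an isomorphism of $\bbN$-graded vertex algebras.

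The main obstacle, as this outline makes clear, is the pair of computations for $\LCD(D_R)$ and its Chevalley--Eilenberg homology---most sharply, making the Koszul identification hold on the nose in $\CVS$ (so that the bornological closure is the naive image and no derived-completion phenomena appear) and confirming that the symmetric-power spectral sequence genuinely degenerates in the bornological category. A secondary, more computational, difficulty is showing that $\iota$ is a morphism of Lie conformal algebras---matching the $\lambda$-bracket on $\bfVR(\clF)$, defined through Laurent expansion of the factorization product, with the $\lambda$-bracket of $L$ coming from the formula for $\mu_U$---and checking that $V(\iota)$ is filtered with the asserted associated graded.
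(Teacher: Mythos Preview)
Your proposal is correct and follows the same overall architecture as the paper: identify $\LCD(D_R)\cong\clA^{0,\bullet}_{\bbC,\rmc}(D_R)\otimes E$ via $(*)$, run the spectral sequence for the symmetric-power filtration on $C^\CE_\bullet$ (which degenerates at $E_1$ since the cohomology of $\LCD(D_R)$ is concentrated in degree~$1$), invoke tameness of the $S^1$-action and \cref{lem:HmclArmc} to handle the weight spaces, and use \cref{lem:smtoholo} for holomorphic translation-equivariance. The paper carries this out via \cref{prp:CGconst} rather than \cref{thm:FAtoVA} directly, but that is cosmetic.

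The one genuine organizational difference is in constructing the isomorphism with $V(L)$. You propose to exhibit a Lie conformal algebra morphism $\iota\colon L\to\bfVR(\clF)$ and invoke the left-adjoint universal property to obtain a vertex algebra map $V(\iota)$, then prove bijectivity by filtrations. The paper instead builds a $\Lie(L)$-action on $\bfVR(\clF)$ via the factorization product with annuli (\cref{lem:PhiILiehom}), uses the induced-module property of $V(L)$ to get a module map $\theta_R$, proves $\theta_R$ is bijective by the same filtration comparison you describe, and only \emph{then} shows it is a vertex algebra morphism via \cref{lem:alphaRn}. Both routes hinge on the same computation---the Laurent expansion of $\mu^R_{z,0}$ in terms of annulus actions, which is exactly \cref{lem:alphaRn}---so your ``secondary difficulty'' of checking that $\iota$ respects $\lambda$-brackets is not avoided but merely relocated. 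Your route has the conceptual advantage that the vertex algebra morphism property comes for free from the adjunction; the paper's route has the advantage that the intermediate $\Lie(L)$-module structure is made explicit and the annulus picture (\cref{rmk:ULieL}) is developed along the way. One small imprecision: the spectral sequence only gives $H^\CE_0\cong\Sym(\Omega^1(D_R)'\otimes E)$ at the level of associated graded, not on the nose in $\CVS$; but since only finite-dimensional weight pieces enter the argument, this is harmless.
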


Fix a graded linear subspace $E\subset L$ satisfying the condition in \cref{thm:main}. 
For an open subset $U\subset \bbC$, 
\begin{equation}\label{eq:LCDAE}
\clA^{0, \bullet}_{\bbC, \rmc}(U)\otimes E\to \LCD(U), \quad
\xi\otimes x\mapsto \ol{\xi\otimes x}
\end{equation}
is an isomorphism in $\sfC(\CVS)$. 

\begin{rmk}\label{rmk:XtimesY}
Let $X$ and $Y$ be complex manifolds. For open subsets $U\subset X$ and $V\subset Y$, as stated in \cite[Proposition 3.5.14]{CG}, there is a canonical isomorphism $\clA^{0, k}_X(U)\,\wt{\otimes}\,\clA^{0, l}_Y(V)\cong \clA^{0, k+l}_{X\times Y}(U\times Y)$ of bornological vector spaces. Since the tensor product $\wt{\otimes}$ preserves colimits in each variable, it follows that $\clA^{0, k}_{X, \rmc}(U)\,\wt{\otimes}\,\clA^{0, l}_{Y, \rmc}(V)\cong \clA^{0, k+l}_{X\times Y, \rmc}(U\times V)$ as bornological vector spaces. Hence, we obtain an isomorphism in $\sfC(\CVS)$: 
\[
\clA^{0, \bullet}_{X, \rmc}(U)\,\wt{\otimes}\,\clA^{0, \bullet}_{Y, \rmc}(V)\cong 
\clA^{0, \bullet}_{X\times Y, \rmc}(U\times V). 
\]
\end{rmk}

Let $p\in \bbN$. For an open subset $U\subset \bbC$, by the isomorphism \eqref{eq:LCDAE} and \cref{rmk:XtimesY}, we have an isomorphism in $\sfC(\CVS)$:  
\begin{equation}\label{eq:LCDpiso}
L_{\bbC, \pdd_z}^{\bullet+1}(U)^{\wt{\otimes}p}\cong 
(\clA^{0, \bullet+1}_{\rmc}(U)\otimes E)^{\wt{\otimes}p} \cong
\clA^{0, \bullet+p}_\rmc(U^p)\otimes E^{\otimes p}. 
\end{equation}
Moreover, for $\Delta\in \bbZ$ and an open subset $U\subset \bbC$ invariant under the action of $S^1$, we have an isomorphism in $\sfC(\CVS)$: 
\[
L_{\bbC, \pdd_z}^{\bullet+1}(U)^{\wt{\otimes}p}_\Delta\cong
\bigoplus_{\Delta_0+\cdots +\Delta_p=\Delta}\clA^{0, \bullet+p}_\rmc(U^p)_{\Delta_0}\otimes E_{\Delta_1}\otimes \cdots \otimes E_{\Delta_p}. 
\]

\begin{lem}\label{lem:Hnwtotimesop}
\ 
\begin{enumerate}
\item
Let $U\subset \bbC$ be an open subset. For $n\neq 0$, we have $H_n(\LCDs(U)^{\wt{\otimes}p})=0$. 

\item
Let $\Delta\in \bbZ$ and $U\subset \bbC$ be an open subset invariant under the action of $S^1$. Then, for $n\neq 0$, we have $H_n(\LCDs(U)^{\wt{\otimes}p}_\Delta)=0$. 
\end{enumerate}
\end{lem}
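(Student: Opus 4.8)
The plan is to reduce both parts to the Serre--duality computation for the compactly supported Dolbeault complex already recorded in \cref{fct:Serre}. By the isomorphism \eqref{eq:LCDpiso} in $\sfC(\CVS)$,
\[
\LCDs(U)^{\wt{\otimes}p}\cong \clA^{0,\bullet+p}_\rmc(U^p)\otimes E^{\otimes p},
\]
and since $E$ carries the fine bornology, $E^{\otimes p}$ is a fine linear space, hence isomorphic in $\BVS$ to a direct sum of copies of $\bbC$ by \cref{prp:finebor} (2). Because $X\otimes(-)$ preserves colimits and $X\otimes\bbC\cong X$, the functor $(-)\otimes E^{\otimes p}$ acts degreewise as a (possibly infinite) direct sum of copies of the identity; in particular it is exact. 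Since kernels and cokernels in $\BVS$ are compatible with coproducts, the homology functor $H_\bullet$ taken in $\sfC(\BVS)$ commutes with such direct sums, so $H_n\bigl(\LCDs(U)^{\wt{\otimes}p}\bigr)\cong H_n\bigl(\clA^{0,\bullet+p}_\rmc(U^p)\bigr)\otimes E^{\otimes p}$, and likewise with the $S^1$-weight decomposition for part (2).

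For part (1): $U^p$ is an open subset of $\bbC^p$, so \cref{fct:Serre} with $m=p$ shows that $\clA^{0,\bullet}_\rmc(U^p)$ has cohomology concentrated in degree $p$; after the shift by $p$ this reads $H_n\bigl(\clA^{0,\bullet+p}_\rmc(U^p)\bigr)=0$ for $n\neq 0$, and the displayed identity above finishes the argument.

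For part (2): if $U\subset\bbC$ is $S^1$-invariant then so is $U^p\subset\bbC^p$ under the diagonal rotation, and the graded form of \eqref{eq:LCDpiso} gives an isomorphism in $\sfC(\CVS)$
\[
\LCDs(U)^{\wt{\otimes}p}_\Delta\cong \bigoplus_{\Delta_0+\cdots+\Delta_p=\Delta}\clA^{0,\bullet+p}_\rmc(U^p)_{\Delta_0}\otimes E_{\Delta_1}\otimes\cdots\otimes E_{\Delta_p}.
\]
Arguing as above, it suffices to check $H_n\bigl(\clA^{0,\bullet+p}_\rmc(U^p)_{\Delta_0}\bigr)=0$ for $n\neq 0$ and every $\Delta_0\in\bbZ$. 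By \cref{lem:HDeltainj} the rotation action on $\clA^{0,\bullet}_\rmc(U^p)$ is tame, so the inclusion of the $\Delta_0$-weight subcomplex induces an isomorphism $H^j\bigl(\clA^{0,\bullet}_\rmc(U^p)_{\Delta_0}\bigr)\cong H^j\bigl(\clA^{0,\bullet}_\rmc(U^p)\bigr)_{\Delta_0}$, and the latter vanishes for $j\neq p$ by \cref{fct:Serre}. After the shift by $p$ this is exactly the desired vanishing.

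The step that needs the most care will be the index bookkeeping around the degree shift $\bullet+p$ (so that ``concentrated in top Dolbeault degree $m=p$'' becomes ``$H_n$ supported in $n=0$'') together with the exactness of the coproduct functor on $\BVS$ used to move $H_\bullet$ past $(-)\otimes E^{\otimes p}$; the genuine analytic input, namely Serre duality and its $S^1$-equivariant refinement, is supplied by \cref{fct:Serre} and \cref{lem:HDeltainj}, so no new estimates are required.
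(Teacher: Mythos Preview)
Your argument is correct and follows essentially the same route as the paper: both proofs use the isomorphism \eqref{eq:LCDpiso} to reduce to the compactly supported Dolbeault complex on $U^p$, then invoke \cref{fct:Serre} for part (1) and \cref{lem:HDeltainj} for part (2). You are simply more explicit than the paper about why $H_\bullet$ commutes with $(-)\otimes E^{\otimes p}$ (via the fine bornology and \cref{prp:finebor}), which the paper leaves implicit in the displayed isomorphism $H_n(\clA^{0,\bullet+p}_\rmc(U^p)\otimes E^{\otimes p})\cong H_n(\clA^{0,\bullet+p}_\rmc(U^p))\otimes E^{\otimes p}$.
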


\begin{proof}
(1) By the isomorphism \eqref{eq:LCDpiso}, we have
\[
H_n(\LCDs(U)^{\wt{\otimes}p})\cong 
H_n(\clA^{0, \bullet+p}_{\rmc}(U^p))\otimes E^{\otimes p}=
H^{-n+p}(\clA^{0, \bullet}_\rmc(U^p))\otimes E^{\otimes p}=0, 
\]
where the last equality follows from \cref{fct:Serre}. 

(2) A similar argument works, using \cref{lem:HDeltainj}. 
\end{proof}

We can now prove that the $S^1\ltimes \bbC$-equivariant prefactorization factorization algebra $H^\CE_\bullet\LCD\colon \frU_\bbC\to \CVS$ is amenably holomorphic. We check the conditions (i)--(iv) in \cref{prp:CGconst}.  

First, for an open subset $U\subset \bbC$ that is invariant under the action of $S^1$, we show that the $S^1$-action on $C^\CE_\bullet\LCD(U)$ is tame. Define an action of $\clD(S^1)$ on $\LCDs(U)^{\wt{\otimes}p}$ as follows: 
\begin{align*}
&\varphi*(f(z_1, \ldots, z_p)d\ol{z}_1\wedge\cdots \wedge d\ol{z}_p\otimes x_1\otimes \cdots \otimes x_p)\\
&\ceq
\Bigl(\int_{q\in S^1}\varphi(q)q^{\Delta(x_1)+\cdots+\Delta(x_p)-p}f(q^{-1}z_1, \ldots, q^{-1}z_p)\Bigr)d\ol{z}_1\wedge\cdots \wedge d\ol{z}_p\otimes x_1\otimes \cdots \otimes x_p, 
\end{align*}
where we used the isomorphism \eqref{eq:LCDpiso}. This action induces an action of $\clD(S^1)$ on $C^\CE_\bullet\LCD(U)$, and one can verify the conditions (i)--(iii) in \cref{dfn:tame}. 

Next, for arbitrary open subset $U\subset \bbC$, we show that $H^\sep_n(C^\CE_\bullet\LCD(U))=0$ whenever $n\neq 0$. Define an increasing filtration on $C^\CE_\bullet\LCD(U)$ by
\[
F_pC^\CE_\bullet\LCD(U)\ceq \Sym^{\le p}(\LCDs(U)). 
\]
We then consider the spectral sequence $E_{p, q}^r$ associated with this filtration: 
\[
E_{p, q}^1\cong H_{p+q}(\Sym^p(\LCDs(U)))\Longrightarrow H_n(C^\CE_\bullet\LCD(U)). 
\]
By \cref{lem:Hnwtotimesop} (1), we find that $E_{p, q}^1=0$ whenever $p+q\neq 0$. Hence, the spectral sequence $E_{p, q}^r$ degenerates at the first page. It follows that, for $n\neq 0$, 
\[
F_pH_n(C^\CE_\bullet \LCD(U))/F_{p-1}H_n(C^\CE_\bullet\LCD(U))\cong 
H_n(\Sym^p(\LCDs(U)))=0 \quad (p\in \bbN), 
\]
which implies $H_n(C^\CE_\bullet\LCD(U))=0$. Since $H_n^\sep$ is the separation of $H_n$, we obtain $H^\sep_n(C^\CE_\bullet\LCD(U))=0$ for $n\neq0$. 

Now, we show that $H_\bullet^\CE\LCD$ is a holomorphically translation-equivariant prefactorization algebra. It is easy to see that $C^\CE_\bullet\LCD$ is smoothly translation-equivariant (\cref{dfn:smPFA}). Hence, it suffices to verify the condition (ii) in \cref{lem:smtoholo}. For $0<R\le \infty$, define a morphism $\delta_R\colon \LCD(D_R)\to L_{\bbC, \pdd_z}^{\bullet-1}(D_R)$ of $\bbZ$-graded objects by
\[
\delta_R\colon L_{\bbC, \pdd_z}^1(D_R)\to L_{\bbC, \pdd_z}^0(D_R), \quad \ol{f(z)dz\otimes x}\mapsto \ol{f(z)\otimes x}, 
\]
and extend it to $\delta_R\colon C^\CE_\bullet\LCD(D_R)\to C^\CE_{\bullet+1}\LCD(D_R)$ by the Leibniz rule. Then, for $0<r_1, \ldots, r_m, R\le \infty$ and $a_i\in C^\CE_0(\LCD(D_{r_i}))$, we obtain
\begin{align*}
\pd{}{\ol{z}_i}(C^\CE_\bullet\LCD)^{r_1, \ldots, r_m; R}_{z_1, \ldots, z_m}(a_1\otimes \cdots \otimes a_m)
&=
(C^\CE_\bullet\LCD)^{r_1, \ldots, r_m; R}_{z_1, \ldots, z_m}(a_1\otimes \cdots \otimes \pdd_{\ol{z}}a_i\otimes \cdots \otimes a_m)\\
&=
(C^\CE_\bullet\LCD)^{r_1, \ldots, r_m; R}_{z_1, \ldots, z_m}(a_1\otimes \cdots \otimes \ol{\pdd}\delta_{r_i}a_i\otimes \cdots \otimes a_m)\\
&=
\ol{\pdd}\,(C^\CE_\bullet\LCD)^{r_1, \ldots, r_m; R}_{z_1, \ldots, z_m}(a_1\otimes \cdots \otimes \delta_{r_i}a_i\otimes \cdots \otimes a_m). 
\end{align*}

For $\Delta\in \bbZ$, define an increasing filtration on $C^\CE_\bullet\LCD(D_R)_\Delta$ by 
\[
F_pC^\CE_\bullet\LCD(D_R)_\Delta\ceq \Sym^{\le p}(\LCDs(D_R))_\Delta. 
\]
Consider the spectral sequence $(E_R^\Delta)_{p, q}^r$ associated with this filtration: 
\begin{equation}\label{eq:ERDspesec}
(E_R^\Delta)_{p, q}^1\cong H_{p+q}(\Sym^p(\LCDs(D_R))_\Delta)\Longrightarrow H_n(C^\CE_\bullet\LCD(D_R)_\Delta). 
\end{equation}
By \cref{lem:Hnwtotimesop} (2), we find that $(E_R^\Delta)_{p, q}^1=0$ whenever $p+q\neq 0$, and hence the spectral sequence $(E_R^\Delta)_{p, q}^r$ degenerates at the first page. Thus, there exists a linear isomorphism
\begin{equation}\label{eq:Fp/Fp-1}
F_pH_0(C^\CE_\bullet\LCD(D_R)_\Delta)/F_{p-1}H_0(C^\CE_\bullet\LCD(D_R)_\Delta)\sto
H_0(\Sym^p(\LCDs(D_R))_\Delta) 
\end{equation}
for each $p\in \bbN$. Note that this linear isomorphism is bounded. 

By \cref{lem:HmclArmc} (1), if $\Delta<0$, then $H_0(\Sym^p(\LCDs(D_R))_\Delta)=0$, and hence $H_0(C^\CE_\bullet\LCD(D_R)_\Delta)=0$. Thus, we obtain $H^\sep_0(C^\CE_\bullet\LCD(D_R)_\Delta)=0$ whenever $\Delta<0$. 

Since the linear isomorphism in \cref{lem:HmclArmc} (1) is bounded, the convex bornological vector space $H_0(\Sym^p(\LCDs(D_R))_\Delta)$ is separated. Hence, the existence of the bounded linear isomorphism \eqref{eq:Fp/Fp-1} implies that $H_0(C^\CE_\bullet\LCD(D_R)_\Delta)$ is also separated. 
Therefore, it remains to show that, for $0<r<R\le \infty$ and $\Delta\in \bbZ$, the map 
\[
H_0((C^\CE_\bullet\LCD)^{D_r}_{D_R})\colon H_0(C^\CE_\bullet\LCD(D_r)_\Delta)\to H_0(C^\CE_\bullet\LCD(D_R)_\Delta)
\]
is a linear isomorphism. This follows immediately, since \cref{lem:HmclArmc} (1) shows that the induced map 
\[
H_0(\Sym^p(\LCDs(D_r))_\Delta)\to H_0(\Sym^p(\LCDs(D_R))_\Delta)
\]
is a linear isomorphism. 

\smallskip

The proof that $H^\CE_\bullet\LCD\colon \frU_\bbC\to \CVS$ is amenably holomorphic is now complete. 
Therefore, we obtain a $\bbZ$-graded vertex algebra $\bfVR(H^\CE_\bullet\LCD)$. 

To prove the isomorphism $\bfVR(H^\CE_\bullet\LCD)\cong V(L)$, we examine the values of $H^\CE_\bullet\LCD$ on annuli. For an open subset $I\subset \bbR$, set
\[
A_I\ceq \{q\exp(t)\mid q\in S^1, t\in I\}, 
\]
which is an open subset of $\bbC$. The map
\[
\clU_L\colon \frU_\bbR\to \ModC, \quad I\mapsto \clU_L(I)\ceq\bigoplus_{\Delta\in \bbZ}(H^\CE_\bullet\LCD)(A_I)_\Delta
\] 
inherits a natural structure of a prefactorization algebra, defined as follows: 
\begin{itemize}
\item 
The extension morphism is given by
\[
(\clU_L)^I_J\ceq (H^\CE_\bullet\LCD)^{A_I}_{A_J}\colon \clU_L(I)\to \clU_L(J)
\]
for open subsets $I, J\subset \bbR$ with $I\subset J$. 

\item 
The factorization product is given by 
\[
(\clU_L)^{I, J}_K\ceq (H^\CE_\bullet\LCD)^{A_I, A_J}_{A_K}\colon \clU_L(I)\otimes \clU_L(J)\to \clU_L(K)
\]
for open subsets $I, J, K\subset \bbR$ with $I\sqcup J\subset K$. 

\item 
The unit is given by that of $H^\CE_\bullet\LCD$. 
\end{itemize}

\smallskip
For an open interval $I\subset \bbR$ and $n\in \bbZ$, choose $\xi_n\in \clA_{\bbC, \rmc}^{0, 1}(A_I)_{-n}$ such that $\phi(\xi_n)(z^{-n-1}dz)=2\pi\sqrt{-1}$. Then for $x\in L$, we have 
\[
\ol{\xi_n\otimes x}\in L_{\bbC, \pdd_z}^1(A_I)_{\Delta(x)-n-1}\subset C^\CE_0\LCD(A_I)_{\Delta(x)-n-1}, 
\]
and hence 
\[
[\ol{\xi_n\otimes x}]\in (H^\CE_\bullet\LCD)(A_I)_{\Delta(x)-n-1}\subset \clU_L(I). 
\]
Since $[\pdd_z\xi_n]=n[\xi_{n-1}]$ in $H^1(\clA^{0, \bullet}_{\bbC, \rmc}(A_I)_{-n+1})$, one can define a linear map $\Phi_I$ by
\[
\Phi_I\colon \Lie(L)\to \clU_L(I), \quad x_{[n]}\mapsto [\ol{\xi_n\otimes x}]. 
\]
The definition of $\Phi_I$ is independent of the choice of $\xi_n$. 

For $f\in C^\infty_\rmc(\bbR, \bbC)$, define
\begin{equation}\label{eq:ftimes}
f^\times\colon \bbC^\times \to \bbC, \quad z\mapsto f(\log|z|). 
\end{equation}
Clearly, $f^\times\in C^\infty_\rmc(\bbC^\times , \bbC)$. 
Moreover, the following properties are straightforward to verify: 
\begin{itemize}
\item
$\xi^f_n\ceq f^\times(z)|z|^{-2}z^{n+1}d\ol{z}\in \clA^{0, 1}_{\bbC, \rmc}(\bbC^\times)_{-n}$. 

\item
If $\int_\bbR fdt=1$, then $\phi(\xi^f_n)(z^{-n-1}dz)=2\pi\sqrt{-1}$. 

\item
For an open interval $I\subset \bbR$, if $\supp f\subset I$, then $\supp f^\times\subset A_I$. 
\end{itemize}

\begin{lem}\label{lem:PhiILiehom}
Let $I, I_{-1}, I_0, I_1\subset \bbR$ be open intervals such that $I_{-1}\sqcup I_0\sqcup I_1\subset I$ and $I_{-1}<I_0<I_1$. Then, for $x, y\in L$ and $m, n\in \bbZ$, we have
\[
(\clU_L)^{I_0, I_1}_I\bigl(\Phi_{I_0}(x_{[m]})\otimes \Phi_{I_1}(y_{[n]})\bigr)-(\clU_L)^{I_{-1}, I_0}_I\bigl(\Phi_{I_{-1}}(y_{[n]})\otimes \Phi_{I_0}(x_{[m]})\bigr)=\Phi_I([x_{[m]}, y_{[n]}]). 
\]
\end{lem}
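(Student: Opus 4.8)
The plan is to prove the identity at the chain level in the Chevalley--Eilenberg complex $C^\CE_\bullet\LCD(A_I)$, by choosing convenient representatives and exhibiting an explicit primitive of the difference of the two sides. First I would unwind the factorization products. Fix bump functions $f_{-1},f_0,f_1\in C^\infty_\rmc(\bbR)$ with $\supp f_i\subset I_i$ and $\int_\bbR f_i\,dt=1$, and take $\xi^{f_0}_m$, $\xi^{f_1}_n$, $\xi^{f_{-1}}_n$ as representatives for $\Phi_{I_0}(x_{[m]})$, $\Phi_{I_1}(y_{[n]})$, $\Phi_{I_{-1}}(y_{[n]})$; regarded as forms on $A_I$ these are supported in $A_{I_0}$, $A_{I_1}$, $A_{I_{-1}}$, which are pairwise disjoint. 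Since $C^\CE_\bullet$ is symmetric monoidal and the factorization products of $\LCD$ are built from its extension-by-zero morphisms, the two terms on the left are represented by the $\Sym^2$-chains $\ol{\xi^{f_0}_m\otimes x}\cdot\ol{\xi^{f_1}_n\otimes y}$ and $\ol{\xi^{f_0}_m\otimes x}\cdot\ol{\xi^{f_{-1}}_n\otimes y}$ in $C^\CE_0\LCD(A_I)$. Each is a cycle: the internal part of $d^\CE$ vanishes because $\ol\pdd$ of any $(0,1)$-form on an open subset of $\bbC$ is zero, and the bracket part $d_{[\cdot,\cdot]}$ gives the Lie bracket of two elements with disjoint supports, which vanishes by \cref{LXDdgLie}. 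Hence the left-hand side is the homology class of $\ol{\xi^{f_0}_m\otimes x}\cdot\ol{(\xi^{f_1}_n-\xi^{f_{-1}}_n)\otimes y}$, and it remains to show that this chain differs by a $d^\CE$-boundary from a representative of $\Phi_I([x_{[m]},y_{[n]}])$.

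Next I would produce a primitive of $\xi^{f_1}_n-\xi^{f_{-1}}_n$ on the whole annulus. Both forms are cocycles of weight $-n$ in $\clA^{0,\bullet}_{\bbC,\rmc}(A_I)$ with the same pairing value $\phi(\cdot)(z^{-n-1}dz)=2\pi\sqrt{-1}$, so \cref{lem:mphixi} (2) and \cref{lem:HmclArmc} (2) give $[\xi^{f_1}_n]=[\xi^{f_{-1}}_n]$, whence $\xi^{f_1}_n-\xi^{f_{-1}}_n=\ol\pdd\theta$ for some $\theta\in\clA^{0,0}_{\bbC,\rmc}(A_I)_{-n}$. Concretely, taking $h\in C^\infty_\rmc(\bbR)$ with $h'=f_1-f_{-1}$, one checks that $h$ has compact support inside $I$, that $h\equiv-1$ on an interval containing $\supp f_0$, and that $\theta\ceq 2h^\times(z)z^n$ does the job; its decisive feature is $\theta(z)=-2z^n$ on $\supp\xi^{f_0}_m$, hence on $\supp(\pdd_z^k\xi^{f_0}_m)\subseteq\supp\xi^{f_0}_m$ for every $k$.

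Then I would set $\omega\ceq c_0\,\ol{\xi^{f_0}_m\otimes x}\cdot\ol{\theta\otimes y}\in C^\CE_1\LCD(A_I)$ for a suitable normalizing constant $c_0$ and compute $d^\CE\omega$. Its $\Sym^2$-part is $c_0\,\ol{\xi^{f_0}_m\otimes x}\cdot\ol{(\ol\pdd\theta)\otimes y}=c_0\,\ol{\xi^{f_0}_m\otimes x}\cdot\ol{(\xi^{f_1}_n-\xi^{f_{-1}}_n)\otimes y}$ (using $\ol\pdd\xi^{f_0}_m=0$), which for the right $c_0$ is exactly the chain representing the left-hand side. Its $\Sym^1$-part is a scalar multiple of $[\ol{\xi^{f_0}_m\otimes x},\ol{\theta\otimes y}]$; expanding this bracket via $\mu_{A_I}$ and substituting $\theta\equiv-2z^n$ on $\supp(\pdd_z^k\xi^{f_0}_m)$ turns it into a finite combination of the chains $\ol{z^n(\pdd_z^k\xi^{f_0}_m)\otimes x_{(k)}y}$ (finite since $x_{(k)}y=0$ for $k\gg0$). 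Now the iterated relation $[\pdd_z\xi_\ell]=\ell[\xi_{\ell-1}]$ gives $[\pdd_z^k\xi^{f_0}_m]=k!\binom{m}{k}[\xi_{m-k}]$, and multiplication of a $(0,1)$-form by $z^n$ lowers the weight by $n$ while preserving $\phi(\cdot)(z^{-(\mathrm{weight})-1}dz)$; so, again by \cref{lem:mphixi} (2) and \cref{lem:HmclArmc} (2), $[z^n\pdd_z^k\xi^{f_0}_m]=k!\binom{m}{k}[\xi_{m+n-k}]$ in $H^1(\clA^{0,\bullet}_{\bbC,\rmc}(A_I)_{k-m-n})$, so that $\ol{z^n(\pdd_z^k\xi^{f_0}_m)\otimes x_{(k)}y}$ and $k!\binom{m}{k}\ol{\xi_{m+n-k}\otimes x_{(k)}y}$ differ by a $d^\CE$-boundary of a $\Sym^1$-chain. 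Summing over $k$ and invoking the bracket relation $[x_{[m]},y_{[n]}]=\sum_{j\ge0}\binom{m}{j}(x_{(j)}y)_{[m+n-j]}$ in $\Lie(L)$ together with linearity of $\Phi_I$ identifies the $\Sym^1$-part of $d^\CE\omega$, modulo those boundaries, with $-$(a representative of $\Phi_I([x_{[m]},y_{[n]}])$); the leftover boundaries are absorbed into $\omega$ by adding the corresponding $\Sym^1$-chains. Passing to homology gives the asserted identity.

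The main obstacle is not conceptual---the skeleton is that disjoint supports kill the bracket (\cref{LXDdgLie}), that $\ol\pdd$-exactness on an annulus is detected by a single ``mode'' (\cref{lem:HmclArmc} (2)), and that the binomial coefficient in the bracket of $\Lie(L)$ is precisely the one produced by $\pdd_z^k(z^{k-m-1})$. The delicate part is the bookkeeping of signs and constants: one must track carefully (i) the chain-level form of the factorization product through $C^\CE_\bullet$ applied to the extension maps and the structure isomorphisms $\mu$, (ii) the splitting of $d^\CE$ into its internal and bracket components on $\Sym^2$, including the identification $\Sym^p(\frg[1])\cong(\bigwedge^p\frg)[p]$ and the attendant Koszul signs, and (iii) the several factors of $\tfrac12$ entering through $\phi(\xi)(\eta)=\tfrac12\int\xi\wedge\eta$, through $\pdd_{\ol z}\log|z|=\tfrac1{2\ol z}$, and through $\theta=2h^\times z^n$---so that $c_0$ can be pinned down to make everything cancel.
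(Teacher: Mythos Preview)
Your proposal is correct and follows essentially the same approach as the paper: both choose bump-function representatives $\xi^{f_i}_\bullet$, construct an explicit primitive (your $\theta=2h^\times z^n$ is, up to a sign and a factor of $2$, the paper's $g^\times(z)z^n$ with $g(t)=\int_{-\infty}^t(f_{-1}-f_1)$), form the $\Sym^2$-chain $\ol{\xi^{f_0}_m\otimes x}\cdot\ol{\theta\otimes y}$, and compute $d^\CE$ of it to identify the $\Sym^2$-part with the left-hand side and the bracket part with a representative of $\Phi_I([x_{[m]},y_{[n]}])$. The only cosmetic difference is that the paper computes the residue $\phi((\pdd_z^j\xi^{f_0}_m)g^\times z^n)(z^{-m-n+j-1}dz)=2\pi\sqrt{-1}\,j!\binom{m}{j}$ directly (using that $g^\times\equiv1$ on $\supp\xi^{f_0}_m$), thereby pinning down all constants at once, whereas you reach the same conclusion via the iterated relation $[\pdd_z^k\xi_m]=k!\binom{m}{k}[\xi_{m-k}]$ and \cref{lem:HmclArmc} (2); these are equivalent, and your caution about $c_0$ is unnecessary once you note that $h\equiv-1$ on $I_0$ fixes everything.
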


\begin{proof}
For each $i\in\{-1, 0, 1\}$, take $f_i\in C^\infty_\rmc(\bbR, \bbC)$ such that $\int_\bbR f_{i}\,dt=1$ and $\supp f\subset I_i$. Then we have
\[
(\clU_L)^{I_0, I_1}_I\bigl(\Phi_{I_0}(x_{[m]})\otimes \Phi_{I_1}(y_{[n]})\bigr)=
(\clU_L)^{I_0, I_1}_I\bigl([\ol{\xi^{f_0}_m\otimes x}]\otimes[\ol{\xi^{f_1}_n\otimes y}]\bigr)=
[(\ol{\xi^{f_0}_m\otimes x})(\ol{\xi^{f_1}_n\otimes y})]. 
\]
Similarly, we have
\[
(\clU_L)^{I_{-1}, I_0}_I\bigl(\Phi_{I_{-1}}(y_{[n]})\otimes \Phi_{I_0}(x_{[m]})\bigr)=
[(\ol{\xi^{f_{-1}}_n\otimes y})(\ol{\xi^{f_0}_m\otimes x})]=
[(\ol{\xi^{f_0}_m\otimes x})(\ol{\xi^{f_{-1}}_n\otimes y})]. 
\]
Now define a function $g\colon \bbR\to \bbC$ by 
\[
g(t)\ceq \int_{-\infty}^tf_{-1}(\tau)\,d\tau-\int_{-\infty}^tf_1(\tau)\,d\tau,
\]
then $\supp g\subset I$ and 
\[
\ol{\pdd}g^\times(z)=f^\times_{-1}(z)|z|^{-2}zd\ol{z}-f^\times_1(z)|z|^{-2}zd\ol{z}. 
\]
Here we use the notation defined in \eqref{eq:ftimes}. 
We obtain $(\ol{\xi^{f_0}_m\otimes x})(\ol{g^\times(z)z^n\otimes y})\in C^\CE_\bullet\LCD(A_I)_{-m-n}$ such that 
\begin{align*}
&d_1^\CE\bigl((\ol{\xi^{f_0}_m\otimes x})(\ol{g^\times(z)z^n\otimes y})\bigr)\\
&=
(\ol{\xi^{f_0}_m\otimes x})(\ol{\ol{\pdd}g^\times(z)z^n\otimes y})+[\ol{\xi^{f_0}_m\otimes x},\, \ol{g^\times(z)z^n\otimes y}]\\
&=
(\ol{\xi^{f_0}_m\otimes x})(\ol{\xi^{f_{-1}}_n\otimes y})-(\ol{\xi^{f_0}_m\otimes x})(\ol{\xi^{f_1}_n\otimes y})
+\sum_{j\ge 0}\frac{1}{j!}\ol{(\pdd_z^j\xi^{f_0}_m)g^\times(z)z^n\otimes x_{(j)}y}. 
\end{align*}
It follows that $(\pdd_z^j\xi^{f_0}_m)g^\times(z)z^n\in \clA_{\bbC, \rmc}^{0, 1}(A_I)_{-m-n+j}$ and 
\[
\phi((\pdd_z^j\xi^{f_0}_m)g^\times(z)z^n)(z^{-m-n+j-1}dz)=2\pi\sqrt{-1}j!\binom{m}{j}.
\] 
Therefore, 
\begin{align*}
\Phi_I([x_{[m]}, y_{[n]}])
&=
\sum_{j\ge 0}\binom{m}{j}\Phi_I((x_{(j)}y)_{[m+n-j]})=
\sum_{j\ge 0}\frac{1}{j!}[\ol{(\pdd_z^j\xi^{f_0}_m)g^\times(z)z^n\otimes x_{(j)}y}]\\
&=
[(\ol{\xi^{f_0}_m\otimes x})(\ol{\xi^{f_1}_n\otimes y})]-[(\ol{\xi^{f_0}_m\otimes x})(\ol{\xi^{f_{-1}}_n\otimes y})], 
\end{align*}
which proves the lemma. 
\end{proof}

\begin{rmk}\label{rmk:ULieL}
For an open interval $I\subset \bbR$, by \cref{lem:PhiILiehom}, one can define a linear map by
\[
\wt{\Phi}_I\colon U(\Lie(L))\to \clU_L(I), \quad a_1\cdots a_p\mapsto (\clU_L)^{I_1, \ldots, I_p}_I(\Phi_{I_1}(a_1)\otimes \cdots \otimes\Phi_{I_p}(a_p)), 
\]
where we take open intervals $I_1, \ldots, I_p\subset \bbR$ such that $I_1\sqcup \cdots \sqcup I_p\subset I$ and $I_1<\cdots <I_p$. The definition of $\wt{\Phi}_I$ is independent of the choice. We claim that $\wt{\Phi}_I$ is injective. Since $\wt{\Phi}_I(U(\Lie(L))_\Delta)\subset H_0^\sep(C^\CE_\bullet\LCD(A_I)_\Delta)$ for each $\Delta\in \bbZ$, it suffices to show that $\wt{\Phi}_I\colon U(\Lie(L))_\Delta\to H_0^\sep(C^\CE_\bullet\LCD(A_I)_\Delta)$ is injective. 
To prove this, we first equip $U(\Lie(L))$ with the PBW filtration, and $U(\Lie(L))_\Delta$ with the induced filtration. Then we have
\[
F_pU(\Lie(L))_\Delta/F_{p-1}U(\Lie(L))_\Delta\cong \Sym^p(\Lie(L))_\Delta. 
\]
Next, define an increasing filtration on $C^\CE_\bullet\LCD(A_I)_\Delta$ by
\[
F_pC^\CE_\bullet\LCD(A_I)_\Delta\ceq 
\Sym^{\le p}(\LCDs(A_I))_\Delta, 
\]
and consider the spectral sequence $(E_I^\Delta)_{p, q}^r$ associated with this filtration: 
\[
(E_I^\Delta)_{p, q}^1\cong H_{p+q}(\Sym^p(\LCDs(A_I))_\Delta)\Longrightarrow
H_n(C^\CE_\bullet\LCD(A_I)_\Delta). 
\]
Then, by \cref{lem:Hnwtotimesop} (2), the spectral sequence $(E_I^\Delta)^{p, q}_r$ degenerates at the first page. Hence, we have a bounded linear isomorphism 
\[
F_pH_0(C^\CE_\bullet\LCD(A_I)_\Delta)/F_{p-1}H_0(C^\CE_\bullet\LCD(A_I)_\Delta)\sto
H_0(\Sym^p(\LCDs(A_I))_\Delta) 
\]
for each $p\in \bbN$. Since the injective linear map in \cref{lem:HmclArmc} (2) is bounded, the convex bornological vector space $H_0(\Sym^p(\LCDs(A_I))_\Delta)$ is separated, and hence so is $H_0(C^\CE_\bullet\LCD(A_I)_\Delta)$. 
Note that $\wt{\Phi}_I\colon U(\Lie)_\Delta\to H_0(C^\CE_\bullet\LCD(A_I)_\Delta)$ preserves filtrations. 
By \cref{lem:HmclArmc} (2), we find that $\wt{\Phi}_I$ induces an injective linear map between the associated graded spaces, which proves the claim. 
\end{rmk}

For a fixed $0<R\le \infty$, take $r\in \bbR_{>0}$ and an open interval $I\subset \bbR$ such that $D_r\sqcup A_I\subset D_R$. The factorization product 
\[
(H_\bullet^\CE\LCD)^{A_I, D_r}_{D_R}\colon 
(H_\bullet^\CE\LCD)(A_I)\otimes (H_\bullet^\CE\LCD)(D_r)\to (H_\bullet^\CE\LCD)(D_R)
\]
induces a linear map
\[
\clU_L(I)\otimes \bfV_{\!r}(H^\CE_\bullet\LCD)\to \bfVR(H^\CE_\bullet\LCD). 
\]
Hence, using the linear map $\Phi_I\colon \Lie(L)\to \clU_L(I)$, we obtain a linear map
\[
\begin{tikzcd}
\alpha_R\,\colon\, \Lie(L) \otimes \bfVR(H^\CE_\bullet\LCD) \arrow[r] &
\clU_L(I) \otimes \bfV_{\!r}(H^\CE_\bullet\LCD) \arrow[r] &
\bfVR(H^\CE_\bullet\LCD),
\end{tikzcd}
\]
which is independent of the choices of $r$ and $I$. 
By \cref{lem:PhiILiehom}, the linear map $\alpha_R$ defines a $U(\Lie(L))$-module structure on $\bfVR(H^\CE_\bullet\LCD)$. 

\begin{lem}\label{lem:killnge0}
For $n\in \bbN$ and $x\in L$, we have $\alpha_R(x_{[n]}, \vac)=0$. 
\end{lem}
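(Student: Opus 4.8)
The plan is to rewrite $\alpha_R(x_{[n]},\vac)$ as the value of an extension map on $\Phi_I(x_{[n]})$, and then to exhibit an explicit primitive in the Chevalley--Eilenberg complex over $D_R$ which exists precisely because $n\ge 0$.

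First I would unwind $\alpha_R$ against the vacuum. Fix $r\in\bbR_{>0}$ and an open interval $I\subset\bbR$ with $D_r\sqcup A_I\subset D_R$, and let $\vac_r\in\bfV_{\!r}(H^\CE_\bullet\LCD)$ denote the vacuum, i.e.\ the image of $\bbC\xrightarrow{1_\clF}\clF(\varnothing)\xrightarrow{\clF^\varnothing_{D_r}}\clF(D_r)$ with $\clF=H^\CE_\bullet\LCD$. Since $\clF^\varnothing_{D_R}=\clF^{D_r}_{D_R}\circ\clF^\varnothing_{D_r}$, the element $\vac\in\bfVR(H^\CE_\bullet\LCD)$ is the image of $\vac_r$ under the extension map, so $\alpha_R(x_{[n]},\vac)=(H^\CE_\bullet\LCD)^{A_I, D_r}_{D_R}\bigl(\Phi_I(x_{[n]})\otimes\vac_r\bigr)$. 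A short diagram chase with the compatibility axioms (i), the symmetry axiom (ii), and the unit axiom (iv) of \cref{dfn:PFA} then shows that a factorization product against a vacuum collapses to the extension map, $(H^\CE_\bullet\LCD)^{A_I, D_r}_{D_R}(a\otimes\vac_r)=(H^\CE_\bullet\LCD)^{A_I}_{D_R}(a)$. Hence it suffices to prove that $(H^\CE_\bullet\LCD)^{A_I}_{D_R}$ kills $\Phi_I(x_{[n]})=[\ol{\xi_n\otimes x}]$, where $\xi_n\in\clA^{0,1}_{\bbC,\rmc}(A_I)_{-n}$ is any form with $\phi(\xi_n)(z^{-n-1}\,dz)=2\pi\sqrt{-1}$.

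Next I would show that the extension by zero $\wt\xi_n\ceq(\clA^{0,1}_{\bbC,\rmc})^{A_I}_{D_R}(\xi_n)\in\clA^{0,1}_{\bbC,\rmc}(D_R)_{-n}$, which is automatically $\ol{\pdd}$-closed, is $\ol{\pdd}$-exact exactly when $n\ge0$. By the Serre duality isomorphism of \cref{fct:Serre}, its class in $H^1(\clA^{0,\bullet}_{\bbC,\rmc}(D_R))$ corresponds to the continuous functional $\eta\mapsto\frac{1}{2}\int_{D_R}\wt\xi_n\wedge\eta$ on $\Omega^1(D_R)$; since $\wt\xi_n$ is supported in $A_I$, its value on $z^k\,dz$ equals $\phi(\xi_n)(z^k\,dz)$ computed on the annulus $A_I$, which by \cref{lem:mphixi} (2) vanishes unless $k+1=-n$ --- impossible for $n\ge0$. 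As $\{z^k\,dz\}_{k\in\bbN}$ topologically span $\Omega^1(D_R)$, the functional is zero, so the class vanishes. Working in the weight-$(-n)$ subcomplex (legitimate by \cref{lem:HDeltainj}) then yields $\zeta\in\clA^{0,0}_{\bbC,\rmc}(D_R)_{-n}$ with $\ol{\pdd}\zeta=\wt\xi_n$.

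Finally I would lift $\zeta$ to the Chevalley--Eilenberg complex. The element $\ol{\zeta\otimes x}\in L^0_{\bbC,\pdd_z}(D_R)\subset C^\CE_1\LCD(D_R)$ satisfies $d^\CE(\ol{\zeta\otimes x})=\pm\,\ol{(\ol{\pdd}\zeta)\otimes x}=\pm\,(C^\CE_\bullet\LCD)^{A_I}_{D_R}(\ol{\xi_n\otimes x})$, since on $\Sym^1(\LCDs(D_R))$ the Chevalley--Eilenberg differential reduces to the internal differential of $\LCD(D_R)$ (the bracket term is an empty sum on a single generator). Hence the extension of the cycle $\ol{\xi_n\otimes x}$ is a boundary in $C^\CE_\bullet\LCD(D_R)$, so it represents $0$ in $H^\CE_\bullet\LCD(D_R)$, and therefore $\alpha_R(x_{[n]},\vac)=0$. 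The only genuine obstacle I anticipate is bookkeeping: keeping the homological shift conventions of $C^\CE_\bullet$ consistent (so that $L^0_{\bbC,\pdd_z}$ indeed sits in Chevalley--Eilenberg degree $1$) and tracking the $S^1$-weights so that the primitive $\zeta$ can be chosen in weight $-n$; the actual content lies in the Serre-duality computation, which is exactly where the hypothesis $n\ge0$ is used.
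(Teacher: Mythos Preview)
Your proof is correct and shares the paper's overall architecture: reduce $\alpha_R(x_{[n]},\vac)$ to the extension $(H^\CE_\bullet\LCD)^{A_I}_{D_R}(\Phi_I(x_{[n]}))$ via the unit axiom, show the extended form $\wt\xi_n$ is $\ol\pdd$-exact on $D_R$, and lift the primitive to $\Sym^1$ in the Chevalley--Eilenberg complex. The difference is in how exactness is established. The paper constructs an explicit primitive: with the radial representative $\xi^f_n=f^\times(z)|z|^{-2}z^{n+1}\,d\ol z$ and $g(t)=\int_t^\infty f(\tau)\,d\tau$, a primitive of the form $g^\times(z)z^n$ (up to a constant) is smooth and compactly supported in $D_R$, the condition $n\ge0$ being exactly what makes $z^n$ smooth across the origin where $g^\times\equiv1$. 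You instead argue abstractly via Serre duality (\cref{fct:Serre}) and the weight constraint (\cref{lem:mphixi}), which is essentially a reproof of the $m=1$ case of \cref{lem:HmclArmc}(1) showing $H^1(\clA^{0,\bullet}_\rmc(D_R)_{-n})=0$ for $n\ge0$. Your route has the advantage of working for any representative $\xi_n$ and of reusing machinery already assembled in the paper; the paper's explicit construction is more transparent about the geometric role of the hypothesis $n\ge0$.
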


\begin{proof}
By the unit axiom of prefactorization algebras, we have 
\[
\alpha_R(x_{[n]}, \vac)=H_0^\sep\bigl((C^\CE_\bullet\LCD\bigr)^{A_I}_{D_R})(\Phi_I(x_{[n]})), 
\]
where $I\subset \bbR$ is an open interval such that $A_I\subset D_R$. Take $f\in C^\infty_\rmc(\bbR, \bbC)$ with $\int_\bbR fdt=1$ and $\supp f\subset I$, then $\Phi_I(x_{[n]})=[\ol{f^\times(z)|z|^{-2}z^{n+1}d\ol{z}\otimes x}]$ in $H_0^\sep(C^\CE_\bullet\LCD(A_I)_{\Delta(x)-n-1})$. Define a function $g\colon \bbR\to \bbC$ by
\[
g(t)\ceq \int^{\infty}_tf(\tau)\,d\tau. 
\]
We obtain $\supp g^\times\subset D_R$ and 
\[
\ol{\pdd} g^\times=(\clA^{0, 1}_{\bbC, \rmc})^{A_I}_{D_R}(f^\times(z)|z|^{-2}z^{n+1}d\ol{z}).
\]
Hence it follows that $(C^\CE_\bullet\LCD)^{A_I}_{D_R}(f^\times(z)|z|^{-2}z^{n+1}d\ol{z}\otimes x)$ is exact in $C^\CE_\bullet\LCD(D_R)_{\Delta(x)-n-1}$, which proves the lemma. 
\end{proof}

By \cref{lem:killnge0}, there exists a morphism $\theta_R\colon V(L)\to \bfVR(H^\CE_\bullet\LCD)$ of $U(\Lie(L))$-modules such that $\theta_R(\vac)=\vac$. Clearly, we have $\theta_R(V(L)_\Delta)\subset H_0(C^\CE_\bullet\LCD(D_R)_\Delta)$ for each $\Delta\in \bbZ$. 

\begin{lem}
The map $\theta_R\colon V(L)\to \bfVR(C^\CE_\bullet\LCD)$ is an isomorphism of $U(\Lie(L))$-modules. 
\end{lem}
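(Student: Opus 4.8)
The plan is to deduce that $\theta_R$ is an isomorphism by comparing associated graded objects for compatible filtrations; since $\theta_R$ is already a morphism of $U(\Lie(L))$-modules with $\theta_R(\vac)=\vac$, it then suffices to prove that it is bijective. On the source, I would equip $U(\Lie(L))$ with the PBW filtration and $V(L)=U(\Lie(L))\otimes_{U(\Lie_+(L))}\bbC$ with the induced filtration $F_\bullet$, so that $\operatorname{gr}^F_pV(L)_\Delta\cong\Sym^p\!\bigl(\Lie(L)/\Lie_+(L)\bigr)_\Delta$ by the standard fact on induced modules. Using condition $(*)$ one shows $\Lie(L)\cong\bbC[t^{\pm1}]\otimes E$ with $x_{[m]}\leftrightarrow t^m\otimes x$ (the relation $(Tx)_{[n]}=-n\,x_{[n-1]}$ letting one eliminate powers of $T$), hence $\Lie_+(L)=\Span_\bbC\{x_{[m]}\mid x\in E,\ m\ge 0\}$ and $\Lie(L)/\Lie_+(L)=\Span_\bbC\{x_{[m]}\mid x\in E,\ m<0\}$; thus $\operatorname{gr}^F_pV(L)_\Delta$ has the explicit basis of symmetric monomials $x_{i_1[m_1]}\cdots x_{i_p[m_p]}$, where $\{x_i\}$ is a homogeneous basis of $E$, each $m_j\le -1$, and $\sum_j(\Delta(x_{i_j})-m_j-1)=\Delta$.

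On the target, I would use the filtration $F_pC^\CE_\bullet\LCD(D_R)_\Delta=\Sym^{\le p}(\LCDs(D_R))_\Delta$ already analysed in the proof of \cref{thm:main}, whose spectral sequence degenerates and yields the bounded isomorphism \eqref{eq:Fp/Fp-1}, namely $\operatorname{gr}^F_p\bfVR(H^\CE_\bullet\LCD)_\Delta\cong H_0(\Sym^p(\LCDs(D_R))_\Delta)$. Via \eqref{eq:LCDAE}, the Koszul identification $\Sym^p(-[1])=\bigwedge^p(-)[p]$, and the sign-twisted K\"unneth/Serre computation underlying that proof (\cref{fct:Serre} and \cref{lem:HmclArmc}~(1) applied to $D_R^p$), the classes $\prod_j[\ol{\xi_{m_j}\otimes x_{i_j}}]$ — where $\xi_m\in\clA^{0,1}_{\bbC,\rmc}(D_R)_{-m}$ is any form with $\phi(\xi_m)(z^{-m-1}\,dz)=2\pi\sqrt{-1}$ — form a basis of $H_0(\Sym^p(\LCDs(D_R))_\Delta)$, indexed by exactly the same data as above: multisets of pairs $(i_j,m_j)$ with $m_j\le -1$ and $\sum_j(\Delta(x_{i_j})-m_j-1)=\Delta$. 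The weight bookkeeping matches because, by the equivariant structure of \cref{ss:equivLCD}, $\ol{\xi_m\otimes x}$ has $S^1$-weight $\Delta(x)-m-1=\Delta(x_{[m]})$.

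Next I would check that $\theta_R$ is filtered, $\theta_R(F_pV(L))\subset F_p\bfVR(H^\CE_\bullet\LCD)$, and that $\operatorname{gr}^F_p\theta_R$ is precisely $x_{i_1[m_1]}\cdots x_{i_p[m_p]}\mapsto\prod_j[\ol{\xi_{m_j}\otimes x_{i_j}}]$. This uses the description of $\theta_R$ through $\alpha_R$ and $\wt{\Phi}_I$: by \cref{rmk:ULieL}, $\wt{\Phi}_I(a_1\cdots a_p)$ is a $p$-fold factorization product of classes $\Phi_{I_j}(a_j)\in F_1$; the factorization products of $C^\CE_\bullet\LCD$ are induced by the multiplication of $\Sym$ and hence send $F_a\otimes F_b$ into $F_{a+b}$; and $\vac\in F_0$ while the extension morphisms preserve $\Sym$-degree. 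Thus $\theta_R$ is filtered and $\operatorname{gr}^F_p\theta_R$ reads off the top symmetric component, namely $\prod_j\Phi_{I_j}(x_{i_j[m_j]})$ transported from the annulus $A_I$ to $D_R$; this transport carries the weight-$(-m)$ generator to the weight-$(-m)$ generator because $\phi(\cdot)(z^{-m-1}\,dz)$ is unaffected by extension by zero, and the induced maps on $H_0(\Sym^p(\,\cdot\,))$ are isomorphisms by \cref{lem:HmclArmc} and the argument in the proof of \cref{thm:main}. Comparing the two explicit bases, under $x_{i[m]}\leftrightarrow[\ol{\xi_m\otimes x_i}]$, then shows $\operatorname{gr}^F_p\theta_R$ is a linear isomorphism for all $p,\Delta$; the case $p=0$ is just $\theta_R(\vac)=\vac$.

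Finally, since both filtrations are exhaustive with $F_{-1}=0$ and $\operatorname{gr}^F\theta_R$ is an isomorphism, an induction on $p$ — applying the short five lemma to the maps of short exact sequences $0\to F_{p-1}\to F_p\to\operatorname{gr}^F_p\to 0$ — shows $F_p\theta_R$ is an isomorphism for every $p$, and passing to the colimit gives that $\theta_R$ is a linear isomorphism, hence an isomorphism of $U(\Lie(L))$-modules. I expect the main obstacle to be the third step: simultaneously (i) tracking the $S^1$-weights of the forms $\xi_m$ and of the elements of $E$ through $\phi$ and \cref{lem:HmclArmc}, (ii) verifying the leading-term behaviour of the factorization products defining $\wt{\Phi}_I$ and $\alpha_R$, and (iii) reconciling the annulus $A_I$ carrying $\Phi_I$ with the disk $D_R$.
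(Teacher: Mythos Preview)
Your approach is essentially the same as the paper's: both filter the source by PBW degree and the target by symmetric degree via $F_pC^\CE_\bullet\LCD(D_R)_\Delta=\Sym^{\le p}(\LCDs(D_R))_\Delta$, then invoke \cref{lem:HmclArmc}\,(1) to match the associated graded pieces. The paper phrases the source filtration via $U(\Lie_-(L))$ rather than via the quotient $\Lie(L)/\Lie_+(L)$, but these are the same filtration on $V(L)$; and the paper is content to assert that $\theta_R$ is filtered and that $\operatorname{gr}\theta_R$ is an isomorphism ``by \cref{lem:HmclArmc}\,(1)'' without the explicit basis-matching you spell out, so your version is simply a more detailed execution of the same argument.

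One small imprecision to tighten in step three: the induced map $H_0(\Sym^p(\LCDs(A_I))_\Delta)\to H_0(\Sym^p(\LCDs(D_R))_\Delta)$ is \emph{not} an isomorphism (the annulus side is larger, cf.\ \cref{lem:HmclArmc}\,(2)); what you actually need, and what your preceding sentence correctly identifies, is only that for $m_j\le -1$ the extension-by-zero sends the generator $[\xi_{m_j}]$ on $A_I$ to the corresponding generator on $D_R$, since $\phi$ is unchanged. With that correction the argument goes through.
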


\begin{proof}
It suffices to show that $\theta_R\colon V(L)_\Delta\to H_0(C^\CE_\bullet\LCD(D_R)_\Delta)$ is a linear isomorphism for each $\Delta\in \bbZ$. Note that $\Lie_-(L)\ceq \Span_{\bbC}\{x_{[n]}\mid x\in L, n<0\}$ is a Lie subalgebra of $\Lie(L)$. Consider the PBW filtration on $U(\Lie_-(L))$, and equip $U(\Lie_-(L))_\Delta$ with the induced filtration. Define 
\[
F_pV(L)_\Delta\ceq F_pU(\Lie_-(L))_\Delta\vac, 
\]
so that $\{F_pV(L)_\Delta\}_{p\in \bbZ}$ forms an increasing filtration on $V(L)_\Delta$. Moreover, recall the increasing filtration on $H_0((C^\CE_\bullet\LCD)_\Delta(D_R))$ induced by the spectral sequence \eqref{eq:ERDspesec}. Note that $\theta_R$ preserves these filtrations, and we have
\begin{align*}
&F_pV(L)_\Delta/F_{p-1}V(L)_\Delta\cong \Sym^p(\Lie_-(L))_\Delta, \\
&F_pH_0(C^\CE_\bullet\LCD(D_R)_\Delta)/F_{p-1}H_0(C^\CE_\bullet\LCD(D_R)_\Delta)\cong 
H_0(\Sym^p(\LCDs(D_R))_\Delta).
\end{align*}
Thus, by \cref{lem:HmclArmc} (1), we find that $\theta_R$ induces a linear isomorphism between the associated graded spaces, which proves the claim. 
\end{proof}

\begin{lem}\label{lem:alphaRn}
For $x\in L$ and $n\in \bbZ$, we have $\alpha_R(x_{[n]}, -)=\alpha_R(x_{[-1]}, \vac)_{(n)}(-)$ in $\End_\bbC(\bfVR(H^\CE_\bullet\LCD))$. 
\end{lem}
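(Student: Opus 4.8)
Write $\clF\ceq H^\CE_\bullet\LCD$. Both sides of the asserted equality are endomorphisms of $\bfVR(\clF)$, so it is enough to prove, for every $c\in\bfVR(\clF)$ and $n\in\bbZ$, that $\alpha_R(x_{[n]},c)=\alpha_R(x_{[-1]},\vac)_{(n)}c$. By the construction of the vertex operators on $\bfVR(\clF)$, the right-hand side is the coefficient of $z^{-n-1}$ in the Laurent expansion on $D_R^\times$ of the bornologically holomorphic function $z\mapsto\mu^R_{z,0}(\alpha_R(x_{[-1]},\vac)\otimes c)$. So the plan is to compute this function on a suitable open annulus $A_{\rho_1,\rho_2}\subset D_R^\times$ and match its Laurent expansion there with $\sum_{m\in\bbZ}z^{-m-1}\alpha_R(x_{[m]},c)$.

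The first step is to unwind the left side geometrically. I would fix small radii $r_0,t_0>0$, a bounded open interval $J$ with $A_J\subset D_{t_0}$ and $e^{\sup J}<\rho_1$, an annulus $A_{\rho_1,\rho_2}$ with $\ol{D}_{t_0}(z)\sqcup\ol{D}_{r_0}\subset D_R$ for all $z\in A_{\rho_1,\rho_2}$, and a thick annulus $A_I$ around $0$ containing $A_{\rho_1,\rho_2}$ and every translate $A_J+z$ ($z\in A_{\rho_1,\rho_2}$), with $A_I\sqcup D_{r_0}\subset D_R$; all of this is arrangeable by taking $J$ sufficiently negative. Let $c_{r_0}\in\bfV_{r_0}(\clF)$ represent $c$, and note that $\alpha_R(x_{[-1]},\vac)$ is represented in $\bfV_{t_0}(\clF)$ by $\clF^{A_J}_{D_{t_0}}(\Phi_J(x_{[-1]}))$ (by the unit axiom, exactly as in the proof of \cref{lem:killnge0}, together with functoriality of the precosheaf). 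Then, using the definition of $\mu^R_{z,0}$, the $S^1\ltimes\bbC$-equivariance of $\clF$, and the compatibility of the factorization products with the extension morphisms, one obtains for $z\in A_{\rho_1,\rho_2}$
\[
\mu^R_{z,0}\bigl(\alpha_R(x_{[-1]},\vac)\otimes c\bigr)=\clF^{A_I,D_{r_0}}_{D_R}\bigl(\Psi(z)\otimes c_{r_0}\bigr),\qquad\Psi(z)\ceq\clF^{A_J+z}_{A_I}\bigl(\sigma_{(1,z),A_J}\Phi_J(x_{[-1]})\bigr).
\]

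The heart of the proof is the identity, in $(H^\CE_\bullet\LCD)(A_I)$ and converging bornologically,
\[
\Psi(z)=\sum_{m\in\bbZ}z^{-m-1}\,\Phi_I(x_{[m]})\qquad(z\in A_{\rho_1,\rho_2}).
\]
Since the Lie bracket contributes nothing in symmetric degree one, $\LCDs$ is a subcomplex of $C^\CE_\bullet\LCD$, so there is a bounded map $H_0(\LCDs(A_I))\to H_0(C^\CE_\bullet\LCD(A_I))$; both $\Psi(z)$ and the $\Phi_I(x_{[m]})$ are images under this map (and under the quotient $\clA^{0,\bullet}_{\bbC,\rmc}(A_I)\otimes L\to\LCD(A_I)$) of homology classes $[\,\ol{\eta\otimes x}\,]$ with $\eta\in\clA^{0,1}_{\bbC,\rmc}(A_I)$, and as $\sigma_{(1,z),A_J}$ merely translates the form and fixes $x$, the identity reduces, after applying these bounded maps to the convergent series, to
\[
[(1,z)\xi_{-1}]=\sum_{m\in\bbZ}z^{-m-1}[\xi_m]\qquad\text{in }H^1\bigl(\clA^{0,\bullet}_{\bbC,\rmc}(A_I)\bigr),
\]
where $\xi_m=h^\times(w)|w|^{-2}w^{m+1}\,d\ol w$ and $\xi_{-1}=g^\times(w)|w|^{-2}\,d\ol w$ are the forms from \cref{ss:FEEV} (with $\int g=\int h=1$, $\supp g,\supp h\subset J$) and $(1,z)\xi_{-1}$ is the translate. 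By \cref{fct:Serre}, $H^1(\clA^{0,\bullet}_{\bbC,\rmc}(A_I))$ is the strong dual $\Omega^1(A_I)'$ under the pairing $\phi$; since the monomials $w^k\,dw$ span a dense subspace of the Fr\'{e}chet space $\Omega^1(A_I)$, a class is determined by its $\phi$-values on them, and a short computation --- the change of variables $w=z+u$ together with the binomial expansion of $(z+u)^k$, legitimate on $\supp g^\times$ because $|z|>e^{\sup J}$ --- gives $\phi((1,z)\xi_{-1})(w^k\,dw)=2\pi\sqrt{-1}\,z^k$, while $\phi(\xi_m)(w^k\,dw)=2\pi\sqrt{-1}\,\delta_{k,-m-1}$ by the normalization of $\xi_m$ and \cref{lem:mphixi}; hence the two sides have equal $\phi$-values. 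For convergence, the partial sums of the right side approximate, pointwise on $\Omega^1(A_I)$, the evaluation functional $f\,dw\mapsto 2\pi\sqrt{-1}\,f(z)$, which is continuous for $z\in A_I$; since $\Omega^1(A_I)$ is Fr\'{e}chet--Montel this convergence is strong, hence bornological (as for $\clD(S^1)$ in \cref{ss:FAtoVA}, via \cite[Result 52.28]{KM} and \cite[Corollary 12.5.9]{J81}), and it is preserved under tensoring with the fixed $x$ and under the bounded maps above.

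Combining the three displays, $\clF^{A_I,D_{r_0}}_{D_R}(-\otimes c_{r_0})$ is bounded, so by \cref{lem:boundmapconv} it commutes with the convergent series, giving $\mu^R_{z,0}(\alpha_R(x_{[-1]},\vac)\otimes c)=\sum_{m\in\bbZ}z^{-m-1}\alpha_R(x_{[m]},c)$ on $A_{\rho_1,\rho_2}$; comparing Laurent coefficients then yields $\alpha_R(x_{[-1]},\vac)_{(n)}c=\alpha_R(x_{[n]},c)$. The main obstacle is this cohomological identity together with its convergence: one must arrange the various annuli and radii so that every factorization-product manipulation is defined, descend correctly from $H_0(C^\CE_\bullet\LCD(A_I))$ to the compactly supported Dolbeault cohomology $H^1(\clA^{0,\bullet}_{\bbC,\rmc}(A_I))$, and control the Laurent series through the Serre-duality identification of that cohomology with $\Omega^1(A_I)'$.
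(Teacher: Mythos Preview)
Your argument is correct and follows essentially the same route as the paper: both reduce the claim to the identity $[(1,z)\xi_{-1}]=\sum_{m\in\bbZ} z^{-m-1}[\xi_m]$ in $H^1(\clA^{0,\bullet}_{\bbC,\rmc})$ of the large annulus, identify this via Serre duality (\cref{fct:Serre}) with the Laurent expansion of the evaluation functional $g\,dw\mapsto 2\pi\sqrt{-1}\,g(z)$, and then invoke the Montel and nuclear-Fr\'{e}chet properties of $\Omega^1$ to upgrade pointwise convergence to bornological convergence (the paper reaches the same point via Cauchy's integral formula rather than your binomial expansion, but the content is identical). One small slip in your setup: the representatives $\xi_m$ for $\Phi_I(x_{[m]})$ must lie in $\clA^{0,1}_{\bbC,\rmc}(A_I)$, so you need $\supp h\subset I$ rather than $\supp h\subset J$ (or else enlarge $A_I$ to contain $A_J$, which your constraints permit once $r_0<e^{\inf J}$).
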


\begin{proof}
Set $a\ceq \alpha_R(x_{[-1]}, \vac)$, and let $b=[\ol{\eta\otimes y}]\in \bfVR(H^\CE_\bullet\LCD)$. For $\zeta\in D_R^\times$, choose: 
\begin{itemize}
\item 
$r\in \bbR_{>0}$ such that $\ol{D}_r(\zeta)\sqcup \ol{D}_r\subset D_R$, 

\item 
open intervals $I, J\subset \bbR$ such that $A_I\subset D_r$, $A_I(\zeta)\subset A_J$, and $D_r\sqcup A_J\subset D_R$, 

\item 
$f\in C^\infty_\rmc(\bbR, \bbC)$ such that $\int_\bbR fdt=1$ and $\supp f\subset I$. 
\end{itemize}
Then, we have 
\[
\mu_{\zeta, 0}^R(a\otimes b)=
\bigl[(\ol{(\clA^{0, 1}_{\bbC, \rmc})^{A_J}_{D_R}(\clA^{0, 1}_{\bbC, \rmc})^{A_I(\zeta)}_{A_J}(1, \zeta)\xi^f_{-1}\otimes x}) (\ol{(\clA^{0, 1}_{\bbC, \rmc})^{D_r}_{D_R}(\eta)\otimes y})\bigr].
\]

Since, for each $n\in \bbZ$, 
\[
\phi((\clA^{0, 1}_{\bbC, \rmc})^{A_I(\zeta)}_{A_J}(1, \zeta)\xi^f_{-1})(z^{-n-1}dz)=
2\pi\sqrt{-1}\zeta^{-n-1}, 
\]
the element $[(\clA^{0, 1}_{\bbC, \rmc})^{A_I(\zeta)}_{A_J}(1, \zeta)\xi^f_{-1}]\in H^1(\clA^{0, \bullet}_{\bbC, \rmc}(A_J))$ corresponds to the continuous linear map
\[
\varphi\colon \Omega^1(A_J)\to \bbC, \quad g(z)dz\mapsto 2\pi\sqrt{-1}g(\zeta).
\]
Now take $s_1, s_2\in \bbR_{>0}$ such that 
\[
(\text{inner radius of $A_J$})<s_1<|\zeta|<s_2<(\text{outer radius of $A_J$}).
\]
Then, for $g(z)dz\in \Omega^1(A_J)$, the Cauchy's theorem gives  
\begin{align*}
2\pi\sqrt{-1}g(\zeta)
&=
-\int_{\pdd D_{s_1}}\frac{g(z)}{z-\zeta}\,dz+\int_{\pdd D_{s_2}}\frac{g(z)}{z-\zeta}\,dz\\
&=
\sum_{n\ge 0}\zeta^{-n-1}\int_{\pdd D_{s_1}}\frac{g(z)}{z-\zeta}\,dz+\sum_{n\ge 0}\zeta^n\int_{\pdd D_{s_2}}\frac{g(z)}{z-\zeta}\,dz. 
\end{align*}
Hence, by defining a continuous linear map 
\[
\varphi_n\colon \Omega^1(A_J)\to \bbC, \quad g(z)dz\mapsto \Res_{z=0}z^ng(z)dz 
\]
for each $n\in \bbZ$, we obtain
\[
\varphi(g(z)dz)=\sum_{n\ge 0}\zeta^{-n-1}\varphi_n(g(z)dz)+\sum_{n\ge0} \zeta^n\varphi_{-n-1}(g(z)dz) \quad (g(z)dz\in \Omega^1(A_J)). 
\]
Since $\Omega^1(A_J)$ is a Montel space, every pointwise convergent sequence in its strong dual $\Omega^1(A_J)'$ is convergent (see \cite[\S34.4]{T67}). Thus, we have 
\[
\varphi=\sum_{n\ge0} \zeta^{-n-1}\varphi_n+\sum_{n\ge 0}\zeta^n\varphi_{-n-1} \quad \text{in }\, \Omega^1(A_J)'. 
\]
Moreover, since $\Omega^1(A_J)$ is a nuclear Fr\'{e}chet space, every convergent sequence in $\Omega^1(A_J)'$ is bornologically convergent (see \cite[Result 52.28]{KM} and \cite[Corollary 12.5.9]{J81}).

For each $n\in \bbZ$, take $\xi_n\in \clA_{\bbC, \rmc}^{0, 1}(A_J)_{-n}$ such that $\phi(\xi_n)(z^{-n-1}dz)=2\pi\sqrt{-1}$. Since $\phi(\xi_n)=\varphi_n$, above discussion implies that 
\[
[(\clA^{0, 1}_{\bbC, \rmc})^{A_I(\zeta)}_{A_J}(1, \zeta)\xi_{-1}^f]=
\sum_{n\ge0}\zeta^{-n-1}[\xi_n]+\sum_{n\ge0}\zeta^n[\xi_{-n-1}] \quad \text{in }\,H^1(\clA_{\bbC, \rmc}^{0, \bullet}(A_J)). 
\]
Hence we obtain 
\[
\mu_{\zeta, 0}^R(a\otimes b)=
\sum_{n\ge0}\zeta^{-n-1}\alpha_R(x_{[n]}, b)+\sum_{n\ge 0}\zeta^n\alpha_R(x_{[-n-1]}, b), 
\]
where the series on the right-hand side converges bornologically. This concludes the proof. 
\end{proof}

For $x\in L$ and $v\in V(L)$, it follows from \cref{lem:alphaRn} that 
\[
\theta_R(Y(x_{[-1]}\vac, z)v)=Y(\theta_R(x_{[-1]}\vac), z)\theta_R(v). 
\]
Hence $\theta_R\colon V(L)\to \bfVR(H^\CE_\bullet\LCD)$ is a morphism of vertex algebras. The proof of \cref{thm:main} is now complete. 

\subsubsection{The case with central extension}

Let $L$ be a Lie conformal algebra and $\omega_\lambda\colon L\otimes L\to \bbC[\lambda]$ a $2$-cocycle of $L$. For each open subset $U\subset \bbC$, one can define a morphism $\omega_U\colon \LCD(U)\otimes \LCD(U)\to \bbC[-1]$ in $\sfC(\CVS)$ by
\[
\omega_U(\ol{\xi\otimes x}, \ol{\eta\otimes y})\ceq 
\frac{1}{2\pi\sqrt{-1}}\sum_{n\ge0}\Bigl(\int_U\frac{1}{n!}(\pdd_z^n\xi)\wedge \eta\wedge dz\Bigr)\, \omega_{(n)}(x, y).
\]
Then $\omega\ceq \{\omega_U\}_{U\in \frU_\bbC}$ is a $(-1)$-shifted $2$-cocycle of $\LCD$. Hence, by taking its twisted factorization envelope, we obtain $S^1\ltimes \bbC$-equivariant prefactorization algebras 
\[
C^\CE_{\bullet\, \omega}\LCD\colon \frU_\bbC\to \sfC(\CVS), \qquad
H^\CE_{\bullet\, \omega}\LCD\colon \frU_\bbC\to \CVS. 
\]

\begin{thm}\label{thm:main2}
Let $L=\bigoplus_{\Delta\in \bbN}L_\Delta$ be an $\bbN$-graded Lie conformal algebra satisfying the condition $(*)$ in \cref{thm:main}, and $\omega_\lambda\colon L\otimes L\to \bbC[\lambda]$ be a $2$-cocycle of $L$. Then the $S^1\ltimes \bbC$-equivariant prefactorization algebra $H^\CE_{\bullet\,\omega}\LCD\colon \frU_\bbC\to \CVS$ is amenably holomorphic. Moreover, the associated vertex algebra $\bfVR(H^\CE_{\bullet\,\omega}\LCD)$ is isomorphic to the enveloping vertex algebra $V(L_{\omega_\lambda})$ as an $\bbN$-graded vertex algebra: 
\[
\bfVR(H^\CE_{\bullet\,\omega}\LCD)\cong V(L_{\omega_\lambda}). 
\]
\end{thm}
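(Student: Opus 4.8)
The plan is to run the proof of \cref{thm:main} with the Lie conformal algebra $L$ replaced throughout by its central extension $L_{\omega_\lambda}$, the extra direction being carried by the $(-1)$-shifted $2$-cocycle $\omega$. The starting observation is that, by construction of the twisted factorization envelope, $(\LCD)_\omega(U)=\LCD(U)\oplus\bbC[-1]$ as a precosheaf of complexes, with the extension morphisms acting by the identity on the summand $\bbC[-1]$ and the dg Lie bracket given by $[a,b]^\sim=[a,b]+\omega_U(a,b)$; combining this with the isomorphism \eqref{eq:LCDAE} for the original $L$ (which still satisfies $(*)$ — note $L_{\omega_\lambda}$ need not) gives an isomorphism $(\LCD)_\omega(U)\cong\bigl(\clA^{0,\bullet}_{\bbC,\rmc}(U)\otimes E\bigr)\oplus\bbC[-1]$ in $\sfC(\CVS)$. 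First I would record that $\omega_U$ is invariant under the $S^1\ltimes\bbC$-action — the defining integral is translation invariant, and for the $S^1$-action the weights balance since $\omega_{(k)}(x,y)$ is nonzero only when $\Delta(x)+\Delta(y)-k-1=0$ — so that extending $\ol{\sigma}_{(q,\zeta),U}$ by the identity on $\bbC[-1]$ makes $(\LCD)_\omega$ an $S^1\ltimes\bbC$-equivariant prefactorization algebra in $\dgLie{\CVS}$, and hence so are $C^\CE_{\bullet\,\omega}\LCD$ and $H^\CE_{\bullet\,\omega}\LCD$.

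Next I would verify that $H^\CE_{\bullet\,\omega}\LCD$ is amenably holomorphic by checking the hypotheses of \cref{prp:CGconst} exactly as in the proof of \cref{thm:main}. The only structural input that changes is the homology of symmetric powers: since $(\LCD)_\omega(U)[1]=\LCDs(U)\oplus\bbC$ with the summand $\bbC$ placed in degree $0$ and weight $0$, one has $\Sym^p((\LCD)_\omega(U)[1])=\bigoplus_{i+j=p}\Sym^i(\LCDs(U))$ as complexes, so \cref{lem:Hnwtotimesop} still yields $H_n(\Sym^p((\LCD)_\omega(U)[1])_\Delta)=0$ for $n\neq0$, and the $\Sym^{\le p}$-filtration spectral sequence degenerates at the first page. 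Tameness of the $S^1$-action on $C^\CE_{\bullet\,\omega}\LCD(U)$ follows from the same $\clD(S^1)$-action extended by the identity on the central direction; the vanishing $H^\sep_0(C^\CE_{\bullet\,\omega}\LCD(D_R)_\Delta)=0$ for $\Delta<0$ and the separatedness of $H_0(C^\CE_{\bullet\,\omega}\LCD(D_R)_\Delta)$ follow from \cref{lem:HmclArmc}~(1) (the central contributions sit in weight $0$); and the maps $H_0(C^\CE_{\bullet\,\omega}\LCD(D_r)_\Delta)\to H_0(C^\CE_{\bullet\,\omega}\LCD(D_R)_\Delta)$ are isomorphisms because \cref{lem:HmclArmc}~(1) makes the induced maps on each graded piece isomorphisms. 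Holomorphic translation equivariance is obtained via \cref{lem:smtoholo} with the same operator $\delta_R$, extended by $0$ on the central direction. This produces the vertex algebra $\bfVR(H^\CE_{\bullet\,\omega}\LCD)$, which is $\bbN$-graded by the weight vanishing.

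For the identification with $V(L_{\omega_\lambda})$ I would mirror the second half of the proof of \cref{thm:main}, working on annuli. On $\clU_{L,\omega}(I)\ceq\bigoplus_{\Delta}(H^\CE_{\bullet\,\omega}\LCD)(A_I)_\Delta$ define $\Phi_I\colon\Lie(L_{\omega_\lambda})\to\clU_{L,\omega}(I)$ by the old formula $x_{[n]}\mapsto[\ol{\xi_n\otimes x}]$ on the Lie subalgebra $\Lie(L)\subset\Lie(L_{\omega_\lambda})$, and by $C_{[-1]}\mapsto[C]$, where $C$ denotes the image in $C^\CE_0((\LCD)_\omega(A_I))_0$ of the generator of the summand $\bbC[-1]$ — a Chevalley--Eilenberg cycle, since the bracket part of $d^\CE$ vanishes on $\Sym^1$ of a single degree-$0$ generator. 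The crucial new computation is the twisted analogue of \cref{lem:PhiILiehom}: rerunning that argument with $[\cdot,\cdot]^\sim$, the element $d^\CE_1$ of the homotopy $(\ol{\xi^{f_0}_m\otimes x})(\ol{g^\times(z)z^n\otimes y})$ now picks up the extra term $\omega_{A_I}(\ol{\xi^{f_0}_m\otimes x},\ol{g^\times(z)z^n\otimes y})\,C$, and evaluating the defining integral of $\omega_{A_I}$ with the residue identity already used in the proof of \cref{lem:PhiILiehom} (only $k=m+n+1$ contributes, by weight-homogeneity of $\omega_{(k)}$) reproduces, up to a harmless overall constant absorbed into the normalization of $\Phi_I(C_{[-1]})$, the coefficient of $C_{[-1]}$ in $[x_{[m]},y_{[n]}]^\sim$ in $\Lie(L_{\omega_\lambda})$. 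Hence $\Phi_I$ is compatible with the factorization product in the sense of \cref{lem:PhiILiehom}. From here the remaining steps transcribe verbatim: $\Phi_I$ extends to an injective $\wt{\Phi}_I\colon U(\Lie(L_{\omega_\lambda}))\to\clU_{L,\omega}(I)$ by the PBW/spectral-sequence argument of \cref{rmk:ULieL} (using $\Delta(C_{[-1]})=0$); the factorization product with a small disk makes $\bfVR(H^\CE_{\bullet\,\omega}\LCD)$ a $U(\Lie(L_{\omega_\lambda}))$-module on which $\Lie_+(L_{\omega_\lambda})=\Lie_+(L)$ kills $\vac$ (the analogue of \cref{lem:killnge0}; there is no extra condition since $C_{[n]}=0$ for $n\geq0$), giving a module map $\theta_R\colon V(L_{\omega_\lambda})\to\bfVR(H^\CE_{\bullet\,\omega}\LCD)$ with $\theta_R(\vac)=\vac$; it is an isomorphism by comparing associated graded pieces via \cref{lem:HmclArmc}~(1), now using $\mathrm{gr}\,V(L_{\omega_\lambda})_\Delta\cong\Sym(\Lie_-(L)\oplus\bbC C_{[-1]})_\Delta$ matching $\bigoplus_p H_0(\Sym^p((\LCD)_\omega(D_R)[1])_\Delta)$; and the analogue of \cref{lem:alphaRn} (for $x\in L$ the old contour argument is unchanged, and $\mu^R_{\zeta,0}(\theta_R(C_{[-1]}\vac)\otimes-)$ is independent of $\zeta$ since the central direction is $\bbC$-invariant) shows $\theta_R$ intertwines the vertex operators, so it is an isomorphism of $\bbN$-graded vertex algebras.

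The main obstacle is the bookkeeping in the twisted version of \cref{lem:PhiILiehom}: one must check that $\omega_{A_I}(\ol{\xi^{f_0}_m\otimes x},\ol{g^\times(z)z^n\otimes y})$ is genuinely proportional, by a constant independent of $x,y,m,n$, to the cocycle coefficient $\binom{m}{m+n+1}\omega_{(m+n+1)}(x,y)$ occurring in $\Lie(L_{\omega_\lambda})$ — this is where the normalization $\frac{1}{2\pi\sqrt{-1}}$ in the definition of $\omega_U$, the factor in the Serre pairing $\phi$, and the normalization of $\xi_n$ must be kept straight. Apart from this one new computation, everything is a routine transcription of the proof of \cref{thm:main}.
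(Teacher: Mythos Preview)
Your proposal is correct and follows exactly the approach of the paper, whose proof simply states that ``the proof of \cref{thm:main} works with minor modifications.'' You have carefully spelled out what those modifications are---extending the equivariant structure and the $\clD(S^1)$-action by the identity on the central summand $\bbC[-1]$, redoing the $\Sym^{\le p}$-filtration argument with $\Sym^p((\LCD)_\omega(U)[1])\cong\bigoplus_{i\le p}\Sym^i(\LCDs(U))$, and recomputing the annulus commutator of \cref{lem:PhiILiehom} with the extra cocycle term---and correctly identified the last of these as the only place requiring a genuinely new (but routine) residue computation.
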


\begin{proof}
The proof of \cref{thm:main} works with minor modifications. 
\end{proof}

\clearpage

\begin{exm}\label{exm:VirKacfct}
\ 
\begin{enumerate}
\item
Recall that the Virasoro conformal algebra $\Conf$ is the one-dimensional central extension of the Lie conformal algebra $\bbC[T]L$ determined by the 2-cocycle \eqref{eq:Virconf2cocy}. Note that $\bbC[T]L$ is $\bbN$-graded by declaring $\Delta(L)\ceq 2$. Moreover, the graded linear subspace $\bbC L\subset \bbC[T]L$ satisfies the condition $(*)$ in \cref{thm:main}. Thus, \cref{thm:main2} can be applied to $\bbC[T]L$ together with the 2-cocycle \eqref{eq:Virconf2cocy}, and hence we obtain an amenably holomorphic prefactorization algebra whose associated vertex algebra is the enveloping vertex algebra of $\Conf$. 

\item
Recall that, for a Lie algebra $\frg$ and a symmetric invariant form $\kappa\colon \frg\otimes \frg\to \bbC$, the current Lie conformal algebra $\Cur_\kappa(\frg)$ is the one-dimensional central extension of the Lie conformal algebra $\bbC[T]\otimes \frg$ determined by the 2-cocycle \eqref{eq:curconf2cocy}. Note that $\bbC[T]\otimes \frg$ is $\bbN$-graded by declaring $\Delta(a)\ceq 1$ for every $a\in \frg$. Moreover, the graded linear subspace $\frg\subset \bbC[T]\otimes \frg$ satisfies the condition $(*)$ in \cref{thm:main}. Thus, \cref{thm:main2} can be applied to $\bbC[T]\otimes\frg$ together with the 2-cocycle \eqref{eq:curconf2cocy}, and hence we obtain an amenably holomorphic prefactorization algebra whose associated vertex algebra is the enveloping vertex algebra of $\Cur_\kappa(\frg)$. 
\end{enumerate}
\end{exm}

By \cref{exm:VirKacfct}, we can view \cref{thm:main2} as a generalization of the constructions of the Kac--Moody factorization algebra \cite[\S5.5]{CG} and of the Virasoro factorization algebra \cite{W}. 

\subsubsection{The super case}

By modifying the construction of \cref{ss:HDCLA}, for a complex manifold $X$, a Lie conformal superalgebra $L$, and a morphism $D\colon \clA^{0, \bullet}_X\to \clA^{0, \bullet}_X$ of sheaves on $X$ with values in $\sfC(\CVS)$ satisfying 
\[
D_U(\xi\wedge \eta)=D_U\xi\wedge\eta+\xi\wedge D_U\eta \quad (\xi\in \clA^{0, k}_X(U), \eta\in \clA^{0, l}_X(U)), 
\]
one can construct a prefactorization algebra $\LXD\colon \frU_X\to \dgLie{\sCVS}$. 
As in \cref{ss:equivLCD}, in the case where $X=\bbC$, $D=\pdd_z$, and $L$ is an $\bbN/2$-graded Lie conformal superalgebra, one can equip 
\[
\LCD\colon \frU_\bbC\to \dgLie{\sCVS}
\]
with an $S^1\ltimes \bbC$-equivariant structure. 

\begin{thm}\label{thm:main3}
Let $L=\bigoplus_{\Delta\in \bbN/2}L_\Delta$ be an $\bbN$/2-graded Lie conformal superalgebra satisfying the following condition: There exists a graded linear subsuperspace $E\subset L$ such that $\bbC[T]\otimes E\to L$, $p(T)\otimes x\mapsto p(T)x$ is a linear isomorphism. 
\begin{enumerate}
\item 
Then the $S^1\ltimes \bbC$-equivariant prefactorization algebra $H^\CE_\bullet\LCD\colon \frU_\bbC\to \sCVS$ is amenably holomorphic, and its associated vertex superalgebra $\bfVR^\super(H^\CE_\bullet\LCD)$ is isomorphic to the enveloping vertex superalgebra $V(L)$ as an $\bbN/2$-graded vertex superalgebra. 

\item 
For a $2$-cocycle $\omega_\lambda\colon L\otimes L\to \bbC[\lambda]$ of $L$, the $S^1\ltimes \bbC$-equivariant prefactorization algebra $H^\CE_{\bullet\,\omega}\LCD\colon \frU_\bbC\to \sCVS$ is amenably holomorphic, and its associated vertex superalgebra $\bfVR^\super(H^\CE_{\bullet\,\omega}\LCD)$ is isomorphic to the enveloping vertex superalgebra $V(L_{\omega_\lambda})$ as an $\bbN/2$-graded vertex superalgebra. 
\end{enumerate}
\end{thm}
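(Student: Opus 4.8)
The plan is to run the proofs of \cref{thm:main} and \cref{thm:main2} verbatim, replacing every category, functor, limit, and sign rule by its $\bbZ/2$-graded analogue, and invoking \cref{thm:superFAtoVA} (and its companion, the super analogue of \cref{prp:CGconst}) in place of \cref{thm:FAtoVA}. The only genuinely new ingredient is the Koszul sign bookkeeping, so the proposal is to indicate where signs enter and to check that they are consistent with the conventions already fixed in \cref{ss:baFA} for $\sfC(\sfM)$, $C^\CE_\bullet$, and the braiding of $\sCVS$.

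First I would set up the super version of $\LXD$ exactly as in \cref{ss:HDCLA}: equip the Lie conformal superalgebra $L$ with the fine bornology (now an object of $\sCVS$), set $L^\bullet_X(U)\ceq \clA^{0,\bullet}_{X,\rmc}(U)\otimes L$ with the Dolbeault factor purely even, and define the superbracket $\mu_U$ by the same formula $\xi\otimes x\otimes\eta\otimes y\mapsto\sum_{n\ge0}\tfrac{1}{n!}(D^n_U\xi)\wedge\eta\otimes x_{(n)}y$, noting that the sign from moving $\eta$ past $x$ is trivial (the Dolbeault part is even) while the skew-symmetry and Jacobi identity of the \emph{super} $\lambda$-bracket carry the Koszul signs. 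The boundedness lemma (\cref{lem:mubounded}) and the super-commutativity of disjoint opens (\cref{LXDdgLie}) go through unchanged, since $E_{(n)}E=0$ for $n\gg0$ on any finite-dimensional subsuperspace and since the support argument is insensitive to parity. This yields $\LXD\colon\frU_X\to\dgLie{\sCVS}$. For $X=\bbC$, $D=\pdd_z$, and $L=\bigoplus_{\Delta\in\bbN/2}L_\Delta$, the equivariant structure is defined as in \cref{ss:equivLCD} by $\sigma_{(q,\zeta),U}(\xi\otimes x)=(q,\zeta)\xi\otimes q^{\Delta(x)-1}x$; it is bounded, it satisfies $(q,\zeta)\pdd_z\xi=q\,\pdd_z(q,\zeta)\xi$, and it is a morphism in $\dgLie{\sCVS}$, so composing with $C^\CE_\bullet$ and $H_\bullet$ gives the $S^1\ltimes\bbC$-equivariant prefactorization algebras $C^\CE_\bullet\LCD,H^\CE_\bullet\LCD\colon\frU_\bbC\to\sCVS$, whose weight decomposition is the half-integer one of \cref{thm:superFAtoVA}, with $H^\CE_\bullet\LCD(D_R)_{\Delta+i/2}$ inside the parity-$\ol{i}$ component.

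Next I would verify amenable holomorphy by checking the four hypotheses of the super analogue of \cref{prp:CGconst}, following the body of the proof of \cref{thm:main}: (i) tameness of the $S^1$-action on $C^\CE_\bullet\LCD(D_R)$ via the averaging action of $\clD(S^1)$, exactly as in \cref{lem:HDeltainj} and the proof of \cref{thm:main}; (ii) $H^\sep_n(C^\CE_\bullet\LCD(U))=0$ for $n\ne0$, via the increasing filtration $\Sym^{\le p}(\LCDs(U))$ and the associated spectral sequence, whose first page vanishes off the antidiagonal because $H_n(\LCDs(U)^{\wt{\otimes}p})\cong H^{-n+p}(\clA^{0,\bullet}_\rmc(U^p))\otimes E^{\otimes p}=0$ by \cref{fct:Serre} (tensoring with the finite-dimensional superspace $E^{\otimes p}$ changes nothing, and $\Sym$ in $\sfC(\sCVS)$ is automatically exterior on the odd part, which is precisely the content of the identification $\Sym^p(\frg[1])\cong\bigwedge^p(\frg)[p]$ from \cref{ss:baFA}); (iii) holomorphic translation-equivariance of $H^\CE_\bullet\LCD$ via the homotopy $\delta_R\colon\ol{f(z)dz\otimes x}\mapsto\ol{f(z)\otimes x}$ extended by the super Leibniz rule, as in \cref{lem:smtoholo}(ii); (iv) vanishing $H^\sep_0(C^\CE_\bullet\LCD(D_R)_\Delta)=0$ for $\Delta\ll0$ and the inclusion-induced maps $H^\sep_0(C^\CE_\bullet\LCD(D_r)_\Delta)\to H^\sep_0(C^\CE_\bullet\LCD(D_R)_\Delta)$ being isomorphisms, both from \cref{lem:HmclArmc}(1) applied on the associated graded. \cref{thm:superFAtoVA} then produces the $\bbN/2$-graded vertex superalgebra $\bfVR^\super(H^\CE_\bullet\LCD)$.

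Finally I would establish the isomorphism with $V(L)$ as in the last part of the proof of \cref{thm:main}: define $\Phi_I\colon\Lie(L)\to\clU_L(I)$, $x_{[n]}\mapsto[\ol{\xi_n\otimes x}]$; the super analogue of \cref{lem:PhiILiehom} shows it intertwines the Lie superbracket of $\Lie(L)$ with the factorization products, the only new feature being a Koszul sign $(-1)^{p(x)p(y)}$ in the swap term that matches the sign in $[x_{[m]},y_{[n]}]$. This gives $\wt\Phi_I\colon U(\Lie(L))\to\clU_L(I)$ (the super PBW theorem underlies \cref{rmk:ULieL}) and a $U(\Lie(L))$-module structure $\alpha_R$ on $\bfVR^\super(H^\CE_\bullet\LCD)$; \cref{lem:killnge0} holds verbatim via the primitive $g^\times$, yielding $\theta_R\colon V(L)\to\bfVR^\super(H^\CE_\bullet\LCD)$ with $\theta_R(\vac)=\vac$, which is an isomorphism by comparing the super PBW filtration on the source with the $\Sym^{\le p}$-filtration on the target and applying \cref{lem:HmclArmc}(1) to the associated graded. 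The super analogue of \cref{lem:alphaRn}, using the same Cauchy-kernel expansion in the strong dual of the nuclear Fr\'echet space $\Omega^1(A_J)$, then gives $\theta_R(Y(x_{[-1]}\vac,z)v)=Y(\theta_R(x_{[-1]}\vac),z)\theta_R(v)$, so $\theta_R$ is an isomorphism of vertex superalgebras; part (2) follows by the same twist as in \cref{thm:main2}, with $\omega$ the $(-1)$-shifted $2$-cocycle of $\LCD$ given by the same integral formula. The main obstacle is not conceptual but organizational: one must fix the Koszul sign conventions for the Chevalley--Eilenberg differential of a dg Lie superalgebra (where homological degree and internal parity both contribute), for the identification $\Sym^p(\frg[1])\cong\bigwedge^p(\frg)[p]$ used in every spectral-sequence step, and for the swap term of the super \cref{lem:PhiILiehom}, and then check that these are precisely the conventions making $C^\CE_\bullet\colon\dgLie{\sCVS}\to\sfC(\sCVS)$ symmetric monoidal and $\wt\Phi_I$ well-defined; once this is pinned down, every estimate and homological argument above is identical to the non-super case.
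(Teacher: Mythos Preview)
Your proposal is correct and follows exactly the approach of the paper: the paper's own proof of \cref{thm:main3} is the single sentence ``The proof of \cref{thm:main} works with minor modifications,'' and what you have written is a careful expansion of that sentence, spelling out the Koszul sign bookkeeping and the super analogues of each lemma. There is no divergence in method; your writeup is simply more explicit than the paper's.
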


\begin{proof}
The proof of \cref{thm:main} works with minor modifications. 
\end{proof}

\begin{exm}\label{exm:N124}
\ 
\begin{enumerate}
\item
Recall that the Neveu--Schwarz Lie conformal superalgebra $L^{N=1}$ is the one-dimensional central extension of the Lie conformal superalgebra $\bbC[T]L\oplus \bbC[T]G$ determined by the 2-cocycle \eqref{eq:N1cocy}. Note that $\bbC[T]L\oplus \bbC[T]G$ is $\bbN/2$-graded by declaring $\Delta(L)\ceq2$ and $\Delta(G)\ceq 3/2$. Moreover, the graded linear subsuperspace $\bbC L\oplus \bbC G\subset \bbC[T]L\oplus \bbC[T]G$ satisfies the condition in \cref{thm:main3}. Thus, \cref{thm:main3} (2) can be applied to $\bbC[T]L\oplus \bbC[T]G$ together with the 2-cocycle \eqref{eq:N1cocy}, and hence we obtain an amenably holomorphic prefactorization algebra whose associated vertex algebra is the enveloping vertex algebra of $L^{N=1}$. 

\item 
Recall that the $N=2$ Lie conformal superalgebra $L^{N=2}$ is the one-dimensional central extension of the Lie conformal superalgebra 
\[
M=\bbC[T]L\oplus \bbC[T]J\oplus \bbC[T]G^+\oplus \bbC[T]G^- 
\]
determined by the 2-cocycle \eqref{eq:N2cocy}. Note that $M$ is $\bbN/2$-graded by declaring $\Delta(L)\ceq2$, $\Delta(J)\ceq 1$, and $\Delta(G^\pm)\ceq 3/2$. Moreover, the graded linear subsuperspace $\bbC L\oplus \bbC J\oplus \bbC G^+\oplus \bbC G^-\subset M$ satisfies the condition in \cref{thm:main3}. Thus, \cref{thm:main3} (2) can be applied to $M$ together with the 2-cocycle \eqref{eq:N2cocy}, and hence we obtain an amenably holomorphic prefactorization algebra whose associated vertex algebra is the enveloping vertex algebra of $L^{N=2}$. 

\item 
Recall that the $N=4$ Lie conformal algebra $L^{N=4}$ is the one-dimensional central extension of the Lie conformal superalgebra 
\[
M=\bbC[T]L\oplus \bbC[T]J^0\oplus\bbC[T]J^+\oplus \bbC[T]J^-\oplus \bbC[T]G^+\oplus \bbC[T]G^-\oplus \bbC[T]\ol{G}^+\oplus \bbC[T]\ol{G}^-
\]
determined by the 2-cocycle \eqref{eq:N4cocy}. Note that $M$ is $\bbN/2$-graded by declaring $\Delta(L)\ceq2$, $\Delta(J^0)=\Delta(J^\pm)\ceq 1$, and $\Delta(G^\pm)=\Delta(\ol{G}^\pm)\ceq 3/2$. Moreover, the graded linear subsuperspace 
\[
\bbC L\oplus \bbC J^0\oplus \bbC J^+\oplus \bbC J^-\oplus\bbC G^+\oplus \bbC G^-\oplus\bbC \ol{G}^+\oplus \bbC\ol{G}^-\subset M 
\]
satisfies the condition in \cref{thm:main3}. Thus, \cref{thm:main3} (2) can be applied to $M$ together with the 2-cocycle \eqref{eq:N2cocy}, and hence we obtain an amenably holomorphic prefactorization algebra whose associated vertex algebra is the enveloping vertex algebra of $L^{N=4}$. 
\end{enumerate}
\end{exm}

As seen in \cref{exm:N124}, we obtain new prefactorization algebras corresponding to vertex superalgebras.


\begin{thebibliography}{MMMM}
\bibitem[B20a]{B20a}
D.~Bruegmann, 
\emph{Geometric Vertex Algebras}, preprint, 
arXiv: 2012.09717v2. 

\bibitem[B20b]{B20b}
D.~Bruegmann, 
\emph{Vertex Algebras and Costello-Gwilliam Factorization Algebras}, preprint, 
arXiv: 2012.12214v2. 

\bibitem[BD04]{BD04}
A.~Beilinson, V.~Drinfeld, 
\emph{Chiral Algebras}, 
AMS Colloq. Publ. \textbf{51}, Amer. Math. Soc. (2004). 

\bibitem[B+19]{BDHK}
B.~Bakalov, A.~De Sole, R.~Heluani, V.G.~Kac, 
\emph{An operadic approach to vertex algebra and Poisson vertex algebra cohomology}, 
Jpn. J. Math. \textbf{14}, 249--342 (2019). 

\bibitem[C11]{C11}
K.~Costello, 
\emph{Renormalization and Effective Field Theory}, 
Math. Surv. Monog. \textbf{170}, Amer. Math. Soc. (2011). 

\bibitem[CG17]{CG}
K.~Costello, O.~Gwilliam, 
\emph{Factorization Algebras in Quantum Field Theory volume 1}, 
new mathematical monographs \textbf{31}, Cambridge University Press (2017). 

\bibitem[CG21]{CG2}
K.~Costello, O.~Gwilliam, 
\emph{Factorization Algebras in Quantum Field Theory volume 2}, 
new mathematical monographs \textbf{41}, Cambridge Univ. Press (2021). 

\bibitem[FBZ04]{FBZ}
E.~Frenkel, D.~Ben-Zvi,
\emph{Vertex Algebras and Algebraic Curves}, 2nd ed.,
Math. Surv. Monog. \textbf{88}, Amer. Math. Soc. (2004). 

\bibitem[FZ92]{FZ}
I.B.~Frenkel, Y.~Zhu, 
\emph{Vertex operator algebras associated to representations of affine and Virasoro algebras}, 
Duke. Math. J. \textbf{66} (1), 123--168 (1992). 

\bibitem[GGW20]{GGW}
V.~Gorbounov, O.~Gwilliam, B.~Williams, 
\emph{Chiral Differential Operators via Quantization of the Holomorphic $\sigma$-Model}, 
Ast\'{e}risque \textbf{419} (2020). 

\bibitem[H77]{H77}
H.~Hogbe-Nlend, 
\emph{Bornologies and Functional Analysis}, 
North-Holland Mathematics Studies \textbf{26}, 
North-Holland Publishing Company (1977). 

\bibitem[J81]{J81}
H.~Jarchow, 
\emph{Locally Convex Spaces}, 
Mathematische Leitf\"{a}den, Teubner, Stuttgart (1981). 

\bibitem[K98]{K98}
V.~Kac, 
\emph{Vertex Algebras for Beginners}, 2nd ed.,
Univ. Lect. Ser. \textbf{10}, Amer. Math. Soc. (1998). 

\bibitem[KM97]{KM}
A.~Kriegl, P.W.~Michor, 
\emph{The Convenient Setting of Global Analysis}, 
Math. Surv. Monog. \textbf{53}, Amer. Math. Soc. (1997). 

\bibitem[N]{N1}
Y.~Nishinaka, 
\emph{A note on vertex algebras and Costello-Gwilliam factorization algebras}, preprint, 
arXiv: 2408.00412v2. 

\bibitem[S53]{S}
J.P.~Serre, 
\emph{Quelques probl\'{e}mes globaux relatifs aux vari\'{e}t\'{e}s de stein}, 
Colloque sur les fonctions de plusieurs variables, 57--68 (1953).

\bibitem[T67]{T67}
F.~Tr\`{e}ves, 
\emph{Topological Vector Spaces, Distributions and Kernels}, 
Academic Press (1967). 

\bibitem[V26]{V}
B.~Vicedo, 
\emph{Full Universal Enveloping Vertex Algebras from Factorisation}, 
Ann. Henri Poincar\'{e} (2026).  

\bibitem[W17]{W}
B.~Williams, 
\emph{The Virasoro vertex algebra and factorization algebras on Riemann surfaces},
Lett. Math. Phys. \textbf{107}, 2189--2237 (2017). 
\end{thebibliography}
\end{document}